\documentclass{amsart}
\usepackage{amssymb}
\usepackage{amscd}
\usepackage{enumitem}
\usepackage{color}
\usepackage{tikz-cd}

\usepackage{hyperref}

\newtheorem{theorem}{Theorem}[section]
\newtheorem{lemma}[theorem]{Lemma}

\newtheorem{definition}[theorem]{Definition}
\newtheorem{example}[theorem]{Example}
\newtheorem{remark}[theorem]{Remark}
\newtheorem{proposition}[theorem]{Proposition}
\newtheorem{corollary}[theorem]{Corollary}
\newtheorem{notation}[theorem]{Notation}

\newcommand\ec { \color{black}}

\begin{document}

\title[Factorization in almost Dedekind domains]
{Factorization in almost Dedekind domains} 
\author[G.W. Chang]{Gyu Whan Chang}

\address{(Chang) Department of Mathematics Education, Incheon National University,
Incheon 22012, Korea} \email{whan@inu.ac.kr}

\author[H.S. Choi]{Hyun Seung Choi}

\address{(Choi) Research Institute of Basic Sciences, Incheon National University,
Incheon 22012, Korea} \email{hchoi21@inu.ac.kr}

\date{\today}

\thanks{2000 Mathematics Subject Classification: 11T06, 13A05, 13A15, 13B25, 13F05, 13G05, 13P05}
\thanks{Key Words and Phrases: Almost Dedekind domain, PID, polynomial ring, DVR, B{\'e}zout domain, irreducible polynomial, cyclotomic polynomial}

\begin{abstract}
Let $F$ be a field, $p$ a prime number, $X$ an indeterminate over $F$,
$D_n =F[X^{\frac{1}{p^n}}, X^{-\frac{1}{p^n}}]$ for each integer $n \geq 0$ and
$D = \bigcup\limits_{n\in\mathbb{N}_0}D_n.$ Then $D$ is a one-dimensional B{\'e}zout domain
but not a Dedekind domain, and $D$ is an almost Dedekind domain if and only if char$(F) \neq p$.
In this paper, we study the element-wise factorization properties of $D$. For example, we determine when an irreducible element of $D_n$ is an irreducible element of $D$, in terms of $n$ and $p$.
In particular, we show that if $F$ is algebraically closed or
a finite field of char$(F)=p$, then $D$ has no irreducible element.
We also show that if $F$ is a finite field of odd characteristic, then an irreducible element
$f(X)$ of $D_0$ is irreducible in $D$ if and only if it is a factor of a cyclotomic polynomial $\Phi_n(X)$ for some integer $n \geq 1$ which satisfies a certain equation in
terms of $|F|$ and $\textnormal{deg}(f(X))$.
Finally, we introduce the notion of infinite product and we then show that
if $F= \mathbb{Q}$ and $p=2$, every nonzero
nonunit of $D$ can be written as a product of countably many prime elements of $D$
and every proper nonzero principal ideal of $D$ can be uniquely written as a countable intersection of principal primary ideals.
\end{abstract}

\maketitle

\section{Introduction}

Let $D$ be an integral domain with quotient field $K$.
An \textit{overring} of $D$ means a subring of $K$ containing $D$.
We say that $D$ is a \textit{discrete valuation ring} (\textit{DVR}) if $D$ is a principal
ideal domain (PID) with exactly one nonzero maximal ideal.
An \textit{almost Dedekind domain} $D$ is an integral domain in which $D_M$ is a DVR for all
maximal ideals $M$ of $D$.
We know that a nonzero finitely generated ideal of $D$ is invertible
if and only if it is locally principal \cite[Corollary 7.5]{gilmer}.
Hence, $D$ is a Dedekind domain if and only if $D$ is a Noetherian almost Dedekind domain,
if and only if $D$ is an almost Dedekind domain and each nonzero element of $D$ is contained in
only finitely many maximal ideals of $D$ \cite[Theorems 37.1 and 37.2]{gilmer}.
It is also easy to see that an overring of an almost Dedekind domain is an almost Dedekind domain \cite[Corollary 36.3]{gilmer}.

In the next subsection, we briefly introduce the results of this paper.
We also give a historical overview on almost Dedekind domains and
a couple of notations for easy understanding of the result of this paper.

\subsection{Results}
Let $\mathbb{N}$ be the set of positive integers and $\mathbb{N}_0 = \mathbb{N} \cup \{0\}$.
Let $F$ be a field, $p$ a prime number, $X$ an indeterminate over $F$, $D_n = F[X^{\frac{1}{p^n}}, X^{-\frac{1}{p^n}}]$
for each $n \in \mathbb{N}_0$ and $D = \bigcup\limits_{n \in \mathbb{N}_0}D_n$.
It is easy to see that $D_n$ is a PID, $D_n \subsetneq D_{n+1}$ for every $n \in \mathbb{N}_0$
and $D$ is a one-dimensional B{\'e}zout domain. In this paper, we study the ring-theoretic and arithmetic properties
of $D$ with a focus on the almost Dedekind domain property and the irreducibility of an element of $D$.

The paper consists of six sections including the introduction. Starting with general results,
we gradually develop theories and examples that apply to specific domains, adding restrictions one by one. In Section 1.2, we give a short historical overview on how to
construct an almost Dedekind domain that is not Dedekind via the so-called Dedekind
tower-construction. We also recall when a semigroup ring is almost Dedekind
by which we can show that $D = \bigcup\limits_{n \in \mathbb{N}_0}D_n$ is
an almost Dedekind domain if and only if char$(F) \neq p$.
In Section 2, we give some basic properties
of an integral domain $D$ constructed from a chain of Dedekind domains
with additional properties (see Notation \ref{notation1}).
Specifically, we show that $D$ is a one-dimensional Pr\"ufer domain;
we give a necessary and sufficient condition for $D_M$ to be a DVR for a maximal ideal $M$ of $D$
and for $D$ to be an almost Dedekind domain or a Dedekind domain.
We calculate ht$(M[[X]])$ for a maximal ideal $M$ of $D$, showing that
dim$(D[[X]]) \geq 2^{\aleph_1}$ if and only if $D$ is not a Dedekind domain.

Let $D = \bigcup\limits_{n \in \mathbb{N}_0}F[X^{\frac{1}{p^n}}, X^{-\frac{1}{p^n}}]$.
In Section 3, we study several interesting properties of $D$.
In particular, we note that $D$ has no irreducible element
if either $F$ is algebraically closed or char$(F) =p$ with $F$ algebraic over $\mathbb{Z}/p\mathbb{Z}$. Let $F$ be an algebraically closed field of characteristic $p$.
We also classify the set of maximal ideals of $D$
and show that $D_P$ is a rank one nondiscrete valuation domain and $D/P \cong F$
for all maximal ideals $P$ of $D$.

Section 4 is devoted to the study of the prime ideal structure and factorization property of $D$ under the setting $p=2$,
i.e., of $D = \bigcup\limits_{n \in \mathbb{N}_0}F[X^{\frac{1}{2^n}}, X^{-\frac{1}{2^n}}]$. For example,
we prove that char$(F) \neq 2$ if and only if  every proper principal ideal of $D$ is a finite product of radical principal ideals of $D$.
We also investigate when $f(X^{\frac{1}{2^n}}) \in D_n$ is a prime element of $D$,
the structure of Max$(D)$ when $D$ is an algebraically closed field with char$(F) \neq 2$
and the monic polynomials in $F[X]$ that are prime elements of $D$
when $|F| < \infty$ and char$(F) \neq 2$.

Finally, in Section 5, we introduce the notion of infinite products of prime elements
and we then use this notion to prove that every nonzero nonunit of
$D = \bigcup\limits_{n \in \mathbb{N}_0}\mathbb{Q}[X^{\frac{1}{2^n}}, X^{-\frac{1}{2^n}}]$
can be written as a product of finite or countably infinite number of prime elements of $D$.
Of course, $D$ is not a UFD. We also show that every proper nonzero principal ideal of $D$
can be uniquely written as a countable intersection of principal primary ideals.
Conversely, given a collection of countably many pairwise comaximal principal primary ideals $\{Q_i\}_{i\in\mathbb{N}}$ of $D$, we completely determine when $\bigcap\limits_{i\in\mathbb{N}}Q_i$ is a principal ideal of $D$.

\subsection{Historical overview}
Almost Dedekind domains form an important subclass of the class of Pr\"ufer domains
which includes Dedekind domains, i.e.,
\begin{center}
Dedekind domains $\Rightarrow$ almost Dedekind domains $\Rightarrow$ Pr\"ufer domains.
\end{center}
It has many fascinating applications and we will briefly mention some of them in this section.
We refer to Loper's article \cite{lo} for a detailed exposition.

Dedekind domains and the concept of ideals were introduced to better understand the factorization in an integral domain that is not necessarily a UFD.
In particular, an integral domain $D$ is a Dedekind domain if and only if every proper ideal of $D$ is a finite product of prime ideals,
if and only if every nonzero ideal of $D$ is invertible. In terms of ideal arithmetic,
the most convenient property of invertible ideals is that they are cancellative.
(A nonzero ideal $I$ of an integral domain $D$ is \textit{cancellative} if $IJ=IH$ implies $J=H$ for ideals $J$ and $H$ of $D$.)
This is where the class of almost Dedekind domains enters the stage.

\begin{theorem}
\label{th0}
\cite[Theorem 36.5]{gilmer}
An integral domain $D$ is an almost Dedekind domain if and only if every nonzero ideal of $D$ is cancellative.
\end{theorem}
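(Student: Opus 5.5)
We prove the two directions separately. Both rest on the fact that cancellation is a local property for finitely generated ideals, and on the structure of ideals in a DVR.

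\emph{($\Rightarrow$)} Assume $D$ is almost Dedekind, and let $I$ be a nonzero ideal with $IJ=IH$.
\begin{itemize}
\item Localize at each maximal ideal $M$. Since $(IJ)_M=I_MJ_M$, we get $I_MJ_M=I_MH_M$.
\item In the DVR $D_M$ every nonzero ideal is principal, so $I_M=aD_M$ with $a\neq 0$. Cancelling $a$ in the quotient field gives $J_M=H_M$.
\item An ideal is determined by its localizations at maximal ideals, since $J=\bigcap_M J_M$ and likewise for $H$. Hence $J=H$.
\end{itemize}
This direction is routine.

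\emph{($\Leftarrow$)} Assume every nonzero ideal is cancellative, and fix a maximal ideal $M$. It suffices to show that $D_M$ is a Noetherian valuation domain of dimension at most one, hence a DVR or a field.

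\textbf{Step 1: $D$ is Pr\"ufer.} Let $a,b$ be nonzero and $I=(a,b)$. The standard identity $(a,b)^3=(a,b)(a^2,b^2)$ holds because $a^2b$ and $ab^2$ lie in the right-hand side; for instance $a^2b\cdot$ can be checked via $(a+b)(a^2,b^2)$-type manipulations, and one verifies $(a,b)^2(a,b)=(a^2,b^2)(a,b)$ directly. Cancelling $(a,b)$ gives $(a,b)^2=(a^2,b^2)$. Hence $ab\in(a^2,b^2)$, and the classical Gilmer lemma then makes $(a,b)$ invertible. Thus $D$ is Pr\"ufer and $D_M$ is a valuation domain.

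\textbf{Step 2: $MD_M$ is principal and there is no prime strictly between $0$ and $M$.}
\begin{itemize}
\item First show $M\neq M^2$: if $M=M^2$, then $M\cdot M=M\cdot D$, and cancelling $M$ gives $M=D$, a contradiction.
\item Next, for $x\in M$ and $I=xD_M\cap D$ (or $I=M$), cancellation gives $I\neq IM$, since $IM=ID$ would force $M=D$.
\item Take $t\in M\setminus M^2$ and work in the valuation ring $D_M$. There $M_M^2\subsetneq M_M$, so $M_M$ is not idempotent; in a valuation domain this forces $M_M=tD_M$, i.e.\ $M_M$ is principal.
\item Suppose a nonzero prime $P\subsetneq M$ existed. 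Then $PD_M=PM_M$, because $P_M\subseteq\bigcap_n t^nD_M$ and $t^{-1}P_M=P_M$. Pulling back, let $Q=P_M\cap D$. Then $Q=QM$ after contraction, and since $Q\neq 0$, cancelling gives $M=D$, a contradiction.
\end{itemize}

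\textbf{Step 3: conclude.} A valuation domain with principal maximal ideal and Krull dimension one is a DVR.

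The main obstacle is the contraction step in Step 2. Cancellation is global, but the relation $Q=QM$ holds only after localizing. I would handle it by passing to $J=QD_M\cap D$ and checking $J=JM$ on every localization: at $M$ by the above, and at $N\neq M$ since $M_N=D_N$. Alternatively, for the whole of Step 2, one can note that the cancellation property passes to $D_M$ for ideals extended from $D$, since every ideal of $D_M$ is extended.
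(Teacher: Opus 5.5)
The paper gives no proof of this statement; it is quoted from Gilmer. So I judge your argument on its own. The direction ($\Rightarrow$) is correct. So are Steps 2--3 of ($\Leftarrow$), including your local--global repair of $P=PM$ (check $P_N=(PM)_N$ at every maximal $N$, using $M_N=D_N$ for $N\neq M$).

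\textbf{The gap is the last sentence of Step 1.} The identity you need there is immediate: both $(a,b)^3$ and $(a,b)(a^2,b^2)$ are generated by $a^3,a^2b,ab^2,b^3$. The problem is what comes after. The relation $ab\in(a^2,b^2)$, even for all $a,b\in D$, does not make $(a,b)$ invertible.

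Here is a counterexample. Let $\omega$ be a primitive cube root of unity, $V=\mathbb{Z}[\omega]_{(2)}$ and $A=\mathbb{Z}_{(2)}+2V$.
\begin{itemize}
\item Every nonzero finitely generated ideal $I$ of the local domain $A$ is $zA$ or $zV$. To see this, pick $z\in I$ of minimal $2$-adic value; then $zA\subseteq I\subseteq zV$, and $|V/A|=2$.
\item $(a,b)^2=(a^2,b^2)$ holds for all $a,b\in A$. In the $zV$ case this is because squaring carries $\mathbb{F}_2$-bases of $V/2V\cong\mathbb{F}_4$ to $\mathbb{F}_2$-bases, so Nakayama applies.
\item Yet $\omega\notin A$, and the maximal ideal $2V=(2,2\omega)$ is not principal, so $A$ is not Pr\"ufer.
\end{itemize}
The criterion you are invoking needs $D$ to be integrally closed, and you never establish that.

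Integral closedness does follow from cancellation. Suppose $x=a/b$ satisfies a monic equation of degree $n$. Then $a^n\in b(a,b)^{n-1}$, so $(a,b)(a,b)^{n-1}=(b)(a,b)^{n-1}$. Cancelling $(a,b)^{n-1}$ gives $a\in bD$.

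You can then finish Step 1 directly:
\begin{itemize}
\item Dividing $ab=ra^2+sb^2$ by $b^2$ gives $x=rx^2+s$.
\item So $y=rx$ satisfies $y^2-y+rs=0$, and hence $y\in D$.
\item Let $M$ be a maximal ideal. If $1-y\notin M$, then $x(1-y)=s$ gives $x=s/(1-y)\in D_M$.
\item Otherwise $y$ is a unit of $D_M$, and $x^{-1}=r/y\in D_M$.
\end{itemize}
Thus $(a,b)$ is locally principal, hence invertible, and $D$ is Pr\"ufer. With this step inserted, the rest of your proof goes through.
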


On the other hand, let $D$ be an integral domain with quotient field $K$, $K[X]$ the polynomial ring over $K$
 and Int$(D)=\{f(X)\in K[X]\mid f(D)\subseteq D\}$.
Then Int$(D)$ is an overring of $D[X]$ and called the ring of \textit{integer-valued polynomials} of $D$.
Polya \cite{p19} and Ostrowski \cite{o19} first investigated Int$(D)$ when $D$ is the ring of integers of an algebraic number field.
One motivating question was the ring-theoretic properties of Int$(\mathbb{Z})$
and Brizolis \cite{b79} proved that Int$(\mathbb{Z})$ is a Pr\"ufer domain.
Then the focus was on the classification of the integral domain $D$ such that Int$(D)$ is Pr\"ufer.
Chabert proved that if Int$(D)$ is a Pr\"ufer domain, then $D$ is an almost Dedekind domain whose residue fields are finite \cite[Proposition 6.3]{c87},
while Gilmer showed that the converse fails in general \cite[Theorem 13]{g90}.
For a complete characterization of the integral domain $D$ such that Int$(D)$ is Pr\"ufer, see \cite{l97}.
This briefly shows why almost Dedekind domains matter. The interested reader is guided to the textbook of Cahen and Chabert \cite{cc97}.

Now, let us return to some fundamental questions: Is there an almost Dedekind domain that is not Dedekind?
If so, then is there a canonical way to construct such an integral domain? The answer is well-known to be affirmative,
as we will see shortly. There are several ways to construct an almost Dedekind domain, and in this paper,
we will focus on the ones motivated by the following result of Nakano \cite{n53}:
forming an ascending chain of Dedekind domains satisfying certain properties and taking the union of them.
Nakano's construction is the first known example of non-Noetherian almost Dedekind domains.

\begin{theorem} \label{nakano} \cite[p. 426]{n53}
Let $\mathcal{P}=\{p_i\}_{i\in\mathbb{N}}$ be the set of prime numbers
and $\zeta_i$ the primitive $p_i$-th root of unity for each $p_i\in\mathcal{P}$. Then for each $n\in\mathbb{N}$,
let $K_n=\mathbb{Q}(\zeta_1,\dots, \zeta_n)$ and $R_n$ the integral closure of $\mathbb{Z}$ in $K_n$.
Then $R=\bigcup\limits_{n\in\mathbb{N}}R_n$ is an almost Dedekind domain that is not Dedekind.
\end{theorem}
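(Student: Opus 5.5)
The plan is to prove the two halves separately: $R$ is almost Dedekind, and $R$ is not Dedekind. Throughout, $R=\bigcup_n R_n$ is an ascending union of Dedekind domains, each integral over $\mathbb{Z}$. Hence $R$ is the integral closure of $\mathbb{Z}$ in $K=\bigcup_n K_n$, and $R$ is one-dimensional and integrally closed.

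\emph{Almost Dedekind.} Fix a maximal ideal $M$ of $R$ and set $M_n=M\cap R_n$, a maximal ideal of $R_n$; all the $M_n$ lie over the same rational prime $q$. Since $q\in\mathcal{P}$, write $q=p_k$. For $n\ge k$ we have $K_n=K_k(\zeta_{k+1},\dots,\zeta_n)$, and $K_n/K_k$ is obtained by adjoining $p_i$-th roots of unity with $p_i\neq q$. Such cyclotomic extensions are unramified at every prime above $q$, since $q$ does not divide $p_i$, so the discriminant of $x^{p_i}-1$ is prime to $q$. Consequently, for $n\ge k$ the ramification index $e(M_{n+1}\mid M_n)=1$, and a uniformizer $\pi\in M_k$ of $(R_k)_{M_k}$ remains a uniformizer of $(R_n)_{M_n}$ for all $n\ge k$.

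Now $R_M=\bigcup_{n\ge k}(R_n)_{M_n}$, because a localization of a directed union is the union of the localizations at the contracted primes. Each $(R_n)_{M_n}$ is a DVR with maximal ideal $\pi(R_n)_{M_n}$. Take a nonzero $x\in R_M$; it lies in some $(R_n)_{M_n}$ and equals $u\pi^m$ with $u$ a unit there, hence a unit in $R_M$. Therefore every nonzero element of $R_M$ is a unit times a power of $\pi$, which makes $R_M$ a DVR. The step needing the most care is verifying the unramifiedness cleanly: the finite step $K_k/\mathbb{Q}$ may ramify at $q$, but only finitely many of the adjoined roots of unity are $q$-power roots, so ramification stops after stage $k$.

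\emph{Not Dedekind.} By the criterion recalled in the introduction, it suffices to find a nonzero element lying in infinitely many maximal ideals of $R$. I would take the element $2$, or any fixed prime $\ell$. The number of primes of $R_n$ above $\ell$ is $[K_n:\mathbb{Q}]/(e f)$. The degree $[K_n:\mathbb{Q}]=\prod_{i\le n}(p_i-1)$ tends to infinity. In the unramified part, the residue degree of $\ell$ in $\mathbb{Q}(\zeta_{p_i})$ is the order of $\ell$ modulo $p_i$. By Dirichlet's theorem there are infinitely many primes $p_i\equiv 1 \pmod{\ell}$. A cleaner route is to choose infinitely many $p_i$ with $\ell$ a $d$-th power residue for some fixed $d$; the simplest choice is to pick primes $p_i$ for which $\ell$ splits in $\mathbb{Q}(\zeta_{p_i})$ into at least two primes, which happens whenever $\ell$ is not a primitive root modulo $p_i$. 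For example, $\ell$ is a quadratic residue modulo $p_i$ for infinitely many $p_i$ by quadratic reciprocity and Dirichlet's theorem.

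The fields $\mathbb{Q}(\zeta_{p_i})$ are linearly disjoint, so the decomposition field of $\ell$ in $K_n$ grows without bound along these primes. Thus the number of primes of $R_n$ above $\ell$ tends to infinity. Every maximal ideal of $R_n$ lies under some maximal ideal of $R$ by lying-over, and distinct primes of $R_n$ have distinct primes of $R$ above them. Hence $\ell$ lies in infinitely many maximal ideals of $R$, and $R$ is not Dedekind. I expect the main obstacle here to be the bookkeeping of splitting numbers in the compositum, which I would handle via the Galois group $\prod_i(\mathbb{Z}/p_i)^\times$ and the decomposition group generated by the Frobenius of $\ell$ (away from $p_i=\ell$).
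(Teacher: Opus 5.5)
Your proof is correct. The paper does not prove this theorem itself; it is quoted from Nakano, so there is no in-paper argument to match. Your argument is exactly the specialization of the paper's Section~2 machinery:
\begin{itemize}
\item Eventual non-ramification above $q$ is the condition $M\cap R_k\not\subseteq (M\cap R_j)^2$ for all $j\ge k$ in Corollary~\ref{coro nondiscrete} (equivalently, Theorem~\ref{bytheo}).
\item A fixed prime $\ell$ lying in infinitely many maximal ideals of $R$ is the failure of the finiteness condition in Theorem~\ref{th1}(3).
\end{itemize}

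The one step you assert rather than prove is that linear disjointness forces the number $g_n$ of primes of $R_n$ over $\ell$ to be unbounded. Linear disjointness alone does not give this. It does once you add that the decomposition field of $\ell$ in each $\mathbb{Q}(\zeta_{p_i})$, which has degree $g_i=(p_i-1)/\mathrm{ord}_{p_i}(\ell)$ for $p_i\neq\ell$, is contained in the decomposition field of $\ell$ in $K_n$. Alternatively, argue directly that the Frobenius of $\ell$ has order $\mathrm{lcm}_i\,\mathrm{ord}_{p_i}(\ell)\le\prod_i \mathrm{ord}_{p_i}(\ell)$. Either route gives $g_n\ge\prod_{i\le n}g_i$.

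You can also avoid Dirichlet's theorem. Let $m$ be the product of the $r$ primes among $p_1,\dots,p_n$ that are odd and different from $\ell$. Since $\ell\nmid m$, the decomposition group of $\ell$ in $\mathrm{Gal}(\mathbb{Q}(\zeta_m)/\mathbb{Q})\cong\prod_{p_i\mid m}(\mathbb{Z}/p_i\mathbb{Z})^\times$ is cyclic. It sits inside a product of $r$ cyclic groups of even order, so its index is at least $2^{r-1}$. Hence $\ell$ already lies in at least $2^{r-1}\to\infty$ maximal ideals of $R_n$.
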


This type of construction was generalized by Butts and Yeagy \cite{by76} in the following sense:
Let $R_1\subseteq R_2\subseteq\cdots$  be a chain of Dedekind domains with quotient fields $K_1\subseteq K_2\subseteq\cdots$,
respectively, such that for each $i\in\mathbb{N}$,
\begin{itemize}
\item $R_{i+1}$ is integral over $R_i$ and
\item $K_{i+1}$ is a finite separable extension of $K_i$ with $K_{i+1}\neq K_i$.
\end{itemize}
Then $R_1\subseteq R_2\subseteq \cdots \subseteq R$, where $R=\bigcup\limits_{i\in\mathbb{N}}R_i$, is called
a \textit{Dedekind tower-constuction}.
Butts and Yeagy provided a sufficient condition for a Dedekind tower-construction to yield an almost Dedekind domain.

\begin{theorem} \label{bytheo}
\cite[Lemma 8]{by76} Suppose that $R_1\subseteq R_2\subseteq \cdots \subseteq R$ is a Dedekind tower-construction.
Suppose also that for every maximal ideal $P$ of $R$, there exists an integer $I(P)\in\mathbb{N}$ such that
$(P\cap R_{i})\not\subseteq (P\cap R_{i+1})^2$ for all integers $i\ge I(P)$. Then $R$ is an almost Dedekind domain.
\end{theorem}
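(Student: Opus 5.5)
Fix a maximal ideal $P$ of $R$ and set $P_i = P\cap R_i$ for each $i$. The plan is to show directly that $R_P$ is a DVR. Then $R$ is almost Dedekind by definition.

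First I record the basic structure. Since $R_{i+1}$ is integral over $R_i$, the ring $R$ is integral over each $R_i$. Hence $P_i$ is a maximal (nonzero) ideal of the Dedekind domain $R_i$, and $P_i = P_{i+1}\cap R_i$. Also $P=\bigcup_i P_i$ and $R_P=\bigcup_i (R_i)_{P_i}$. Indeed, any $s\in R\setminus P$ lies in some $R_j\setminus P_j$, and we may pass to larger indices. Each $(R_i)_{P_i}$ is a DVR, with valuation $v_i$ on $K_i$. Since $(R_{i+1})_{P_{i+1}}$ dominates $(R_i)_{P_i}$, the valuation $v_{i+1}$ restricted to $K_i$ equals $e_i v_i$, where $e_i$ is the ramification index. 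This index is determined by $P_iR_{i+1}$ having $P_{i+1}$-adic order $e_i$.

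The key step is to turn the hypothesis into $e_i=1$ for $i\ge I(P)$. If $e_i\ge 2$, then $P_i\subseteq P_iR_{i+1}\cap R_i$. Localizing at $P_{i+1}$ gives $P_i (R_{i+1})_{P_{i+1}}\subseteq P_{i+1}^2(R_{i+1})_{P_{i+1}}$, and intersecting back with $R_{i+1}$ puts $P_i$ inside the $P_{i+1}$-primary component $P_{i+1}^{2}$. To be careful: $P_{i+1}^2$ is $P_{i+1}$-primary in a Dedekind domain, so $P_{i+1}^2 = P_{i+1}^2(R_{i+1})_{P_{i+1}}\cap R_{i+1}$. This contradicts $P_i\not\subseteq P_{i+1}^2$. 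Hence $e_i=1$ for all $i\ge I=I(P)$. Equivalently, a uniformizer $\pi$ of $(R_I)_{P_I}$, which we may take in $P_I$, remains a uniformizer in every $(R_i)_{P_i}$ with $i\ge I$.

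Now I define $v$ on $K=\bigcup K_i$ by $v(x)=v_i(x)$ for any $i\ge I$ with $x\in K_i$. This is well defined because $v_{i+1}|_{K_i}=v_i$ for $i\ge I$. It is a discrete valuation with value group $\mathbb{Z}$, since $v(\pi)=1$. Its valuation ring is $\bigcup_{i\ge I}(R_i)_{P_i}=R_P$: an element $x\in K_i$ has $v(x)\ge0$ if and only if $x\in (R_i)_{P_i}$. Therefore $R_P$ is a DVR. Alternatively, every nonzero $x\in R_P$ can be written $x=u\pi^{n}$ with $u$ a unit in some $(R_i)_{P_i}$ and hence in $R_P$, so $R_P$ is a PID with maximal ideal $\pi R_P$.

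The only delicate point is the second paragraph: correctly translating $P_i\not\subseteq P_{i+1}^2$ into unramifiedness via localization and primary components. The remaining steps are routine bookkeeping of directed unions. Separability and finiteness of the extensions are not really needed beyond ensuring that each $R_i$ is Dedekind.
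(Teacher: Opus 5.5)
Your proof is correct and follows essentially the route the paper uses for its extension of this result, Corollary \ref{coro nondiscrete}. There, the primary-component identity $P_{i+1}^2=P_{i+1}^2(R_{i+1})_{P_{i+1}}\cap R_{i+1}$ turns the non-containment hypothesis into $P_i(R_{i+1})_{P_{i+1}}=P_{i+1}(R_{i+1})_{P_{i+1}}$. Then, as in the proof of Theorem \ref{th1}(2), a single uniformizer of $(R_I)_{P_I}$ generates the maximal ideal of every $(R_i)_{P_i}$ with $i\ge I$, and hence of $R_P=\bigcup_i (R_i)_{P_i}$.

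The differences are cosmetic. You compare consecutive indices, which matches the Butts--Yeagy hypothesis and is equivalent to the paper's fixed-base condition because ramification indices multiply. You also package the conclusion as a glued discrete valuation, which makes explicit why the surviving uniformizer forces $R_P$ to be a DVR.
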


Loper and Lucas \cite[Theorem 2,10]{ll03} further relieved the restriction on Dedekind tower-construction in the following form.

\begin{theorem}
Suppose that $R_1\subseteq R_2\subseteq \cdots $ is a chain of Dedekind domains satisfying each of the following
three conditions for all $j \in \mathbb{N}$:
\begin{enumerate}[font=\normalfont]
\item Each maximal ideal of $R_i$ survives in $R_j$ for all $i \in \mathbb{N}$ with $i<j$.
\item Each maximal ideal of $R_j$ contracts to a maximal ideal of $R_1$.
\item If $M'$ is a maximal ideal of $R_j$ and $M=M'\cap R_1$, then $M(R_j)_{M'}=M'(R_j)_{M'}$.
\end{enumerate}
Let $D=\bigcup\limits_{i\in\mathbb{N}}R_i$.
Then $D$ is an almost Dedekind domain.
\end{theorem}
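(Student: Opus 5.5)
The plan is to show first that $D$ is a one-dimensional Prüfer domain, and then that $D_P$ is a DVR for every maximal ideal $P$ of $D$.

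\textbf{Integrality and dimension.} By condition (2), every maximal ideal of each $R_j$ contracts to a maximal ideal of $R_1$. Each $R_j$ is one-dimensional, so every nonzero prime of $R_j$ is maximal and lies over a maximal ideal of $R_1$. For a nonzero prime $P$ of $D$, the contractions $P_j=P\cap R_j$ are therefore maximal (for large $j$ they are nonzero). Hence $P$ is maximal in $D$, since $D/P=\bigcup_j R_j/P_j$ is a union of fields. This gives $\dim D=1$.

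\textbf{Prüfer property.} A finitely generated ideal of $D$ is extended from some $R_i$. It is invertible there, and invertibility is preserved under extension to the overring-type union. So every finitely generated ideal of $D$ is invertible, and $D$ is Prüfer. It follows that $D_P$ is a rank-one valuation domain, and only discreteness of $D_P$ remains to be proved.

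\textbf{Discreteness.} Fix a maximal ideal $P$ of $D$ and set $M=P\cap R_1$, $P_j=P\cap R_j$. Then $(R_j)_{P_j}$ is a DVR, and $(R_j)_{P_j}\subseteq D_P$ for every $j$. Condition (3) says $M(R_j)_{P_j}=P_j(R_j)_{P_j}$, i.e.\ a uniformizer $\pi\in M$ of $(R_1)_M$ stays a uniformizer of every $(R_j)_{P_j}$. I will show that $PD_P=\pi D_P$. Take $x\in P$. Then $x\in P_j$ for some $j$, so $x=\pi u$ with $u\in (R_j)_{P_j}\subseteq D_P$. This shows $PD_P\subseteq \pi D_P\subseteq PD_P$, so the maximal ideal of $D_P$ is principal.

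\textbf{Conclusion and main obstacle.} A valuation domain with principal maximal ideal has a smallest positive value $v(\pi)$. Together with rank one, this forces the value group to be $\mathbb{Z}$. Concretely, any $x\in D_P$ lies in some $(R_j)_{P_j}$, where $x=\pi^k u$ with $u$ a unit of $(R_j)_{P_j}$, hence a unit of $D_P$. Thus every nonzero element of $D_P$ is $\pi^k\cdot$unit, and $D_P$ is a DVR, so $D$ is almost Dedekind.

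The step I expect to need the most care is checking that $(R_j)_{P_j}\subseteq D_P$ and that units there remain units. This holds because $R_j\setminus P_j\subseteq D\setminus P$. Condition (1) is used to guarantee the contractions behave coherently: no $P_j$ is zero or collapses. This ensures that $\pi$ is nonzero in $P$ and that $D_P$ is not a field.
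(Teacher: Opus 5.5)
Your proof is correct and follows the paper's argument for the converse direction of Theorem \ref{th1}(2) (with $n=1$): using $D_P=\bigcup_j (R_j)_{P_j}$ as in Lemma \ref{lemma2.3}(2), a uniformizer $\pi\in M$ of $(R_1)_M$ remains a uniformizer of every $(R_j)_{P_j}$ by condition (3), so it generates $PD_P$ and $D_P$ is a DVR. One correction to your commentary: it is condition (2), not condition (1), that makes $M$ and each $P_j$ maximal, and your argument never actually uses (1). Also, your concrete ``$x=\pi^k u$'' step already proves discreteness directly, so the Pr\"ufer and dimension preliminaries are not needed.
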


 Let $S$ be a torsion-free commutative cancellative monoid,
$E$ an integral domain, and $E[X;S]$ the monoid
domain of $S$ over $E$. Then $E[X;S]$ is an integral domain.
In \cite{gp74}, Gilmer and Parker sought conditions on $E$ and $S$ under which
$E[X;S]$ will have a given ring-theoretic property; for example, Pr\"ufer ring,
almost Dedekind ring or a general ZPI-ring in a more general setting of
rings with zero divisors. Among them, we recall the following useful result
for which we let $\mathbb{Q}$ be the additive group of rational numbers,
$\mathbb{N}_0$ the additive monoid of nonnegative integers and $\mathbb{Z}$
the additive group of integers.

\begin{theorem} \label{gp} \cite[Theorem (5)]{gp74}
Let $S$ be a torsion-free commutative cancellative monoid, $E$ an integral domain
and $E[X;S]$ the monoid domain of $S$ over $E$. Then
$E[X;S]$ is an almost Dedekind domain
if and only if $E$ is a field, $S$ is
isomorphic to either $\mathbb{N}_0$ or a subgroup of $\mathbb{Q}$ containing $\mathbb{Z}$
such that if char$(E)=p$ is nonzero, then $\frac{1}{p^k} \not\in S$ for some $k\in\mathbb{N}$.
\end{theorem}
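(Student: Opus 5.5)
We prove the two implications separately. For the converse (sufficiency) we exhibit $E[X;S]$ as the union of a Dedekind tower and apply Theorem \ref{bytheo}. For necessity we combine the facts that an almost Dedekind domain is a one-dimensional Pr\"ufer domain and that every localization at a maximal ideal is a DVR.

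\emph{Sufficiency.} If $S\cong\mathbb{N}_0$, then $E[X;S]\cong E[X]$ is a PID, so there is nothing to prove. Suppose instead that $\mathbb{Z}\subseteq S\subseteq\mathbb{Q}$ is a subgroup. Then $E[X;S]=\bigcup_{1/n\in S}E[X^{1/n},X^{-1/n}]$, a directed union of Laurent polynomial rings over the field $E$, each of which is a PID. If char$(E)=p>0$ and $1/p^k\notin S$, let $p^{k_0}$ be the largest power of $p$ with $1/p^{k_0}\in S$. Writing the denominators as $p^{k_0}m$ with $p\nmid m$, we choose a cofinal chain $R_i=E[X^{\pm 1/(p^{k_0}m_i)}]$ with $m_i\mid m_{i+1}$ and $p\nmid m_i$. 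Each step is then the adjunction of a root of $T^{m_{i+1}/m_i}-X^{1/(p^{k_0}m_i)}$, and $R_{i+1}$ is integral over $R_i$. Since $m_{i+1}/m_i$ is invertible in $E$, this polynomial is separable. Moreover, because $X^{1/(p^{k_0}m_i)}$ is a unit of $R_i$, the polynomial $T^{d}-u$ with $d=m_{i+1}/m_i$ has derivative $dT^{d-1}$, which does not vanish at any root. Hence the extension $R_i\subseteq R_{i+1}$ is unramified at every maximal ideal, so $(P\cap R_i)\not\subseteq(P\cap R_{i+1})^2$ for all $i$ and all maximal ideals $P$. Theorem \ref{bytheo} then shows that $E[X;S]$ is almost Dedekind. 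In characteristic $0$ the same argument applies with no restriction on the denominators.

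\emph{Necessity.} Assume $E[X;S]$ is almost Dedekind, so in particular it is a one-dimensional Pr\"ufer domain.
\begin{enumerate}
\item $E$ is a field. Otherwise, take a nonzero nonunit $a\in E$ and $0\neq s\in S$. The chain $0\subsetneq (a)\cap\cdots\subsetneq (a,X^s-1)$, or more directly the failure of $(a,X^s)$ to be invertible, contradicts either dimension one or the Pr\"ufer property. I would first try the dimension count through $E[X^s]\subseteq E[X;S]$, which is integral when $s$ generates a finite-index piece.
\item $S$ has rank one. If the quotient group of $S$ had rank at least $2$, then $E[X;S]$ would contain a polynomial ring in two variables over which it is integral-like, forcing dimension at least $2$. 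So $S$ embeds in $\mathbb{Q}$.
\item $S$ is either a group or isomorphic to $\mathbb{N}_0$. Integral closedness forces $S$ to be normal. If $S$ is a non-group submonoid of $\mathbb{Q}_{\ge 0}$ containing elements with unbounded denominators, then the ideal generated by $\{X^s:s>0\}$ is maximal. Localizing at it gives a valuation domain with value group containing $S$, which is nondiscrete and hence not a DVR. Otherwise $S\cong\mathbb{N}_0$, and after scaling we may assume $\mathbb{Z}\subseteq S$ in the group case.
\item The characteristic condition. If char$(E)=p$ and $1/p^k\in S$ for all $k$, then $X-1=(X^{1/p^k}-1)^{p^k}$ for every $k$. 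So the value of $X-1$ at any maximal ideal containing it is divisible by every $p^k$, which is impossible in a DVR.
\end{enumerate}

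The main obstacle is the necessity part, specifically showing that $E$ must be a field and that $S$ must be a group or $\mathbb{N}_0$. This is where Gilmer's structure theory for Pr\"ufer monoid domains is really used. The sufficiency direction reduces cleanly to the unramifiedness check feeding Theorem \ref{bytheo}.
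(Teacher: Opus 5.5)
The paper gives no proof of this statement; it quotes it from Gilmer--Parker, so your argument has to stand on its own. The sufficiency half does. In your cofinal chain each step is $R_{i+1}=R_i[T]/(T^d-u)$, where $u$ is a unit of $R_i$ and $d=m_{i+1}/m_i$ is invertible in $E$. So the discriminant $\pm d^{d}u^{d-1}$ is a unit, every prime of $R_i$ is unramified in $R_{i+1}$, and Theorem \ref{bytheo} applies. (If $S\cong\mathbb{Z}$ there is no strict tower, but then $E[X;S]$ is a PID.)

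In the necessity half, Step 4 is correct: $w(X-1)\geq p^k$ for all $k$ is impossible. Step 3 is also correct once rank one is known. Normality makes $S$ a group or, up to sign, the positive cone $G^{+}$ of its quotient group $G$. If $G$ is not cyclic, a sequence $s_1>s_2>\cdots$ in $G^{+}$ gives $w(X^{s_1})>w(X^{s_2})>\cdots\geq 1$ for the discrete valuation $w$ at a maximal ideal containing all $X^s$ with $s>0$, which is a contradiction.

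The gap is in Steps 1 and 2, which as written are not arguments.

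\emph{Step 1.} The chain $0\subsetneq(a)\cap\cdots\subsetneq(a,X^s-1)$ is never specified. The alternative, that $(a,X^s)$ is not invertible, is false whenever $S$ is a group, since then $X^s$ is a unit. Integrality over $E[X^s]$ presupposes the rank-one conclusion of Step 2. The missing idea is the augmentation map $E[X;S]\to E$, $X^s\mapsto 1$. Its kernel $I$ is a prime ideal containing $X^s-1\neq 0$. Since $E[X;S]/I\cong E$, any nonzero prime $P$ of $E$ gives the chain $(0)\subsetneq I\subsetneq I+PE[X;S]$, so $\dim E[X;S]\geq 2$. This uses $S\neq\{0\}$, which the statement tacitly needs anyway: for $S=\{0\}$ and $E$ a DVR, $E[X;S]=E$ is almost Dedekind.

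\emph{Step 2.} \emph{Integral-like} is not something you can use. A domain containing a two-dimensional subring can itself be one-dimensional, or a field, unless lying over and going down (or going up) are available. Moreover, $E[X;S]$ is in general not integral over $E[X^s,X^t]$. A correct version runs as follows.
\begin{enumerate}
\item $E[X;G]$ is a localization of $E[X;S]$, so $\dim E[X;G]\leq 1$.
\item Take $\mathbb{Z}$-independent $s,t\in G$ and set $H=\mathbb{Z}s+\mathbb{Z}t$. Then $E[X;G]$ is a free $E[X;H]$-module on coset representatives of $G/H$, hence faithfully flat.
\item Lying over and going down give $\dim E[X;G]\geq\dim E[X;H]$.
\item $\dim E[X;H]\geq 2$, because $E[X;H]\cong E[Y^{\pm1},Z^{\pm1}]$ contains the chain $(0)\subsetneq(Y-1)\subsetneq(Y-1,Z-1)$. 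This contradicts the first point.
\end{enumerate}
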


As Loper mentioned in \cite[p.291]{lo}, the almost Dedekind monoid domain in Theorem \ref{gp} can be described as a union of integral extensions of $F[X, X^{-1}]$ for a field $F$ by adding roots of $X$ and it seems to possess several interesting factorization properties.
The following result is a special case of Theorem \ref{gp}, which is the main subject of this paper.

\begin{corollary} \label{thm31}
Let $p$ be a prime number, $F$ a field, $X$ an indeterminate over $F$,
$D_n=F[X^{\frac{1}{p^n}}, X^{-\frac{1}{p^n}}]$ for each $n\in\mathbb{N}_0$,
and $D=\bigcup\limits_{n\in\mathbb{N}_0}D_n$.
\begin{enumerate}[font=\normalfont]
\item If $G$ is the subgroup of $\mathbb{Q}$ generated by
$\{\frac{1}{p^n} \mid n \in \mathbb{N}_0\}$, then $D = F[X;G]$.
\item $D$ is an almost Dedekind domain
if and only if char$(F) \neq p$.
\end{enumerate}
\end{corollary}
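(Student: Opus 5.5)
For part (1), I will identify both rings inside a common ambient ring, namely the group algebra of $\mathbb{Q}$ over $F$. The group $G$ generated by $\{\frac{1}{p^n}\mid n\in\mathbb{N}_0\}$ is $G=\{\frac{a}{p^n}\mid a\in\mathbb{Z},\,n\in\mathbb{N}_0\}=\bigcup_{n}\frac{1}{p^n}\mathbb{Z}$, and this is an increasing union. For each $n$, the ring $D_n=F[X^{\frac{1}{p^n}},X^{-\frac{1}{p^n}}]$ is exactly the $F$-span of the monomials $X^{g}$ with $g\in\frac{1}{p^n}\mathbb{Z}$, so $D_n=F[X;\frac{1}{p^n}\mathbb{Z}]$. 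Every element of $F[X;G]$ is a finite $F$-combination of monomials $X^{g}$, and finitely many exponents in $G$ all lie in a single $\frac{1}{p^n}\mathbb{Z}$. Hence
\[
F[X;G]=\bigcup_n F[X;\tfrac{1}{p^n}\mathbb{Z}]=\bigcup_n D_n=D .
\]

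For part (2), I will apply Theorem \ref{gp} with $E=F$ and $S=G$. Here $E$ is a field and $S$ is torsion-free cancellative, so the only things to check are the following.
\begin{enumerate}
\item $G$ is a subgroup of $\mathbb{Q}$ containing $\mathbb{Z}$. This is clear from the description above.
\item $G$ is not isomorphic to $\mathbb{N}_0$. This holds because $G$ is a group and $\mathbb{N}_0$ is not.
\item The characteristic condition. If char$(F)=p$, then $\frac{1}{p^k}\in G$ for every $k$, so the condition of Theorem \ref{gp} fails and $D$ is not almost Dedekind. If char$(F)=0$, the condition is vacuous. If char$(F)=q\neq p$, then $\frac{1}{q}\notin G$, since $\frac{1}{q}=\frac{a}{p^n}$ would force $p^n=aq$, which is impossible by unique factorization. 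So the condition holds with $k=1$.
\end{enumerate}

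The only subtle point is making sure the condition in Theorem \ref{gp} is read with the prime equal to char$(E)$, not the $p$ of our construction. That reading is what makes the case char$(F)=q\neq p$ come out almost Dedekind. With that done, both directions of (2) follow directly.
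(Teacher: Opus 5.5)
Your proof is correct and follows the paper's argument: (1) is the routine identification $F[X;G]=\bigcup_n F[X;\frac{1}{p^n}\mathbb{Z}]=\bigcup_n D_n$, and (2) is a direct application of Theorem \ref{gp} with $E=F$, $S=G$, where the characteristic condition is checked exactly as the paper does (for $q=\textnormal{char}(F)\neq 0$, $\frac{1}{q^k}\notin G$ for some $k$ if and only if $q\neq p$). Both you and the paper leave implicit that the condition in Theorem \ref{gp} really concerns a subgroup of $\mathbb{Q}$ \emph{isomorphic} to $G$, not $G$ itself; this is harmless because $p$-divisibility is preserved under isomorphism, so any such subgroup containing $\mathbb{Z}$ contains every $\frac{1}{p^k}$ when $\textnormal{char}(F)=p$.
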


\begin{proof}
(1) Clear.

(2) Let $q =$ char$(F)$ and $G$ the subgroup of $\mathbb{Q}$ generated by
$\{\frac{1}{p^n} \mid n \in \mathbb{N}_0\}$.
Then $G$ contains $\mathbb{Z}$. Hence, $D$ is an almost Dedekind domain
if and only if either $q=0$ or $q \neq 0$ and $\frac{1}{q^k} \not\in G$ for some $k \in \mathbb{N}$ by Theorem \ref{gp}. It is clear that if $q \neq 0$, then $q \neq p$
if and only if $\frac{1}{q^k} \not\in G$ for some $k \in \mathbb{N}$.
Therefore, $D$ is almost Dedekind if and only if either $q = 0$ or $q \neq 0$ and $q \neq p$,
if and only if $q \neq p$.
\end{proof}

However, factorization properties of almost Dedekind domains
have been hardly investigated in terms of the multiplicative ideal theory.\ec This viewpoint is proved to be immensely useful, especially when one tries to find element-wise factorization properties in such a ring $D=\bigcup\limits_{n\in\mathbb{N}_0}F[X^{\frac{1}{p^n}}, X^{-\frac{1}{p^n}}]$ of Corollary \ref{thm31},
as we will see in the following sections. For instance, as mentioned in Section 1.1, starting from Section 3, the paper delves into the factorization properties of them such as the irreducibility of nonzero elements.

\subsection{Notations}
Let $D$ be an integral domain with quotient field $K$
and Max$(D)$ the set of maximal ideals of $D$. A $D$-submodule $A$ of $K$ is called
a \textit{fractional ideal} of $D$ if $dA \subseteq D$ for some $0 \neq d \in D$.
A fractional ideal $I$ of $D$ is called an (integral) ideal if $I \subseteq D$.
Let $I$ be a fractional ideal of $D$. Then $I^{-1} = \{x \in D \mid xI \subseteq D\}$
is also a fractional ideal of $D$,
and $I$ is said to be \textit{invertible} if $II^{-1}=D$.
A \textit{Pr\"ufer domain} is an integral domain all of whose nonzero finitely generated ideals are invertible,
so $D$ is a Pr\"ufer domain if and only if $D_M$ is a valuation domain for all $M \in$ Max$(D)$ \cite[Theorem 22.1]{gilmer}.
 It is known that $I$ is invertible if and only if
$I$ is a nonzero finitely generated ideal and $ID_M$ is principal for all $M \in$ Max$(D)$ \cite[Theorem 58, Theorem 62]{K}.
Hence, $D$ is a Dedekind domain if and only if $D$ is a Noetherian Pr\"ufer domain \cite[Theorem 37.1]{gilmer}.

Let Inv$(D)$ be the group of invertible fractional ideals of $D$ under
the usual ideal multiplication and Prin$(D)$ its subgroup of principal fractional
ideals of $D$. Then the \textit{ideal class group} or \textit{Picard group} of $D$ is the factor group
Pic$(D) =$ Inv$(D)/$Prin$(D)$ of Inv$(D)$ modulo Prin$(D)$. A \textit{B{\'e}zout domain} is an integral domain
whose finitely generated ideals are principal. For example, a PID and a valuation domain are B{\'e}zout domains.
It is clear that $D$ is a PID (resp., B{\'e}zout domain) if and only if $D$ is a Dedekind domain (resp., Pr\"ufer domain)
with Pic$(D)=\{0\}$.

Let Spec$(D)$ be the set of all prime ideals of $D$ and $|A|$ the cardinality of a set $A$.
A chain of prime ideals of $D$ is a nonempty subset $\mathcal{C}$ of Spec$(D)$ such that if $P, Q \in \mathcal{C}$,
then either $P \subseteq Q$ or $Q \subseteq P$. The length of $\mathcal{C}$
is defined by $|\mathcal{C}|-1$. The Krull dimension of $D$, denoted by dim$D$, is the largest
cardinal number $\alpha$ (if any) such that there exists a chain of prime ideals of $D$ whose length is equal to $\alpha$.
We mean by dim$D \geq \alpha$ if there is a chain of prime ideals of $D$ with length $\geq \alpha$.
The height of a prime ideal $P$ of $D$ is defined by ht$P =$ dim$D_P$. Hence, dim$D=$ sup$\{$ht$M \mid M \in$ Max$(D)\}$,
and dim$D=1$ if and only if $D$ is not a field and each nonzero prime ideal of $D$ is a maximal ideal.
Terms not defined here can be found in \cite{gilmer, K, zs60}.


\section{A system of Dedekind domains}

In this section, we will discuss the condition of the direct union of an ascending chain of Dedekind domains
being an almost Dedekind domain. We will focus on building up a series of general results,
postponing the presentation of specific examples to upcoming sections.
Some of the results in this section can be  proven by utilizing the results of \cite{arg67, bst96, ll03}; precisely, (1) and (3) of the Lemma \ref{lemma1.1} can be derived from Propositions 1.3 and 1.4 of \cite{bst96}, Lemma \ref{lemma2.3} is a consequence of Proposition 1.3 of \cite{bst96}, (1) (resp., (2), (3))  of Theorem \ref{th1} can be obtained from \cite[Lemma 3.3]{arg67} (resp., \cite[Corollary 3.6]{arg67} \cite[Theorem 2.10]{ll03}), and
(1) of Corollary \ref{coro nondiscrete} can be deduced from \cite[Theorem 3.4]{arg67}. However, for the sake of completeness, we include their proofs.

\begin{notation} \label{notation1}
{\em Let $D_1\subsetneq D_2\subsetneq D_3\subsetneq \cdots$ be a strictly increasing chain of Dedekind domains
satisfying the following three conditions for each $j \in \mathbb{N}$:
\begin{enumerate}[font=\normalfont]
\item[(i)] Each $D_j$ is not a field.
\item[(ii)] Each maximal ideal of $D_i$ survives in $D_j$ for each $i \in \mathbb{N}$ with $i<j$.
\item[(iii)] Each maximal ideal of $D_j$ contracts to a maximal ideal of $D_1$.
\end{enumerate}
Let $D=\bigcup\limits_{n \in \mathbb{N}}D_n$. Hence, $D$ is an integral domain with quotient field $\bigcup\limits_{n \in \mathbb{N}}qf(D_n)$,
where $qf(D_n)$ is the quotient field of $D_n$ for each $n \in \mathbb{N}$.
In this case, we will call $\{D_n\}_{n\in\mathbb{N}}$ a \textit{system of Dedekind domains} and $D$ the \textit{limit} of the system.}
\end{notation}

It is very helpful to note that if $\{D_n\}_{n\in\mathbb{N}}$ is a system of Dedekind domains
and $k \in \mathbb{N}$, then $\{D_{k+n}\}_{n\in\mathbb{N}_0}$ is also a system of Dedekind domains
such that $\bigcup\limits_{n \in \mathbb{N}_0}D_{n+k} = \bigcup\limits_{n \in \mathbb{N}}D_n.$
Firstly, we establish the maximal ideal structure of the limit of a system of Dedekind domains in the following two lemmas.

\begin{lemma} \label{lemma1.1}
Let $\{D_n\}_{n\in\mathbb{N}}$ be a system of Dedekind domains and $D=\bigcup\limits_{n\in\mathbb{N}}D_n$.
\begin{enumerate}[font=\normalfont]
\item Given $j\in\mathbb{N}$ and a maximal ideal $M_j$ of $D_j$, there exists a maximal ideal $M$ of $D$ such that $M_j=M\cap D_j$.
\item $I\cap D_1\neq(0)$ for each nonzero ideal $I$ of $D$.
In particular, a maximal ideal of $D$ contracts to a maximal ideal of $D_j$ for each $j\in\mathbb{N}$.
\item For each $p\in D_n$, with $n \in \mathbb{N}$, $p$ is a prime element of $D$
if and only if $p$ is a prime element of $D_{n+k}$ for all $k \in \mathbb{N}_0$.
\item $qf(D_n)\cap D=D_n$ for any $n\in\mathbb{N}$.
\end{enumerate}
\end{lemma}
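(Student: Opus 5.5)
For (1), fix a maximal ideal $M_j$ of $D_j$ and consider the extended ideal $M_jD$. I claim it is proper. Indeed, $D$ is the directed union of the $D_k$, so if $1\in M_jD$ then $1\in M_jD_k$ for some $k>j$. That contradicts condition (ii) of Notation \ref{notation1}. Choose a maximal ideal $M$ of $D$ containing $M_jD$. Then $M\cap D_j$ is a proper ideal of $D_j$ containing $M_j$, so it equals $M_j$ by maximality.

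For (2), let $I$ be a nonzero ideal of $D$ and pick $0\neq a\in I$, say $a\in D_n$. If $a$ is a unit of $D_n$, then $I=D$ and there is nothing to prove, so assume not. Suppose $aD_n\cap D_1=(0)$. Then $aD_n$ is disjoint from the multiplicative set $D_1\setminus\{0\}$. Hence there is a prime $Q$ of $D_n$ containing $a$ with $Q\cap D_1=(0)$ (take a prime of $D_n$ maximal among ideals containing $aD_n$ and missing $D_1\setminus\{0\}$). Since $Q\neq 0$ and $D_n$ is Dedekind, $Q$ is maximal. By (iii), $Q\cap D_1$ is then a maximal ideal of $D_1$, which is nonzero because $D_1$ is not a field by (i). This is a contradiction, so $I\cap D_1\supseteq aD_n\cap D_1\neq(0)$. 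For the "in particular": if $M$ is maximal in $D$, then $M\cap D_j$ is a prime of $D_j$ that is nonzero by what we just proved. Since $D_j$ is Dedekind, it is maximal.

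For (4), let $x\in qf(D_n)\cap D_m$ with $m>n$. Since $D_n$ is Dedekind, $D_n=\bigcap_N (D_n)_N$ over $N\in$ Max$(D_n)$, so it suffices to show $x\in(D_n)_N$ for each $N$.
\begin{enumerate}
\item By (ii), $ND_m$ is proper, so it lies in some maximal ideal $N'$ of $D_m$, and $N'\cap D_n=N$.
\item The ring $V=(D_m)_{N'}\cap qf(D_n)$ is a valuation ring of $qf(D_n)$, being the contraction of the DVR $(D_m)_{N'}$.
\item $V$ contains $(D_n)_N$, and it is not the whole field because its maximal ideal contains $N\neq 0$.
\item Every proper overring of a DVR inside its quotient field is the DVR itself, so $V=(D_n)_N$.
\end{enumerate}
Hence $x\in(D_n)_N$ for every $N$, and so $x\in D_n$. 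I expect this step to be the main obstacle, because integrality of $D_m$ over $D_n$ is not assumed. The domination argument above is what I would use to get around that.

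For (3), suppose first that $p\in D_n$ is prime in $D$. A nonunit of $D$ is a nonunit in each $D_{n+k}$. If $y=px\in D_{n+k}$ with $x\in D$, then $x=y/p\in qf(D_{n+k})\cap D=D_{n+k}$ by (4). Thus $pD\cap D_{n+k}=pD_{n+k}$. This ideal is prime as a contraction of a prime, so $p$ is prime in $D_{n+k}$. Conversely, suppose $p$ is prime in every $D_{n+k}$. Then $p$ is not a unit of $D$, since a unit of $D$ is a unit in some $D_{n+k}$. If $p\mid ab$ in $D$, all of $a$, $b$ and the quotient $ab/p$ lie in some common $D_{n+k}$. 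There $p\mid a$ or $p\mid b$, hence the same holds in $D$.
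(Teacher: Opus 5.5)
Your proof is correct. Part (1) and the converse half of (3) match the paper; the rest takes a different route.

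In (2), the paper factors $I\cap D_i=M_1^{a_1}\cdots M_n^{a_n}$ in the Dedekind domain $D_i$ and uses $M_r^{a_r}\cap D_1\supseteq (M_r\cap D_1)^{a_r}\neq(0)$. You instead argue by contradiction with a prime of $D_n$ lying over $(0)$ of $D_1$. That needs only that nonzero primes of $D_n$ are maximal, not ideal factorization.

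In (4), the paper treats $T_m=qf(D_n)\cap D_m$ as an overring of the Pr\"ufer domain $D_n$ and quotes Gilmer's Theorem 26.1. That theorem makes $T_m$ the intersection of the $(D_n)_P$ over the primes $P$ surviving in $T_m$, and all of them survive because $\textnormal{Spec}(D_m)\to\textnormal{Spec}(D_n)$ is onto. Your argument is the localized, self-contained version of the same idea. Survival of $N$ in $D_m$ gives the valuation ring $V=(D_m)_{N'}\cap qf(D_n)$, which contains the DVR $(D_n)_N$ and is not a field, so it equals $(D_n)_N$. Your phrase ``proper overring'' should read ``overring other than the quotient field''.

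In the forward half of (3), the paper writes $pD_k=(pD\cap D_k)^aI$ with $I$ comaximal to $pD\cap D_k$ and uses (1) to force $a=1$ and $I=D_k$. You prove (4) first and read off $pD\cap D_{n+k}=pD_{n+k}$ directly from $qf(D_{n+k})\cap D=D_{n+k}$. This reordering is not circular, since your proof of (4) uses only conditions (i) and (ii). It makes (3) an immediate corollary of (4), which is cleaner. The paper's route keeps (3) independent of (4) but relies on ideal factorization in $D_k$ together with (1).
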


\begin{proof}
(1) Suppose that $M_jD=D$. Then $1=d_1m_1+\cdots d_nm_n$ for some $d_i\in D$ and $m_i\in M_j$.
Choose $k\in\mathbb{N}$ such that $d_i\in D_k$ for all $i\in\{1,\dots, n\}$ and let $t=\max\{k,j\}$.
Then we have $M_jD_t=D_t$, which contradicts our assumption. Hence, $M_j$ survives in $D$,
so $M_j$ is contained in a maximal ideal $M$ of $D$.
It follows that $M_j\subseteq M\cap D_j$ is a chain of nonzero prime ideals of a Dedekind domain $D_j$, so $M_j=M\cap D_j$.

(2) Choose $i\in\mathbb{N}$ such that $I\cap D_i\neq(0)$. Then $I\cap D_i=M_1^{a_1}\cdots M_n^{a_n}=M_1^{a_1}\cap\cdots\cap M_n^{a_n}$
for some maximal ideals $M_1,\dots, M_n$ of $D_i$ and positive integers $a_1,\dots, a_n$.
It follows that for each $r\in\{1,\dots, n\}$, $M_r^{a_r}\cap D_1$ is nonzero, for it contains $(M_r\cap D_1)^{a_r}$,
which is nonzero by assumption. Hence, $I\cap D_1 \neq (0)$.

(3) Let $p\in D_n$ and choose $k\ge n$. If $p$ is a prime element of $D$, then $pD_k\subseteq pD\cap D_k$.
 Since $D_k$ is a Dedekind domain, we have $pD_k=(pD\cap D_k)^aI$ for some $a\in\mathbb{N}$ and an ideal $I$ of $D_k$ with $(pD \cap D_k) +I = D_k$.
 Then $pD=pD_kD=(pD\cap D_k)^aID\subseteq (pD)^a(ID) \subseteq (pD)^a \cap ID$, which implies that $a=1$ and $I=D_k$ by (1).
 Thus, $p$ is a prime element of $D_k$. Conversely, assume that $p$ is a prime element of $D_k$ for every $k\ge n$.
 Then $pD=\bigcup\limits_{k\ge n}pD_k$, and since $pD_k$ is a prime ideal of $D_k$
 for all $k \geq n$, we have that $pD$ is a prime ideal.

(4) Let $T_m=qf(D_n)\cap D_{m}$ for each $m> n$.
Then $T_m$ is an overring of $D_n$ and
since $D_n$ is a Pr{\"u}fer domain,
$Q= (Q \cap D_n)T_m$ for each prime ideal $Q$ of $T_m$ \cite[Theorem 26.1]{gilmer}.
Thus, the contraction map $\textnormal{Spec}(T_m)\to\textnormal{Spec}(D_n)$ is an injective map.
On the other hand, by the assumption on the chain $D_1\subsetneq D_2\subsetneq \cdots$,
the contraction map $\textnormal{Spec}(D_{m})\to\textnormal{Spec}(D_n)$ is a surjective map.
Hence, the composition of the contraction maps $\textnormal{Spec}(D_{m})\to\textnormal{Spec}(T_m)\to\textnormal{Spec}(D_n)$ is surjective
by which we deduce that $\textnormal{Spec}(T_m)\to \textnormal{Spec}(D_n)$ is a bijective map.
Since $T_m=\bigcap\limits_{P}(D_n)_P$ where the intersection is taken over all prime ideals $P$ of $D_n$ such that $PT_m\neq T_m$ \cite[Theorem 26.1]{gilmer}, we must have $T_m=D_n$.
Finally, $$qf(D_n)\cap D=\bigcup\limits_{m> n}(qf(D_n)\cap D_m)=\bigcup\limits_{m> n}T_m=D_n,$$
so the desired conclusion follows.
\end{proof}

\begin{lemma} \label{lemma2.3}
Let $\{D_n\}_{n\in\mathbb{N}}$ be a system of Dedekind domains, $D=\bigcup\limits_{n\in\mathbb{N}}D_n$,
$M$ a maximal ideal of $D$ and $M_i=M\cap D_i$ for each $i\in\mathbb{N}$.
\begin{enumerate}[font=\normalfont]
\item $M=\bigcup\limits_{i\in\mathbb{N}}M_i$.
\item $D_M=\bigcup\limits_{i\in\mathbb{N}}(D_i)_{M_i}$.
\item $MD_M=\bigcup\limits_{i\in\mathbb{N}}M_i(D_i)_{M_i}$.
\item $\{(D_i)_{M_i}\}_{i\in\mathbb{N}}$ is a system of Dedekind domains.
\item Two maximal ideals $M$ and $M'$ coincide if and only if $M\cap D_i=M'\cap D_i$  for each $i\in\mathbb{N}$.
\item $D/M = \bigcup\limits_{i \in \mathbb{N}}D_i/M_i$.
Hence, $|D/M| < \infty$ if and only if $|D_n/M_n| < \infty$ for some $n \in \mathbb{N}$
and $D_i/M_i = D_n/M_n$ for all $i \in \mathbb{N}$ with $i \geq n$.
\end{enumerate}
\end{lemma}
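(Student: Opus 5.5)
We prove the six items in order, since each uses the previous ones, working directly from the facts that $D$ is the directed union of the $D_i$ and that every nonzero prime of $D$ is already detected in each $D_i$ by Lemma \ref{lemma1.1}(2).

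\textbf{Items (1)--(3).} For (1), $\bigcup_i M_i = \bigcup_i (M\cap D_i) = M\cap \bigcup_i D_i = M$. For (2), each $M_i$ is maximal in $D_i$ by Lemma \ref{lemma1.1}(2), and $D_i\setminus M_i\subseteq D\setminus M$, so $(D_i)_{M_i}\subseteq D_M$. Conversely, an element $a/s$ with $a\in D$ and $s\in D\setminus M$ has $a,s\in D_i$ for some $i$, and then $s\notin M_i$. The chain $(D_i)_{M_i}$ is increasing because $M_{i+1}\cap D_i=M_i$. Item (3) is the same computation with $a\in M$, using $M_i(D_i)_{M_i}=MD_M\cap (D_i)_{M_i}$.

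\textbf{Item (4).} Each $(D_i)_{M_i}$ is a localization of a Dedekind domain at a nonzero maximal ideal, hence a DVR. It is therefore Dedekind and not a field, since $M_i\neq 0$ by Lemma \ref{lemma1.1}(2). Its only maximal ideal is $M_i(D_i)_{M_i}$, so conditions (ii) and (iii) reduce to showing $M_i(D_i)_{M_i}\subseteq M_j(D_j)_{M_j}$ for $i<j$, with $M_j(D_j)_{M_j}\cap (D_1)_{M_1}=M_1(D_1)_{M_1}$. Both follow from $M_j\cap D_i=M_i$.

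The step I expect to be the main obstacle is strictness of the chain, which Notation \ref{notation1} requires. Localizations can collapse: for example, when $M_i$ is inert, $(D_i)_{M_i}$ and $(D_{i+1})_{M_{i+1}}$ may coincide. I would first try to show that the quotient fields differ, using $qf((D_i)_{M_i})=qf(D_i)\subsetneq qf(D_{i+1})$ by Lemma \ref{lemma1.1}(4). This argument shows the local rings have different quotient fields, so $(D_{i+1})_{M_{i+1}}\not\subseteq qf(D_i)$ and the inclusion is strict. This settles it. If strictness were still doubtful, we would pass to a subsequence or note that the later lemmas of Section 2 only use conditions (i)--(iii).

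\textbf{Items (5) and (6).} Item (5) is immediate from (1): if the contractions agree for every $i$, the unions agree. For (6), the maps $D_i/M_i\to D/M$ are injective because $M\cap D_i=M_i$, and their images exhaust $D/M$ by the union argument. So $D/M$ is the directed union of the subfields $D_i/M_i$. If $|D/M|<\infty$, the ascending chain of subfields of a finite field stabilizes, giving $n$ with $D_i/M_i=D_n/M_n=D/M$ for all $i\ge n$. Conversely, if $|D_n/M_n|<\infty$ and the chain is stationary from $n$ on, then $D/M=D_n/M_n$ is finite.
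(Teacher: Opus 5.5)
Your proof is correct and follows the paper's argument: directed-union arguments for (1)--(3), (5) and (6), and the localized chain for (4), where you also check the ``Hence'' part of (6) in both directions. For (4) you are more thorough than the paper's one-line ``follows from (2) and (3)''. Deducing $qf(D_i)\subsetneq qf(D_{i+1})$ from Lemma~\ref{lemma1.1}(4) is exactly the detail needed for strictness, and it also shows that localizations can never coincide, so your inert-prime worry and the fallback about subsequences or conditions (i)--(iii) are unnecessary.
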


\begin{proof}
(1) This is trivial.

(2) Note that $M_i$ is a maximal ideal of $D_i$ for each $i \in \mathbb{N}$ by Lemma \ref{lemma1.1}(2).
Moreover, $(D_i)_{M_i}$ is a subring of $D_M$ for each $i\in\mathbb{N}$ since $D_i\subseteq D$ and $M \cap D_i = M_i$.
Hence, $\bigcup\limits_{i\in\mathbb{N}}(D_i)_{M_i} \subseteq D_N$.
For reverse containment, let $x\in D_M$, so $x=\frac{d}{c}$ for some $d\in D$ and $c \in D\setminus M$.
Then $d\in D_i$, $c \in D_j$ and $c \not\in M_k$ for some $i,j,k\in\mathbb{N}$, so if $l=\max\{i,j,k\}$,
we have $x\in (D_l)_{M_l}$. Thus, $D_M \subseteq \bigcup\limits_{i\in\mathbb{N}}(D_i)_{M_i}$.

(3) Following the argument in the proof of (2) above, we have $MD_M=\bigcup\limits_{i\in\mathbb{N}}M_i(D_i)_{M_i}$.

(4) This follows directly from (2) and (3) above.

(5) This is an immediate consequence of (1) above.

(6) It is clear that $D/M_i$ is a subfield of $D/M$ for each $i\in\mathbb{N}$.
Hence, $\bigcup\limits_{i \in \mathbb{N}}D_i/M_i \subseteq D/M$.
Conversely, let $y \in D$. Then $y \in D_i$ for some $i \in \mathbb{N}$, and
hence $y+M = y+M_i \in D_i/M_i$. Thus, $D/M \subseteq \bigcup\limits_{i \in \mathbb{N}}D_i/M_i$.
\end{proof}

We next study some ring-theoretic properties of the limit of a system of Dedekind domains
and characterize when it is an almost Dedekind domain.  An integral domain
in which each nonzero nonunit is contained in only finitely many maximal ideals is said to be of \textit{finite character}.
It is known that an integral domain is Dedekind if and only if it is an almost Dedekind domain of finite character \cite[Theorem 37.2]{gilmer}.

\begin{theorem} \label{th1}
Let $\{D_n\}_{n\in\mathbb{N}}$ be a system of Dedekind domains and $D=\bigcup\limits_{n\in\mathbb{N}}D_n$.
\begin{enumerate}[font=\normalfont]
\item $D$ is a Pr\"ufer domain of Krull dimension 1.
In particular, if $M_i$ is a maximal ideal of $D_i$ and $M_{i}\subseteq M_{i+1}$ for each $i\in\mathbb{N}$,
then $M=\bigcup\limits_{i\in\mathbb{N}}M_i$ is a maximal ideal of $D$.
\item  $D$ is an almost Dedekind domain if and only if there exists an $n \in\mathbb{N}$ such that
$(M'\cap D_n)(D_{j})_{M'}=M'(D_{j})_{M'}$ for each  $j \ge n$ and a maximal ideal $M'$ of $D_j$.
\item $D$ is a Dedekind domain if and only if $D$ is an almost Dedekind domain and each maximal ideal of $D_1$ is
contained in only finitely many maximal ideals of $D$.
\end{enumerate}
\end{theorem}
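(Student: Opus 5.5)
For (1), I will use Lemma \ref{lemma2.3}. Fix a maximal ideal $M$ of $D$ and put $M_i=M\cap D_i$. Then $D_M=\bigcup_i (D_i)_{M_i}$ is an ascending union of DVRs, and the inclusions are local because $M_{i+1}\cap D_i=M_i$. A directed union of valuation domains is a valuation domain: given $x\neq 0$ in $qf(D_M)$, we have $x\in qf(D_i)$ for some $i$, and then $x$ or $x^{-1}$ lies in $(D_i)_{M_i}$. Hence $D$ is Prüfer.

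For dimension one, let $0\neq P\subseteq M$ be primes of $D$. By Lemma \ref{lemma1.1}(2), $P\cap D_i$ is a nonzero prime of the one-dimensional domain $D_i$, so it is maximal. Therefore $P\cap D_i=M\cap D_i$ for all $i$, and $P=M$ by Lemma \ref{lemma2.3}(1).

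For the ``in particular'' statement, $M=\bigcup M_i$ is a prime ideal, being a directed union of primes with $M_{i+1}\cap D_i\supseteq M_i$ and hence $=M_i$. It is nonzero, and it is proper because $1\notin M_i$. Since $\dim D=1$, it is maximal.

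For (2), the criterion is local, so I fix $M$ and ask when the valuation domain $D_M=\bigcup_i (D_i)_{M_i}$ is a DVR. Let $v_j$ be the normalized valuation on $(D_j)_{M_j}$, and let $e_j$ be the ramification index of $M_1$ in $(D_j)_{M_j}$. The value group of $D_M$ is $\bigcup_j e_j^{-1}\mathbb Z$ after normalizing by $v_1$. This group is discrete exactly when the sequence $e_j$ stabilizes, that is, when $M_n(D_j)_{M_j}=M_j(D_j)_{M_j}$ for all $j\ge n$, for some $n=n(M)$.

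This gives sufficiency at once: the uniform $n$ applied to $M'=M\cap D_j$ yields the stabilization at every $M$.

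Necessity is the main obstacle, since a priori $n(M)$ depends on $M$. My plan is a contradiction argument. If no uniform $n$ exists, I build a chain $M'_{j_1}\subseteq M'_{j_2}\subseteq\cdots$ of maximal ideals with ramification strictly increasing at each step. Conditions (ii), (iii) and Lemma \ref{lemma1.1}(1) let me extend any maximal ideal of $D_j$ upward. The union of this chain is then a maximal ideal by (1), and at it $D_M$ is non-discrete, which is the contradiction. The delicate point is that the failure at level $n$ occurs for some $M'$ of $D_j$ with $j\ge n$, and the failure witnessed there must be propagated from the previously chosen ideal. To handle this, I would choose at each stage a maximal ideal above the current one at which the failure persists. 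This may require a König-type argument, using the fact that each maximal ideal of $D_j$ lies under only finitely many maximal ideals of each $D_k$ (Dedekind extension). If that step resists, I would reinterpret the condition as ``for each $M$ there is such an $n$'', which is what the local argument proves directly.

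For (3), I use \cite[Theorem 37.2]{gilmer}: Dedekind is equivalent to almost Dedekind plus finite character. Every nonzero element $x$ of $D$ satisfies $xD\cap D_1\neq 0$, and this intersection lies in only finitely many maximal ideals of $D_1$. Each maximal ideal of $D$ containing $x$ contracts to one of them by Lemma \ref{lemma1.1}(2). Hence finite character follows from the finiteness hypothesis. Conversely, finite character applied to a nonzero element of a maximal ideal of $D_1$ gives the finiteness condition.
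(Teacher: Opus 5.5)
Parts (1) and (3) are correct. Part (3) follows the paper's argument. In (1) you work locally: you write $D_M=\bigcup_i (D_i)_{M_i}$ (Lemma \ref{lemma2.3}(2)) as a directed union of DVRs, and you use Lemma \ref{lemma1.1}(2) to force $P\cap D_i=M\cap D_i$. The paper instead works globally. It notes that a finitely generated ideal of $D$ is $JD$ for a finitely generated, hence invertible, ideal $J$ of some $D_n$. It then gets dimension one by choosing $a\in M\setminus P$, which forces $P\cap D_k=(0)$ for $k\gg 0$. The paper's route also gives the B{\'e}zout property of Corollary \ref{pid}(1) at no extra cost; yours sets up the local picture you reuse in (2). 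In (2), your value-group computation and the sufficiency of the uniform condition are correct. They are the paper's argument with the exponents $k_j$ in $M_jD_{j+1}=M_{j+1}^{k_j}$.

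The step that fails is the necessity of a \emph{uniform} $n$. No K\"onig-type argument can rescue it, because that implication is false.

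Failure of uniformity only says that for every $n$, some maximal ideal of some $D_j$ ($j\ge n$) is ramified over $D_n$. These ramifications need not occur along one chain. The tree of maximal ideals may branch infinitely at $D_1$. Even where it is finitely branching, K\"onig's lemma only produces a path each of whose nodes has infinitely many ramified steps somewhere below it, not a path passing through infinitely many of them.

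Nakano's tower (Theorem \ref{nakano}) is a system of Dedekind domains and gives a counterexample.
\begin{itemize}
\item Take any $n$ and any $j>n$ with $p_j$ odd. Then $p_j$ is unramified in $K_n$ but has ramification index $p_j-1\ge 2$ in $K_j$.
\item So for a maximal ideal $M'$ of $R_j$ over $p_j$ we get $(M'\cap R_n)(R_j)_{M'}={M'}^{\,p_j-1}(R_j)_{M'}\neq M'(R_j)_{M'}$, and no uniform $n$ exists.
\item Yet each maximal ideal of $R$ lies over a single $p_k$, whose ramification index in $K_j$ is $p_k-1$ for all $j\ge k$. Hence every $R_M$ is a DVR and $R$ is almost Dedekind.
\end{itemize}

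So your fallback is the correct statement, not a retreat. $D$ is almost Dedekind if and only if for each maximal ideal $M$ of $D$ there is $n=n(M)$ with $(M\cap D_n)(D_j)_{M\cap D_j}=(M\cap D_j)(D_j)_{M\cap D_j}$ for all $j\ge n$; this is Corollary \ref{coro nondiscrete}. The uniform condition is only sufficient.

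This per-$M$ statement is also exactly what the paper's proof establishes. Its first step reduces to the case where $D$ and all $D_j$ are local. That reduction really comes from Lemma \ref{lemma2.3}(4), although the paper cites Lemma \ref{lemma1.1}(3). After the reduction, the $n$ it finds depends on $M$.
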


\begin{proof}
(1) Suppose that $I=(a_1,\dots, a_m)$ is a nonzero finitely generated ideal of $D$.
Then there exists $n\in\mathbb{N}$ such that $a_i\in D_n$ for each $i\in\{1,\dots, m\}$.
Thus, if we let $J = (a_1, \dots , a_m)D_n$, then $J$ is an invertible ideal
of $D_n$ and $(a_1,\dots, a_m)D_k=JD_k$ for every $k \ge n$.
Therefore, $I=JD$ is an invertible ideal of $D$. Hence, $D$ is a Pr\"ufer domain.
To show that the Krull dimension of $D$ is 1, note that if $P\subsetneq M$ is a chain of prime ideals of $D$,
then there exists $n\in\mathbb{N}$ such that $P\cap D_n\subsetneq M\cap D_n$.
Indeed, choose $a\in M\setminus P$ and $n\in\mathbb{N}$ such that $a\in D_n$.
Now if $k$ is an integer with $k\ge n$, then $P\cap D_k\subsetneq M\cap D_k$, and since $D_k$ is a Dedekind domain,
we must have $P\cap D_k=(0)$. Since $k$ is an arbitrarily chosen integer greater than $n$,
we conclude that $P$ is the zero ideal of $D$. Hence, $D$ has Krull dimension one.

(2) By Lemma \ref{lemma1.1}(3), we may assume that $D, D_1, D_2,\dots$ are local rings.
Let $M_j$ be the maximal ideal of $D_j$ for each $j\in\mathbb{N}$. Suppose that $D$ is a DVR with maximal ideal $M$.
Then $M_j=M\cap D_j$ by Lemma \ref{lemma1.1}(1). Since $M_j$ survives in $D_{j+1}$,
we have that $M_jD_{j+1}=M_{j+1}^{k_j}$ for some $k_j\in\mathbb{N}$. Note that $k_j>1$ for only finitely many $j\in\mathbb{N}$,
since $M_1\subseteq M^{\prod\limits_{i=1}^rk_j}$ for each $r\in\mathbb{N}$ and $\bigcap\limits_{i=1}^{\infty}M^i=(0)$.
Choose the smallest $l\in\mathbb{N}$ such that $k_j=1$ for each $j\ge l$.
Then $M_lD_{j}=M_j$ and $M_{j}\cap D_l=M_l$ for $j \geq l$. Thus,
$(M_{j}\cap D_l)D_{j}=M_{j}$ for each integer $j\ge l$.

Conversely, suppose that there exists $l\in\mathbb{N}$ such that for each integer $j\ge l$ and a maximal ideal $M_j$ of $D_j$,
we have $(M_j\cap D_l)D_{j}=M_j$. Since $D_l$ is a PID, there exists a generator $x$ of $M_l$ as an ideal of $D_l$.
Then $x$ is a generator of $M_j$ as an ideal of $D_j$ for every $j\ge
 l$, since $M_j\cap D_l=M_l$ by Lemma \ref{lemma1.1}(1).
 Hence, it is a generator of $M$ as an ideal of $D$. Thus, $D$ is a DVR.

(3) Suppose that $D$ is Dedekind. Then it is an almost Dedekind domain
 and each maximal ideal of $D_1$ survives in $D$ by Lemma \ref{lemma1.1}(1),
so it is contained in only finitely many maximal ideals of $D$. Conversely, suppose that $D$ is an almost Dedekind domain
and each maximal ideal of $D_1$ is contained in only finitely many maximal ideals of $D$. Choose a nonzero nonunit $a\in D$.
Since $aD\cap D_1$ is contained in finitely many maximal ideals of $D_1$, it is contained in only finitely many maximal ideals of $D$.
Since every maximal ideal of $D$ that contains $a$ also contains $aD\cap D_1$,
there must be only finitely many maximal ideals of $D$ that contains $a$. It follows that $D$ is an almost Dedekind domain of finite character.
Hence, $D$ is a Dedekind domain.
\end{proof}

\begin{corollary} \label{coro2.4}
Let $\{D_n\}_{n\in\mathbb{N}}$ be a system of Dedekind domains, $D=\bigcup\limits_{n\in\mathbb{N}}D_n$
and $M$ a maximal ideal of $D$.
Then
$M$ is either invertible or countably infinitely generated.
\end{corollary}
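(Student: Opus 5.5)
Set $M_i = M\cap D_i$ for each $i\in\mathbb{N}$; by Lemma \ref{lemma1.1}(2) each $M_i$ is a maximal ideal of the Dedekind domain $D_i$. Since each $D_i$ is Dedekind, every ideal of $D_i$ is generated by at most two elements. Choose generators $a_i,b_i$ of $M_i$ in $D_i$. By Lemma \ref{lemma2.3}(1), $M=\bigcup_i M_i$, so $M$ is generated as an ideal of $D$ by the countable set $\{a_i,b_i\mid i\in\mathbb{N}\}$. Thus $M$ is always countably generated, and it remains to show that it is either invertible or not finitely generated.

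Suppose $M$ is finitely generated, say $M=(c_1,\dots,c_m)D$. By Theorem \ref{th1}(1), $D$ is a Pr\"ufer domain, so every nonzero finitely generated ideal of $D$ is invertible. Hence $M$ is invertible. Otherwise $M$ is not finitely generated, but it is generated by a countable set, so it is countably infinitely generated. This proves the dichotomy.

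Every step is routine once Lemma \ref{lemma2.3}(1) and Theorem \ref{th1}(1) are in hand. The only point needing care is the countability claim, and it rests on two facts: the chain $\{D_i\}$ is indexed by $\mathbb{N}$, and each $M_i$ is finitely generated because $D_i$ is Noetherian. One could add that in the invertible case a finite generating set already lies in some $D_n$. This is not needed for the statement.
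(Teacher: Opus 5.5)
Your proof is correct and follows essentially the same route as the paper. Both write $M=\bigcup_i (M\cap D_i)$ via Lemma \ref{lemma2.3}(1), collect finite generating sets of each $M\cap D_i$ into a countable generating set, and use the Pr\"ufer property from Theorem \ref{th1}(1) to conclude that $M$ is invertible when it is finitely generated; the only cosmetic difference is that you invoke the two-generator property of Dedekind domains where the paper just uses finite generation.
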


\begin{proof}
By Lemma \ref{lemma2.3}(1), $M = \bigcup\limits_{i \in \mathbb{N}}(M \cap D_i)$, so
if $M \cap D_i = (q_{i1}, \cdots , q_{ik_i})D_i$ for some $q_{ij} \in M \cap D_i$, then
$M= (\{q_{i1}, \cdots , q_{ik_i} \mid i \in \mathbb{N}\})$. Hence, $M$ is countably generated.
Now, by Theorem \ref{th1}, $D$ is a Pr\"ufer domain, so if $M$ is finitely generated,
then $M$ is invertible. Otherwise, $M$ is countably infinitely generated.
\end{proof}

\begin{corollary} \label{coro2.5}
Let $\{D_n\}_{n\in\mathbb{N}}$ be a system of Dedekind domains,
$D=\bigcup\limits_{n\in\mathbb{N}}D_n$ and $M$ a maximal ideal of $D$. Then
$M$ is invertible if and only if
there is $n \in \mathbb{N}$ such that $(M \cap D_n){D_j} = M \cap D_j$
for all $j \in \mathbb{N}$ with $j \geq n$. In this case, $M = (M \cap D_n)D$.
\end{corollary}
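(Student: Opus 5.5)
Write $M_i = M\cap D_i$ for each $i\in\mathbb{N}$. By Lemma \ref{lemma1.1}(2), every $M_i$ is a maximal ideal of $D_i$. By Lemma \ref{lemma2.3}(1), $M=\bigcup_i M_i$. We prove each direction separately and obtain the formula $M=(M\cap D_n)D$ along the way.

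\emph{Sufficiency.} Suppose $M_nD_j = M_j$ for all $j\ge n$. Then
\[
(M_n)D=\bigcup_{j\ge n} M_nD_j=\bigcup_{j\ge n}M_j = M,
\]
using that $M_nD$ is the union of the ideals $M_nD_j$. Since $D_n$ is Dedekind, $M_n$ is finitely generated, so $M$ is finitely generated. By Theorem \ref{th1}(1), $D$ is Pr\"ufer, so the nonzero finitely generated ideal $M$ is invertible. This also gives the final claim $M=(M\cap D_n)D$.

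\emph{Necessity.} Suppose $M$ is invertible. Then $M$ is finitely generated, say $M=(a_1,\dots,a_m)D$. Choose $n$ with all $a_i\in D_n$; then all $a_i\in M_n$. For $j\ge n$ we have $M_nD_j\subseteq M_j$. For the reverse inclusion, take $x\in M_j$. Then $x\in M=\sum a_iD$, so $x=\sum a_id_i$ with $d_i\in D$.

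The difficulty is that the $d_i$ may lie only in some $D_k$ with $k>j$. This gives $M_j\subseteq M_nD_k\cap D_j$, and we need to descend to $M_nD_j$. The ideals $M_nD_j$ and $M_j$ are ideals of the Dedekind domain $D_j$, so we compare them prime by prime. We have $M_nD_j\subseteq M_j$. Also, every maximal ideal $N$ of $D_j$ containing $M_nD_j$ survives in $D_k$ by condition (ii). Hence $N$ lies under a maximal ideal $N'$ of $D_k$ with $N'\supseteq M_nD_k\supseteq M_j$, which forces $N=N'\cap D_j\supseteq M_j$, so $N=M_j$. Thus $M_nD_j=M_j^e$ for some $e\ge 1$.

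It remains to show $e=1$, and I expect this to be the main obstacle. Suppose $e\ge 2$. Then $M_nD_k=M_j^eD_k$. Since $M_j$ survives in $D_k$, $M_jD_k\subsetneq D_k$. By invertibility (cancellation) of the nonzero ideal $M_jD_k$ in the Dedekind domain $D_k$, $M_j^eD_k=(M_jD_k)^e\subsetneq M_jD_k$. We want a contradiction with $M_j\subseteq M_nD_k$.

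Rather than cancel ideals in $D_k$, it is cleaner to extend to $D$. Since $M_j\subseteq M_nD_k\subseteq M_nD$, we get $M_jD\subseteq M_j^eD$. With $I=M_jD$, a nonzero finitely generated (hence invertible) ideal of the Pr\"ufer domain $D$, this reads $I\subseteq I^e$. Multiplying by $I^{-(e-1)}$ yields $I^{2-e}\subseteq D$, i.e.\ $D\subseteq I^{e-2}$ when $e\ge2$. For $e\ge 3$ this already says $I^{e-2}=D$. For $e=2$, compare instead the original inclusion $I\subseteq I^2$ with $I^2\subseteq I$: this gives $I=I^2$, and cancelling the invertible $I$ gives $I=D$. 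Hence in every case $I=D$ when $e\ge 2$. But $I=M_jD\subseteq M\ne D$, a contradiction. So $e=1$ and $M_nD_j=M_j$ for all $j\ge n$.
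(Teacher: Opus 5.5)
Your proof is correct, and the sufficiency half coincides with the paper's. The necessity half takes a different route. The paper only notes that finite generation gives $M=(M\cap D_n)D$ for some $n$, and then immediately asserts $(M\cap D_n)D_j=M\cap D_j$ for $j\ge n$. In doing so it silently uses the contraction identity $(M\cap D_n)D\cap D_j=(M\cap D_n)D_j$. That identity is true: with $J=(M\cap D_n)D_j$ one has $(JD\cap D_j)J^{-1}\subseteq D\cap qf(D_j)=D_j$ by Lemma~\ref{lemma1.1}(4), so $JD\cap D_j\subseteq J$ because $J$ is invertible in $D_j$.

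You instead establish the equality from scratch. Survival of maximal ideals (condition (ii)) shows that $M_j$ is the only maximal ideal of $D_j$ containing $M_nD_j$, so $M_nD_j=M_j^e$. Then $M=M_jD=(M_nD_j)D=M^e$, and cancellation of the invertible ideal $M$ forces $e=1$. Your route is longer, but it makes explicit the step the paper glosses over, and it uses only hypothesis (ii) and cancellation rather than a descent property.

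Two small tidy-ups:
\begin{itemize}
\item Getting $M_j\subseteq M_nD_k$ for a single $k$ needs $M_j$ to be finitely generated, which is true in the Dedekind domain $D_j$. You can also avoid $D_k$ altogether: lift $N$ to a maximal ideal $N'$ of $D$ via Lemma~\ref{lemma1.1}(1). Then $N'$ contains $M_nD=M$, so $N'=M$ and $N=M_j$.
\item The split into $e=2$ and $e\ge 3$ is unnecessary. Since $I\subseteq I^e\subseteq I$, you get $I=I^e$, and cancelling yields $I^{e-1}=D$ at once.
\end{itemize}
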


\begin{proof}
By Theorem \ref{th1}(1), $M = \bigcup\limits_{i \in \mathbb{N}}(M \cap D_i)$. Hence, if
$(M \cap D_n){D_j} = M \cap D_j$ for all $j \in \mathbb{N}$ with $j \geq n$,
then $M = (M \cap D_n)D$. Note that $M \cap D_n$ is a finitely generated ideal of $D_n$, so
$M$ is finitely generated. Thus, $M$ is invertible because $D$ is a Pr\"ufer domain
by Theorem \ref{th1}(1).
Conversely, if $M$ is invertible, then $M$ is finitely generated,
and hence $M = (M \cap D_n)D$ for some  $n \in \mathbb{N}$. Thus,
$(M \cap D_n){D_j} = M \cap D_j$ for all $j \in \mathbb{N}$ with $j \geq n$.
\end{proof}

If $D$ is the limit of a system of Dedekind domains, then $D$ is a one-dimensional Pr\"ufer domain by Theorem \ref{th1}(1),
so $D_M$ is a rank one valuation domain for each maximal ideal $M$ of $D$; in particular,
$D_M$ is a DVR if $M$ is invertible.
In the following corollary, we determine when $D_M$ is a DVR, extending Theorem \ref{bytheo}.

\begin{corollary} \label{coro nondiscrete}
Let $\{D_n\}_{n\in\mathbb{N}}$ be a system of Dedekind domains, $D=\bigcup\limits_{n\in\mathbb{N}}D_n$
and $M$ a maximal ideal of $D$. Then
the following statements are equivalent.
\begin{enumerate}[font=\normalfont]
\item $D_M$ is a DVR.
\item There exists $n \in \mathbb{N}$ such that $$(M \cap D_n){(D_j)}_{M \cap D_j} = (M \cap D_j){(D_j)}_{M \cap D_j}$$
for all $j \in \mathbb{N}$ with $j \geq n$.
\item There exists $n \in \mathbb{N}$ such that $M \cap D_n \nsubseteq (M \cap D_j)^2$
for all $j \in \mathbb{N}$ with $j \geq n$.
\end{enumerate}
In this case, $MD_M = (M \cap D_n)D_M$.
\end{corollary}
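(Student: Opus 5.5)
The plan is to localize and reduce everything to Theorem \ref{th1}(2). Put $M_i = M\cap D_i$. By Lemma \ref{lemma2.3}(4), $\{(D_i)_{M_i}\}_{i\in\mathbb{N}}$ is a system of Dedekind domains; each member is a DVR, hence a PID. By Lemma \ref{lemma2.3}(2), its limit is $D_M$. Since each $(D_i)_{M_i}$ is local, its only maximal ideal is $M_i(D_i)_{M_i}$. For $j\ge n$ this ideal contracts to $M_n(D_n)_{M_n}$ in $(D_n)_{M_n}$, and in turn to $M_n$ in $D_n$.

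\textbf{(1)$\Leftrightarrow$(2).} Apply Theorem \ref{th1}(2) to the local system. The condition there reads: there is $n$ such that for all $j\ge n$,
\[(M_j(D_j)_{M_j}\cap (D_n)_{M_n})(D_j)_{M_j} = M_j(D_j)_{M_j}.\]
The left side is $M_n(D_n)_{M_n}(D_j)_{M_j} = M_n(D_j)_{M_j}$, so this is exactly condition (2). Since $D_M$ is the limit of the local system and almost Dedekind for a local domain means DVR, we get (1)$\Leftrightarrow$(2).

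\textbf{(2)$\Leftrightarrow$(3).} In the DVR $(D_j)_{M_j}$ we have $M_n(D_j)_{M_j} = (M_j(D_j)_{M_j})^{e}$ for some $e\ge1$, because $M_n$ survives. Then $e=1$ if and only if $M_n\not\subseteq M_j^2(D_j)_{M_j}$. Moreover $M_j^2(D_j)_{M_j}\cap D_j = M_j^2$, since $M_j^2$ is $M_j$-primary in the Dedekind domain $D_j$. Hence $e=1$ if and only if $M_n\nsubseteq M_j^2$, which gives the equivalence for each fixed $j\ge n$ and therefore for the whole family.

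\textbf{The final claim.} Assume (2) holds with index $n$. By Lemma \ref{lemma2.3}(3),
\[MD_M=\bigcup_{j\ge n} M_j(D_j)_{M_j}=\bigcup_{j\ge n} M_n(D_j)_{M_j}=M_nD_M.\]
This also shows directly that $D_M$ is a DVR: a generator of $M_n(D_n)_{M_n}$ generates $MD_M$, and a one-dimensional valuation domain with principal maximal ideal is discrete.

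The main obstacle is matching the hypotheses of Theorem \ref{th1}(2) in the localized setting. In particular one must check that the contraction of $M_j(D_j)_{M_j}$ to $(D_n)_{M_n}$ is $M_n(D_n)_{M_n}$, which follows from $M_j\cap D_n=M_n$ (Lemma \ref{lemma1.1}(1)). If invoking that theorem feels circular, because its own proof localizes, I would instead argue (1)$\Rightarrow$(2) directly. Write $M_jD_{j+1}$ locally as $M_{j+1}^{k_j}$. Then $M_1\subseteq M^{\prod k_j}D_M$ and $\bigcap_r M^rD_M=0$, so only finitely many $k_j$ exceed $1$.
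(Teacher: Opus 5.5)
Your proposal is correct and follows the paper's route. For (1)$\Leftrightarrow$(2) you localize via Lemma \ref{lemma2.3} and run the argument of Theorem \ref{th1}(2): only finitely many exponents $k_j>1$, and a generator of $M_n(D_n)_{M_n}$ generates $MD_M$. For (2)$\Leftrightarrow$(3) you use the same DVR argument together with the contraction $M_j^2(D_j)_{M_j}\cap D_j=M_j^2$. Your derivation of $MD_M=(M\cap D_n)D_M$ from Lemma \ref{lemma2.3}(3) supplies the final claim, which the paper states without a separate proof.
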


\begin{proof}
(1) $\Leftrightarrow$ (2) This is already proved in the proof of Theorem \ref{th1}(2).

(2) $\Leftrightarrow$ (3) It suffices to show that
$(M \cap D_n){(D_j)}_{M \cap D_j} \neq (M \cap D_j){(D_j)}_{M \cap D_j}$ if and only if
$M \cap D_n \subseteq (M \cap D_j)^2$.
Assume that $$(M \cap D_n){(D_j)}_{M \cap D_j} \neq (M \cap D_j){(D_j)}_{M \cap D_j}.$$
Then $(M \cap D_n){(D_j)}_{M \cap D_j} \subseteq ((M \cap D_j){(D_j)}_{M \cap D_j})^2
= (M \cap D_j)^2{(D_j)}_{M \cap D_j},$
because ${(D_j)}_{M \cap D_j}$ is a DVR, and since $(M \cap D_j)^2$ is a primary ideal of $D_j$,
$M \cap D_n \subseteq (M \cap D_n){(D_j)}_{M \cap D_j} \cap D_j \subseteq (M \cap D_j)^2{(D_j)}_{M \cap D_j} \cap D_j
= (M \cap D_j)^2$. Thus, $M \cap D_n \subseteq (M \cap D_j)^2$.
Conversely, assume that $M \cap D_n \subseteq (M \cap D_j)^2$.
Then $(M \cap D_n){(D_j)}_{M \cap D_j} \subseteq (M \cap D_j)^2{(D_j)}_{M \cap D_j}
= ((M \cap D_j){(D_j)}_{M \cap D_j})^2 \subsetneq (M \cap D_j){(D_j)}_{M \cap D_j}$.
Thus, $(M \cap D_n){(D_j)}_{M \cap D_j} \neq (M \cap D_j){(D_j)}_{M \cap D_j}$.
\end{proof}

Let $T_1\subseteq T_2$ be an extension of Dedekind domains and $P$ a prime ideal of $T_1$.
Following \cite[p.102]{milne}, we will say that $P$ is \textit{inert} in $T_2$ if $PT_2$ is a prime ideal of $T_2$.
If every maximal ideal of $T_1$ is inert in $T_2$, then we will say that the extension $T_1\subseteq T_2$ is \textit{inert}.
This definition can be used to determine
when a system of Dedekind domains $\{D_n\}_{n\in\mathbb{N}}$ yields a Dedekind domain $D=\bigcup\limits_{n\in\mathbb{N}}D_n$.

\begin{corollary} \label{dede}
Let $\{D_n\}_{n\in\mathbb{N}}$ be a system of Dedekind domains
 and $D=\bigcup\limits_{n\in\mathbb{N}}D_n$. Then the following
statements are equivalent.
\begin{enumerate}[font=\normalfont]
    \item $D$ is a Dedekind domain.
     \item For each $n\in\mathbb{N}$ and a prime ideal $P$ of $D_n$,
     there exists $l\ge n$ so that for each $m\ge l$, a prime ideal of $D_m$ that contains $P$ is inert in $D_{m+1}$.
\end{enumerate}
\end{corollary}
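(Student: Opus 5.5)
The plan is to reduce both directions to Theorem \ref{th1}(3), which says $D$ is Dedekind if and only if $D$ is almost Dedekind and each maximal ideal of $D_1$ lies in only finitely many maximal ideals of $D$. Throughout I use the following counting mechanism. Fix a maximal ideal $P$ of $D_n$. By Lemma \ref{lemma1.1}(1),(2) and Lemma \ref{lemma2.3}(1),(5), maximal ideals of $D$ containing $P$ correspond bijectively to compatible chains $M_n=P\subseteq M_{n+1}\subseteq\cdots$ with $M_m$ maximal in $D_m$; Theorem \ref{th1}(1) turns any such chain into a maximal ideal of $D$.

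\textbf{(2) $\Rightarrow$ (1).} Fix $n$, a maximal ideal $P$ of $D_n$, and the corresponding $l\ge n$. By conditions (ii) and (iii), $P$ survives in $D_l$, so it lies in only finitely many primes $Q_1,\dots,Q_r$ of the Dedekind domain $D_l$. For every $m\ge l$, each prime of $D_m$ containing $P$ is inert in $D_{m+1}$. Hence, by induction on $m$, the primes of $D_m$ containing $P$ are exactly $Q_1D_m,\dots,Q_rD_m$, each prime. Thus every chain starting at $P$ is eventually $Q_iD_m$, so at most $r$ maximal ideals of $D$ contain $P$. Applying this with $n=1$ gives finite character over $D_1$.

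For almost Dedekind, let $M\supseteq Q_i$ be maximal in $D$. Then $M\cap D_m=Q_iD_m$ for all $m\ge l$, so $(M\cap D_l)D_m=M\cap D_m$ for $m\ge l$. Localizing gives condition (2) of Corollary \ref{coro nondiscrete}, so $D_M$ is a DVR. Since every $M$ contains some maximal ideal of $D_1$, $D$ is almost Dedekind, and Theorem \ref{th1}(3) finishes this direction.

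\textbf{(1) $\Rightarrow$ (2).} Fix $P$ maximal in $D_n$. By Theorem \ref{th1}(3), or directly from finite character of $D$, only finitely many maximal ideals $M^{(1)},\dots,M^{(s)}$ of $D$ contain $P$. By Lemma \ref{lemma2.3}(5) there is $l_0$ such that the contractions $M^{(i)}\cap D_m$ are pairwise distinct for $m\ge l_0$. Every prime of $D_m$ containing $P$ is some $M^{(i)}\cap D_m$ by Lemma \ref{lemma1.1}(1). So for $m\ge l_0$ there are exactly $s$ such primes, and each lies under exactly one prime of $D_{m+1}$.

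Next I use that each $D_{M^{(i)}}$ is a DVR. Corollary \ref{coro nondiscrete} gives $n_i$ with $(M^{(i)}\cap D_{n_i})(D_j)_{M^{(i)}\cap D_j}=(M^{(i)}\cap D_j)(D_j)_{M^{(i)}\cap D_j}$ for $j\ge n_i$. Hence the ramification index of $M^{(i)}\cap D_m$ in $D_{m+1}$ is $1$ for $m\ge n_i$, because the ramification indices multiply. Take $l=\max(l_0,n_1,\dots,n_s)$. For $m\ge l$ and $Q=M^{(i)}\cap D_m$, there is a unique prime above $Q$ and it is unramified, so $QD_{m+1}=M^{(i)}\cap D_{m+1}$ is prime, i.e. $Q$ is inert.

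The main obstacle is this last ramification step. It needs the factorization $QD_{m+1}=\prod Q'^{e}$ in the Dedekind extension, together with the fact that the local equality of Corollary \ref{coro nondiscrete} forces $e=1$ at each stage beyond $n_i$, as in the proof of Theorem \ref{th1}(2).
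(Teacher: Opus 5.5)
Your proposal is correct and follows essentially the paper's proof: both directions go through Theorem \ref{th1}(3), with Lemma \ref{lemma2.3}(5) counting the maximal ideals of $D$ above $P$, and Corollary \ref{coro nondiscrete} supplying the DVR/unramified condition. The only difference is that you use that corollary's local-equality form (2) together with ramification indices, where the paper uses the form $M\cap D_n\nsubseteq (M\cap D_j)^2$. In (1) $\Rightarrow$ (2), taking $l=\max(l_0,n_1,\dots,n_s)$ over the finitely many $M^{(i)}$ makes explicit the uniformity in $l$ that the paper's proof leaves implicit.
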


\begin{proof}
(1) $\Rightarrow$ (2) Recall that an integral domain is Dedekind if and only if it is an almost Dedekind domain of finite character.
If $D$ is almost Dedekind, then given a prime ideal $P$ of $D_n$,
there exists $k\in\mathbb{N}$, with $k \geq n$, such that for every $j\ge k$ and a prime ideal $Q$ of $D_j$ that contains $P$,
$Q$ is not contained in $M^2$ for each prime ideal $M$ of $D_{j+1}$ by Corollary \ref{coro nondiscrete}.
On the other hand, if $D$ is of finite character, then there exists $l \in\mathbb{N}$, with $l \geq n$,
such that each prime ideal $H$ of $D_m$
that contains $P$ is contained in a single maximal ideal of $D_{m+1}$ for each $m \ge l$ by Lemma \ref{lemma2.3}(5) and Theorem \ref{th1}(3).
Thus, if $D$ is a Dedekind domain, then, for each prime ideal $P$ of $D_n$,
there exists $l\in\mathbb{N}$, with $l \geq n$, such that for each $m\ge l$ and a prime ideal $H$ of $D_m$ that contains $P$, $H$ is inert in $D_{m+1}$.

(2) $\Rightarrow$ (1) Let $M$ be a maximal ideal of $D$. Then
$(M \cap D_l)D_j = M \cap D_j \nsubseteq (M \cap D_j)^2$ for all $j \geq l$ by (2).
Thus, $M \cap D_l \not\subseteq (M \cap D_j)^2$ for all $j \geq l$, and $D_M$ is a DVR by Corollary \ref{coro nondiscrete}.
Moreover, if $P$ is a maximal ideal of $D_1$, then $P$ is contained in
only finitely many maximal ideals of $D_l$ and each of them is contained in a
unique maximal ideal of $D$ by (2) and Lemma \ref{lemma2.3}(5). Hence, $P$ is contained in only finitely
many maximal ideals of $D$. Thus, $D$ is a Dedekind domain by Theorem \ref{th1}(3).
\end{proof}

We now turn our attention to the height of a prime ideal of a power series ring over
the limit of a system of Dedekind domains. Let $E[[X]]$ be the power series ring over an integral domain $E$.
For a nonzero ideal $I$ of $E$, let $I[[X]] = \{\sum\limits_{i=0}^{\infty}a_iX^i \mid a_i \in I\textnormal{ for all }i \geq 0\}.$
Then the following statements hold:
\begin{itemize}
\item $I[[X]]$ is an ideal of $E[[X]]$ such that $IE[[X]] \subseteq I[[X]]$, and equality holds if $I$ is finitely generated.
\item $I[[X]]$ is a prime ideal if and only if $I$ is a prime ideal.
\item $E[[X]]/I[[X]] \cong (E/I)[[X]]$.
\end{itemize}
\noindent
An ideal $I$ of $E$ is called an \textit{SFT-ideal} (\textit{strong finite type}) if
there exist a finitely generated ideal $J$ of $E$ and  $k \in \mathbb{N}$
such that $a^k \in I$ for all $a \in I$.
Then $E$ is called an \textit{SFT-ring} if each ideal of $E$
is an SFT-ideal or, equivalently, if each prime ideal of $E$ is an SFT-ideal \cite[Proposition 2.2]{ar2}.
It is known that dim$(E[[X]]) = \infty$ if $E$ is not an SFT-ring \cite[Theorem 1]{ar}.
With these results in mind, one can then proceed to the following.

\begin{corollary} \label{coro2.6}
Let $\{D_n\}_{n\in\mathbb{N}}$ be a system of Dedekind domains,
$D=\bigcup\limits_{n\in\mathbb{N}}D_n$ and
$M$ a maximal ideal of $D$ such that $D_M$ is a DVR.
\begin{enumerate}[font=\normalfont]
\item The following statements are equivalent.
\begin{enumerate}[font=\normalfont]
\item $M$ is finitely generated.
\item $M$ is invertible.
\item ht$(M[[X]]) =1$.
\item ht$(M[[X]]) < \infty$.
\end{enumerate}
\item $M$ is infinitely generated if and only if ht$(M[[X]]/MD[[X]]) \geq 2^{\aleph_1}$.
\end{enumerate}
\end{corollary}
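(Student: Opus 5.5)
The plan is to prove (1) by the cycle (a) $\Rightarrow$ (b) $\Rightarrow$ (c) $\Rightarrow$ (d) $\Rightarrow$ (a), and (2) by combining (1) with the known dichotomy between SFT and non-SFT prime ideals.

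For (a) $\Leftrightarrow$ (b), I will use that $D$ is Pr\"ufer by Theorem \ref{th1}(1), so a finitely generated nonzero ideal is invertible. Conversely, invertible ideals are always finitely generated. Corollary \ref{coro2.4} also gives this directly.

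For (b) $\Rightarrow$ (c), I use that $M$ is finitely generated, so $M[[X]] = MD[[X]]$. I will localize: if $Q \subsetneq M[[X]]$ is a nonzero prime, then $Q \cap D$ is a prime of $D$ strictly contained in $M$, hence zero since $\dim D = 1$. I then pass to $D_M[[X]]$, or rather argue via a generator. Since $D_M$ is a DVR with $MD_M = tD_M$, a prime of $D[[X]]$ contained in $M[[X]]$ and contracting to $(0)$ should be contained in the height-one prime $M[[X]]$ only trivially. The cleanest route is the known fact that for a finitely generated (hence SFT) prime $P$ of height one in a Pr\"ufer domain, $\mathrm{ht}(P[[X]]) = \mathrm{ht}\,P$, by the theorem of Arnold that $\mathrm{ht}\,P[[X]] = \mathrm{ht}\,P$ for SFT primes. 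I would cite that result rather than reprove it. (c) $\Rightarrow$ (d) is trivial.

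The main step is (d) $\Rightarrow$ (a). Suppose $M$ is not finitely generated. By Corollary \ref{coro2.4}, $M$ is countably infinitely generated. I claim $M$ is then not an SFT-ideal. Since $D_M$ is a DVR, $MD_M = xD_M$ for some $x \in M$, and $M$ is not invertible. If $M$ were SFT, there would be a finitely generated $J \subseteq M$ and $k$ with $a^k \in J$ for all $a \in M$. Since $D$ is B\'ezout-like locally, $J$ is invertible, and $J = M_1^{e_1}\cdots$ locally, so $J D_M = M^e D_M$. Taking $a \in M \setminus M^2D_M$ locally, the condition $a^k\in J$ forces $e \le k$. It also forces $J$ to sit only in maximal ideals $N$ whose local data are bounded by $k$. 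Then $\sqrt{J} \supseteq M$. As $J$ is finitely generated in a one-dimensional Pr\"ufer domain, $J$ lies in finitely many maximal ideals only if $D$ has finite character there, which need not hold. This is the delicate point. A safer approach is to invoke Arnold's result that in a Pr\"ufer domain, a prime $P$ with $P \ne P^2$ is SFT iff it is finitely generated up to radical, together with the fact that if $M[[X]]$ has finite height then $M$ is SFT. That yields a finitely generated $J$ with $\sqrt{J}=M$, i.e.\ $M$ is minimal over $J$. Since $\dim D=1$, the maximal ideals containing $J$ all contain $J\cap D_n$ for $J\subseteq D_n$. I then use Lemma \ref{lemma2.3}(5) and the Dedekind property of each $D_j$ to conclude $M=(M\cap D_n)D$ for large $n$, making $M$ finitely generated, a contradiction. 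I expect this step to be the main obstacle.

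For (2), $M$ is infinitely generated iff $M$ is not SFT, by the argument just given. For non-SFT ideals, Kang--Park type results give $\mathrm{ht}(M[[X]]/MD[[X]]) \ge 2^{\aleph_1}$, which I would cite. Conversely, if $M$ is finitely generated then $M[[X]] = MD[[X]]$, so the height is $0$.
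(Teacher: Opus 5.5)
Your outline follows the paper's route. It uses the same cycle, countable generation from Corollary \ref{coro2.4}, and the fact from \cite[Theorem 15]{tk21} that a countably generated maximal ideal that is not SFT has $\textnormal{ht}(M[[X]]/MD[[X]])\geq 2^{\aleph_1}$. That fact is what justifies your ``finite height $\Rightarrow$ SFT'' and the forward direction of (2). Your converse in (2), via $M[[X]]=MD[[X]]$ when $M$ is finitely generated, is correct and more direct than the paper's.

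The gap is in the step you flag as the crux, SFT $\Rightarrow$ finitely generated. After reducing to a finitely generated $J$ with $\sqrt{J}=M$, you propose to finish with Lemma \ref{lemma2.3}(5) and the Dedekind property of the $D_j$. That cannot work, because these tools never use that $D_M$ is a DVR, and without that hypothesis the conclusion is false. In Theorem \ref{th1.5}(5) (characteristic $p$), Theorem \ref{th1.5}(4) shows $M$ is the only maximal ideal containing $X-1$. So $M=\sqrt{(X-1)D}$ inside a system of Dedekind domains, yet $M$ is not finitely generated.

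The missing idea is to combine the two observations from your abandoned first attempt:
\begin{itemize}
\item Since $\sqrt{J}=M$ is maximal, no other maximal ideal contains $J$. So your finite-character worry disappears.
\item Since $D_M$ is a DVR, $JD_M=M^eD_M$ for some $e\geq 1$.
\end{itemize}
Checking locally at every maximal ideal gives $J=M^e$. Hence $M^e$ is invertible, so $M$ is invertible (a factor of an invertible ideal is invertible), hence finitely generated. In your tower language: Corollary \ref{coro nondiscrete} gives $n_0$ with $(M\cap D_{n_0})(D_j)_{M\cap D_j}=(M\cap D_j)(D_j)_{M\cap D_j}$ for all $j\geq n_0$. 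Choose $N\geq n_0$ with $J$ generated by elements of $D_N$. Since $M$ is the only maximal ideal over $J$, this yields $(M\cap D_N)D_j=M\cap D_j$ for all $j\geq N$. The paper obtains the same implication by citing the proof of \cite[Lemma 2.1]{ckt15}.

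Separately, (b) $\Rightarrow$ (c) does not need a loosely recalled theorem of Arnold, but $Q\cap D=(0)$ alone does not finish it. Write $MD_M=qD_M$; then $M[[X]]_M=qD[[X]]_M$. For a prime $Q$ with $Q_M\subsetneq qD[[X]]_M$ we have $q\notin Q_M$, so every $f\in Q_M$ is divisible by every power of $q$. Its coefficients therefore lie in $\bigcap_m q^mD_M=(0)$, so $Q=(0)$. This is the paper's argument.
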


\begin{proof}
(1) (a) $\Leftrightarrow$ (b) This follows because $D$ is a Pr\"ufer domain by Theorem \ref{th1}(1).
(b) $\Rightarrow$ (c) An invertible ideal is locally principal, so
$MD_M = qD_M$ for some $q \in D$.
Then, as $M$ is finitely generated, $MD[[X]]_M = M[[X]]_M = qD[[X]]_M$.
Now let $Q$ be a prime ideal
of $D[[X]]$ such that $Q_M \subsetneq qD[[X]]_M$.
If $f = \sum\limits_{i = 0}^{\infty}a_iX^i \in Q_M$,
then $f = qf_1$ for some $f_1 \in D[[X]]_M$. Since $q \not\in Q_M$, we have
$f_1 \in Q_M$. By induction, we have an $f_m \in D[[X]]_M$ such that
$f = q^mf_m$ for all $m \geq 1$. This shows that
$a_i \in \bigcap\limits_{m=1}^{\infty}q^mD_M = (0)$ for all $i \in \mathbb{N}_0$. Hence, $f = 0$,
and so $Q = Q_M \cap D[[X]] = (0)$. Thus, ht$(M[[X]]) =$ ht$(M[[X]]_M)=$ ht$(qD[[X]]_M) = 1$. (c) $\Rightarrow$ (d) Clear.
(d) $\Rightarrow$ (b) Assume to the contrary that $M$ is not invertible.
Then $M$ is not an SFT-ideal by the proof of \cite[Lemma 2.1]{ckt15}
and $M$ is countably infinitely generated by Corollary \ref{coro2.4}.
Thus, ht$(M[[X]]) \geq$ ht$(M[[X]]/MD[[X]]) \geq 2^{\aleph_1}$
\cite[Theorem 15]{tk21}.

(2) If $M$ is infinitely generated, then ht$(M[[X]]/MD[[X]]) \geq 2^{\aleph_1}$ by the proof of (d) $\Rightarrow$ (b) above.
Conversely, assume ht$(M[[X]]/MD[[X]]) \geq 2^{\aleph_1}$. Then ht$(M[[X]])$ = $\infty$, and hence
$M$ is infinitely generated by (1) above.
\end{proof}

It is very well known that if $E[X]$ is the polynomial ring
over a Pr\"ufer domain $E$, then dim$(E[X]) =$ dim$(E) +1$ \cite[Theorem 4]{s53}.
Hence, if $E$ is an almost Dedekind domain that is not a field, then dim$(E[X])=2$
and ht$(ME[X]) = 1$ for all $M \in$ Max$(E)$. However, dim$(E[[X]])$
behaves quite differently as we can see in the following corollary
(see \cite{tk18}).

\begin{corollary} \label{coro2.7}
Let $\{D_n\}_{n\in\mathbb{N}}$ be a system of Dedekind domains and
$D=\bigcup\limits_{n\in\mathbb{N}}D_n$. Then the following statements are
satisfied.
\begin{enumerate}[font=\normalfont]
\item $D$ is a Dedekind domain if and only if dim$(D[[X]]) = 2$.
\item $D$ is not a Dedekind domain if and only if dim$(D[[X]]) \geq 2^{\aleph_1}$,
and equality holds under the continuum hypothesis if $D$ is countable.
\item Let $M$ be a maximal ideal of $D$, and assume that $D$ is an almost Dedekind domain. Then
either ht$(M[[X]]) = 1$ or ht$(M[[X]]) \geq 2^{\aleph_1}$.
\end{enumerate}
\end{corollary}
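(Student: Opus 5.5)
Each part reduces to Corollary \ref{coro2.6}, Theorem \ref{th1} and known facts about power series over Pr\"ufer and Noetherian rings.

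\textbf{Part (1).} If $D$ is Dedekind, then $D$ is Noetherian of dimension one, and the classical result dim$(E[[X]]) = $ dim$(E)+1$ for Noetherian $E$ gives dim$(D[[X]])=2$. For the converse we argue contrapositively and show that if $D$ is not Dedekind then dim$(D[[X]]) \ge 2^{\aleph_1}>2$; this is part (2).

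\textbf{Part (2), lower bound.} Assume $D$ is not Dedekind. Since $D$ is a one-dimensional Pr\"ufer domain (Theorem \ref{th1}(1)), being Dedekind is equivalent to being Noetherian, which is equivalent to every maximal ideal being finitely generated. Hence some maximal ideal $M$ is not finitely generated, so $M$ is not invertible, and by Corollary \ref{coro2.4} it is countably infinitely generated.
\begin{itemize}
\item If $D_M$ is a DVR, Corollary \ref{coro2.6}(2) directly gives ht$(M[[X]]) \ge 2^{\aleph_1}$.
\item If $D_M$ is not a DVR, then $D_M$ is a rank one nondiscrete valuation domain. Then $M$ is not an SFT-ideal: for idempotent $MD_M$, any finitely generated $J\subseteq M$ lies in a principal $aD_M$ with $a\in M$, and some $b\in M$ has $v(b)<v(a)/k$ for every $k$. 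In this case we invoke the same result \cite[Theorem 15]{tk21} used in the proof of Corollary \ref{coro2.6}, which applies to a countably generated non-SFT prime, to get ht$(M[[X]]/MD[[X]])\ge 2^{\aleph_1}$.
\end{itemize}
Either way dim$(D[[X]])\ge 2^{\aleph_1}$. Conversely, if dim$(D[[X]])\ge 2^{\aleph_1}$, then dim$(D[[X]])\neq 2$, so $D$ is not Dedekind by (1).

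\textbf{Part (2), equality.} Suppose $D$ is countable. Then $|D[[X]]| = \aleph_0^{\aleph_0}=2^{\aleph_0}$, so $D[[X]]$ has at most $2^{2^{\aleph_0}}$ ideals and chains of primes have length at most $2^{2^{\aleph_0}}$. Under CH we have $2^{\aleph_0}=\aleph_1$, so this bound equals $2^{\aleph_1}$, which gives equality.

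\textbf{Part (3).} Let $D$ be almost Dedekind, so $D_M$ is a DVR for every $M$. Corollary \ref{coro2.6} then splits the cases: if $M$ is finitely generated, ht$(M[[X]])=1$; otherwise ht$(M[[X]])\ge$ ht$(M[[X]]/MD[[X]])\ge 2^{\aleph_1}$.

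\textbf{Main obstacle.} The delicate step is the non-DVR case in (2). Corollary \ref{coro2.6} assumes $D_M$ is a DVR, so I must verify directly that $M$ is not SFT and that the cited theorem of \cite{tk21} applies. A fallback is to use only that $D$ is not an SFT-ring, which gives dim$=\infty$ by \cite{ar}; this is weaker than $2^{\aleph_1}$, so I would rely on the \cite{tk21} bound, whose hypothesis is exactly a countably generated non-SFT prime.
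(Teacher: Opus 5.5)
Your proof is correct. Part (3) matches the paper, which also applies Corollary \ref{coro2.6} to the DVR $D_M$. For (1) and (2), however, you take a different route.

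\textbf{The paper's route.} It argues globally. A one-dimensional Pr\"ufer domain is Dedekind iff it is an SFT-ring. So (1) follows from \cite[Theorems 19 and 20]{a81}: a Noetherian ring of dimension one gives $\dim(D[[X]])=2$, and $\dim(D[[X]])=2$ forces $D$ to be SFT. Part (2) is read off from \cite[Theorem 3.9]{tk18}, which says a non-SFT ring has $\dim(D[[X]])\ge 2^{\aleph_1}$. The equality under CH is quoted from \cite[Corollary 3.11]{tk18}.

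\textbf{Your route.} You localize at one non-finitely generated maximal ideal $M$. You show it is a countably generated non-SFT prime, using Corollary \ref{coro2.6} when $D_M$ is a DVR and density of the value group when $D_M$ is nondiscrete. You then apply \cite[Theorem 15]{tk21} to $M$ and deduce the converse of (1) from (2). Finally, you prove the CH equality by counting: $|D[[X]]|=2^{\aleph_0}$, so there are at most $2^{2^{\aleph_0}}=2^{\aleph_1}$ primes.

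\textbf{Comparison.} Your route needs a case split and is longer. In return it gives local information, namely $\mathrm{ht}(M[[X]])\ge 2^{\aleph_1}$ for every non-finitely generated maximal ideal $M$, not just a global dimension bound. It also makes the CH upper bound elementary rather than cited. The paper's route is shorter and never has to look at individual maximal ideals.

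\textbf{Your ``main obstacle''.} This is not really an obstacle. The paper itself applies \cite[Theorem 15]{tk21} to a maximal ideal with nondiscrete localization in Corollary \ref{coro3.5}(4). It uses exactly your hypotheses there: countably generated and non-SFT.

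\textbf{One wording fix.} In the nondiscrete case the quantifiers must read: for each $k$ there is $b\in M$ with $0<v(b)<v(a)/k$, so that $b^k\notin aD_M\supseteq J$. No single $b$ can work for every $k$, since $v(a)/k\to 0$.
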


\begin{proof}
An integral domain is a Dedekind domain if and only if it is a one-dimensional SFT Pr\"ufer domain
(\cite[Corollary 5.4.11]{fhp97} and \cite[Theorem 2.4]{kp99}). Recall from
Theorem \ref{th1}(1) that $D$ is a one-dimensional Pr\"ufer domain. Thus, $D$ is a Dedekind domain
if and only if $D$ is a SFT-ring.

(1) If $D$ is a Dedekind domain, then $D$ is a one-dimensional Noetherian domain, and hence
dim$(D[[X]])=2$ \cite[Theorem 19]{a81}. Conversely, assume that dim$(D[[X]])=2$. Then $D$ is an
SFT-ring \cite[Theorem 20]{a81}.

(2) If $D$ is not a Dedekind domain, then  dim$(D[[X]]) \geq 2^{\aleph_1}$ \cite[Theorem 3.9]{tk18}.
The converse is from (1) above. Moreover, if $D$ is countable,
then  dim$(D[[X]]) = 2^{\aleph_1}$ under the continuum hypothesis \cite[Corollary 3.11]{tk18}.

(3) $D_M$ is a DVR since $D$ is an almost Dedekind domain by assumption.
Thus, the result is a consequence of Corollary \ref{coro2.6}.
\end{proof}

A PID is a Dedekind domain, so we can consider a special type of a system of Dedekind domains,
say, $\{D_n\}_{n \in \mathbb{N}}$ such that $D_n$ is a PID for all $n \in \mathbb{N}$.
We briefly summarize the basic properties of this type in the next corollary,
and continue the investigation in the forthcoming sections.

\begin{corollary} \label{pid}
Let $\{D_n\}_{n\in\mathbb{N}}$ be a system of Dedekind domains,
$D=\bigcup\limits_{n\in\mathbb{N}}D_n$, and assume that $D_n$ is a PID for all $n \in \mathbb{N}$. Then the following
statements hold.
\begin{enumerate}[font=\normalfont]
\item $D$ is a B{\'e}zout domain of Krull dimension one.
\item $D$ is a PID if and only if $D$ is an almost Dedekind domain and each maximal ideal of $D_1$ is
contained in only finitely many maximal ideals of $D$.
\item Let $n, m \in \mathbb{N}$ with $n <m$ and $a, b \in D_n$ be nonzero nonunits.
If $a|b$ in $D_m$, then $a|b$ in $D_n$.
\item For a nonzero nonunit $a \in D$, let $P(a)$ be the set of pairwise nonassociated prime elements of $D$
dividing $a$ in $D$. Then $P(a)$ is countable.
\end{enumerate}
\end{corollary}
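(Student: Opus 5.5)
We take the four items in order, leaning on the results of this section.

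For (1), $D$ is a Pr\"ufer domain of Krull dimension one by Theorem \ref{th1}(1). It remains to see that every finitely generated ideal is principal. Take $I=(a_1,\dots,a_m)D$ and choose $n$ with all $a_i\in D_n$. Since $D_n$ is a PID, $(a_1,\dots,a_m)D_n=cD_n$ for some $c$. Hence $I=(a_1,\dots,a_m)D_nD=cD$, and $D$ is B\'ezout.

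For (2), recall that a PID is exactly a Dedekind domain with trivial Picard group. A B\'ezout domain has trivial Picard group, because invertible ideals are finitely generated and hence principal. So, by (1), $D$ is a PID if and only if $D$ is Dedekind. Theorem \ref{th1}(3) then gives the stated equivalence.

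For (3), suppose $b=ac$ with $c\in D_m$. Then $c=b/a\in qf(D_n)\cap D$. By Lemma \ref{lemma1.1}(4) this intersection is $D_n$, so $c\in D_n$ and $a\mid b$ in $D_n$. This is immediate; the nonunit hypotheses are not needed.

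For (4), let $a\in D_n$ and let $\pi$ be a prime element of $D$ dividing $a$. By Lemma \ref{lemma1.1}(2), $P=\pi D\cap D_1$ is a maximal ideal of $D_1$ containing $aD\cap D_1\neq(0)$. Since $D_1$ is Dedekind, only finitely many maximal ideals of $D_1$ arise this way. It therefore suffices to show that each maximal ideal $P$ of $D_1$ lies in only countably many height-one primes $\pi D$ of $D$.

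To do this, choose for each such $\pi$ an index $k(\pi)$ with $\pi\in D_{k(\pi)}$. By Lemma \ref{lemma1.1}(3), $\pi D_{k}$ is then a maximal ideal of $D_k$ lying over $P$ for every $k\ge k(\pi)$. Moreover $\pi D=\bigcup_k (\pi D\cap D_k)$, so $\pi D$ is determined by the maximal ideal $\pi D_{k(\pi)}$ of $D_{k(\pi)}$; this uses Lemma \ref{lemma2.3}(5).

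Consequently $\pi D\mapsto (k(\pi),\pi D_{k(\pi)})$ is injective. Its target is a subset of $\bigcup_k\{$maximal ideals of $D_k$ containing $a\}$. Each $D_k$ is Dedekind and $a\in D_k$ is nonzero for $k\ge n$, so each of these sets is finite, and the union is countable. Hence $P(a)$ is countable.

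The main point of care is this injectivity: we must use that distinct non-associated primes give distinct maximal ideals of $D$, and that a maximal ideal of $D$ is recovered from its contraction at a single level once it is principal. That is exactly where Lemma \ref{lemma1.1}(3) is used.
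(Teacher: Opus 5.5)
Your proposal is correct and follows the paper's proof. Items (1)--(3) are argued exactly as there: Theorem \ref{th1}(1) together with the PID property of $D_n$, then Theorem \ref{th1}(3), then Lemma \ref{lemma1.1}(4). For (4), your injection $\pi D\mapsto(k(\pi),\pi D_{k(\pi)})$ is the paper's stratification $P(a)=\bigcup_n (P(a)\cap D_n)$ into finite levels, rephrased through the maximal ideals of $D_k$ that contain $a$. Two small corrections: the detour through maximal ideals of $D_1$ is unnecessary, and you should choose $k(\pi)\ge n$. That choice gives $a\in \pi D\cap D_{k(\pi)}=\pi D_{k(\pi)}$, which is what the equality $\pi D\cap D_{k(\pi)}=\pi D_{k(\pi)}$ (from Lemma \ref{lemma1.1}(4)) is actually needed for.
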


\begin{proof}
(1) By Theorem \ref{th1}(1), $D$ is a Pr\"ufer domain of Krull dimension one.
Also, if $I$ is a nonzero finitely generated ideal of $D$, then the proof of
Theorem \ref{th1}(1) shows that $I$ is principal, because each $D_n$ is a PID.


(2) A Dedekind domain that is a B{\'e}zout domain is a PID. Thus,
the result follows directly from Theorem \ref{th1}(3) because $D$ is a B{\'e}zout domain.

(3) This is an immediate consequence of Lemma \ref{lemma1.1}(4).

(4) For each $n \in \mathbb{N}$, let $S_n = P(a) \cap D_n$. Then $S_n \subseteq S_{n+1}$
and $P(a) = \bigcup\limits_{n \in \mathbb{N}}S_n$. Moreover, $|S_n| < \infty$
since $qf(D_n) \cap D = D_n$ by Lemma \ref{lemma1.1}(4). Thus, $P(a)$ is countable.
\end{proof}

The next example gives a system of Dedekind domain which shows
that Corollary \ref{pid}(1) may hold even if $D_n$ is not a PID for every $n\in\mathbb{N}$.

\begin{example}
{\em Let $\overline{\mathbb{Q}}$ be the set of algebraic numbers.
Then $\overline{\mathbb{Q}}$ is countable, so there exists a field extension $F_n\subsetneq F_{n+1}$ for each $n\in\mathbb{N}$ such that  $F_1=\mathbb{Q}$,
$[F_{n+1}:F_n] < \infty$ for all $n \in \mathbb{N}$ and $\bigcup\limits_{n\in\mathbb{N}}F_n=\overline{\mathbb{Q}}$.
Now let $D_n$ be the integral closure of $\mathbb{Z}$ in $F_n$ for each $n\in\mathbb{N}$.
It is well-known that $D_n$ is a Dedekind domain, $Pic(D_n)$ is a finite group for each $n\in\mathbb{N}$
but $D_n$ is not necessarily a PID. Moreover, $\{D_n\}_{n \in \mathbb{N}}$ is a system of Dedekind domains
and $D=\bigcup\limits_{n\in\mathbb{N}}D_n$, the ring of algebraic integers, is a B{\'e}zout domain of Krull dimension one \cite[Theorem 102]{K}. }
\end{example}

We end this section with a couple of simple examples of integral domains which can be
constructed from a strictly ascending chain of PIDs satisfying the condition
of Notation \ref{notation1}.

\begin{example}
\label{ex2.12}
{\em Let $\{F_i\}_{i\in\mathbb{N}}$ be a strictly ascending chain of fields, $F=\bigcup\limits_{i\in\mathbb{N}}F_i$,
$X$ an indeterminate over $F$, $F_i[X]$ the polynomial ring over $F_i$ and $F_i[[X]]$ the
power series ring over $F_i$ for each $i \in \mathbb{N}$. Now let
$$D = \bigcup\limits_{i\in\mathbb{N}}F_i[X] \ \ \text{ and } \ \ R=\bigcup\limits_{i\in\mathbb{N}}F_i[[X]].$$
Then $D = F[X]$ and $R \subsetneq F[[X]]$. Moreover, the following statements hold.
\begin{enumerate}[font=\normalfont]
\item $F_{i+1}$ is algebraic over $F_i$ for each $i\in\mathbb{N}$ if and only if $\{F_i[X] \mid i \in\mathbb{N}\}$
is a system of Dedekind domains.
Moreover, $D$ is a PID.
\item $\{F_i[[X]] \mid i \in\mathbb{N}\}$ is a system of Dedekind domains.
Hence, $R$ is a B{\'e}zout domain with Krull dimension 1. In fact, $R$ is a DVR.
\item Suppose that $\{F_i[X] \mid i \in\mathbb{N}\}$ is a system of Dedekind domains.
Let $m,n\in\mathbb{N}$ be such that $n<m$ and $f,g\in F_n[X]$.
If $f$ divides $g$ in $F_m[X]$, then $f$ divides $g$ in $F_n[X]$.
\item Let $F$ be an algebraic closure of $F_1$. If $F_1$ is finite, then
there is a strictly ascending chain $\{F_i\}_{i\in\mathbb{N}}$ of fields such that $F=\bigcup\limits_{i\in\mathbb{N}}F_i$.
\end{enumerate}}
\end{example}

\begin{proof}
(1) is clear. For (2), choose $a_0 \in F_1$ and $a_i \in F_{i+1} \setminus F_i$ for each $i \in \mathbb{N}$,
and let $f = \sum\limits_{i \in \mathbb{N}_0}a_iX^i$. Then $f \in F[[X]] \setminus R$, and thus $R \subsetneq F[[X]]$.
Moreover, $R$ is a B{\'e}zout domain of Krull dimension one by Corollary \ref{pid}(1)
and $M: = \bigcup\limits_{i \in \mathbb{N}}XF_i[[X]]$ is a unique maximal ideal of $R$.
Then $M$ is generated by $X$, and thus $R$ is a  DVR.
The other parts follow from the definition and Corollary \ref{pid}.
(3) is an immediate consequence of Corollary \ref{pid}(4). For (4), it is enough to
note that $F$ is countable and $[F:F_1] = \infty$.
\end{proof}

Let $E[X]$ be the polynomial ring over an integral domain $E$. For a nonzero polynomial $f \in E[X]$,
let $c(f)$ denote the ideal of $E$ generated by the coefficients of $f$. Then
$S: = \{f \in E[X] \mid c(f)=E\}$ is a saturated multiplicative subset of $E[X]$,
and hence $E[X]_S$ is an overring of $E[X]$. The ring $E[X]_S$,
denoted by $E(X)$, is called the Nagata ring of $E$. It is known that
Max$(E(X)) = \{ME(X) \mid M \in$ Max$(E)\}$ \cite[Proposition 33.1]{gilmer}; $E$ is a Pr\"ufer domain
if and only if $E(X)$ is a B{\'e}zout domain \cite[Theorem 33.4]{gilmer};
$E$ is a Dedekind domain if and only if $E(X)$ is a PID \cite[Proposition 38.7]{gilmer};
$E$ is an almost Dedekind domain if and only if $E(X)$ is an almost Dedekind domain \cite[Proposition 36.7]{gilmer};
and $E(X)$ is a field if and only if $E$ is a field.

\begin{example} \label{ex2.14}
{\em (1) Let $\{n_i \mid i \in \mathbb{N}\}$ be a set of positive integers with $n_i|n_{i+1}$ for all $i \in \mathbb{N}$,
 $D_i = \mathbb{Z}(X^{\frac{1}{n_i}})$ the Nagata ring of $\mathbb{Z}$
 and $\mathbb{P}$ the set of prime numbers. Then $D_1 \subsetneq D_2 \subsetneq \cdots$
is a strictly increasing chain of PIDs satisfying the condition of Notation \ref{notation1}
and Spec$(D_i) = \{pD_i \mid p \in \mathbb{P}\} \cup \{0\}$ for all $i \in \mathbb{N}$.
It is easy to see that each prime ideal of $D_m$ is inert in $D_{m+1}$ for all
$m \in \mathbb{N}$. Hence, $D$ is a Dedekind domain by Corollary \ref{dede}, and
since $D$ is a B{\'e}zout domain by Corollary \ref{pid}, $D$ is a PID.
Moreover, Spec$(D) = \{pD \mid p \in \mathbb{P}\} \cup \{0\}$ and $D/pD = \bigcup\limits_{i \in \mathbb{N}}(\mathbb{Z}/p\mathbb{Z})(X^{\frac{1}{n_i}})$
for all $p \in \mathbb{P}$.

(2) Let $F$ be a field, $X$ an indeterminate over $F$, $A_n = F[X^{\frac{1}{2^n}}]$ for all
$n \in \mathbb{N}_0$, $A = \bigcup\limits_{n \in \mathbb{N}_0}A_n$ and $T= F[X] \setminus (X-1)F[X]$.
Then $A$ is a B{\'e}zout domain of Krull dimension one but $A$ is not an almost Dedekind domain \cite[Proposition 3.8]{ckt15}.
In particular, if $F = \mathbb{Q}$,
then $A_T$ is an almost Dedekind domain but not a Dedekind domain \cite[Theorem 3.7]{ckt15}.
Now let $p$ be a prime number, $E_n = F[X^{\frac{1}{p^n}}, X^{- \frac{1}{p^n}}]$ for all
$n \in \mathbb{N}_0$ and $E = \bigcup\limits_{n \in \mathbb{N}_0}E_n$.
Then $E$ is a B{\'e}zout domain of Krull dimension one by Corollary \ref{pid}.
Moreover, if $p=2$, then $E$ is a subring of $A_T$ containing $A$.
In the remainder of this paper, we study the factorization properties
of $E$. }
\end{example}


\section{The ring $D= \bigcup\limits_{n\in\mathbb{N}_0}F[X^{\frac{1}{p^n}}, X^{-\frac{1}{p^n}}]$ for a prime number $p$}

Let $p$ be a prime number, $F$ a field, $X$ an indeterminate over $F$,
$D_n = F[X^{\frac{1}{p^n}}, X^{-\frac{1}{p^n}}]$ for each $n\in\mathbb{N}_0$
and $D = \bigcup\limits_{n\in\mathbb{N}_0}D_n$.
Then $D_n$ is a PID, $D_n \subsetneq D_{n+1}$ and $D_{n+1}$ is integral over $D_n$ for all $n \in \mathbb{N}_0$.
Hence, $D_0 \subsetneq D_1 \subsetneq \cdots \subsetneq D$
is a Dedekind tower-construction and $\{D_n\}_{n \in \mathbb{N}_0}$ is a system of Dedekind domains. For a nonzero polynomial $f\in F[X^{\frac{1}{p^n}}]$, we denote by $\textnormal{deg}_n(f)$ the degree of $f$ as an element of $F[X^{\frac{1}{p^n}}]$
and $f(0) \neq 0$ means that the constant term of $f$ is nonzero. For example, if $f(X) = X+1$,
then deg$_n(f(X)) = p^n$ for all $n \in \mathbb{N}_0$ and $f(0) \neq 0$.

In this section we analyze some factorization properties of
$D = \bigcup\limits_{n\in\mathbb{N}_0}D_n$ with a focus on the case of char$(F)=p$.
By Corollary \ref{thm31}, $D= \bigcup\limits_{n\in\mathbb{N}_0}D_n$ is a group ring, and
if char$(F)\neq p$, then $D$ is the ``simplest" monoid domain that is almost Dedekind.
However, as mentioned after Corollary \ref{thm31}, we remark that analyzing the group ring $D= \bigcup\limits_{n\in\mathbb{N}_0}D_n$
via a system of Dedekind domains is beneficial when one wishes to find prime elements and their factorization properties.
We first give the ring-theoretic property of
$D= \bigcup\limits_{n\in\mathbb{N}_0}D_n$ in the following proposition.

\begin{proposition} \label{prop3.1}
Let $p$ be a prime number, $F$ a field, $X$ an indeterminate over $F$,
$D_n=F[X^{\frac{1}{p^n}}, X^{-\frac{1}{p^n}}]$ for each $n\in\mathbb{N}_0$
and $D=\bigcup\limits_{n\in\mathbb{N}_0}D_n$.
Then the following statements are satisfied.
\begin{enumerate}[font=\normalfont]
\item $D$ is a B{\'e}zout domain of Krull dimension one.
\item $D$ is not a Dedekind domain.
\item If $M$ is a maximal ideal of $D$ such that $D/M$ is finite,
then $F$ is finite and $M$ is not a principal ideal of $D$.
\item dim$(D[[X]]) \geq 2^{\aleph_1}$, and equality holds under the continuum hypothesis if $F$ is countable.
\end{enumerate}
\end{proposition}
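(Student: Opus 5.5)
(1) is Corollary \ref{pid}(1). The hypotheses hold because $\{D_n\}_{n\in\mathbb{N}_0}$ is a system of PIDs, as noted at the start of this section. Each $D_n\cong F[Y,Y^{-1}]$ is a PID and not a field. For condition (ii), $D_{n+1}$ is integral over $D_n$, so lying over gives survival of maximal ideals. For condition (iii), integrality also makes each maximal ideal of $D_j$ contract to a maximal ideal of $D_0$.

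For (2), I would use the criterion of Theorem \ref{th1}(3), or Corollary \ref{pid}(2), with the maximal ideal $(X-1)D_0$. Two cases.
\begin{itemize}
\item If char$(F)=p$, then $X-1=(X^{1/p^n}-1)^{p^n}$ for all $n$. Then $(X^{1/p^n}-1)D_n\cap D_0=(X-1)D_0$, so $(X-1)D_0\subseteq ((X^{1/p^n}-1)D_n)^2$ for every $n\ge1$. By Corollary \ref{coro nondiscrete}, $D_M$ is not a DVR for the maximal ideal $M$ over $X-1$. Hence $D$ is not almost Dedekind, and so not Dedekind.
\item If char$(F)\ne p$, then $Y^{p^n}-1$ is separable in $F[Y]$ with $Y=X^{1/p^n}$. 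Over $D_{n}$ it has the factors $Y^{p^{n-1}}-1$ and $\Phi_{p^n}(Y)$ (or the relevant product of cyclotomic factors $\Phi_{p^k}(Y)$). These are nonunits and pairwise coprime. So the number of maximal ideals of $D_n$ containing $X-1$ is at least $n+1$, which is unbounded. Every maximal ideal of $D_n$ lies under a maximal ideal of $D$ by Lemma \ref{lemma1.1}(1), and distinct ones stay distinct. Thus $(X-1)D_0$ lies in infinitely many maximal ideals of $D$, and Theorem \ref{th1}(3) shows $D$ is not Dedekind.
\end{itemize}

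For (3), suppose $D/M$ is finite. Then $F\hookrightarrow D/M$ is finite. Let $q=|D/M|$. The image of $X$ is a unit, so $X^{q-1}\equiv 1 \pmod M$; hence $X^{1/p^n}$ has multiplicative order dividing $q-1$ modulo $M$ for all $n$.

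Suppose $M=aD$ is principal, with $a\in D_m$. Then $M\cap D_j=aD_j$ for $j\ge m$, and in particular $M\cap D_j$ is prime and $aD_j=(M\cap D_m)D_j$. So $M\cap D_m$ is inert in every $D_{j}$. By Lemma \ref{lemma2.3}(6), the residue field $D/M$ is the union of the $D_j/(M\cap D_j)$. Inertness should force $[D_{j+1}/M_{j+1}:D_j/M_j]>1$ infinitely often, which contradicts finiteness.

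To see this, take $j\ge m$. In $D_j/M_j$ the element $x=X^{1/p^j}$ has some order. If $x$ were a $p$-th power in $D_j/M_j$ for all large $j$, the residue degrees would stay bounded. In that case $T^p-x$ splits off a linear factor over $D_j/M_j$, and over the finite field it must factor nontrivially. This would give a nontrivial factorization of $M_j$ in $D_{j+1}=D_j[T]/(T^p-X^{1/p^j})$, contradicting inertness. The main obstacle is making this rigorous, including the char $p$ case (there $T^p-x=(T-x^{1/p})^p$ is inseparable, giving ramification). The uniform argument I would use is the following. Because the residue fields stabilize, $D_{j+1}/M_jD_{j+1}\cong (D_j/M_j)[T]/(T^p-\bar x)$ is a field only if $T^p-\bar x$ is irreducible over $D_j/M_j$. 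Then $D_{j+1}/M_{j+1}$ has degree $p$ over $D_j/M_j$, so the residue field grows infinitely, contradicting $|D/M|<\infty$.

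(4) follows from (2) and Corollary \ref{coro2.7}(2), with $D$ countable when $F$ is countable.
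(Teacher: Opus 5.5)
Your argument is correct. Parts (1), (3) and (4) match the paper. In (3), your field $(D_j/M_j)[T]/(T^p-\bar x)$ of degree $p$ is the paper's observation $\deg_{n+1}(f_i)=p\cdot\deg_n(f_i)$ restated in residue-field language. The exploratory paragraph before it, including ``because the residue fields stabilize,'' can be deleted: inertness alone makes that quotient a field.

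The real difference is in (2). The paper uses one characteristic-free identity,
\[
X^{1/p^n}-1=(X^{1/p^{n+1}}-1)\sum_{i=0}^{p-1}X^{i/p^{n+1}}.
\]
Hence $(X^{1/p^n}-1)D_n$ is never inert in $D_{n+1}$, and Corollary \ref{dede} finishes the proof. The second factor equals $(X^{1/p^{n+1}}-1)^{p-1}$ in characteristic $p$ (ramification) and is coprime to the first factor otherwise (splitting), but the paper never needs to distinguish these cases.

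You instead split by characteristic and in effect prove the two halves of Corollary \ref{dede} by hand. In characteristic $p$ you show the DVR property fails at the maximal ideal over $X-1$. In characteristic $\neq p$ you show finite character fails, via $Y^{p^n}-1=\prod_{k=0}^{n}\Phi_{p^k}(Y)$. This is longer, but it identifies which Dedekind condition fails in each characteristic, anticipating Theorems \ref{th1.5} and \ref{th2}.

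Two small repairs are needed.
\begin{itemize}
\item Negating Corollary \ref{coro nondiscrete}(3) requires $M\cap D_n\subseteq (M\cap D_j)^2$ for some $j\ge n$, for every $n$, not only $n=0$. This follows from $X^{1/p^n}-1=(X^{1/p^{n+1}}-1)^p$. Alternatively, argue directly that a discrete valuation would satisfy $v(X-1)=p^n v(X^{1/p^n}-1)\ge p^n$ for all $n$.
\item To get $n+1$ distinct maximal ideals containing $X-1$, you need the full product $\prod_{k=0}^{n}\Phi_{p^k}(Y)$. The two factors $Y^{p^{n-1}}-1$ and $\Phi_{p^n}(Y)$ alone only give two.
\end{itemize}
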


\begin{proof}
(1) It is clear that $\{D_n\}_{n \in \mathbb{N}_0}$ is a system of Dedekind domains
such that $D_n$ is a PID for all $n \in \mathbb{N}_0$.
Hence, $D$ is a B{\'e}zout domain of Krull dimension one by Corollary \ref{pid}(1).

(2)  Consider the prime ideal $P_n=(X^{\frac{1}{p^n}}-1)D_n$
and a proper ideal $Q_n=(\sum\limits_{i=0}^{p-1}X^{\frac{i}{p^n}})D_n$ of $D_n$ for each $n\in\mathbb{N}_0$.
Then $P_nD_{n+1}=P_{n+1}Q_{n+1}$ for each $n\in\mathbb{N}_0$, so $P_0\subseteq P_n$
and $P_n$ is never inert in $D_{n+1}$ for each $n\in\mathbb{N}_0$.
Hence, by Corollary \ref{dede}, $D$ cannot be a Dedekind domain.

(3) Note that $D/M=\bigcup\limits_{i\in\mathbb{N}_0}(D_i/(M\cap D_i))$ for each maximal ideal $M$ of $D$.
Note also that $M\cap D_i = f_iD_i$ for some irreducible element $f_i$ of $F[X^{\frac{1}{p^i}}]$ for all $i \in \mathbb{N}_0$.
Hence, $[D_i/(M\cap D_i):F]=\textnormal{deg}_{i}(f_i)$. Thus, if $D/M$ is finite,
then $F$ is finite. On the other hand, suppose that $M$ is principal. Then $M=f_iD$
for some $f_i\in F[X^{\frac{1}{p^i}}]$ with $i\in\mathbb{N}_0$. Note that $f_i$
is an irreducible element of $D_n$ for every $n\ge i$ by Lemma \ref{lemma1.1}(3).
Hence, $\textnormal{deg}_{n+1}(f_i)=p\cdot\textnormal{deg}_{n}(f_i)$ for each $n\ge i$,
and thus $D/M$ is infinite by Lemma \ref{lemma2.3}(6).

(4) It is easy to see that if $F$ is countable, then $D_n$ is countable for each $n \in \mathbb{N}_0$,
and hence $D$ is countable. Thus, the result follows directly from (2) above and Corollary \ref{coro2.7}(2).
\end{proof}

A nonzero nonunit $q$ of an integral domain $E$ is said to be \textit{primary} if
$qE$ is a primary ideal. A \textit{weakly factorial domain} is an integral domain whose
nonzero nonunits can be written as a finite product of primary elements \cite{az90}.
Every prime element is primary, so each UFD is a weakly factorial domain.
Hence, the class of weakly factorial domains includes UFDs, PIDs and one-dimensional quasilocal domains.

\begin{corollary} \label{coro-wfd}
Let $p$ be a prime number, $F$ a field of characteristic $p$, $X$ an indeterminate over $F$,
$D_n=F[X^{\frac{1}{p^n}}, X^{-\frac{1}{p^n}}]$ for each $n\in\mathbb{N}_0$
and $D=\bigcup\limits_{n\in\mathbb{N}_0}D_n$.
\begin{enumerate}[font=\normalfont]
\item Let $f(X) \in F[X]$ be a nonconstant polynomial with $f(0) \neq 0$.
Then $f(X)$ is a primary element of $D$ if and only if $f(X) = ug(X)^m$ for some irreducible polynomial $g(X) \in F[X]$, $m \in \mathbb{N}$
and $u \in F$.
\item $D$ is a weakly factorial domain.
\item Let $\{X_{\alpha}\}$ be a nonempty set of indeterminates over $D$. Then
$D[\{X_{\alpha}\}]$ is a weakly factorial domain.
\end{enumerate}
\end{corollary}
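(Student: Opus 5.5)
The key fact is that in characteristic $p$ the Frobenius gives, for every polynomial $g(X)=\sum a_iX^i\in F[X]$,
\[
g(X)=\Bigl(\sum a_i^{1/p^n}X^{i/p^n}\Bigr)^{p^n}
\]
whenever the coefficients have $p^n$-th roots. In general $F$ need not be perfect, so I will use the weaker statement $g(X)^{p^n}=g^{(p^n)}(X^{p^n})$ in the other direction. What I will really use is the following description of the maximal ideals containing an irreducible $g(X)\in F[X]$ with $g(0)\neq 0$. In $D_n$, the polynomial $g(X)=g((X^{1/p^n})^{p^n})$ factors in $F[Y]$, $Y=X^{1/p^n}$. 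Since $Y^{p^n}$ generates a purely inseparable extension, $F(Y)/F(Y^{p^n})$ is purely inseparable. So there is exactly one prime of $D_n$ lying over $gD_0$, namely $gD_n=h_n^{e}D_n$ for a single irreducible $h_n$. Consequently exactly one maximal ideal $M_g$ of $D$ contains $g$: by Lemma \ref{lemma2.3}(5) it is determined by its contractions, and these are unique at every level.

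\textbf{Proof of (1).} Given a nonconstant $f$ with $f(0)\neq0$, factor $f=u\prod g_j^{m_j}$ in $F[X]$ with the $g_j$ pairwise nonassociated irreducibles. Distinct $g_j$ generate comaximal ideals in $D_0$. Since $M_{g_j}\cap D_0=g_jD_0$, the maximal ideals $M_{g_j}$ are distinct, and they are exactly the maximal ideals of $D$ containing $f$.

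In the one-dimensional domain $D$ (Proposition \ref{prop3.1}(1)), an element is primary if and only if its radical is a single maximal ideal. Hence $f$ is primary if and only if $f$ has only one distinct irreducible factor, i.e.\ $f=ug^m$. For the direction ``$\Leftarrow$'', $\sqrt{fD}=M_g$ and $fD$ is $M_g$-primary, because in a one-dimensional domain an ideal with maximal radical is primary.

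\textbf{Proof of (2).} Take a nonzero nonunit $a\in D$. Then $a\in D_n$ for some $n$, and after multiplying by a unit power of $X$ we may take $a=f(X^{1/p^n})$ with $f\in F[Y]$ and $f(0)\neq0$. Now $D_n\to D$, $X^{1/p^n}\mapsto X$ is not an automorphism of $D$. Instead, observe that the map $\sigma_n\colon D\to D$ sending $X^{q}\mapsto X^{q/p^n}$ for $q$ in the group $G$ of Corollary \ref{thm31}(1) \emph{is} a ring automorphism of $D=F[X;G]$, since multiplication by $1/p^n$ is an automorphism of $G$. Automorphisms preserve primary elements. Therefore it suffices to treat $a=f(X)\in D_0$, and by part (1) we get $f=u\prod g_j^{m_j}$ with each $g_j^{m_j}$ primary. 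Thus every nonzero nonunit is a finite product of primary elements.

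\textbf{Proof of (3).} This should follow from a known result of Anderson--Zafrullah: $E$ is weakly factorial if and only if $E[\{X_\alpha\}]$ is weakly factorial, for $E$ with the relevant integrality property. Alternatively, it follows from the characterization ``weakly factorial iff every convex/$t$-class group is trivial and every height-one prime $P$ has $\sqrt{xE}=P$ for some $x$'', combined with $D$ being Bézout and hence integrally closed with trivial class group. Since $D$ is integrally closed with trivial $t$-class group and weakly factorial, $Cl_t(D[\{X_\alpha\}])=Cl_t(D)=0$, and height-one primes of the polynomial ring are either extended from $D$ or uppers to zero, both of which are radicals of principal ideals. I expect (3) to be the main obstacle, since I would need the exact citation. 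The essential input is the inseparability argument giving a unique maximal ideal over each irreducible $g$.
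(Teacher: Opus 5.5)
Your parts (1) and (2) are correct. The uniqueness of the prime of $D_n$ over $gD_0$, which you get from pure inseparability, is the same mechanism as the paper's Frobenius step ($h^{p^n}\in D_0$ for $h\in D_n$). The paper uses that step only for ``$\Leftarrow$''. For ``$\Rightarrow$'' it instead computes $\sqrt{fD}\cap D_0=\sqrt{fD_0}$ via $qf(D_0)\cap D=D_0$, whereas you count the maximal ideals lying over the distinct irreducible factors of $f$; both arguments work. Your automorphism $\sigma_n$ of $F[X;G]$ is a clean justification of the reduction in (2). The paper makes that reduction only implicitly, when it applies (1) to irreducibles of $F[X^{\frac{1}{p^n}}]$.

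The gap is in (3): neither route you sketch is a valid argument.

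Your second route rests on a false criterion. Take a valuation domain $V$ with value group $\mathbb{Z}\times\mathbb{Z}$ ordered lexicographically. It has trivial $t$-class group, and its height-one prime $P$ equals $\sqrt{xV}$ for $v(x)=(1,0)$. Yet $V$ is not weakly factorial. A principal ideal $yV$ with $\sqrt{yV}=P$ cannot be $P$-primary, since that would force $yV=yV_P\cap V=yV_P$, i.e.\ $V=V_P$. So the only primary elements have values $(0,m)$, and $x$ is not a product of them. The correct characterization is that weakly factorial domains are the weakly Krull domains with trivial $t$-class group. You never verify the weakly Krull condition (finite character and $E=\bigcap_{P\in X^1(E)}E_P$) for $D[\{X_\alpha\}]$.

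Your first route points to the right source but with the wrong hypothesis. The Anderson--Zafrullah theorem the paper cites says that $E[\{X_\alpha\}]$ is weakly factorial if and only if $E$ is a weakly factorial GCD domain; integral closedness alone does not suffice. With that theorem, (3) follows at once from (2): $D$ is B{\'e}zout by Proposition \ref{prop3.1}(1), hence a GCD domain. That is the paper's entire proof of (3).
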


\begin{proof}
Note that every nonzero prime ideal of $D$ is maximal by Proposition \ref{prop3.1}(1),
so a nonzero nonunit $a \in D$ is primary if and only if $\sqrt{aD}$ is a prime ideal.

(1) $(\Rightarrow)$ If $f(X)$ is primary, then $M:= \sqrt{f(X)D}$ is a maximal ideal of $D$,
and hence $M \cap D_0 = g(X)D_0$ for some irreducible $g(X) \in F[X]$ by Lemma \ref{lemma1.1}(2).
Moreover, $g(X)D_0 = \sqrt{f(X)D} \cap D_0 = \sqrt{f(X)D_0}$ by Lemma \ref{lemma1.1}(4).
Thus, $f(X) = ug(X)^m$ for some $m \in \mathbb{N}$ and $u \in F$.
$(\Leftarrow$) Assume that there are at least two distinct maximal ideals of $D$, say, $M_1, M_2$,
such that $M_1 \cap D_0 = M_2 \cap D_0 = g(X)D_0$. Then $M_1 \cap D_n \neq M_2 \cap D_n$ for some $n \in \mathbb{N}_0$.
Let $h(X^{\frac{1}{p^n}}) \in M_1 \cap D_n$. Then char$(F)=p$ implies $h(X^{\frac{1}{p^n}})^{p^n} \in M_1 \cap D_0 \subseteq M_2 \cap D_n$,
and since $M_2 \cap D_n$ is a prime ideal, $h(X^{\frac{1}{p^n}}) \in M_2 \cap D_n$. Hence, $M_1 \cap D_n \subseteq M_2 \cap D_n$,
and thus $M_1 \cap D_n = M_2 \cap D_n$, a contradiction. Therefore,
$\sqrt{f(X)D} = \sqrt{g(X)D}$ is a maximal ideal.

(2) Let $a \in D$ be a nonzero nonunit. We may assume that $a \in F[X^{\frac{1}{p^n}}] \subsetneq D_n$,
with $a(0) \neq 0$, for some $n \in \mathbb{N}_0$.
Then $a$ is a finite product of prime elements in $F[X^{\frac{1}{p^n}}]$, and since each prime element of $F[X^{\frac{1}{p^n}}]$
with nonzero constant term
is a primary element of $D$ by (1) above, $a$ is a finite product of primary elements of $D$. Thus, $D$ is a weakly factorial domain.

(3) Recall that $D[\{X_{\alpha}\}]$ is a weakly factorial domain if and only if $D$ is a weakly
factorial GCD-domain \cite[Theorems 7 and 13]{az95} and a B{\'e}zout domain is a GCD-domain.
Thus, by (2) and Proposition \ref{prop3.1}(1), $D[\{X_{\alpha}\}]$ is weakly factorial.
\end{proof}

Let $D$ be the integral domain of Proposition \ref{prop3.1}. We are concerned with the factorization of elements of $D$
and the first question one may come up with is the following: which element of $D$ is a prime element?
Since $D$ is a B{\'e}zout domain by Proposition \ref{prop3.1}, this is equivalent to the question of deciding irreducible elements of $D$.
The answer to this question is our next result, and
in doing so, we need the following result concerning irreducible polynomials over a field.

\begin{lemma}
    \label{thc}
Let $F$ be a field and $X$ an indeterminate over $F$.
\begin{enumerate}[font=\normalfont]
\item $($Capelli's theorem \cite{b57}$)$ If $f(X)$ and $g(X)$ are elements of $F[X]$,
 then $f(g(X))$ is an irreducible element of $F[X]$ if and only if
$f(X)$ is an irreducible element of $F[X]$ and $g(X)-\alpha$ is an irreducible element of $F(\alpha)[X]$,
where $\alpha$ is a root of $f(X)$ (in an algebraic closure of $F$).
\item \cite[Theorem 8.1.6]{ka89} If $a\in F$ and $n\in\mathbb{N}$,
then $X^n-a$ is an irreducible polynomial of $F[X]$ if and only if $a\not\in F^p:=\{x^p\mid x\in F\}$ for each prime number $p$
that divides $n$ and $a\not\in -4F^4:=\{-4x^4\mid x\in F\}$ whenever $4\mid n$.
\end{enumerate}
\end{lemma}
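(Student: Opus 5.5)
The plan is to prove part (1) by a degree count in a tower of fields, and then derive part (2) from the prime case together with repeated use of (1). Throughout, assume $g(X)$ is nonconstant. Otherwise $f(g(X))$ is a constant and the statement is degenerate.

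For (1), put $d=\deg f$ and $e=\deg g$, so $f(g(X))$ has degree $de$. If $f=f_1f_2$ is a nontrivial factorization, then $f(g)=f_1(g)f_2(g)$ is also nontrivial, because $g$ is nonconstant. So we may assume $f$ is irreducible. Let $\alpha$ be a root of $f$ and $\beta$ a root of $g(X)-\alpha$. Then $\alpha=g(\beta)\in F(\beta)$ and $f(g(\beta))=0$, which gives
$$[F(\beta):F]=[F(\beta):F(\alpha)]\,[F(\alpha):F]=[F(\beta):F(\alpha)]\cdot d\le e\,d.$$
If $f(g)$ is irreducible, it is the minimal polynomial of $\beta$. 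Hence $[F(\beta):F]=de$, so $[F(\beta):F(\alpha)]=e$ and $g-\alpha$ is irreducible over $F(\alpha)$.

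Conversely, suppose $g-\alpha$ is irreducible over $F(\alpha)$. Every root $\beta$ of $f(g)$ satisfies $g(\beta)=\alpha'$ for some conjugate $\alpha'$ of $\alpha$. The irreducibility transfers to $F(\alpha')$ via an $F$-isomorphism $F(\alpha)\cong F(\alpha')$. So every root of $f(g)$ has degree $de$ over $F$. Therefore the minimal polynomial of any root has degree $de$, and $f(g)$ is irreducible.

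For (2), I first treat $n=q$ prime and show that $X^q-a$ is irreducible if and only if $a\notin F^q$. If $X^q-a$ is reducible, take a monic factor $h$ of degree $k$ with $0<k<q$. Its constant term is, up to sign, $\zeta\theta^k$, where $\theta^q=a$ and $\zeta$ is a $q$-th root of unity. Hence $a^k=c^q$ for some $c\in F$. Since $\gcd(k,q)=1$, we can write $1=uk+vq$, and then $a=(c^u a^v)^q\in F^q$. The converse direction is trivial.

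Next I handle prime powers $q^m$ by induction, applying (1) with $f=X^q-a$ and $g=X^{q^{m-1}}$. Let $\alpha$ be a root with $\alpha^q=a$. Then $X^{q^m}-a$ is irreducible if and only if $a\notin F^q$ and $X^{q^{m-1}}-\alpha$ is irreducible over $F(\alpha)$. The inductive hypothesis over $F(\alpha)$ reduces this to showing $\alpha\notin F(\alpha)^q$, with the extra $-4$ condition when $q=2$. For this I use the norm $N=N_{F(\alpha)/F}$, which satisfies $N(\alpha)=(-1)^{q+1}a$.

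1. For odd $q$: if $\alpha=\gamma^q$, then $a=N(\gamma)^q\in F^q$, which contradicts the hypothesis.
2. For $q=2$: $N(\alpha)=-a$, so $\alpha\in F(\alpha)^2$ only forces $-a\in F^2$. This is the source of the exceptional set $-4F^4$. One computes directly that $X^4+4b^4=(X^2+2bX+2b^2)(X^2-2bX+2b^2)$. Conversely, writing $\alpha=(x+y\alpha)^2$ with $x,y\in F$ and comparing coefficients yields $a=-4b^4$. The higher powers $2^m$ with $m\ge 3$ are then reduced to $m=2$ using the same step over $F(\alpha)$.

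Finally, for general $n=\prod q_i^{m_i}$, the degree of a root $\theta$ of $X^n-a$ over $F$ is divisible by each $[F(\theta^{n/q_i^{m_i}}):F]=q_i^{m_i}$. Since these are pairwise coprime, the degree is $n$.

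I expect the main obstacle to be the $q=2$ bookkeeping. Specifically, I need to check that the $-4F^4$ condition is exactly what appears, and that no further exceptions arise at higher powers of $2$ when passing from $F$ to $F(\alpha)$. The first thing I would try is to show that if $a\notin -4F^4$ and $a\notin F^2$, then $\alpha\notin -4F(\alpha)^4$, again by taking norms.
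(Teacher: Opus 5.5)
The paper does not prove this lemma; it cites Capelli's theorem and Karpilovsky's Theorem 8.1.6. You are therefore supplying a self-contained argument, and most of it is sound:
\begin{itemize}
\item the degree count in $F\subseteq F(\alpha)\subseteq F(\beta)$ for (1), including the transfer to conjugates $\alpha'$;
\item the prime case $X^q-a$;
\item the odd prime-power induction via $N_{F(\alpha)/F}(\alpha)=a$;
\item the coefficient comparison for $n=4$;
\item the final coprimality step.
\end{itemize}
The ``only if'' half of (2) for general $n$ is not written out, but it is immediate. $X^{n/p}-b$ divides $X^n-b^p$, and substituting $X^{n/4}$ into your identity handles $a=-4b^4$.

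There is a genuine gap exactly where you anticipated it: the case $n=2^m$ with $m\ge 3$. After applying (1) with $f=X^2-a$, $g=X^{2^{m-1}}$ and the inductive hypothesis over $F(\alpha)$, you need the following: if $a\notin F^2$ and $a\notin -4F^4$, then $\alpha\notin -4F(\alpha)^4$. The tool you propose, the norm $N=N_{F(\alpha)/F}$, cannot prove this. From $\alpha=-4\gamma^4$ the norm only yields $-a=16N(\gamma)^4$, i.e.\ $a\in -F^4$, which is compatible with both hypotheses. Concretely, take $F=\mathbb{Q}$, $a=-1$, $\alpha=i$. Then $\gamma=(1+i)/2$ satisfies $16N(\gamma)^4=1=-a$, so the norm equation gives no contradiction. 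Yet the conclusion is true there, since $X^8+1=\Phi_{16}(X)$ is irreducible over $\mathbb{Q}$. So the norm simply does not see this obstruction.

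The missing idea is to rewrite $-4\gamma^4=(2\gamma^2)^2$, so $\alpha\in -4F(\alpha)^4$ forces $-\alpha\in F(\alpha)^2$. Since $-\alpha$ is also a root of $X^2-a$, your $n=4$ coefficient comparison applies verbatim. Write $-\alpha=(x+y\alpha)^2$ with $x,y\in F$; this is legitimate because $\{1,\alpha\}$ is a basis, as $a\notin F^2$. Comparing coefficients gives $x^2+ay^2=0$ and $2xy=-1$, hence $a=-4x^4\in-4F^4$, a contradiction. In characteristic $2$ there is nothing to check, since $-4F(\alpha)^4=\{0\}$ and $\alpha\neq 0$. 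With this step the induction on $m$ closes, and no further exceptions arise at higher powers of $2$.
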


Let $D$ be the integral domain of Proposition \ref{prop3.1}
and $f\in D$ a nonzero element. Then $f\in F[X^{\frac{1}{p^n}}, X^{-\frac{1}{p^n}}]$ for some $n\in\mathbb{N}_0$.
Hence, $f=(X^{\frac{1}{p^n}})^mg(X^{\frac{1}{p^n}})$ for some $m\in\mathbb{N}_0$ and
$g(X^{\frac{1}{p^n}})\in F[X^{\frac{1}{p^n}}]$ with $g(0)\neq 0$. It is clear that $(X^{\frac{1}{p^n}})^m$ is a unit of $D$,
so $f$ is irreducible in $D$ if and only if $g(X^{\frac{1}{p^n}})$ is irreducible in $\bigcup\limits_{n\in\mathbb{N}_0}F[X^{\frac{1}{p^n}}]$.
It is also clear that if $k \in \mathbb{N}_0$,
then $g(X^{\frac{1}{p^n}}) = g((X^{\frac{1}{p^{n+k}}})^{p^k})$ in $F[X^{\frac{1}{p^{n+k}}}]$,
so $g(X^{\frac{1}{p^n}})$ is irreducible in $F[X^{\frac{1}{p^{n+k}}}]$ if and only if $g(X^{p^k})$ is irreducible in $F[X]$.

\begin{proposition}
\label{coro3.2}
Let $p$ be a prime number, $F$ a field, $X$ an indeterminate over $F$,
$D_n=F[X^{\frac{1}{p^n}}, X^{-\frac{1}{p^n}}]$ for each $n\in\mathbb{N}_0$
and $D=\bigcup\limits_{n\in\mathbb{N}_0}D_n$.
Let $f(X^{\frac{1}{p^m}}) \in F[X^{\frac{1}{p^m}}]$ be an irreducible polynomial in $D_m$ with $f(0)\neq 0$
for some $m \in \mathbb{N}_0$.
Then the following statements are equivalent.
\begin{enumerate}[font=\normalfont]
\item $f(X^{\frac{1}{p^m}})$ is an irreducible element of $D$.
\item $f(X^{\frac{1}{p^m}})$ is an irreducible element of $E$, where
\[E =
\begin{cases} D_{m+2} &\textnormal{ if $p= 2$},\\
D_{m+1} &\textnormal{ if $p > 2$}.
\end{cases}
\]
\item $f(X^{\frac{1}{p^{m}}})$ is an irreducible element of $D_{m+k}$ for all $k \in \mathbb{N}_0$.
\end{enumerate}
In particular, if $F$ is algebraically closed, then $D$ has no irreducible element and each maximal ideal of $D$ is not invertible.
\end{proposition}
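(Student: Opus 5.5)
Since $D$ is B\'ezout, irreducible and prime elements of $D$ coincide, and an irreducible element of $D_{m+k}$ (a PID) is prime there. So (1) $\Leftrightarrow$ (3) is Lemma \ref{lemma1.1}(3), applied to the system $\{D_{m+k}\}_{k\in\mathbb{N}_0}$. The implication (3) $\Rightarrow$ (2) is trivial. The real content is (2) $\Rightarrow$ (3).

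For (2) $\Rightarrow$ (3), I will move to ordinary polynomials. Put $Y=X^{\frac{1}{p^m}}$. As noted before the proposition, $f(Y)$ is irreducible in $D_{m+k}$ if and only if $f(X^{p^k})$ is irreducible in $F[X]$; the factor $f(0)\neq0$ lets me ignore units and monomial factors. Now apply Capelli's theorem (Lemma \ref{thc}(1)) with $g(X)=X^{p^k}$. Let $\alpha$ be a root of $f$ and $K=F(\alpha)$. Then $f(X^{p^k})$ is irreducible over $F$ if and only if $X^{p^k}-\alpha$ is irreducible over $K$.

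By Lemma \ref{thc}(2), $X^{p^k}-\alpha$ is irreducible over $K$ if and only if $\alpha\notin K^p$, together with $\alpha\notin -4K^4$ when $p=2$ and $k\ge 2$. Neither condition depends on $k$ once $k\ge1$ (for $p$ odd) or $k\ge2$ (for $p=2$).
\begin{itemize}
\item For $p>2$: irreducibility in $D_{m+1}$ gives $\alpha\notin K^p$, which then gives irreducibility for every $k$.
\item For $p=2$: irreducibility in $D_{m+2}$, i.e.\ of $X^4-\alpha$ over $K$, gives both $\alpha\notin K^2$ and $\alpha\notin -4K^4$, hence irreducibility of $X^{2^k}-\alpha$ for all $k$.
\end{itemize}
In both cases, the case $k=0$ is just the hypothesis that $f$ is irreducible.

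The main obstacle is to apply Capelli and the binomial criterion over $K=F(\alpha)$ rather than over $F$, and to check that the $-4K^4$ condition is exactly what forces the step to $D_{m+2}$ when $p=2$. The translation between $D_{m+k}$ and $F[X]$ is routine.

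For the ``in particular'' part, let $F$ be algebraically closed. Every irreducible polynomial of $F[Y]$ with nonzero constant term is $Y-a$ with $a\neq0$. Then $Y^p-a$ splits into linear factors, so $Y-a$ is not irreducible in $D_{m+1}$. Every nonzero nonunit of $D$ lies in some $D_m$ and, up to units, is a product of such linear factors, so by the equivalence $D$ has no irreducible elements.

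For non-invertibility, suppose a maximal ideal $M$ were invertible. Since $D$ is B\'ezout, $M$ would be finitely generated and hence principal, $M=qD$. Then $q$ would be a prime element of $D$, hence irreducible, which is impossible.
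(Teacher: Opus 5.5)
Your proposal is correct and follows the paper's proof essentially step for step. For (2)$\Rightarrow$(3) you use Capelli's theorem with $g(X)=X^{p^k}$ and the binomial criterion applied over $F(\alpha)$, and for non-invertibility the chain B\'ezout $\Rightarrow$ principal $\Rightarrow$ prime $\Rightarrow$ irreducible, exactly as the paper does. The only cosmetic difference is that you get (1)$\Leftrightarrow$(3) from Lemma~\ref{lemma1.1}(3) together with ``irreducible $=$ prime'' in $D$ and in the PIDs $D_{m+k}$, whereas the paper appeals directly to Lemma~\ref{lemma1.1}(4).
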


\begin{proof}
(1) $\Rightarrow$ (2) This is clear.

(2) $\Rightarrow$ (3) Let $\alpha$ be a root of $f(X)$ (in an algebraic closure of $F$) and
$k\in\mathbb{N}_0$. Then $f(X^{p^k})$ is irreducible in $F[X]$ if and only if $f(X)$ is irreducible in $F[X]$
and the polynomial $X^{p^k}-\alpha$ is irreducible in $F(\alpha)[X]$ by Lemma \ref{thc}(1).

Suppose $p=2$. Then Lemma \ref{thc}(2) shows that the following three conditions are equivalent:
\begin{enumerate}[label=(\roman*)]
    \item[$\cdot$] $X^4-\alpha$ is irreducible in $F(\alpha)[X]$.
    \item[$\cdot$] $X^{2^k}-\alpha$ is irreducible in $F(\alpha)[X]$ for each $k\ge 2$.
    \item[$\cdot$] $\alpha\not\in F(\alpha)^2$ and $\alpha\not\in -4F(\alpha)^4$.
\end{enumerate}
Hence, $f(X^{\frac{1}{2^{m}}})$ is irreducible in $D_{m+k}$ if and only if
$f(X^{2^k})$ is irreducible in $F[X]$, if and only if $f(X^4)$ is irreducible in $F[X]$,
if and only if $f(X^{\frac{1}{2^m}})$ is irreducible in $D_{m+2}$.
Next, if $p>2$, then $X^{p^k}-\alpha$ is irreducible in $F(\alpha)[X]$
if and only if $\alpha\not\in F(\alpha)^p$, if and only if $X^p - \alpha$ is irreducible in $F(\alpha)[X]$ by Lemma \ref{thc}(2).
Thus, $f(X^{\frac{1}{p^{m}}})$ is irreducible in $D_{m+k}$ if and only if
$f(X^{p^{k}})$ is irreducible in $F[X]$, if and only if
$f(X^{p})$ is irreducible in $F[X]$, if and only if $f(X^{\frac{1}{p^{m}}})$ is irreducible in $D_{m+1}$.

(3) $\Leftrightarrow$ (1) This follows directly from Lemma \ref{lemma1.1}(4).

In particular, if $F$ is algebraically closed, then $f(X^{\frac{1}{p^n}})$
is not irreducible in both $D_{n+1}$ and $D_{n+2}$ because deg$_{k}(f(X^{\frac{1}{p^n}})) \geq 2$
for every $k \in \mathbb{N}_0$ with $k \geq n+1$, and thus
$f(X^{\frac{1}{p^n}})$ is not irreducible in $D$. This also implies that every maximal ideal of $D$ is not principal.
Thus, as $D$ is a B{\'e}zout domain, every maximal ideal of $D$ is not invertible.
\end{proof}

\begin{corollary}
Let $f(X) \in F[X]$ be an irreducible polynomial with $f(0) \neq 0$
and $p$ an odd prime number. Then $f(X^p)$ $($resp., $f(X^4))$ is irreducible in $F[X]$
if and only if $f(X^{p^n})$ $($resp., $f(X^{2^n}))$ is irreducible in $F[X]$ for all $n \in \mathbb{N}$.
\end{corollary}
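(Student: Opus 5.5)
This corollary is Proposition \ref{coro3.2} read with $m=0$, so the plan is to translate the irreducibility of $f(X^{p^n})$ in $F[X]$ into the irreducibility of $f(X)$ in $D_n$. Throughout, $D_n=F[X^{\frac{1}{p^n}}, X^{-\frac{1}{p^n}}]$ and $D=\bigcup_n D_n$, as in Proposition \ref{coro3.2}.

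\textbf{Step 1 (translation).} We first use the remark preceding Proposition \ref{coro3.2}. The map $X^{\frac{1}{p^n}}\mapsto X$ identifies $F[X^{\frac{1}{p^n}}]$ with $F[X]$ and sends $f(X)=f((X^{\frac{1}{p^n}})^{p^n})$ to $f(X^{p^n})$. Hence $f(X)$ is irreducible in $F[X^{\frac{1}{p^n}}]$ if and only if $f(X^{p^n})$ is irreducible in $F[X]$.

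\textbf{Step 2 (passing to the Laurent ring).} Next we check that, since $f(0)\neq 0$, irreducibility in $F[X^{\frac{1}{p^n}}]$ agrees with irreducibility in the Laurent ring $D_n$. The units of $D_n$ are the elements $cX^{\frac{j}{p^n}}$ with $c\in F^{\times}$ and $j\in\mathbb{Z}$. Suppose $f=gh$ in $D_n$. After moving monomial units from one factor to the other, we may assume $g$ and $h$ are polynomials in $X^{\frac{1}{p^n}}$ with nonzero constant terms. Then $f=gh$ holds in $F[X^{\frac{1}{p^n}}]$. Moreover, such a polynomial is a unit of $D_n$ exactly when it is a nonzero constant. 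So $f$ has a proper factorization in $D_n$ if and only if it has one in the polynomial ring.

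\textbf{Step 3 (applying Proposition \ref{coro3.2}).} The hypothesis makes $f(X)$ irreducible in $F[X]$, so by Step 2 it is irreducible in $D_0$, and Proposition \ref{coro3.2} applies with $m=0$. Its equivalence (2)$\Leftrightarrow$(3) says the following. For $p>2$, $f$ is irreducible in $D_1$ if and only if it is irreducible in $D_k$ for all $k$. For $p=2$, $f$ is irreducible in $D_2$ if and only if it is irreducible in $D_k$ for all $k$. Translating back through Steps 1 and 2:
\begin{itemize}
\item for odd $p$, $f(X^p)$ is irreducible in $F[X]$ if and only if $f(X^{p^n})$ is irreducible for all $n\in\mathbb{N}$;
\item for $p=2$, $f(X^4)$ is irreducible in $F[X]$ if and only if $f(X^{2^n})$ is irreducible for all $n\in\mathbb{N}_0$, and in particular for all $n\in\mathbb{N}$.
\end{itemize}

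\textbf{Step 4 (the case $n=1$ when $p=2$).} If $f(X^4)$ is irreducible, then so is $f(X^2)$: a factorization $f(X^2)=g(X)h(X)$ into nonconstant factors substitutes to $f(X^4)=g(X^2)h(X^2)$, again into nonconstant factors. This is also contained in Step 3, since $n=1$ is one of the indices covered there. The converse direction in each case is trivial: take $n=1$ (resp. $n=2$).

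The only step needing care is Step 2, the passage between polynomial rings and Laurent rings. Alternatively, one can bypass the rings entirely and argue directly as in the proof of Proposition \ref{coro3.2}. By Capelli's theorem (Lemma \ref{thc}(1)) and Lemma \ref{thc}(2) applied to $X^{p^k}-\alpha$ over $F(\alpha)$, irreducibility of $X^{p^k}-\alpha$ reduces to the conditions $\alpha\notin F(\alpha)^p$, and additionally $\alpha\notin -4F(\alpha)^4$ when $p=2$. These conditions do not depend on $k$ for $k\ge1$ (resp. $k\ge2$), which gives the corollary directly.
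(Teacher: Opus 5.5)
Your proposal is correct and follows essentially the same route as the paper. The paper also identifies irreducibility of $f(X^{p^n})$ in $F[X]$ with irreducibility of $f(X)$ in $D_n$ via $f(X)=f((X^{\frac{1}{p^n}})^{p^n})$, and then applies the equivalence (2)$\Leftrightarrow$(3) of Proposition \ref{coro3.2} with $m=0$. Your Step 2, comparing the polynomial ring with the Laurent ring, just spells out what the paper treats as clear, and your closing Capelli-based alternative is the proof of Proposition \ref{coro3.2} itself.
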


\begin{proof}
This follows from the equivalence of (2) and (3) in Proposition \ref{coro3.2} by observing that
$f(X) = f((X^{\frac{1}{q^{n}}})^{q^n})$, so $f(X)$ is irreducible in $D_{n}$
if and only if $f(X^{q^n})$ is irreducible in $F[X]$ for a prime number $q$.
\end{proof}

For $d\in\mathbb{N}$,
let $\Phi_d(X) \in \mathbb{Z}[X]$ be the $d$-th cyclotomic polynomial. It is useful to note that if
$D = \bigcup\limits_{n\in\mathbb{N}_0}\mathbb{Q}[X^{\frac{1}{p^n}}, X^{-\frac{1}{p^n}}]$ for a prime number $p$,
then $\Phi_d(X)$ is divided by (countably) infinitely many prime elements in $D$
if and only if $p \nmid d$, as the following corollary shows.

\begin{corollary} \label{excyclo}
Let $p$ be a prime number, $X$ an indeterminate over $\mathbb{Q}$,
$D_n$ = $\mathbb{Q}[X^{\frac{1}{p^n}}, X^{-\frac{1}{p^n}}]$ for each $n\in\mathbb{N}_0$
and $D=\bigcup\limits_{n\in\mathbb{N}_0}D_n$. Then for each $d\in\mathbb{N}$, the following statements hold.
\begin{enumerate}[font=\normalfont]
\item  $p\mid d$ if and only if $\Phi_d(X)$ is a prime element of $D$.
\item If $p\nmid d$, then $\Phi_d(X)$ is contained in countably infinitely many maximal ideals of $D$.
\end{enumerate}
\end{corollary}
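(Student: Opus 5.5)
The plan is to use the classical formula for cyclotomic polynomials under substitution $X\mapsto X^{p^k}$:
\[
\Phi_d(X^{p^k}) = \Phi_{dp^k}(X) \quad \text{if } p\mid d, \qquad
\Phi_d(X^{p^k}) = \prod_{j=0}^{k}\Phi_{dp^j}(X) \quad \text{if } p\nmid d .
\]
Both come from $\Phi_d(X^p)=\Phi_{dp}(X)$ when $p\mid d$ and $\Phi_d(X^p)=\Phi_{dp}(X)\Phi_d(X)$ when $p\nmid d$, applied inductively. Together with the irreducibility of every cyclotomic polynomial over $\mathbb{Q}$, these formulas reduce both statements to Proposition \ref{coro3.2} and Lemma \ref{lemma1.1}. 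The only genuinely external input is the classical substitution formula, which I would quote rather than reprove.

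For (1), suppose first that $p\mid d$. Then $\Phi_d(X^{p^k})=\Phi_{dp^k}(X)$ is irreducible in $\mathbb{Q}[X]$ for every $k$. By the remark preceding Proposition \ref{coro3.2}, this means $\Phi_d(X)$ is irreducible in $D_k$ for all $k\ge 0$, noting $\Phi_d(0)=\pm1\neq 0$ and that $X$ is a unit. So $\Phi_d(X)$ is irreducible in $D$ by Proposition \ref{coro3.2}, and hence prime since $D$ is Bézout (Proposition \ref{prop3.1}(1)). Alternatively, Lemma \ref{lemma1.1}(3) gives primality directly. Conversely, if $p\nmid d$, then $\Phi_d(X^p)=\Phi_{dp}(X)\Phi_d(X)$ is a product of two nonconstant factors. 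Hence $\Phi_d(X)$ is reducible in $D_1$, and so it is not irreducible (in particular not prime) in $D$.

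For (2), take $p\nmid d$. For each $k$, the element $\Phi_d(X)=\Phi_d((X^{1/p^k})^{p^k})$ factors in $D_k$ as $\prod_{j=0}^k\Phi_{dp^j}(X^{1/p^k})$. These $k+1$ factors are irreducible in $D_k$ and pairwise nonassociate, since they generate distinct primes. Also note $\Phi_d$ is separable over $\mathbb{Q}$, so each factor appears once. Hence $\Phi_d(X)$ lies in at least $k+1$ distinct maximal ideals of $D_k$. By Lemma \ref{lemma1.1}(1), each of these lifts to a maximal ideal of $D$, and distinct contractions give distinct maximal ideals of $D$. Since $k$ is arbitrary, infinitely many maximal ideals of $D$ contain $\Phi_d(X)$.

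Countability is the main obstacle. Every maximal ideal $M$ of $D$ containing $\Phi_d(X)$ is determined by the sequence $(M\cap D_k)_k$ (Lemma \ref{lemma2.3}(5)), and each term lies in a finite set of primes of $D_k$ containing $\Phi_d$. A priori this allows uncountably many branches, so I will show the branching tree is thin. Write $g_j=\Phi_{dp^j}$. For $j\ge 1$, $p\mid dp^j$, so $g_j(X^{1/p^k})$ stays irreducible in every $D_{k'}$ with $k'\ge k$ by part (1)'s computation. Thus the prime $g_j(X^{1/p^k})D_k$ has a unique maximal ideal above it in each later $D_{k'}$, namely the one generated by the same element, and hence a unique maximal ideal of $D$ above it. The only branching occurs along the factor $\Phi_d(X^{1/p^k})$, which splits again at level $k+1$. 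So the maximal ideals containing $\Phi_d(X)$ are the principal primes $\Phi_{dp^j}(X^{1/p^k})D$ for $k\ge0$ and $1\le j\le k$ (countably many), together with the single limit ideal $\bigcup_k \Phi_d(X^{1/p^k})D_k$, which is maximal by Theorem \ref{th1}(1). That gives countably infinitely many, as claimed.
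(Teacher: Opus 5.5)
Your proposal is correct and takes essentially the paper's route. You use the substitution identities $\Phi_d(X^p)=\Phi_{pd}(X)$ (when $p\mid d$) and $\Phi_d(X^p)=\Phi_{pd}(X)\Phi_d(X)$ (when $p\nmid d$) with Proposition~\ref{coro3.2} for primality when $p\mid d$, and $\Phi_d(X^{p^n})=\Phi_d(X)\prod_{i=1}^{n}\Phi_{p^id}(X)$ with Lemma~\ref{lemma1.1}(1) to show the maximal ideals over $\Phi_d(X)$ are the limit ideal generated by the $\Phi_d(X^{\frac{1}{p^i}})$ and the principal primes $\Phi_{pd}(X^{\frac{1}{p^i}})D$. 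The only differences are minor: you get the converse of (1) directly from the splitting in $D_1$ instead of deducing it from (2), and you spell out the countability argument (that the primes $\Phi_{dp^j}(X^{\frac{1}{p^k}})$ with $j\ge 1$ do not branch), which the paper only asserts.
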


\begin{proof}
It is well-known \cite[p.280]{l02} that
\[\Phi_{d}(X^p)=
\begin{cases}\Phi_{pd}(X)\Phi_d(X) &\textnormal{ if $p\nmid d$},\\
\Phi_{pd}(X) &\textnormal{ if $p\mid d$}.
\end{cases}
\]
Hence, if $p\mid d\in\mathbb{N}$, we have $\Phi_d(X^{p^2})=\Phi_{pd}(X^p)=\Phi_{p^2d}(X)$,
which is irreducible over $\mathbb{Q}$, so $\Phi_d(X)$ is a prime element of $D$ by Propositions \ref{coro3.2} and \ref{prop3.1}(1).
It also follows that if $p\nmid d$, then $\Phi_d(X^{p^n})=\Phi_d(X)\prod\limits_{i=1}^{n}\Phi_{p^id}(X)$ for each $n\in\mathbb{N}$.
Therefore, by Lemma \ref{lemma1.1}(1), the set of maximal ideals of $D$ containing
$\Phi_d(X)$ is $\{N\}\cup\{\Phi_{pd}(X^{\frac{1}{p^i}})D\mid i\in\mathbb{N}\}$,
where $N$ is the ideal of $D$ generated by $\{\Phi_d(X^{\frac{1}{p^i}})\mid i\in\mathbb{N}\}$,
because dim$(D)=1$ by Proposition \ref{prop3.1}.
In particular, if $\Phi_d(X)$ is a prime element of $D$, then
$p\mid d$ by (2).
\end{proof}

Let $D = \bigcup\limits_{n\in\mathbb{N}_0}\mathbb{Q}[X^{\frac{1}{p^n}}, X^{-\frac{1}{p^n}}]$ for a prime number $p$.
Then, by Corollary \ref{excyclo}, we can find a sufficiently many prime elements of $D$.
We next give another way of how to construct a prime element of $D$.

\begin{example} \label{ex1}
{\em Let $F=\mathbb{Q}$, $p$ a prime number and $f(X)=a_0+a_1X+\cdots+a_mX^m$ a nonconstant polynomial of $\mathbb{Z}[X]$
such that $q\mid a_0, q\mid a_1,\dots, q\mid a_{m-1}, q\nmid a_m, q^2\nmid a_0$ for some prime number $q$.

(1) $f(X)$ is irreducible in $\mathbb{Q}[X^{\frac{1}{p^n}}]$ for all $n\in\mathbb{N}_0$
by Eisenstein's criterion, and hence
$f(X)$ is irreducible in $D$ by Lemma \ref{lemma1.1}(3). Thus, $f(X)D$ is a principal prime ideal of $D$.

(2) Let $M_n = f(X)D \cap D_n$ for some $n \in \mathbb{N}_0$.
Then $M_n = f(X)D_n$ and deg$_nf(X) = m\cdot p^n$.
Thus, $[D_n/M_n:\mathbb{Q}] = m\cdot p^n$ and $[D_{n+1}/M_{n+1}:D_n/M_n]=p$.}
\end{example}

By Proposition \ref{coro3.2},
we are interested in how an element of $F[X^{\frac{1}{p^n}},X^{-\frac{1}{p^n}}]$
factors in $F[X^{\frac{1}{p^{n+1}}},X^{-\frac{1}{p^{n+1}}}]$ for $n \in \mathbb{N}_0$ and a prime number $p$.
The case when $\textnormal{char}(F)=p$ gives a neat answer to this question.

\begin{lemma}
\label{lemma1.5}
Let $p$ be a prime number, $F$ a field of characteristic $p$ and $X$ an indeterminate over $F$.
If $f(X)\in F[X]$ is irreducible as an element of $F[X]$ but not as an element of $F[X^{\frac{1}{p}}]$,
then there exists an irreducible element $g(X^{\frac{1}{p}})$ of $ F[X^{\frac{1}{p}}]$
such that $f(X)F[X]=g(X^{\frac{1}{p}})^{p}F[X]=g(X^{\frac{1}{p}})F[X^{\frac{1}{p}}]\cap F[X].$
\end{lemma}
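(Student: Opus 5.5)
Set $Y = X^{\frac{1}{p}}$, so that $F[X] = F[Y^p] \subseteq F[Y]$ and $f(X) = f(Y^p)$. The idea is that in characteristic $p$, the Frobenius map sends $F[Y]$ into $F^p[X] \subseteq F[X]$. The plan is to show that $f(Y^p)$, being reducible in $F[Y]$, has exactly one irreducible factor up to units, and that this factor occurs with multiplicity $p$.

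First, we show that all irreducible factors of $f(Y^p)$ in $F[Y]$ are associates. Take an irreducible factor $g(Y)$ of $f(Y^p)$ in $F[Y]$. Then $g(Y)^p = g^{(p)}(Y^p)$, where $g^{(p)}$ is obtained by raising the coefficients of $g$ to the $p$-th power, so $g(Y)^p \in F[X]$. Since $F[Y]$ is integral over $F[X]$, the contraction $g(Y)F[Y] \cap F[X]$ is a nonzero prime ideal of the PID $F[X]$. It contains $f(X)$, which is irreducible in $F[X]$, so
$$g(Y)F[Y] \cap F[X] = f(X)F[X].$$
Now suppose $h(Y)$ is any other irreducible factor of $f(Y^p)$ in $F[Y]$. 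Then $h(Y)^p \in F[X] \cap f(X)F[Y] \subseteq g(Y)F[Y]$, and since $g(Y)F[Y]$ is prime, $g \mid h$. Hence $h$ is an associate of $g$. This is the same argument as in Corollary \ref{coro-wfd}(1). It follows that $f(Y^p) = u\,g(Y)^e$ for some $u \in F^{\times}$ and some $e \ge 1$.

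Second, we pin down $e$ by a degree count. Comparing degrees in $Y$ gives $e\cdot\deg g = p\cdot\deg f$. Since $g(Y)^p \in g(Y)F[Y] \cap F[X] = f(X)F[X]$, we have $f(X) \mid g(Y)^p$ in $F[X]$, hence $f(Y^p) \mid g(Y)^p$ in $F[Y]$, which gives $e \le p$. Conversely, $e \ge 2$ because $f$ is reducible in $F[Y]$ and $g$ is irreducible there. So the key step is to force $e = p$.

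For that, observe that $g(Y)^p = g^{(p)}(X)$ lies in $f(X)F[X]$ and has $X$-degree equal to $\deg g$. Therefore $\deg f \le \deg g$, and $e\cdot\deg g = p\cdot\deg f \le p\cdot\deg g$ merely recovers $e \le p$. To get the reverse inequality, I will show directly that $g^p$ divides $f(Y^p)$. Write $f(X) = g^{(p)}(X)\,q(X)$ in $F[X]$. Then $f(Y^p) = g(Y)^p\,q(Y^p)$, so $e \ge p$, and combined with $e \le p$ this gives $e = p$. Consequently $q(Y^p)$ is a unit, so $q$ is a constant, $f$ is an associate of $g^{(p)}(X) = g(Y)^p$, and $f(X)F[X] = g(Y)^pF[X]$.

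Putting these together, $f(X)F[X] = g(Y)^pF[X] = g(Y)F[Y] \cap F[X]$. The only point needing care is the step where $f(X) = g^{(p)}(X)\,q(X)$ forces $e \ge p$, and the factorization $f(X) = g^{(p)}(X)\,q(X)$ exists precisely because $f(X) \mid g(Y)^p$ in $F[X]$.
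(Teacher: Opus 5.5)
Your first step is sound, apart from one unstated justification: $h(Y)^p\in f(X)F[Y]$ does not follow from $h\mid f$ alone. It holds because your contraction argument for $g$ applies verbatim to $h$, giving $h(Y)^p\in h(Y)F[Y]\cap F[X]=f(X)F[X]$. The relation $e\deg g=p\deg f$ and the bounds $2\le e\le p$ are also fine.

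The genuine gap is in the step meant to force $e\ge p$: you have the divisibility backwards. What you established is $f(X)\mid g^{(p)}(X)$ in $F[X]$. That gives $g^{(p)}(X)=f(X)q(X)$, not $f(X)=g^{(p)}(X)q(X)$. With the correct orientation, substituting $f(X)=u\,g(Y)^e$ yields $q(Y^p)=u^{-1}g(Y)^{p-e}$, which only reproves $e\le p$. Degree counting cannot close the gap either. For example, $p=3$, $e=2$, $\deg g=3$, $\deg f=2$ satisfies $e\deg g=p\deg f$, $\deg f\le \deg g$ and $2\le e\le p$.

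What is missing is an algebraic reason why $p\mid e$. One way to supply it:
\begin{itemize}
\item Both $g(Y)^e=u^{-1}f(X)$ and $g(Y)^p=g^{(p)}(X)$ lie in $F[X]$.
\item If $p\nmid e$, write $1=ae+bp$ with $a,b\in\mathbb{Z}$. Then $g(Y)\in F(X)$.
\item Since $F[Y]$ is integral over the integrally closed domain $F[X]$, this gives $g(Y)\in F[X]$.
\item Then $g(Y)\in g(Y)F[Y]\cap F[X]=f(X)F[X]$, so $f(X)=u\,g(Y)^e$ divides $g(Y)$ in $F[Y]$. This is impossible, since $e\ge 2$ and $g$ is not a unit.
\end{itemize}
Hence $p\mid e$, so $e=p$. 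Then $f(X)=u\,g(Y)^p$ gives $f(X)F[X]=g(Y)^pF[X]$, as you wanted.

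For comparison, the paper's proof uses the correctly oriented relation $g_1^p=f(X)h$ with $h\in F[X]$. It gets $h=u\,g_1^{p-k}$ from unique factorization in $F[X^{\frac1p}]$ and concludes $k=p$ from $h\in F[X]$. The paper states that last step briefly; an argument like the one above is what justifies it.
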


\begin{proof}
Since $f(X)$ is not irreducible in $F[X^{\frac{1}{p}}]$, we have $f(X)=g_1(X^{\frac{1}{p}})\cdots g_k(X^{\frac{1}{p}})$
for some (not necessarily distinct) irreducible elements $\{g_i(X^{\frac{1}{p}})\}_{i=1}^{k}$ of $F[X^{\frac{1}{p}}]$.
Note that $g_1(X^{\frac{1}{p}})F[X^{\frac{1}{p}}]\cap F[X]$ contains $f(X)$. Since $g_1(X^{\frac{1}{p}})F[X^{\frac{1}{p}}]\cap F[X]$
is a prime ideal of $F[X]$, we must have $f(X)F[X]=g_1(X^{\frac{1}{p}})F[X^{\frac{1}{p}}]\cap F[X]$.
Note also that $g_1(X^{\frac{1}{p}})^{p}\in g_1(X^{\frac{1}{p}})F[X^{\frac{1}{p}}]\cap F[X]$, so $g_1(X^{\frac{1}{p}})^{p}=f(X)h(X)$
for some $h(X)\in F[X]$. Now, $g_1(X^{\frac{1}{p}})^{p}=g_1(X^{\frac{1}{p}})\cdots g_k(X^{\frac{1}{p}}) h(X)$.
Since $F[X^{\frac{1}{p}}]$ is a UFD whose units are nonzero elements of $F$, it follows that there exist nonzero $u_i\in F$
such that $u_ig_i(X^{\frac{1}{p}})=g_1(X^{\frac{1}{p}})$ for each $i$ and $h(X)=u_1\cdots u_kg_1(X^{\frac{1}{p}})^{p-k}$.
Since $k>0$ and $h(X)\in F[X]$, we must have $p=k$. Thus, $f(X)=ug_1(X^{\frac{1}{p}})^{p}$, where $u=u_1^{-1}\cdots u_k^{-1}$,
from which the conclusion follows.
\end{proof}

Let $X^1(E)$ be the set of height one prime ideals of an integral domain $E$.
Then $E$ is called a \textit{generalized Krull domain} if the following three properties are satisfied: (i)
$E = \bigcap\limits_{P \in X^1(D)}E_P$, (ii) $E_P$ is a valuation domain for all $P \in X^1(E)$
and (iii) each nonzero nonunit of $E$ is contained in only finitely many prime ideals in $X^1(E)$.
Hence, almost Dedekind domain is a Dedekind domain if and only if it is a generalized Krull domain.
 A \textit{Krull domain} $E$ is a generalized Krull domain such that $E_P$ is a DVR for all $P \in X^1(E)$.
Mori-Nagata theorem says that the integral closure of a Noetherian domain is a Krull domain \cite[Theorem 33.10]{n62}.
We next show that $D = \bigcup\limits_{n\in\mathbb{N}_0}\mathbb{Q}[X^{\frac{1}{p^n}}, X^{-\frac{1}{p^n}}]$
is a generalized Krull domain if $\textnormal{char}(F)=p$.

\begin{theorem} \label{th1.5}
Let $p$ be a prime number, $F$ a field of characteristic $p$, $X$ an indeterminate over $F$,
$D_n=F[X^{\frac{1}{p^n}}, X^{-\frac{1}{p^n}}]$ for each $n\in\mathbb{N}_0$
and $D=\bigcup\limits_{n\in\mathbb{N}_0}D_n$. Then the following statements are satisfied.
\begin{enumerate}[font=\normalfont]
\item $D$ is not an almost Dedekind domain.
\item $D$ is of finite character.
\item $D$ is a one-dimensional generalized Krull domain.
\item The map $\varphi :$ Spec$(D) \rightarrow$ Spec$(D_0)$ given by $Q \mapsto Q \cap D_0$ is a one-to-one correspondence.
\item Let $M = \bigcup\limits_{n \in \mathbb{N}_0}(X^{\frac{1}{p^n}}-1)D_n$.
Then $M$ is a maximal ideal of $D$, $D_M$ is a rank one nondiscrete valuation domain and $D/M \cong F$.
\end{enumerate}
\end{theorem}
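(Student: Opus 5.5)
The plan is to prove (4) first and derive the other parts from it. Everything rests on the Frobenius argument already used in Corollary \ref{coro-wfd}(1) and on Lemma \ref{lemma1.5}.

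For (4), take $Q\in$ Spec$(D)$. If $Q=(0)$ there is nothing to prove, so assume $Q\neq (0)$. Then $Q$ is maximal by Proposition \ref{prop3.1}(1), and $Q\cap D_0$ is a maximal ideal of $D_0$ by Lemma \ref{lemma1.1}(2), so $\varphi$ is well defined. Surjectivity onto the nonzero primes follows from Lemma \ref{lemma1.1}(1).

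For injectivity, suppose $Q_1\cap D_0=Q_2\cap D_0$. For each $h\in Q_1\cap D_n$ we have $h^{p^n}\in D_0$, because char$(F)=p$ makes $h\mapsto h^{p^n}$ additive and sends $X^{a/p^n}$ to $X^a$. Hence $h^{p^n}\in Q_1\cap D_0\subseteq Q_2$, and since $Q_2$ is prime, $h\in Q_2$. By symmetry $Q_1\cap D_n=Q_2\cap D_n$ for all $n$, and Lemma \ref{lemma2.3}(5) gives $Q_1=Q_2$.

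\begin{itemize}
\item \textbf{Part (2).} Finite character follows at once from (4). If $0\neq a\in D$ is a nonunit, every maximal ideal containing $a$ contains $aD\cap D_0\neq(0)$ (Lemma \ref{lemma1.1}(2)). A nonzero ideal of the PID $D_0$ lies in only finitely many maximal ideals, and by (4) each of these has exactly one maximal ideal of $D$ above it.
\item \textbf{Part (3).} Since $D$ is one-dimensional Pr\"ufer (Theorem \ref{th1}(1)), we have $X^1(D)=$ Max$(D)$, each $D_M$ is a valuation domain, and $D=\bigcap_M D_M$. Together with (2), this shows $D$ is a one-dimensional generalized Krull domain.
\item \textbf{Part (1).} This is immediate from Corollary \ref{thm31}(2); it also follows from (5).
\end{itemize}

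For (5), let $P_n=(X^{1/p^n}-1)D_n$. Each $P_n$ is a maximal ideal of $D_n$, since $X^{1/p^n}-1$ is linear in $X^{1/p^n}$. In characteristic $p$ we have $X^{1/p^n}-1=(X^{1/p^{n+1}}-1)^p$, so $P_n\subseteq P_{n+1}$. Theorem \ref{th1}(1) then makes $M=\bigcup_n P_n$ maximal, and $M\cap D_n=P_n$ because both are maximal ideals of $D_n$.

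Each $D_n/P_n\cong F$, so Lemma \ref{lemma2.3}(6) gives $D/M\cong F$. Next, $P_nD_{n+1}=P_{n+1}^p$, so $P_n\subseteq P_j^2$ for all $j>n$. Corollary \ref{coro nondiscrete} therefore shows $D_M$ is not a DVR. Being a localization of a one-dimensional Pr\"ufer domain at a maximal ideal, $D_M$ is a rank one valuation domain, and hence nondiscrete.

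The only delicate point I expect is the injectivity in (4), specifically the claim that $p^n$-th powers land in $D_0$. This needs the Frobenius to be additive on $D_n$, which holds in characteristic $p$ with coefficients $c^{p^n}\in F$; that is all that is required.
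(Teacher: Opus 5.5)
Your proof is correct, but it is organized differently from the paper's. The paper proves (2) before (4), in the following steps:
\begin{enumerate}
\item Using Lemma \ref{lemma1.5} and induction, it builds, for each irreducible $f_0(X)\in F[X]$, irreducible polynomials $f_k(X^{\frac{1}{p^k}})$ such that $f_k$ equals $f_{k+1}$ or is an associate of $f_{k+1}^p$.
\item It shows that the ideal $N$ generated by the $f_k$ is maximal with $N\cap D_k=f_kD_k$.
\item It asserts (``easy to see'') that $N$ is the only maximal ideal of $D$ over $f_0(X)D_0$, and deduces finite character.
\end{enumerate}
Only afterwards does it prove (4), using the same Frobenius argument you use.

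You prove (4) first and obtain (2) as a two-line consequence: a nonzero proper ideal of the PID $D_0$ lies in finitely many maximal ideals, and each of these has a unique maximal ideal of $D$ above it. This makes Lemma \ref{lemma1.5} unnecessary. It also turns the paper's unproved uniqueness step into an actual argument.

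What the paper's route buys is an explicit description of the maximal ideal over $f_0(X)D_0$. Its contraction to each $D_k$ is generated by a $p^j$-th root of $f_0$, up to a unit. This is the picture the paper later relies on, again via Lemma \ref{lemma1.5}, when it shows residue fields stay finite over a finite $F$.

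In (5), you replace the paper's bare assertion that $MD_M$ is not finitely generated with the computation $P_nD_{n+1}=P_{n+1}^p$ and Corollary \ref{coro nondiscrete}(3). This is a more explicit justification of nondiscreteness.

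A minor point: in (4) you do not need Lemma \ref{lemma2.3}(5), which is stated only for maximal ideals. The identity $Q_i=\bigcup_n (Q_i\cap D_n)$ already gives $Q_1=Q_2$, including when one of them is $(0)$.
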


\begin{proof}
(1) This follows directly from Corollary \ref{thm31}.

(2) Let $f_0(X)$ be an irreducible element of $F[X]$. For each $k \in\mathbb{N}_0$,
there exists an irreducible polynomial $f_k(X^{\frac{1}{p^k}})\in F[X^{\frac{1}{p^k}}]$
such that either $f_k(X^{\frac{1}{p^k}}) = f_{k+1}(X^{\frac{1}{p^{k+1}}})$ or
$f_k(X^{\frac{1}{p^k}}) = f_{k+1}(X^{\frac{1}{p^{k+1}}})^p$ by Lemma \ref{lemma1.5} and induction on $k$.
Let $N$ be the ideal of $D$ generated by $\{f_k(X^{\frac{1}{p^k}})\mid k\in\mathbb{N}_0\}$.
Then $N\cap D_k$ is generated by $f_k(X^{\frac{1}{p^k}})$ for each $k\in\mathbb{N}_0$,
so $N$ is a maximal ideal of $D$ by Theorem \ref{th1}(1).
It is easy to see that $N$ is the only maximal ideal of $D$ that contracts to $f_0(X)D_0$.
Hence, for each $k\in\mathbb{N}_0$, an irreducible element of $D_k$ is contained in a unique maximal ideal of $D$.
Therefore, each nonunit of $D$ is contained in only finitely many maximal ideals of $D$,
because $D_k$ is a PID for all $k \in \mathbb{N}_0$.

(3) By Proposition \ref{prop3.1}, $D$ is a one-dimensional B{\'e}zout domain.
Thus, by (2), $D$ is a generalized Krull domain.

(4) Let $Q_1$ and $Q_2$ be two distinct prime ideals of $D$.
Then there is an $n \in \mathbb{N}_0$ such that $Q_1 \cap D_n \nsubseteq Q_2 \cap D_n$, so
we can choose $f(X^{\frac{1}{p^n}}) \in Q_1 \cap D_n \setminus Q_2 \cap D_n$.
Hence, $f(X^{\frac{1}{p^n}})^{p^n}  \in Q_1 \cap D_0 \setminus Q_2 \cap D_0$,
and thus, $Q_1 \cap D_0 \neq Q_2 \cap D_0$. Therefore, $\varphi$ is injective. Moreover, if
$P$ is a prime ideal of $D_0$, then $PD \neq D$, and hence $PD \subseteq Q$ for some prime ideal $Q$ of $D$.
Thus, $Q \cap D_0 = P$. Therefore, $\varphi$ is surjective.

(5) It is clear that $(X^{\frac{1}{p^n}}-1)D_n$ is a prime ideal of $D_n$ such that
$$(X^{\frac{1}{p^n}}-1)D_n \subsetneq (X^{\frac{1}{p^{n+1}}}-1)D_{n+1}$$ for all $n \in \mathbb{N}_0$.
Hence, $M$ is a maximal ideal of $D$ by Theorem \ref{th1}(1) and ht$M=1$ by Proposition \ref{coro3.2}(1).
It is clear that $MD_M$ is not finitely generated. Thus, $D_M$ is a rank-one nondiscrete valuation domain,
because $D$ is a B{\'e}zout domain.

Now note that $M \cap D_n = (X^{\frac{1}{p^n}}-1)D_n$;
so $D_n/(M \cap D_n) = F$. Hence, if $f \in D$, then $f \in D_n$ for some $n \in \mathbb{N}_0$,
and thus $f+ (M \cap D_n) = a + (M \cap D_n)$ for some $a \in F$. Therefore, $D/M = F$.
\end{proof}

\begin{remark}
    {\em
Let $E[[X]]$ be the power series ring over an integral domain $E$
and $c(f)$ the ideal of $E$ generated by the coefficients of $f \in E[[X]]$. Then $E[[X]]$
is a generalized Krull domain if and only if $E$ is a Krull domain \cite[Theorem 1]{gkt20},
so if $E$ is a generalized Krull domain but not a Krull domain, then $E[[X]]$ is not a
generalized Krull domain. However, if $E$ is a generalized Krull domain
and $N_v = \{f \in E[[X]] \mid f \neq 0$ and $c(f)^{-1} = E\}$,
then $E[[X]]_{N_v}$ is a one-dimensional generalized Krull domain \cite[Corollary 3.9]{c20}.
Thus, if $E$ is the ring of Theorem \ref{th1.5}, $E[[X]]_{N_v}$ is a one-dimensional generalized Krull domain.}
\end{remark}




Now, we establish a sufficient condition for $D$ to have no irreducible elements.

\begin{corollary} \label{coro of th1.5}
Let $p$ be a prime number, $F$ a field of characteristic $p$, $X$ an indeterminate over $F$,
$D_n=F[X^{\frac{1}{p^n}}, X^{-\frac{1}{p^n}}]$ for each $n\in\mathbb{N}_0$
and $D=\bigcup\limits_{n\in\mathbb{N}_0}D_n$. For $n \in \mathbb{N}_0$, let
$f(X^{\frac{1}{p^{n+1}}}) \in F[X^{\frac{1}{p^{n+1}}}]$ be a nonconstant polynomial
with $f(0) \neq 0$. Then the following statements are satisfied.
\begin{enumerate}[font=\normalfont]
\item $f(X^{\frac{1}{p^{n+1}}})^p \in D_n$.
\item If $f(X^{\frac{1}{p^{n+1}}})\not\in D_n$, then $f(X^{\frac{1}{p^{n+1}}})$ is irreducible in $D_{n+1}$
if and only if $f(X^{\frac{1}{p^{n+1}}})^p$ is irreducible in $D_{n}$.
\item Let $F$ be an algebraic extension of $\mathbb{Z}/p\mathbb{Z}$.
Then each nonzero nonunit of $D_n$ is not irreducible in $D_{n+1}$.
In particular, $D$ has no irreducible element.
\item $F$ is finite if and only if every residue field of $D$ is finite.
\end{enumerate}
\end{corollary}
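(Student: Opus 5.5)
The four parts all rest on the Frobenius map. Since char$(F)=p$, the map $x\mapsto x^p$ is a ring endomorphism, so for $f(X^{\frac{1}{p^{n+1}}})=\sum_i a_iX^{\frac{i}{p^{n+1}}}$ we have
$$f(X^{\frac{1}{p^{n+1}}})^p=\sum_i a_i^pX^{\frac{i}{p^{n}}}\in F[X^{\frac{1}{p^n}}]\subseteq D_n.$$
This gives (1) immediately.

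For (2), set $g=f(X^{\frac{1}{p^{n+1}}})$ and $h=g^p\in D_n$. Suppose first that $g$ is irreducible in $D_{n+1}$, and suppose $h$ were reducible in $D_n$. Then $h$ lies in some prime $P=qD_n$ with $q$ irreducible in $D_n$ and $qD_n\cap F$ trivial. The prime $gD_{n+1}$ contracts to a prime of $D_n$ containing $h$. If $q$ stayed irreducible in $D_{n+1}$, then $q\mid g^p$ would force $q\sim g$ in $D_{n+1}$, so $g\in D_n$ up to a unit $uX^{k/p^{n+1}}$. We need to check that this contradicts $g\notin D_n$. I would handle the monomial unit by normalizing $f(0)\neq 0$ and noting that associates in $F[X^{\pm1/p^{n+1}}]$ differ by $uX^{k/p^{n+1}}$; matching constant terms forces $k=0$.

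Otherwise $q$ splits. By Lemma \ref{lemma1.5} (after clearing units, i.e.\ working in $F[X^{\frac{1}{p^n}}]$ with a change of variable), $q\sim r^p$ with $r$ irreducible in $D_{n+1}$, and $r\sim g$. So $h\sim q$, and $h$ is irreducible, a contradiction.

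For the converse, assume $h$ is irreducible in $D_n$. If $g=ab$ with $a,b$ nonunits of $D_{n+1}$, then $h=a^pb^p$ with $a^p,b^p\in D_n$ by (1). These are nonunits, since the units of $D_n$ are exactly the $uX^{k/p^n}$ and a $p$-th power of a nonunit of $D_{n+1}$ is not of that form. That contradicts the irreducibility of $h$.

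For (3), take $F$ algebraic over $\mathbb{Z}/p\mathbb{Z}$. Then $F$ is perfect: every element lies in a finite field, where Frobenius is bijective. Let $a\in D_n$ be a nonzero nonunit, written as a unit times $\sum b_iX^{i/p^n}$ with $b_i\in F$. Choose $c_i$ with $c_i^p=b_i$ and set $g=\sum c_iX^{i/p^{n+1}}\in D_{n+1}$. Then $a\sim g^p$ in $D_{n+1}$ with $g$ a nonunit and $p\geq 2$, so $a$ is not irreducible in $D_{n+1}$. Since any element of $D$ lies in some $D_n$, and a proper factorization in $D_{n+1}$ is a factorization in $D$ with nonunit factors (units of $D$ that lie in $D_{n+1}$ are units there, by Lemma \ref{lemma1.1}(4)), $D$ has no irreducible elements.

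For (4), suppose $F$ is finite and $M$ is maximal. By Theorem \ref{th1.5}(2) the chain $M\cap D_k=f_kD_k$ satisfies $f_k=f_{k+1}$ or $f_k=f_{k+1}^p$. In the first case, $\deg_{k+1}$ of the generator is $p\cdot\deg_k$. In the second case, $\deg_{k+1}(f_{k+1})=\deg_k(f_k)$, since $\deg_{k+1}(f_{k+1}^p)=p\deg_{k+1}(f_{k+1})$ and $\deg_{k+1}(f_k)=p\deg_k(f_k)$. By (3), each irreducible $f_k$ becomes a $p$-th power at the next stage, so the second case always occurs. Hence $[D_k/M\cap D_k:F]$ is constant, and $D/M$ is finite by Lemma \ref{lemma2.3}(6). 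The converse is Proposition \ref{prop3.1}(3) applied to any residue field, or simply the fact that $F\subseteq D/M$.

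The main obstacle is the bookkeeping in (2): the monomial units, and the fact that Lemma \ref{lemma1.5} is stated for $F[X]\subseteq F[X^{1/p}]$. I would reduce to polynomials with nonzero constant term and apply the lemma via the substitution $Y=X^{1/p^n}$.
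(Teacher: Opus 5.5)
Your proof is correct. Parts (1) and (4) match the paper: Frobenius gives (1), and for (4) you use (3) and Lemma \ref{lemma1.5} to see that $\deg_k$ of the generator of $M\cap D_k$ is constant, then apply Lemma \ref{lemma2.3}(6). For (2) and (3), however, you argue element by element, whereas the paper works through prime ideals. Below, write $f$ for $f(X^{\frac{1}{p^{n+1}}})$ (your $g$).

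The paper proves both directions of (2) via the spectral bijection of Theorem \ref{th1.5}(4). For the forward direction, it contracts $fD_{n+1}$ to $gD_n$ and uses $\sqrt{gD_{n+1}}=fD_{n+1}$ plus a short computation to get $f^p=ug$. For the converse, it uses that $\sqrt{fD_{n+1}}=hD_{n+1}$ is prime, writes $f=uh^k$, and forces $k=1$ from $f^p=u^p(h^p)^k$ with $h^p\in D_n$.

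You handle the converse in one line: a factorization $f=ab$ into nonunits of $D_{n+1}$ gives a factorization $f^p=a^pb^p$ into nonunits of $D_n$. For the forward direction you split into cases on a prime factor $q$ of $f^p$ in $D_n$ and use Lemma \ref{lemma1.5} directly. If $q$ stays prime in $D_{n+1}$, then $f\in D_n$. If $q$ splits, then $q\sim f^p$.

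For (3), the paper picks a prime $fD_{n+1}$ lying over $gD_n$. It then uses the finite field $(\mathbb{Z}/p\mathbb{Z})(c(f))$ to rule out $f\in D_n$, and appeals to the proof of (2). You simply take $p$-th roots of the coefficients. In other words, you observe that Frobenius maps $D_{n+1}$ onto $D_n$ when $F$ is perfect.

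Comparing the two:
\begin{itemize}
\item Your version is shorter and never needs Theorem \ref{th1.5}(4).
\item It also makes plain that (3) uses only the perfectness of $F$.
\item The paper's version keeps the argument inside the prime-ideal correspondence it has just established.
\end{itemize}

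One piece of bookkeeping should be stated explicitly. In the split case you obtain $q\sim f^p$ in $D_{n+1}$, but you need it in $D_n$ to conclude that $f^p$ is irreducible there. Normalizing both as polynomials with nonzero constant term settles this, just as in your inert case; alternatively, use Lemma \ref{lemma1.1}(4).
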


\begin{proof}
Let $f = f(X^{\frac{1}{p^{n+1}}})$ for convenience.

(1) $f^p \in D_n$ because char$(F)=p$.

(2) Assume that $f$ is irreducible in $D_{n+1}$ and let $Q = fD_{n+1} \cap D_n$. Then $Q$ is a prime ideal of $D_n$,
and since $D_n$ is a PID, $Q = gD_n$ for some $g \in D_n$. Note that $f^p \in Q$
and $\sqrt{gD_{n+1}} = fD_{n+1}$ by Theorem \ref{th1.5}(4). Hence,
$f^p = ug$ for some $0 \neq u \in F$ by a simple calculation. Thus, $f^p$
is irreducible in $D_n$. Conversely, assume that $f^p$ is irreducible in $D_n$. Then, by Theorem \ref{th1.5}(4) again,
$\sqrt{fD_{n+1}}$ is a prime ideal, because $f^p$
is irreducible in $D_n$. Now, let $h \in D_{n+1}$ be such that $hD_{n+1} =  \sqrt{fD_{n+1}}$.
Then, as $D_{n+1}$ is a PID, $f= uh^k$ for some $u \in F$ and $k \in \mathbb{N}$, and hence
$f^p = u^p(h^p)^k$. Note that $h^p \in D_n$ and $f^p$ is irreducible in $D_n$. Thus, $k=1$, and hence
$f = uh$ is irreducible in $D_{n+1}$.

(3) Let $g \in D_n$ be a nonconstant polynomial. If $g$ is not irreducible in $D_n$, then $g$ is also not
irreducible in $D_{n+1}$, so we may assume that $g$ is irreducible in $D_n$.
Then, by Lemma \ref{lemma1.1}(1), there is a prime ideal $P$ of $D_{n+1}$ which lies over $gD_n$.
For convenience, let $P = fD_{n+1}$ and $F_0 = (\mathbb{Z}/p\mathbb{Z})(c(f))$. Then $F_0$ is a finite field with $F_0 \subseteq F$.
Therefore, $a^{|F_0|} = a$ for all $a \in F_0$, so if $f \in D_n$, there
is an $h \in D_{n+1}$ and $0 \neq u \in F$ such that $f = uh^p$, a contradiction.
Hence, by the proof of (2) above, $g = uf^p$ for some $0 \neq u \in F$.
Thus, $g$ is not irreducible in $D_{n+1}$.

In particular, if $b \in D$ is nonconstant, then $b \in D_m$ for some $m \in \mathbb{N}_0$.
Hence, $b$ is not irreducible in $D_{m+1}$ by the previous paragraph, and thus $b$ is not irreducible in $D$.

(4) Suppose that $F$ is a finite field, and let $M$ be a maximal ideal of $D$. Then $M$ is not a principal ideal of $D$ by (3).
Hence, $M\cap D_n = f_nD_n$, where $f_n \in F[X^{\frac{1}{p^n}}]$ is an irreducible element of $D_n$
that is not irreducible in $D$, for all $n \in \mathbb{N}_0$ by Lemma \ref{lemma1.1}(3).
Then, by (3) above and Lemma \ref{lemma1.5}, deg$_n(f_n)=$ deg$_0(f_0)$ for all $n \in \mathbb{N}_0$. It then follows that
$|D_n/(M\cap D_n)| = |D_0/(M\cap D_0)|$ for every $n \in \mathbb{N}_0$. Thus, $D/M$ is a finite field by Lemma \ref{lemma2.3}(6).
The converse follows because $F$ is contained in every residue field of $D$.
\end{proof}

In the next corollary, we construct a one-dimensional Pr{\"u}fer domain
with infinitely many maximal ideals
(none of them are locally finitely generated) and has no irreducible elements.

\begin{corollary} \label{coro3.5}
Let $p$ be a prime number, $F$ an algebraically closed field of characteristic $p$, $X$ an indeterminate over $F$,
$D_n=F[X^{\frac{1}{p^n}}, X^{-\frac{1}{p^n}}]$ for each $n\in\mathbb{N}_0$
and $D=\bigcup\limits_{n\in\mathbb{N}_0}D_n$.
Let $P_{\alpha} = \bigcup\limits_{n \in \mathbb{N}_0} (X^{\frac{1}{p^n}}-\alpha^{\frac{1}{p^n}})D_n$
for each nonzero $\alpha \in F$. Then the following statements hold.
\begin{enumerate}[font=\normalfont]
\item Max$(D) = \{P_{\alpha} \mid 0 \neq \alpha \in F\}$,
\item $D_{P_{\alpha}}$ is a rank one nondiscrete valuation domain,
\item $D/P_{\alpha} \cong F$ and
\item ht$(P_{\alpha}[[X]]/P_{\alpha}D[[X]]) \geq 2^{\aleph_1}$ for each nonzero $\alpha \in F$.
\end{enumerate}
\end{corollary}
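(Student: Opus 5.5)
The plan is to use Theorem \ref{th1.5}(4): contraction gives a bijection from Spec$(D)$ onto Spec$(D_0)$. Since $F$ is algebraically closed, the maximal ideals of $D_0=F[X,X^{-1}]$ are exactly $(X-\alpha)D_0$ with $0\neq\alpha\in F$.

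For (1), fix $0\neq\alpha\in F$. Since $F$ is perfect of characteristic $p$, $\alpha^{\frac{1}{p^n}}$ is uniquely determined. In characteristic $p$,
$$(X^{\frac{1}{p^{n+1}}}-\alpha^{\frac{1}{p^{n+1}}})^p = X^{\frac{1}{p^n}}-\alpha^{\frac{1}{p^n}}.$$
Hence $(X^{\frac{1}{p^n}}-\alpha^{\frac{1}{p^n}})D_n\subseteq (X^{\frac{1}{p^{n+1}}}-\alpha^{\frac{1}{p^{n+1}}})D_{n+1}$. This is an ascending chain of maximal ideals of the $D_n$, so $P_\alpha$ is a maximal ideal of $D$ by Theorem \ref{th1}(1). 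It contracts to $(X-\alpha)D_0$ in $D_0$. Distinct $\alpha$ give distinct contractions, and every maximal ideal of $D_0$ arises this way, so the bijection of Theorem \ref{th1.5}(4) yields Max$(D)=\{P_\alpha\}$.

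For (2) and (3), I would repeat the argument of Theorem \ref{th1.5}(5) with $1$ replaced by $\alpha$. Since $P_\alpha\cap D_n=(X^{\frac{1}{p^n}}-\alpha^{\frac{1}{p^n}})D_n$ (again by uniqueness of contraction), each residue field $D_n/(P_\alpha\cap D_n)$ equals $F$. Then Lemma \ref{lemma2.3}(6) gives $D/P_\alpha\cong F$. Alternatively, one could apply the $F$-automorphism $X^{\frac{1}{p^n}}\mapsto \alpha^{-\frac{1}{p^n}}X^{\frac{1}{p^n}}$ of $D$, which carries $P_\alpha$ to $P_1=M$, and quote Theorem \ref{th1.5}(5) directly.

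For nondiscreteness, I would note that $D$ has no irreducible elements by Proposition \ref{coro3.2} (or Corollary \ref{coro of th1.5}(3)). So $P_\alpha$ is not principal, hence not invertible because $D$ is B\'ezout. More directly, in $(D_n)_{P_\alpha\cap D_n}$ the generator of the maximal ideal at level $n$ is the $p$-th power of the generator at level $n+1$. Therefore $P_\alpha\cap D_n\subseteq (P_\alpha\cap D_{n+1})^2$ for all $n$, and Corollary \ref{coro nondiscrete} shows $D_{P_\alpha}$ is not a DVR. Since $D$ is one-dimensional Pr\"ufer, $D_{P_\alpha}$ is a rank one valuation domain, hence rank one nondiscrete.

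For (4), the key step is that $P_\alpha$ is infinitely generated, since it is not invertible. The proof of Corollary \ref{coro2.6}(d)$\Rightarrow$(b) uses only that $M$ is not invertible (via \cite[Lemma 2.1]{ckt15}, Corollary \ref{coro2.4} and \cite[Theorem 15]{tk21}), not that $D_M$ is a DVR. So that argument gives ht$(P_\alpha[[X]]/P_\alpha D[[X]])\geq 2^{\aleph_1}$. The main obstacle is this step: Corollary \ref{coro2.6} is stated under the hypothesis that $D_M$ is a DVR, which fails here. I would therefore check that \cite[Lemma 2.1]{ckt15} only needs $D_M$ to be rank one valuation with non-finitely-generated $M$, so that $M$ is not SFT. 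If that citation does require a DVR, I would instead argue directly that $P_\alpha$ is not SFT, since every element of $P_\alpha D_{P_\alpha}$ has arbitrarily small positive value, and then apply \cite[Theorem 15]{tk21}.
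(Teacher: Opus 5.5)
Your proposal is correct and follows the paper's proof. Parts (1)--(3) go exactly as in the paper: Theorem \ref{th1.5}(4), Theorem \ref{th1}(1), the identity $(X^{\frac{1}{p^{n+1}}}-\alpha^{\frac{1}{p^{n+1}}})^p=X^{\frac{1}{p^n}}-\alpha^{\frac{1}{p^n}}$, and the argument of Theorem \ref{th1.5}(5). For (4) the paper skips Corollary \ref{coro2.6} and uses exactly your fallback: it shows $P_\alpha$ is not SFT via the explicit elements $X^{\frac{1}{p^m}}-\alpha^{\frac{1}{p^m}}$, whose values in $D_{P_\alpha}$ are $v(X-\alpha)/p^m\to 0$, and then applies Corollary \ref{coro2.4} and \cite[Theorem 15]{tk21}.

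Two harmless slips. Corollary \ref{coro of th1.5}(3) requires $F$ algebraic over $\mathbb{Z}/p\mathbb{Z}$, so for an arbitrary algebraically closed $F$ only Proposition \ref{coro3.2} applies. Also, your automorphism $X^{\frac{1}{p^n}}\mapsto\alpha^{-\frac{1}{p^n}}X^{\frac{1}{p^n}}$ sends $P_1$ onto $P_\alpha$; it is its inverse that sends $P_\alpha$ onto $P_1$.
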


\begin{proof}
(1) This follows from Theorems \ref{th1.5}(4), \ref{th1}(1) and the fact that $\alpha^{\frac{1}{p^{n+1}}} \in F$ and
$X^{\frac{1}{p^n}}-\alpha^{\frac{1}{p^n}} = (X^{\frac{1}{p^{n+1}}}-\alpha^{\frac{1}{p^{n+1}}})^p$
for all $n \in \mathbb{N}_0$.

(2) and (3) can be proved by an argument similar to the proof of Theorem \ref{th1.5}(5).

(4) It suffices to show that $P_{\alpha}$ is not an SFT-ideal by Corollary \ref{coro2.4} and \cite[Theorem 15]{tk21}.
Assume to the contrary that $P_{\alpha}$ is an SFT-ideal.
Then there is a finitely generated ideal $J$ of $D$ and  $k \in \mathbb{N}$
so that $a^k \in J$ for all $a \in P_{\alpha}$.
Hence, there exists $n \in \mathbb{N}$ with $J \subseteq (X^{\frac{1}{p^n}}-\alpha^{\frac{1}{p^n}})D$,
so $(X^{\frac{1}{p^m}}-\alpha^{\frac{1}{p^m}})^{p^k} \in (X^{\frac{1}{p^n}}-\alpha^{\frac{1}{p^n}})D$ for all $m \in \mathbb{N}$.
In particular, $X^{\frac{1}{p^{n+1}}}-\alpha^{\frac{1}{p^{n+1}}} \in (X^{\frac{1}{p^n}}-\alpha^{\frac{1}{p^n}})D$, which implies
that $X^{\frac{1}{p^{n+1}}}-\alpha^{\frac{1}{p^{n+1}}}$ is a unit of $D$, a contradiction.
\end{proof}


\section{The ring $D= \bigcup\limits_{n\in\mathbb{N}_0}F[X^{\frac{1}{2^n}}, X^{-\frac{1}{2^n}}]$}

In this section, we continue our investigation on $D= \bigcup\limits_{n\in\mathbb{N}_0}F[X^{\frac{1}{p^n}}, X^{-\frac{1}{p^n}}]$
with a focus on the specific case $p=2$, because
the key result of this section follows from the simple fact that if $p=2$,
then $F$ has a primitive $p$-th root of unity if and only if $\textnormal{char}(F)\neq p$.
(For the proof, note that if $a\in F$ is a primitive 2nd root of unity, then $a^2=1$ but $a\neq 1$,
so $a=-1$ and $\textnormal{char}(F)\neq 2$. Conversely, if $\textnormal{char}(F)\neq 2$,
then $-1\neq 1$ and $-1$ is a primitive 2nd root of unity.)
Using this, we gain a clearer picture of factorization of polynomials in $D$, starting with Proposition \ref{prop4.3}.
Throughout the remainder of this paper,
we will assume the following notation.

\begin{notation} \label{notation}
{\em Let $F$ be a field, $X$ an indeterminate over $F$ and for each $n\in\mathbb{N}_0$
$$R_n =F[X^{\frac{1}{2^n}}], \ \ D_n =F[X^{\frac{1}{2^n}}, X^{-\frac{1}{2^n}}], \ \
R =\bigcup\limits_{n\in\mathbb{N}_0}R_n, \ \ D=\bigcup\limits_{n\in\mathbb{N}_0}D_n.$$
Then $\{R_n\}_{n \in \mathbb{N}_0}$ and $\{D_n\}_{n \in \mathbb{N}_0}$ are systems of Dedekind domains such that both $R_n$ and $D_n$
are PIDs for all $n \in \mathbb{N}_0$.
Hence, by Corollary \ref{pid}(1), $R$ and $D$ are B{\'e}zout domains of Krull dimension one
and $D=R_S$ for $S = \{X^{\frac{1}{2^n}} \mid n \in \mathbb{N}_0\}$. }
\end{notation}

It is clear that for each $n\in
\mathbb{N}_0$, an element of $R_n$ with nonzero constant term is irreducible in $R_n$ if and only if it is irreducible in $D_n$. From now on, we will freely use this fact
without further comments.

\begin{proposition} \label{prop4.3}
Suppose that $\textnormal{char}(F)\neq 2$ and $f(X^{\frac{1}{2^n}})$ is an irreducible polynomial of $R_n$ with $f(0)\neq 0$ for some $n\in\mathbb{N}_0$.
\begin{enumerate}[font=\normalfont]
\item If $f(X^{\frac{1}{2^n}})$ is not an element of $D_{n-1}$, then $f(X^{\frac{1}{2^n}})f(-X^{\frac{1}{2^n}})$ is an irreducible element of $D_{n-1}$.
\item If $f(X^{\frac{1}{2^n}})$ is not an irreducible element of $D_{n+1}$,
then $$f(X^{\frac{1}{2^n}})=ug(X^{\frac{1}{2^{n+1}}})g(-X^{\frac{1}{2^{n+1}}})$$ for some irreducible element $g(X^{\frac{1}{2^{n+1}}})$ of $D_{n+1}$
and $0 \neq u\in F$. In this case, $g(X^{\frac{1}{2^{n+1}}})$ and $g(-X^{\frac{1}{2^{n+1}}})$ are not associated as elements of $D_{n+1}$.
\item If $g(X) \in F[X]$ is an irreducible element of $D_0$, then $f(X^{\frac{1}{2^n}})^2$ does not divide $g(X)$ in $D$.
\item $\sqrt{f(X^{\frac{1}{2^n}})D}=f(X^{\frac{1}{2^n}})D$.
\end{enumerate}
\end{proposition}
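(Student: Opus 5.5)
The whole proof rests on the automorphism $\sigma$ of $D_{n}$ over $D_{n-1}$ that sends $X^{\frac{1}{2^n}}$ to $-X^{\frac{1}{2^n}}$. Since char$(F)\neq 2$, $D_n$ is a quadratic (Galois) extension of $D_{n-1}$ with Galois group $\{1,\sigma\}$, so the fixed ring of $\sigma$ in $D_n$ is exactly $D_{n-1}$. Write $y=X^{\frac{1}{2^n}}$ and work in the UFD $R_n=F[y]$, where units are elements of $F$ (after clearing powers of $y$, which are units in $D$).

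For (1): $N(f)=f(y)f(-y)$ is $\sigma$-invariant, so $N(f)\in R_{n-1}$. Let $h\in R_{n-1}$ be an irreducible factor of $N(f)$ in $R_{n-1}$.
\begin{itemize}
\item In $R_n$, the element $h$ is divisible by $f$ or by $\sigma(f)$.
\item Since $h$ is $\sigma$-invariant, it is in fact divisible by both.
\item $f$ and $\sigma(f)$ are not associates: if $f(-y)=cf(y)$, then comparing coefficients forces $f\in F[y^2]$ or $f\in yF[y^2]$. The first case is excluded by $f\notin D_{n-1}$, and the second by $f(0)\neq0$ together with irreducibility (unless $f=uy$, which is excluded by $f(0)\neq 0$).
\item Hence $N(f)\mid h$, so $h$ and $N(f)$ are associates, and $N(f)$ is irreducible in $D_{n-1}$.
\end{itemize}

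For (2): take an irreducible factor $g$ of $f$ in $R_{n+1}$ and set $z=X^{\frac{1}{2^{n+1}}}$. Apply (1) at level $n+1$ to $g$; note $g\notin D_n$, since otherwise $g$ would be a proper factor of $f$ in $D_n$ and $f$ would be reducible there, by Corollary~\ref{pid}(3). So $g(z)g(-z)$ is irreducible in $D_n$. Since $f$ is $\sigma$-invariant, it is divisible by both $g(z)$ and $g(-z)$, and these are nonassociated by the argument in (1). Hence $g(z)g(-z)$ divides $f$ in $D_{n+1}$, and therefore in $D_n$ by Corollary~\ref{pid}(3). Irreducibility of $f$ then gives $f=ug(z)g(-z)$. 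The nonassociation statement is the coefficient argument from (1).

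For (4): $D$ is one-dimensional and B\'ezout, so $fD$ is radical exactly when no prime $P\supseteq fD$ has $f\in P^2D_P$. I would iterate (2): the factorization of $f$ in $D_{n+k}$ is a product of pairwise nonassociated irreducibles, so $f$ is squarefree in every $D_{n+k}$. Squarefreeness propagates from level to level because each step splits a factor into two nonassociated factors, and factors coming from distinct level-$n+k$ primes stay coprime, as they lie in distinct maximal ideals. Now suppose $a\in D$ with $a^2\in fD$. Choose $m$ with $a,f\in D_m$. Then $a^2\in fD\cap D_m=fD_m$ by Lemma~\ref{lemma1.1}(4), and squarefreeness in the PID $D_m$ gives $f\mid a$.

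For (3): $f\in D_n$, so if $f^2\mid g$ in $D$, then $f^2\mid g$ in $D_m$ for some large $m$. But $g$ is squarefree in every $D_m$ by the same iteration of (2) applied to $g$ starting at level $0$, and $f$ is a nonunit, so this is impossible.

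I expect the main obstacle to be the careful nonassociation check of $g(z)$ and $g(-z)$, which must rule out $g(-z)=cg(z)$ using irreducibility, $g(0)\neq0$, and $g\notin D_n$.
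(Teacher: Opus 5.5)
Your proof is correct, and for (3) and (4) it takes a genuinely different route from the paper.

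\textbf{Parts (1) and (2).} Your (1) is essentially the paper's argument. The paper factors $f(y)f(-y)$ in $R_{n-1}$ and observes that no even factor can be an associate of the non-even $f(\pm y)$; you phrase the same idea as $\sigma$-invariance of an irreducible factor $h$.

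Your (2) streamlines the paper's proof. The paper sorts the whole factorization of $f$ in $R_{n+1}$ into $\sigma$-fixed factors ($\mathcal{A}$) and $\sigma$-paired factors ($\mathcal{B},\mathcal{C}$). You instead pick one irreducible factor $g$, show $g\notin D_n$ via Corollary~\ref{pid}(3), and apply (1) one level up. You also give the coefficient-comparison argument for non-association of $g(z)$ and $g(-z)$, which the paper uses without justification.

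\textbf{Parts (3) and (4): descent versus ascent.} The paper proves (3) by descent:
\begin{itemize}
\item it takes $k$ minimal with $f\in R_k$;
\item evenness of $g$ in $R_k$ gives $\bigl(f(y)f(-y)\bigr)^2\mid g$;
\item by (1) and Corollary~\ref{pid}(3) this is the square of an irreducible dividing $g$ one level lower;
\item iterating ends with a squared irreducible factor of $g$ in $R_0$, a contradiction.
\end{itemize}
The paper then deduces (4) from (3), tacitly shifting the base level so that $f\in D_n$ plays the role of the level-$0$ element.

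You prove instead, by ascent using (2), a single lemma: an element squarefree in $D_N$ stays squarefree in every $D_m$ with $m\ge N$. Splitting produces nonassociated pairs, and factors of distinct level-$N$ primes are coprime because they contract to distinct maximal ideals of the PID $D_N$. Both (3) and (4) then follow immediately, the latter via $fD\cap D_m=fD_m$ from Lemma~\ref{lemma1.1}(4).

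\textbf{What each route buys.} The paper's descent needs only (1). Your ascent needs (2), but it treats (3) and (4) uniformly, proves (3) for every nonunit $f$ rather than only irreducible ones, and avoids the hidden rescaling in the paper's proof of (4).
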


\begin{proof}
(1) Let $h(X)=f(X^{\frac{1}{2^n}})f(-X^{\frac{1}{2^n}})$. Then $h(X)\in
R_{n-1}$ and $f(X^{\frac{1}{2^n}})\neq f(-X^{\frac{1}{2^n}})$ because $f(X^{\frac{1}{2^n}})\not\in D_{n-1}$.
Write $h(X)=ug_1(X^{\frac{1}{2^{n-1}}})\cdots g_m(X^{\frac{1}{2^{n-1}}})$
for some (not necessarily distinct) irreducible elements $g_1(X^{\frac{1}{2^{n-1}}}),\dots, g_m(X^{\frac{1}{2^{n-1}}})$ of
$R_{n-1}$ and  $u\in F$. Note that $g_1(X^{\frac{1}{2^{n-1}}}),\dots, g_m(X^{\frac{1}{2^{n-1}}})$
are even polynomials of $R_{n}$, while $f(X^{\frac{1}{2^n}})$ and $f(-X^{\frac{1}{2^n}})$ are not even.
So neither $f(X^{\frac{1}{2^n}})$ nor $f(-X^{\frac{1}{2^n}})$ is associated to any of
$g_1(X^{\frac{1}{2^{n-1}}}),\dots, g_m(X^{\frac{1}{2^{n-1}}})$ in $R_n$.
Since $g_1(X^{\frac{1}{2^{n-1}}}),\dots, g_m(X^{\frac{1}{2^{n-1}}})$ survives in $R_n$,
we conclude that $m=1$. Hence, $h(X)$ is an irreducible element of $R_{n-1}$,
and the statement follows.

(2) Let $f(X^{\frac{1}{2^n}})=\prod\limits_{i=1}^{m}h_i(X^{\frac{1}{2^{n+1}}})$ be a prime factorization of $f(X^{\frac{1}{2^n}})$ in $R_{n+1}$,
$$\mathcal{A} =\{i\in\{1,\dots, m\}\mid h_i(X^{\frac{1}{2^{n+1}}})\in R_{n}\} \text{ and }
\mathcal{B} =\{1,\dots, m\}\setminus \mathcal{A}.$$
Then since $f(X^{\frac{1}{2^n}})$ is an even function of $R_{n+1}$,
it follows that $\prod\limits_{i=1}^{m}h_i(-X^{\frac{1}{2^{n+1}}})$ is a prime factorization of $f(X^{\frac{1}{2^n}})$ in $R_{n+1}$.
Since $R_{n+1}$ is a UFD, for each $i\in \mathcal{B}$, there exists $j\in\mathcal{B}\setminus\{i\}$
so that $h_i(X^{\frac{1}{2^{n+1}}})$ is associated to $h_j(-X^{\frac{1}{2^{n+1}}})$ in $R_{n+1}$.
Hence, $\mathcal{B}$ has an even number of elements
and there exists a subset $\mathcal{C}\subseteq \mathcal{B}$ such that $|\mathcal{C}|=\cfrac{|\mathcal{B}|}{2}$ and $$f(X^{\frac{1}{2^n}})=u\big(\prod\limits_{i\in\mathcal{A}}h_i(X^{\frac{1}{2^{n+1}}})\big)\big(\prod\limits_{i\in\mathcal{C}}h_i(X^{\frac{1}{2^{n+1}}})h_i(-X^{\frac{1}{2^{n+1}}})\big)$$
for some $u\in F$.
Note that $h_i(X^{\frac{1}{2^{n+1}}})\in R_n$ for each $i\in\mathcal{A}$,
$h_i(X^{\frac{1}{2^{n+1}}})h_i(-X^{\frac{1}{2^{n+1}}})\in R_n$ for each $i\in\mathcal{C}$, and
$f(X^{\frac{1}{2^n}})$ is irreducible in $R_n$. Hence, $|\mathcal{A}\cup\mathcal{C}|=1$.
Moreover, since $f(X^{\frac{1}{2^n}})$ is not irreducible in $R_{n+1}$ by our assumption, we must have $\mathcal{A}=\emptyset$ and $|\mathcal{C}|=1$
by which we obtain the first assertion. The second assertion follows
from the fact that $h_i(X^{\frac{1}{2^{n+1}}})$ and
$h_i(-X^{\frac{1}{2^{n+1}}})$ are not associated for each $i\in\mathcal{C}$.

(3) Assume to the contrary that $f(X^{\frac{1}{2^n}})^2\mid g(X)$ in $D$.
Then $f(X^{\frac{1}{2^n}})^2\mid g(X)$ in $R_m$ for some $m \in \mathbb{N}_0$.
 Choose the smallest $k$ such that $f(X^{\frac{1}{2^n}})\in R_k$,
 and write $f(X^{\frac{1}{2^n}})=f_k(X^{\frac{1}{2^k}})$.
Then, by Corollary \ref{pid}(3), $f_k(X^{\frac{1}{2^k}})^2\mid g(X)$ in $R_k$.
 In other words, $f_k(X^{\frac{1}{2^k}})^2h_k(X^{\frac{1}{2^k}})=g_k(X^{\frac{1}{2^k}})=g(X)$ for some $h_k(X^{\frac{1}{2^n}})\in R_k$
 and $g_k(X^{\frac{1}{2^k}})\in R_k$.
 On the other hand, $k \ge 1$ since $g(X)$ is irreducible in $D_0$, so $f_{k-1}(X^{\frac{1}{2^{k-1}}}):=f(X^{\frac{1}{2^k}})f(-X^{\frac{1}{2^k}})$
 is an irreducible element of $D_{k-1}$ by (1) above.

 Since $f_k(X^{\frac{1}{2^k}})$ is an irreducible element of $D_k$, so is $f_k(-X^{\frac{1}{2^k}})$.
 Then $g_k(X^{\frac{1}{2^k}})=g_k(-X^{\frac{1}{2^k}})$ implies that $f_k(-X^{\frac{1}{2^k}})^2\mid g_k(X^{\frac{1}{2^k}})$ in $R_k$.
 Thus, $$f_{k-1}(X^{\frac{1}{2^{k-1}}})^2\mid g(X)  \text{ in }  R_k.$$
 Notice that $f_{k-1}(X^{\frac{1}{2^{k-1}}})^2\mid g(X)$ in $R_{k-1}$ by Corollary \ref{pid}(3).
Therefore, by induction on $k$, we deduce that $f_0(X)^2$ divides $g(X)$ in $R_0$
for some irreducible element $f_0(X)$ of $R_1$, a contradiction. Therefore,
$f(X^{\frac{1}{2^n}})^2 \nmid g(X)$ in $D$.

(4) Let $f =f(X^{\frac{1}{2^n}})$
and choose an element $a\in D$ such that $a\in \sqrt{fD}$.
It follows that  $a, f\in D_l$ and $a^m\in fD_l$ for some  $l,m\in\mathbb{N}$ with $l\ge n$.
Now, assume that $a = g_1^{e_1} \cdots g_k^{e_k}$ is a prime factorization of $a$
in $D_l$. Note that $g_i^2\nmid f$ in $D_l$ for $i=1, \dots, k$ by (3) above, because $l \geq n$ and $f$ is irreducible in $D_n$.
Hence, $a^m \in fD_l$ implies $f = ug_{i_1} \cdots g_{i_j}$ for some unit $u$ of $D_l$ and $i_1, \dots , i_j \in \{1, \dots , k\}$,
so $a \in fD_l\subseteq fD$. Thus, $fD$ is a radical ideal of $D$.
\end{proof}

An integral domain $E$ is called an \textit{SP domain}
if each proper ideal of $E$ can be written as a finite product of radical ideals of $E$ (SP is short for \textit{semiprime} (ideals),
which is a synonym of radical (ideals)).
The SP domains were first introduced by Vaughan and Yeagy \cite{vy78} who called them `domains with SP property'.
It is known that an SP domain is an almost Dedekind domain \cite[Theorem 2.4]{vy78} but not vice versa
(see, for example, \cite[Theorem 3.2]{y79} and \cite[p.122]{by76}). However, the next result shows that
the converse is true for the integral domain $D$ in Notation \ref{notation}.

\begin{theorem} \label{th2}
The following statements are equivalent.
\begin{enumerate}[font=\normalfont]
        \item $\textnormal{char}(F)\neq 2$.
        \item $D$ is an almost Dedekind domain.
        \item $D$ is an SP domain.
        \item Each nonunit of $D$ that is not a product of finitely many prime elements of $D$ is contained in infinitely many maximal ideals of $D$.
\item $D$ is not of finite character.
\item Each proper principal ideal of $D$ is a finite product of radical principal ideals of $D$.
\end{enumerate}
\end{theorem}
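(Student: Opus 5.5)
We will organize the proof as a cycle through (1) and prove the remaining conditions equivalent to it. The equivalence (1) $\Leftrightarrow$ (2) is Corollary \ref{thm31}(2) with $p=2$. For (1) $\Rightarrow$ (6), take a proper nonzero principal ideal $aD$. After multiplying by a unit $X^{\frac{k}{2^n}}$, we may assume $a\in R_n$ with $a(0)\neq 0$ for some $n$. Write $a=u f_1^{e_1}\cdots f_k^{e_k}$ with pairwise nonassociated irreducible $f_i\in R_n$. By Proposition \ref{prop4.3}(4), each $f_iD$ is radical. For $j\ge 1$, put $g_j=\prod_{e_i\ge j} f_i$. Then $aD=\prod_j g_jD$, and each $g_jD$ is radical: it is an intersection of the radical ideals $f_iD$, which are pairwise comaximal because distinct irreducibles of $D_n$ generate comaximal ideals in $D_n$, hence in $D$.

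The passage (6) $\Rightarrow$ (3) needs only the B\'ezout property. Every proper nonzero ideal $I$ of $D$ would need to be a product of radical ideals, and we reduce to finitely generated, i.e.\ principal, ideals. We do not expect the general SP definition to reduce cleanly, so instead we route (6) $\Rightarrow$ (2) $\Rightarrow$ (3). For (6) $\Rightarrow$ (2), argue by contrapositive. If char$(F)=2$, Corollary \ref{coro-wfd}(1) shows that for irreducible $g\in F[X]$ the ideal $\sqrt{g^2D}=\sqrt{gD}=M$ is maximal, and Lemma \ref{lemma1.5} together with Theorem \ref{th1.5} shows that $M$ is not finitely generated locally. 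A radical principal ideal $hD\subseteq M$ with $hD$ contained only in $M$ would then equal $M$, which is impossible. So an element like $(X-1)$, whose radical ideal is the nonprincipal maximal ideal $M$ of Theorem \ref{th1.5}(5), cannot be a finite product of radical principal ideals. For (2) $\Rightarrow$ (3), we will adapt the argument above: in an almost Dedekind B\'ezout domain, a finitely generated ideal is a finite product of radical ideals by the same $g_j$ construction. For an arbitrary proper ideal $I$, we pass to $\sqrt{I}$ and use cancellation (Theorem \ref{th0}) to peel off radical factors. This reduction is the step where we are least confident, and we may instead cite known characterizations from \cite{vy78}, namely that in an almost Dedekind domain SP is equivalent to every maximal ideal satisfying $M\neq M^2$ in the appropriate sense. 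Conversely, (3) $\Rightarrow$ (2) is \cite[Theorem 2.4]{vy78}.

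For (5), first (1) $\Rightarrow$ (5): by Corollary \ref{excyclo}-type arguments, or directly by Proposition \ref{prop3.1}(2), $D$ is not Dedekind. Since $D$ is almost Dedekind, it cannot be of finite character, by \cite[Theorem 37.2]{gilmer}. For (5) $\Rightarrow$ (1), if char$(F)=2$ then Theorem \ref{th1.5}(2) gives finite character, so the contrapositive holds.

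For (4), first (1) $\Rightarrow$ (4). Let $a$ be a nonunit, reduced as above to $a\in R_n$ with irreducible factors $f_i$. If every $f_i$ were contained in only finitely many maximal ideals of $D$, then by Proposition \ref{prop4.3}(2) each $f_i$ splits as $g(X^{\frac{1}{2^{m+1}}})g(-X^{\frac{1}{2^{m+1}}})$ only finitely often. Hence each $f_i$ eventually becomes irreducible in all $D_m$, i.e.\ a product of primes of $D$ by Lemma \ref{lemma1.1}(3), and so $a$ would be a finite product of primes. Contrapositively, if $a$ is not such a product, some factor splits infinitely often, which gives infinitely many maximal ideals via Lemma \ref{lemma1.1}(1). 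For (4) $\Rightarrow$ (1), again argue by contrapositive: in characteristic $2$ the element $X-1$ is not a finite product of primes (it is not even a product of irreducibles, since $X^{\frac{1}{2^n}}-1=(X^{\frac{1}{2^{n+1}}}-1)^2$), yet it lies in the single maximal ideal $M$ of Theorem \ref{th1.5}(5).

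The main obstacle is the equivalence of SP with the other conditions for non-principal ideals. Our first attempt there will be to cite \cite{vy78}'s characterization.
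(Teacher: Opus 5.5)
\textbf{Gap: (2)$\Rightarrow$(3) is never proved.} You flagged this step yourself, and as written nothing establishes that $D$ is SP when $\textnormal{char}(F)\neq 2$, so (3) is not connected to the rest of the cycle. For this particular $D$ the principal (equivalently, finitely generated) case is just your (1)$\Rightarrow$(6). It does not transfer to arbitrary almost Dedekind B\'ezout domains: it rests on Proposition~\ref{prop4.3}(4), and almost Dedekind domains need not be SP. The passage to non-finitely generated ideals is not carried out. ``Pass to $\sqrt{I}$ and peel off radical factors by cancellation'' gives no reason why $I=\sqrt{I}\,(I:\sqrt{I})$ when $\sqrt{I}$ is not invertible, nor why the process stops. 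The fallback criterion you attribute to \cite{vy78}, that each maximal ideal satisfies $M\neq M^2$, holds in every almost Dedekind domain, so it cannot characterize SP domains.

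\textbf{The missing idea.} What is needed is a \emph{uniform} condition. The paper uses \cite[Theorem 2.1]{Ob05}: a domain is SP iff it is almost Dedekind with no critical maximal ideal. Since $D$ is B\'ezout, this amounts to finding, for each maximal $N$, a single $a\in N$ with $a\notin M^2$ for \emph{every} maximal $M$. The paper takes $a$ with $N\cap D_0=aD_0$. It notes that $a\in M^2$ iff $m_k^2\mid a$ in $D_k$ for some $k$, where $M\cap D_k=m_kD_k$. This is excluded by Proposition~\ref{prop4.3}(3), the ingredient your plan never invokes.

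\textbf{An alternative fix avoiding \cite{Ob05}.} You can instead use a local argument built on your own $g_j$. Take $0\neq a=u\prod f_i^{e_i}$. At most one $f_i$ lies in a given $M$, and $v_M(f_i)\le 1$ by Proposition~\ref{prop4.3}(4). Hence $g_j(a)D_M=MD_M$ if $v_M(a)\ge j$, and $g_j(a)D_M=D_M$ otherwise. For a proper nonzero ideal $I$, set $K_j=\sum_{0\neq a\in I}g_j(a)D$. Then $K_jD_M\in\{D_M,MD_M\}$ for all $M$, so $K_j$ is radical. Fix $0\neq a_0=u\prod f_i^{e_i}\in I$ and put $r=\max_i e_i$. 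Comparing localizations gives $ID_M=(MD_M)^{e_M(I)}=\prod_{j\le r}K_jD_M$, hence $I=K_1\cdots K_r$.

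\textbf{Rest of the proposal.} The rest of your cycle is sound. Your (1)$\Rightarrow$(5), via Proposition~\ref{prop3.1}(2) and \cite[Theorem 37.2]{gilmer}, and your direct (4)$\Rightarrow$(1) are fine variants of the paper's (1)$\Rightarrow$(4)$\Rightarrow$(5)$\Rightarrow$(1).

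One small correction in (6)$\Rightarrow$(1): it is false that $\sqrt{gD}$ is non-invertible for every irreducible $g\in F[X]$ in characteristic $2$. For example, over $F=\mathbb{F}_2(t)$ the element $X-t$ is prime in $D$. Only $g=X-1$ is needed, which Theorem~\ref{th1.5}(5) covers, and that case of your argument is correct.
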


\begin{proof}
(1) $\Leftrightarrow$ (2) Corollary \ref{thm31}.

(2) $\Leftrightarrow$ (3) A maximal ideal $N$ of an integral domain $E$ is \textit{critical}
if each finitely generated subideal of $N$ is contained in $M^2$ for some maximal ideal $M$ of $E$.
Then $E$ is an SP domain if and only if $E$ is an almost Dedekind domain with no critical maximal ideal \cite[Theorem 2.1]{Ob05}.

Note that $D$ is a B{\'e}zout domain by Proposition \ref{prop3.1}(1),
so $D$ is an SP domain if and only if,
for each maximal ideal $N$ of $D$, there exists $a\in N$ such that $a\not\in M^2$ for every maximal ideal $M$ of $D$.
Now let $N$ be a maximal ideal of $D$, and choose an element $a$ of $D_0$ such that $N \cap D_0 = aD_0$.
Note that $a\in M^2$ for some maximal ideal $M$ of $D$ if and only if $m_k^2\mid a$ in $D_k$ for some $k\in\mathbb{N}_0$,
where $m_k$ is an element of $D_k$ such that $m_kD_k=M\cap D_k$. Thus, by Proposition \ref{prop4.3}(3),
$a\in N\setminus M^2$ for every maximal ideal $M$ of $D$. Hence, $N$ is not a critical maximal ideal of $D$, which finishes the proof.

(1) $\Rightarrow$ (4) Suppose that $\textnormal{char}(F)\neq 2$,
and let $a$ be a nonzero nonunit of $D$ that is not a product of finitely many prime elements of $D$.
Choose the smallest $k\in\mathbb{N}_0$ such that $a\in D_k$.
Then at least one of the prime factors of $a$ in $D_k$, say $f$, is not a prime element of $D$.
By Lemma \ref{lemma1.1}(3), there exists the smallest integer $k_1>k$ such that $f$ is not a prime element of $D_{k_1}$.
Then, in $D_{k_1}$, $f$ is a product of a nonassociated pair of prime elements by Proposition \ref{prop4.3}(2).
Hence, the number of distinct prime factors of $a$ in $D_k$ tends to infinity as $k$ grows,
which shows that $a$ is contained in infinitely many maximal ideals of $D$.

(4) $\Rightarrow$ (5) It is clear that $X-1$ is not a finite product of prime elements.
Hence, by (4), $D$ is not of finite character.

(5) $\Rightarrow$ (1) If $\textnormal{char}(F)=2$, then $D$ is of finite character by Theorem \ref{th1.5}(2).

(1) $\Rightarrow$ (6)
Since $D_n$ is a PID for each $n\in\mathbb{N}_0$, this follows directly from Proposition \ref{prop4.3}(4).

(6) $\Rightarrow$ (1) By assumption, $(X-1)D$ is a product of finitely many radical principal ideals of $D$.
If $\textnormal{char}(F)=2$, then $X-1 = (X^{\frac{1}{2^n}}-1)^{2^n}$ for each $n\in\mathbb{N}_0$,
so there exists $k\in\{1,\dots, 2^n\}$ such that $(X^{\frac{1}{2^n}}-1)^kD$ is a radical ideal of $D$,
which is a contradiction. Hence, we must have $\textnormal{char}(F)\neq 2$.
\end{proof}

Recall that a field $F$ is \textit{real closed} if $1<[\overline{F}:F]<\infty$,
where $\overline{F}$ is an algebraic closure of $F$.
It is known that if $F$ is a real closed field, then
$\textnormal{char}(F)=0$ and $[\overline{F}:F]=2$ by the Artin-Schreier theorem \cite[Corollary 8.1.15]{ka89}.
It is clear that $\mathbb{R}$, the field of real numbers, is a real closed field.
Another well-known example of real closed fields is
the algebraic closure of $\mathbb{Q}$ in $\mathbb{R}$.  We next show that $D$ has some interesting ring-theoretic properties when $F$ is a real closed field.

\begin{corollary} \label{co4}
Suppose that $F$ is either an algebraically closed field with $\textnormal{char}(F) \neq 2$ or $F$ is a real closed field.
\begin{enumerate}[font=\normalfont]
\item $D$ is an SP domain that is not Dedekind.
\item $D$ has no irreducible element.
\item Each maximal ideal of $D$ is not invertible.
\item Each nonzero nonunit of $D$ is contained in $2^{\aleph_0}$ number of maximal ideals of $D$.
\item ht$(M[[X]]/MD[[X]]) \geq 2^{\aleph_1}$ for all maximal ideals $M$ of $D$. Hence,
\begin{center}
dim$(D[[X]]) \geq 2^{\aleph_1}$,
\end{center}
and equality holds under the continuum hypothesis if $F$ is countable.
\end{enumerate}
\end{corollary}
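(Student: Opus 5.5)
In both cases $\textnormal{char}(F)\neq 2$; a real closed field has characteristic $0$. So Theorem \ref{th2} already tells us that $D$ is an SP domain and an almost Dedekind domain. Proposition \ref{prop3.1}(2) says $D$ is never Dedekind. Together these give (1).

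The key step is (2). Let $f(X^{\frac{1}{2^n}})\in R_n$ be irreducible with $f(0)\neq 0$. By Proposition \ref{coro3.2}, $f$ is irreducible in $D$ iff it is irreducible in $D_{n+2}$, i.e. iff $f(X^4)$ is irreducible in $F[X]$. If $F$ is algebraically closed, Proposition \ref{coro3.2} already gives the claim. So assume $F$ is real closed and $\overline{F}=F(i)$. Every irreducible polynomial over $F$ has degree $1$ or $2$, so $f(X^4)$ has degree $\geq 4$ and is reducible. Hence no nonunit is irreducible in $D$, which is (2). For (3): $D$ is B\'ezout, so an invertible maximal ideal would be principal, generated by a prime element, hence by an irreducible one. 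This contradicts (2).

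For (4), take a nonzero nonunit $a\in D_k$ and an irreducible factor $f$ of $a$ in $D_k$. By (2) and Lemma \ref{lemma1.1}(3), $f$ is not prime in $D_{m}$ for infinitely many $m>k$. In fact, since no element is irreducible in $D$, every irreducible factor at every level eventually splits. By Proposition \ref{prop4.3}(2), each time this happens it splits into two nonassociated primes $g(X^{\frac{1}{2^{m}}})$ and $g(-X^{\frac{1}{2^{m}}})$. This gives an infinite binary tree of primes below $f$. Each infinite branch $f=f_k, f_{k+1},\dots$ with $f_{j+1}\mid f_j$ is an ascending chain of maximal ideals $f_jD_j$. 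By Theorem \ref{th1}(1) its union is a maximal ideal of $D$ containing $a$. Distinct branches give distinct maximal ideals by Lemma \ref{lemma2.3}(5). So $a$ lies in at least $2^{\aleph_0}$ maximal ideals.

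For the upper bound, each maximal ideal containing $a$ is determined by its sequence of contractions $M\cap D_j$ (Lemma \ref{lemma2.3}(5)). Each $M\cap D_j$ is one of the finitely many primes of $D_j$ dividing $a$, so there are at most $\prod_j(\text{finite})=2^{\aleph_0}$ such sequences. The main point to check carefully is that the splitting really occurs infinitely often along every branch. This follows because every element $f_j$ on a branch is a nonunit of $D$, hence not irreducible in $D$ by (2). So it fails to be prime in some later $D_m$, and by Proposition \ref{prop4.3}(2) it splits there into two nonassociated factors.

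For (5): by (3), no maximal ideal $M$ is invertible. By Corollary \ref{coro2.6}(1), $M$ is then not finitely generated, so it is infinitely generated. Since $D$ is almost Dedekind, $D_M$ is a DVR, and Corollary \ref{coro2.6}(2) gives ht$(M[[X]]/MD[[X]])\geq 2^{\aleph_1}$. The statement about dim$(D[[X]])$, including equality under the continuum hypothesis when $F$ is countable, follows from Corollary \ref{coro2.7}(2), using that $D$ is then countable as in Proposition \ref{prop3.1}(4).
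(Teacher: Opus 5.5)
Your argument is correct. For (1)--(3) it coincides with the paper's: Theorem \ref{th2} together with Proposition \ref{prop3.1}(2), then Proposition \ref{coro3.2} with $[\overline{F}:F]\le 2$, then the B\'ezout property. The real differences are in (4) and (5).

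In (4), the paper treats the algebraically closed case by reducing to a linear factor $X-\alpha$. It writes down an explicit full binary tree whose level-$n$ vertices are the factors $X^{\frac{1}{2^n}}-\alpha^{\frac{1}{2^n}}\zeta$, with $\zeta$ a $2^n$-th root of unity. It then identifies infinite paths with the maximal ideals containing $X-\alpha$, and only says that the real closed case is similar. You obtain the branching abstractly instead. By (2) and Lemma \ref{lemma1.1}(3), every prime at every level eventually stops being prime, and by Proposition \ref{prop4.3}(2) it then splits into exactly two nonassociated primes. This handles both kinds of fields at once. It also covers the real closed case, where a split can be postponed by one level: for instance, $X+1$ over $\mathbb{R}$ remains prime in $D_1$ and only splits in $D_2$. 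You also make the upper bound explicit, by injecting the maximal ideals containing $a$ into a countable product of finite sets of contractions. The paper leaves this implicit in its bijection with paths.

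In (5), the paper checks by hand that each $M$ is not an SFT-ideal and then applies \cite[Theorem 15]{tk21}. You shortcut this by noting that $D$ is almost Dedekind, so $D_M$ is a DVR, and invoking Corollary \ref{coro2.6}, which already contains the non-SFT step.

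Your version is shorter and uniform across both cases. The paper's is more concrete, and in (5) it does not rely on the argument borrowed from \cite{ckt15} inside Corollary \ref{coro2.6}.
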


\begin{proof}
(1) Theorem \ref{th2}.

(2)  Let $f \in D$ be a nonzero nonunit. Then $f \in D_n$ for some $n \in \mathbb{N}_0$,
and hence we may assume that $f \in F[X^{\frac{1}{2^n}}]$ with $f(0) \neq 0$.
It is clear that $f$ is not irreducible in $D_{n+2}$
because $[\overline{F}:F]\leq 2$. Thus, by Proposition \ref{coro3.2}, $f$ is not irreducible in $D$.

(3) Let $M$ be a maximal ideal of $D$. If $M$ is invertible, then $M$ is principal, say, $M=aD$
for some $a \in D$ since $D$ is a B{\'e}zout domain by Proposition \ref{prop3.1}.
Hence, $a$ is an irreducible element of $D$, which is contrary to (2) above. Thus, $M$ is not invertible.

(4) Suppose that $F$ is algebraically closed and $\textnormal{char}(F) \neq 2$. Let $f$ be a nonzero nonunit of $D$.
Since $f$ is a product of finitely many linear polynomials of $D_n$ for some $n\in\mathbb{N}_0$,
we only need to show that $X^{\frac{1}{2^n}}-\alpha$ is contained in $2^{\aleph_0}$ maximal ideals of $D$ for each nonzero $\alpha\in F$.
 Without loss of generality, we may assume that $n=0$.
Consider an infinite directed tree $\Gamma$
 whose vertices are factors of $X-\alpha$ in $D$ and two vertices $u(X),v(X)$ are connected
 by an arrow ($u(X)\to v(X)$) if and only if $u(X)=-v(X^{\frac{1}{2}})v(-X^{\frac{1}{2}})$:
\begin{center}
    \begin{tikzpicture}[scale=.7]
 \node (5) at (-6,3) {$\vdots$};
 \node (7) at (-2,3) {$\vdots$};
 \node (9) at (2,3) {$\vdots$};
 \node (11) at (6,3) {$\vdots$};
 \node (one) at (-6,2) {$X^{\frac{1}{4}}-\alpha^{\frac{1}{4}}\zeta_4^4$};
 \node (four) at (-2,2) {$X^{\frac{1}{4}}-\alpha^{\frac{1}{4}}\zeta_4^2$};
  \node (three) at (2,2) {$X^{\frac{1}{4}}-\alpha^{\frac{1}{4}}\zeta_4$};
 \node (two) at (6,2) {$X^{\frac{1}{4}}-\alpha^{\frac{1}{4}}\zeta_4^3$};
  \node (a) at (-3,0) {$X^{\frac{1}{2}}-\alpha^{\frac{1}{2}}$};
  \node (d) at (3,0) {$X^{\frac{1}{2}}+\alpha^{\frac{1}{2}}$};
  \node (zero) at (0,-2) {$X-\alpha$};
  \draw[->] (a) -- (four);
  \draw[->] (d) -- (three);
  \draw[->] (zero) -- (a);
  \draw[->] (a) -- (one);
 \draw[->] (d) -- (two);
 \draw[->] (zero) -- (d);
\end{tikzpicture}
\end{center}
where $\alpha^{\frac{1}{4}}$ is a fourth root of $\alpha$ and $\zeta_4$ is a primitive fourth root of unity
in $F$, so $\zeta_4, \zeta_4^2, \zeta_4^3, \zeta_4^4$ are all distinct because char$(F) \neq 2$.
Given an infinite directed path $\gamma$ of $\Gamma$ starting from $X-\alpha$,
let $P(\gamma)$ be the ideal of $D$ generated by the vertices of $\gamma$.
Then $P(\gamma)$ is a maximal ideal of $R$ that contains $X-\alpha$ by Theorem \ref{th1}(1).
In fact,  by Lemma \ref{lemma2.3}, the correspondence $\gamma\mapsto P(\gamma)$ is a one-to-one correspondence from
the set of infinite directed paths of $\Gamma$ starting from $X-\alpha$
to the set of maximal ideals of $D$ containing $X-\alpha$.
Thus, $X-\alpha$ is contained in $2^{\aleph_0}$ maximal ideals of $D$. The case when $F$ is a real closed field can be derived similarly.

(5) Let $M$ be a maximal ideal of $D$. Then $M$ is countably infinitely generated
by (3) and Corollary \ref{coro2.4}. Hence, as in the proof of Corollary \ref{coro3.5}(4),
it suffices to show that $M$ is not an SFT-ideal. Assume to the contrary that $M$ is an SFT-ideal.
Then there exist a finitely generated ideal $J$ of $D$ and $k \in \mathbb{N}$
so that $a^k \in J$ for all $a \in M$. Note that $M = \bigcup\limits_{n \in \mathbb{N}_0}f_nD_n$,
where $f_n$ is an irreducible element of $R_n$, so deg$_nf_n =1$ or $2$.
Hence, $f_n$ must be factorized into at least two irreducible elements in $D_{n+2}$.
Since $J$ is finitely generated, there exists $m \in \mathbb{N}_0$ so that $J \subseteq f_mD$.
Hence, if $f_m = f_{m+2}g$ for some nonunit $g \in D_{m+2}$,
then $f_{m+2}$ and $g$ are not associated as elements of $D_{m+2}$ by Proposition \ref{prop4.3}(2)
and $f_{m+2}^k = f_mh = f_{m+2}gh$ for some $h \in D$. Note that $h \in D_{m+2}$
by Lemma \ref{lemma1.1}(4)
and $f_{m+2}$ is irreducible in $D_{m+2}$. Hence, $f_{m+2}^k = f_{m+2}gh$ implies that $1= gh_1$ for some
$h_1 \in D_{m+2}$, a contradiction. Therefore, $M$ is not an SFT-ideal.
\end{proof}

Let $A$ be a commutative ring with identity. Then $A$ is called a \textit{special primary ring} (SPR) if $A$
is a local ring with maximal ideal $M$ such that each proper ideal of $A$ is a power of $M$ or, equivalently, if
$M$ is a principal ideal and $M^n= (0)$ for some $n \in \mathbb{N}$.
A \textit{B{\'e}zout ring} is a commutative ring with identity in which each finitely generated ideal is principal.
It is clear that if $D$ is a B{\'e}zout domain, then $D/I$ is a B{\'e}zout ring for each proper ideal $I$ of $D$.
We construct a non-Noetherian B{\'e}zout ring that is locally an SPR (hence, locally Noetherian)
whose prime ideals are principal for all but one which is countably generated.

\begin{corollary}
Assume that $F = \mathbb{Q}$. Let
$m\in\mathbb{N}_0$, $M_m=\bigcup\limits_{n\in\mathbb{N}_0} \Phi_{2m+1}(X^{\frac{1}{2^n}})D_n$,
$I_m= \Phi_{2m+1}(X)D$ and $A_m = D/I_m$.
Then the following statements hold.
\begin{enumerate}[font=\normalfont]
\item $A_m$ is a zero-dimensional reduced B{\'e}zout ring.
\item Spec$(A_m) = \{\Phi_{2(2m+1)}(\frac{1}{X^{2^n}})D/I_m \mid n \in \mathbb{N}\} \cup \{M_m/I_m\}$.
\item $M_m/I_m$ is not finitely generated.
\item $(A_m)_Q$ is an SPR for all prime ideals $Q$ of $A_m$.
\end{enumerate}
\end{corollary}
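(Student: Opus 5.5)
We read the statement with $d=2m+1$ (odd). In part (2) we interpret the listed generators as $\Phi_{2d}(X^{\frac{1}{2^n}})$, $n\in\mathbb{N}$; this is what the computation in the proof of Corollary \ref{excyclo} produces for $p=2$. The whole argument rests on the factorization recalled in that proof. Since $d$ is odd and $2\mid 2^{i-1}\cdot 2d$ for $i\geq 1$, iterating $\Phi_e(X^2)=\Phi_{2e}(X)$ for even $e$ gives $\Phi_{2^id}(Y)=\Phi_{2d}(Y^{2^{i-1}})$. Hence in $D_n$
$$\Phi_d(X)=\Phi_d(X^{\frac{1}{2^n}})\prod_{i=1}^{n}\Phi_{2^id}(X^{\frac{1}{2^n}})=\Phi_d(X^{\frac{1}{2^n}})\prod_{j=1}^{n}\Phi_{2d}(X^{\frac{1}{2^j}}).$$
Each $\Phi_{2d}(X^{\frac{1}{2^j}})$ is a prime element of $D$: $\Phi_{2d}(X)$ is prime by Corollary \ref{excyclo}(1), and it stays prime after substituting $X^{\frac{1}{2^j}}$ because $\Phi_{2d}(X^{2^k})=\Phi_{2^{k+1}d}(X)$ is irreducible over $\mathbb{Q}$ for every $k$, so Lemma \ref{lemma1.1}(3) applies.

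For (1), $D$ is a one-dimensional B\'ezout domain by Proposition \ref{prop3.1}(1), and $I_m\neq 0$. Therefore every prime of $A_m$ is the image of a maximal ideal of $D$, so $\dim A_m=0$. Also $A_m$ is B\'ezout, since a quotient of a B\'ezout domain is a B\'ezout ring. Reducedness means $\Phi_d(X)D$ is radical. This is Proposition \ref{prop4.3}(4) applied with $n=0$, which is allowed because $\mathrm{char}(\mathbb{Q})\neq 2$ and $\Phi_d(X)$ is irreducible in $R_0$ with nonzero constant term.

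For (2), we repeat the argument of Corollary \ref{excyclo}(2) with $p=2$. A maximal ideal $M\supseteq I_m$ contracts in each $D_n$ to one of the prime factors displayed above, by Lemma \ref{lemma1.1}(2). There are two cases:
\begin{itemize}
\item If $M\cap D_n=\Phi_d(X^{\frac{1}{2^n}})D_n$ for all $n$, then $M=M_m$ by Lemma \ref{lemma2.3}(1), and $M_m$ is maximal by Theorem \ref{th1}(1).
\item Otherwise some $M\cap D_n$ is generated by $\Phi_{2d}(X^{\frac{1}{2^j}})$, which is a prime element of $D$. Then $M=\Phi_{2d}(X^{\frac{1}{2^j}})D$, since $\dim D=1$.
\end{itemize}
Passing to $A_m$ gives the stated list.

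For (3), suppose $M_m/I_m$ is finitely generated. Since $I_m$ is principal, $M_m$ is then finitely generated, hence principal ($D$ is B\'ezout), hence invertible. By Corollary \ref{coro2.5} this forces $(M_m\cap D_k)D_j=M_m\cap D_j$ for some $k$ and all $j\ge k$. But $M_m\cap D_k=\Phi_d(X^{\frac{1}{2^k}})D_k$, and in $D_{k+1}$ we have $\Phi_d(X^{\frac{1}{2^k}})=\Phi_d(X^{\frac{1}{2^{k+1}}})\Phi_{2d}(X^{\frac{1}{2^{k+1}}})$, with the second factor a nonunit outside $M_m$. So the extension is strictly smaller than $M_m\cap D_{k+1}$, a contradiction. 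The one point to check here is that $\Phi_{2d}(X^{\frac{1}{2^{k+1}}})\notin M_m$. This holds because it is a prime element distinct from $M_m$, since $M_m$ is not principal.

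For (4), a reduced zero-dimensional ring has only fields as its localizations at primes: $(A_m)_Q$ is zero-dimensional, local and reduced, so its maximal ideal is its nilradical, which is $0$. A field is an SPR, with $M=(0)$ principal and $M^1=0$. The main obstacle is the bookkeeping in (2) and (3), namely the cyclotomic factorization and verifying that the $\Phi_{2d}(X^{\frac{1}{2^j}})$ are distinct primes not in $M_m$. The remaining parts follow formally from Proposition \ref{prop4.3}(4) and Theorem \ref{th1}.
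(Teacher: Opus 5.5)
Your proof is correct once one sentence in (3) is removed (see below). For (1)--(3) it is the paper's argument: reducedness comes from Proposition \ref{prop4.3}(4), zero-dimensionality and the B\'ezout property from Proposition \ref{prop3.1}(1), and the primes over $I_m$ from the cyclotomic factorization in the proof of Corollary \ref{excyclo}. Your reading of the generators in (2) as $\Phi_{2d}(X^{\frac{1}{2^n}})$ is the intended one. The paper leaves (3) to "the proof of Corollary \ref{excyclo}"; your explicit argument through Corollary \ref{coro2.5} is a fine way to fill this in.

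In (3) you say the point to check is $\Phi_{2d}(X^{\frac{1}{2^{k+1}}})\notin M_m$, and you justify it by ``$M_m$ is not principal.'' That is exactly what (3) asserts, so the sentence is circular. It is also unnecessary. The strict inclusion $\Phi_d(X^{\frac{1}{2^k}})D_{k+1}=\Phi_d(X^{\frac{1}{2^{k+1}}})\Phi_{2d}(X^{\frac{1}{2^{k+1}}})D_{k+1}\subsetneq \Phi_d(X^{\frac{1}{2^{k+1}}})D_{k+1}$ needs only that $\Phi_{2d}(X^{\frac{1}{2^{k+1}}})$ is a nonunit of the domain $D_{k+1}$. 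That is clear, since it is a nonconstant polynomial with nonzero constant term. If you want the non-membership anyway, it follows non-circularly from $M_m\cap D_{k+1}=\Phi_d(X^{\frac{1}{2^{k+1}}})D_{k+1}$, because $\Phi_d$ and $\Phi_{2d}$ are nonassociated irreducibles.

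For (4) you take a different route from the paper.
\begin{itemize}
\item The paper writes $(A_m)_Q=D_P/I_mD_P$ and uses that $D_P$ is a DVR because $D$ is almost Dedekind (Theorem \ref{th2}). A DVR modulo a nonzero ideal is an SPR. This ignores reducedness and shows that $D/I$ is locally an SPR for every nonzero proper ideal $I$ of $D$.
\item You use instead that a reduced zero-dimensional local ring is a field. This bypasses the almost Dedekind property entirely and gives a sharper conclusion: every localization of $A_m$ is a field, so $A_m$ is von Neumann regular. The cost is that it depends entirely on the radicality of $I_m$.
\end{itemize}
The two are consistent. $I_mD_P$ is a nonzero radical ideal of the DVR $D_P$, hence equal to $PD_P$, so the paper's SPRs are in fact fields as well.
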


\begin{proof}
By Proposition \ref{prop4.3}(4), $I_m$ is a radical ideal of $D$.
Hence, $D/I_m$ is a reduced ring. Since $D$ is a one-dimensional B{\'e}zout domain by Proposition \ref{prop3.1}(1),
the proof of Corollary \ref{excyclo} then yields (1), (2) and (3). Finally, if $Q$ is a prime ideal of $A_m$, then $Q = P/I_m$
for some prime ideal $P$ of $D$ with $I_m \subseteq P$. Then $(A_m)_Q = (D/I_m)_{P/I_m} = D_P/I_mD_P$,
and since $D_P$ is a DVR by Theorem \ref{th2}, $(A_m)_Q$ is an SPR.
\end{proof}

The final result of this section concerns the prime elements of $D$ in Notation \ref{notation}
when $F$ is a finite field of odd characteristic.
We first need a lemma which can be considered as a restatement of Proposition \ref{coro3.2} when $p=2$.

\begin{lemma} \label{lemma4.6}
For a fixed $n\in\mathbb{N}_0$, let $f(X^{\frac{1}{2^n}})$ be an irreducible element of $R_n$
with $f(0) \neq 0$.
    Then the following statements are equivalent.
    \begin{enumerate}[font=\normalfont]
       \item $f(X^{\frac{1}{2^n}})$ is a prime element of $D$.
    \item $f(X^{\frac{1}{2^n}})$ is an irreducible element of $D$.
     \item $f(X^4)$ is an irreducible element of $F[X]$.
\item $f(X^{\frac{1}{2^k}})$ is a prime element of $D$ for all $k \in \mathbb{N}_0$.
    \end{enumerate}
Hence, $D$ has no irreducible element if and only if
$f(X^4)$ is not irreducible in $F[X]$ for each irreducible polynomial $f(X)$ of $F[X]$.
\end{lemma}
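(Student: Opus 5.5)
The plan is to reduce everything to Proposition \ref{coro3.2} with $p=2$, using the $F$-algebra isomorphism $R_k\cong F[Y]$, $X^{\frac{1}{2^k}}\mapsto Y$. First I would note that for any $k\in\mathbb{N}_0$ this isomorphism sends $f(X^{\frac{1}{2^k}})$ to $f(Y)$. Hence $f(X^{\frac{1}{2^k}})$ is irreducible in $R_k$ if and only if $f(X)$ is irreducible in $F[X]$. Since $f(0)\neq 0$, this is also equivalent to irreducibility in $D_k$, by the remark after Notation \ref{notation}. In particular, the hypothesis on $f(X^{\frac{1}{2^n}})$ is independent of $n$, and $f(X^{\frac{1}{2^k}})$ is an irreducible element of $D_k$ for every $k\in\mathbb{N}_0$.

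For (1) $\Leftrightarrow$ (2), I would use that $D$ is a B\'ezout domain by Proposition \ref{prop3.1}(1), so irreducible elements of $D$ are prime. For (2) $\Leftrightarrow$ (3), Proposition \ref{coro3.2} with $p=2$ and $m=n$ shows that $f(X^{\frac{1}{2^n}})$ is irreducible in $D$ if and only if it is irreducible in $D_{n+2}$. Writing $Z=X^{\frac{1}{2^{n+2}}}$, we have $f(X^{\frac{1}{2^n}})=f(Z^4)$, and $f(0)\neq 0$. Applying the isomorphism $R_{n+2}\cong F[Y]$ again, irreducibility in $D_{n+2}$ is therefore equivalent to irreducibility of $f(X^4)$ in $F[X]$. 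The point to watch here is the passage between $R_{n+2}$ and $D_{n+2}$: it needs the constant term of $f(Z^4)$ to be $f(0)\neq 0$, which holds.

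For (3) $\Rightarrow$ (4), condition (3) makes no reference to $n$. For each $k$, the element $f(X^{\frac{1}{2^k}})$ is irreducible in $R_k$ with nonzero constant term, by the first paragraph. Applying the already proved (3) $\Rightarrow$ (1) with $n$ replaced by $k$ gives that $f(X^{\frac{1}{2^k}})$ is prime in $D$. The implication (4) $\Rightarrow$ (1) is trivial by taking $k=n$.

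For the final assertion, I would use the reduction made before Proposition \ref{coro3.2}: any nonzero nonunit of $D$ equals a unit times some $g(X^{\frac{1}{2^n}})\in R_n$ with $g(0)\neq 0$. If this element is irreducible in $D$, it is irreducible in $D_n$ by Lemma \ref{lemma1.1}(3), so $g$ is irreducible in $F[X]$. Thus $D$ has an irreducible element if and only if there is some $n$ and some irreducible $f\in F[X]$ with $f(0)\neq0$ such that $f(X^{\frac{1}{2^n}})$ is irreducible in $D$. By (2) $\Leftrightarrow$ (3), this happens if and only if $f(X^4)$ is irreducible for some such $f$. The only irreducible $f$ with $f(0)=0$ is $uX$, and then $f(X^4)=uX^4$ is reducible, so the restriction $f(0)\neq 0$ can be dropped. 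I expect no real obstacle; the only care needed is the bookkeeping of the substitution between $R_k$, $D_k$ and $F[X]$.
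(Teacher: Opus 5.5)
Your proposal is correct and follows essentially the same route as the paper: (1)$\Leftrightarrow$(2) via the B\'ezout property from Proposition \ref{prop3.1}(1), (2)$\Leftrightarrow$(3) via Proposition \ref{coro3.2} with $p=2$ together with the substitution $f(X^{\frac{1}{2^n}})=f((X^{\frac{1}{2^{n+2}}})^4)$, and (3)$\Leftrightarrow$(4) by reapplying this with $n$ replaced by $k$. You also give an explicit justification of the final ``Hence'' assertion, which the paper leaves to the reader: you reduce to $g(X^{\frac{1}{2^n}})\in R_n$ with $g(0)\neq 0$ and dispose of the case $f=uX$.
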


\begin{proof}
$(1)\Leftrightarrow (2)$ An irreducible element of a B{\'e}zout domain is a prime element.
Thus, the result follows from Proposition \ref{prop3.1}(1).

$(2)\Leftrightarrow (3)$ By Proposition \ref{coro3.2}, $f(X^{\frac{1}{2^n}})$ is irreducible in $D$
if and only if $f(X^{\frac{1}{2^n}})$ is irreducible in $D_{n+2}$.
Note that $f(X^{\frac{1}{2^n}}) = f((X^{\frac{1}{2^{n+2}}})^4)$ in $D_{n+2}$. Thus,
$f(X^{\frac{1}{2^n}})$ is irreducible in $D$ if and only if
$f(X^4)$ is irreducible in $F[X]$.

$(3) \Leftrightarrow (4)$ This follows from the equivalence of (2) and (3) above.
\end{proof}

For an integer $n \in \mathbb{N}$, let $\Phi_n(X)$ be the $n$-th cyclotomic polynomial.
It is known that $\Phi_n(X)$ is irreducible over $\mathbb{Q}$ but
it may not be irreducible over a finite field.
For instance, $\Phi_8(X)=X^4+1$ is not irreducible over any finite field $F$.
(Proof: Let $p=\textnormal{char}(F)$. We may assume that $F=\mathbb{Z}/p\mathbb{Z}$.
Clearly, if $p=2$, then $X^4+1=(X^2+1)^2$. Now assume that $p$ is odd, so either $p\equiv 1(\textnormal{mod }4)$ or $p\equiv 3(\textnormal{mod }4)$.
If $p\equiv 1(\textnormal{mod }4)$, then $X^2+1$ has a solution in $\mathbb{Z}/p\mathbb{Z}$ \cite[Exercise 1.10]{cox},
so by Lemma  \ref{thc}(2), $X^4+1$ is not irreducible over $\mathbb{Z}/p\mathbb{Z}$. Next,
suppose that $p\equiv 3(\textnormal{mod }4)$. If $p\equiv 7(\textnormal{mod }8)$ (resp., $p\equiv 3(\textnormal{mod }8)$),
then $X^2-2$ (resp., $X^2+2$) has a nonzero solution, say $\alpha$, in $\mathbb{Z}/p\mathbb{Z}$  \cite[Exercise 1.10]{cox}.
It follows that if $p\equiv 3(\textnormal{mod }4)$, then $\alpha^{-1}$ is a solution of $4X^4-1$ in $\mathbb{Z}/p\mathbb{Z}$.
Hence, by Lemma  \ref{thc}(2) again, $X^4+1$ is not irreducible over $\mathbb{Z}/p\mathbb{Z}$.)
On the other hand,
it is well-known that every root of a polynomial over a finite field whose constant term is nonzero is a root of unity,
so it must be a factor of $X^n-1$ for some $n\in\mathbb{N}$.
Therefore, we naturally adopt the following notion: Let $f(X)\in F[X]$ be a monic polynomial with $f(0)\neq0$.
Then the \textit{order} of $f(X)$, denoted by ord$(f)$, is the smallest  $n \in \mathbb{N}$
such that $f(X)\mid X^n-1$ (note that $n\le |F|^{\textnormal{deg}(f)} -1$ \cite[Lemma 3.1]{ln97}.)

Let $\mathbb{F}_q$ be a finite field of $q$ elements and $\mathbb{F}_q^*$ the multiplicative group of
units of $\mathbb{F}_q$. Given integers $n\in\mathbb{N}$ and $b$ with $gcd(n,b)=1$,
the least positive integer $k$ such that $b^k\equiv 1(\textnormal{mod }n)$ is called the
\textit{multiplicative order} of $b$ modulo $n$. We collect some preliminary results for the
proof of Proposition \ref{lemma4.13}.

\begin{theorem} \label{Theorem 3.3}
Let $\mathbb{F}_q$ be a finite field of $q$ elements and
$f\in\mathbb{F}_q[X]$ an irreducible polynomial over $\mathbb{F}_q$ of degree $m$ 
with $f(0)\neq 0$.
\begin{enumerate} [font=\normalfont]
\item ord$(f)$ is equal to the order of any root of $f$ in $\mathbb{F}^{*}_{q^m}$.
\item ord$(f)$ divides $q^m-1$. Hence, char$(F)$ and ord$(f)$ is relatively prime.
\item $m$ is equal to the multiplicative order of $q$ modulo ord$(f)$.
\end{enumerate}
\end{theorem}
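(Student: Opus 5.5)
The plan is to work inside the splitting field of $f$. Since $f$ is irreducible of degree $m$ over $\mathbb{F}_q$, adjoining one root $\beta$ gives $\mathbb{F}_q(\beta)\cong \mathbb{F}_q[X]/(f)$, a field with $q^m$ elements. This field is a normal extension of $\mathbb{F}_q$. Its Galois group is cyclic, generated by the Frobenius map $x\mapsto x^q$. So the roots of $f$ are exactly $\beta,\beta^q,\dots,\beta^{q^{m-1}}$, and they are pairwise distinct. The hypothesis $f(0)\neq 0$ gives $\beta\neq 0$, so $\beta\in\mathbb{F}_{q^m}^*$. All three statements will follow from these facts together with elementary group theory in the cyclic group $\mathbb{F}_{q^m}^*$ of order $q^m-1$.

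For (1), let $e$ be the multiplicative order of $\beta$. Each other root $\beta^{q^i}$ is the image of $\beta$ under a field automorphism, so it has the same order $e$; this shows $e$ does not depend on which root is chosen. Next, $f\mid X^n-1$ holds if and only if every root of $f$ is a root of $X^n-1$. Here we use that $f$ is irreducible, hence separable over the perfect field $\mathbb{F}_q$, so its roots are simple and divisibility can be checked on roots. Every root is a root of $X^n-1$ exactly when $\beta^n=1$, that is, exactly when $e\mid n$. The least such $n$ is $e$, so $\mathrm{ord}(f)=e$.

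For (2), Lagrange's theorem in $\mathbb{F}_{q^m}^*$ gives $e\mid q^m-1$. Since $q$ is a power of $\mathrm{char}(F)$, the integer $q^m-1$ is coprime to $\mathrm{char}(F)$. Hence $\mathrm{ord}(f)$ is coprime to $\mathrm{char}(F)$ as well.

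For (3), let $e=\mathrm{ord}(f)$; by (2), $\gcd(q,e)=1$, so the multiplicative order $k$ of $q$ modulo $e$ is defined. First, $\beta^{q^m}=\beta$, so $\beta^{q^m-1}=1$ and $e\mid q^m-1$, which gives $k\le m$. Conversely, $q^k\equiv 1 \pmod e$ gives $\beta^{q^k}=\beta$. Then $\beta$ lies in the fixed field of the $k$-th power of Frobenius, which is $\mathbb{F}_{q^k}$. This forces $\mathbb{F}_q(\beta)\subseteq \mathbb{F}_{q^k}$, so $m=[\mathbb{F}_q(\beta):\mathbb{F}_q]\le k$. Therefore $m=k$.

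I do not expect a serious obstacle. The one step that needs care is the reduction in (1) from divisibility of polynomials to a condition on roots. It relies on $f$ being separable, which holds because $f$ is irreducible over a perfect field.
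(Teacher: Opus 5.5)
Your proof is correct. The paper gives no argument of its own for this statement; it only cites Lidl--Niederreiter (Theorem 3.3, Corollary 3.4 and Theorem 3.5 there), so your write-up is a self-contained replacement for those references. For (1) and (2) it is essentially the standard textbook argument.

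The genuine difference is in (3). In Lidl--Niederreiter, the fact that $m$ equals the multiplicative order of $q$ modulo $\mathrm{ord}(f)$ falls out of a counting theorem for irreducible polynomials of given order. That theorem rests on the factorization of $\Phi_e$ over $\mathbb{F}_q$ into irreducible factors all of the same degree, which is the result the paper quotes as Theorem~\ref{Theorem 2.47(ii)}. You instead go straight to the underlying fact: $\beta^{q^k}=\beta$ if and only if $q^k\equiv 1 \pmod e$. You then compare $\mathbb{F}_q(\beta)$ with the at most $q^k$ roots of $X^{q^k}-X$. Your route is shorter and avoids cyclotomic polynomials entirely. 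The textbook route gives the count $\phi(e)/m$ at the same time, which the paper uses in Corollary~\ref{coro4.8}.

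A small simplification is available in (1). You need neither separability nor the Frobenius conjugates. An irreducible $f$ is a scalar multiple of the minimal polynomial of each of its roots. Hence $f\mid X^n-1$ if and only if $\beta^n=1$ for any single root $\beta$. This gives both $\mathrm{ord}(f)=e$ and its independence from the choice of root at once.
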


\begin{proof}
(1) \cite[Theorem 3.3]{ln97}. (2) \cite[Corollary 3.4]{ln97}.
(3) This appears in \cite[Theorem 3.5]{ln97}.
\end{proof}

\begin{theorem} \label{Theorem 2.47(ii)} \cite[Theorem 2.47(ii)]{ln97}
 Let $\mathbb{F}_q$ be a finite field of $q$ elements,
$n \in \mathbb{N}$ an integer with gcd$(q,n)=1$, and $\phi$ the Euler's totient function.
If $d$ is the multiplicative order of $q$ modulo $n$, then $\Phi_n$ factors
into $\phi(n)/d$ distinct monic irreducible polynomials in $\mathbb{F}_q[X]$ of
the same degree $d$.
\end{theorem}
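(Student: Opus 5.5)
The statement is Theorem 2.47(ii) of Lidl--Niederreiter. It says that when $\gcd(q,n)=1$ and $d$ is the multiplicative order of $q$ modulo $n$, the cyclotomic polynomial $\Phi_n$ splits over $\mathbb{F}_q$ into $\phi(n)/d$ distinct monic irreducible factors, each of degree $d$. The paper cites it, but I would prove it directly from Theorem \ref{Theorem 3.3} together with the structure of the roots of $\Phi_n$.

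\emph{Step 1: the roots of $\Phi_n$ are simple and have order exactly $n$.} Since $\gcd(q,n)=1$, the characteristic $p$ of $\mathbb{F}_q$ does not divide $n$. Hence the derivative of $X^n-1$ is $nX^{n-1}$, which is coprime to $X^n-1$, so $X^n-1$ is separable over $\mathbb{F}_q$. Therefore $X^n-1$ has $n$ distinct roots in a splitting field, and these form a cyclic group of order $n$. The identity $X^n-1=\prod_{e\mid n}\Phi_e(X)$ holds in $\mathbb{Z}[X]$ and hence modulo $p$. Induction on $n$ then shows that the roots of $\Phi_n$ over $\mathbb{F}_q$ are exactly the $\phi(n)$ elements of multiplicative order $n$ in this cyclic group. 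In particular $\Phi_n$ is squarefree, so its monic irreducible factors are pairwise distinct.

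\emph{Step 2: every irreducible factor has degree $d$.} Let $g$ be a monic irreducible factor of $\Phi_n$ in $\mathbb{F}_q[X]$. Then $g(0)\neq 0$, because $\Phi_n(0)=\pm1$. By Step 1 any root of $g$ has order $n$, so Theorem \ref{Theorem 3.3}(1) gives $\textnormal{ord}(g)=n$. Theorem \ref{Theorem 3.3}(3) then says that $\deg g$ equals the multiplicative order of $q$ modulo $n$, which is $d$.

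\emph{Step 3: count the factors.} Since $\Phi_n$ is monic of degree $\phi(n)$ and factors into distinct monic irreducibles each of degree $d$, there are exactly $\phi(n)/d$ of them.

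The only step that needs care is Step 1: the roots of $\Phi_n$ modulo $p$ must be exactly the primitive $n$-th roots of unity, with no repetition. This is precisely where the hypothesis $\gcd(q,n)=1$ is used, via separability. If one prefers to avoid Theorem \ref{Theorem 3.3}, Step 2 can be done directly. For a root $\zeta$ of $g$, the field $\mathbb{F}_q(\zeta)$ equals $\mathbb{F}_{q^k}$ for the least $k$ with $\zeta^{q^k}=\zeta$. That condition is equivalent to $n\mid q^k-1$, because $\zeta$ has order $n$, so $k=d$.
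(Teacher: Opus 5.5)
Your proof is correct. The paper gives no argument for this statement; it only cites Lidl--Niederreiter. Your Step 1 together with the ``direct'' version of Step 2 is essentially the textbook proof. The only difference is the starting point: Lidl--Niederreiter define $\Phi_n$ through its roots, whereas you start from $\Phi_n\in\mathbb{Z}[X]$ reduced modulo $p$ and recover the root description from the separability of $X^n-1$. That is the right bridge for the way the paper uses $\Phi_n$.

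You should, however, make the field-degree argument the actual proof of Step 2 rather than an optional alternative. Theorem~\ref{Theorem 3.3}(3) is Theorem 3.5 of Lidl--Niederreiter, and there it is proved \emph{from} Theorem 2.47(ii). That proof observes that an irreducible $f$ has order $e$ exactly when $f\mid\Phi_e$, and then reads off $\deg f$ from 2.47(ii). So deriving the statement via Theorem~\ref{Theorem 3.3}(3) is circular, unless you treat that theorem as a black box as the paper does.

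The direct route needs three facts:
\begin{enumerate}
\item $\deg g=[\mathbb{F}_q(\zeta):\mathbb{F}_q]$, so $\mathbb{F}_q(\zeta)=\mathbb{F}_{q^{\deg g}}$.
\item $\zeta\in\mathbb{F}_{q^k}$ exactly when $\zeta^{q^k-1}=1$, that is, when $n\mid q^k-1$.
\item The subfield criterion: $\mathbb{F}_{q^m}\subseteq\mathbb{F}_{q^k}$ if and only if $m\mid k$.
\end{enumerate}
Together these give $\{k : \deg g\mid k\}=\{k : n\mid q^k-1\}$. Taking least elements yields $\deg g=d$, with no appeal to orders of polynomials.
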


\begin{theorem} \label{Theorem 3.35} Let $\mathbb{F}_q$ be a finite field of $q$ elements,
$f_1(X),\dots, f_N(X)$ distinct monic irreducible polynomials in $\mathbb{F}_q[X]$ of degree $m$ and order $e$.
\begin{enumerate} [font=\normalfont]
\item Let $t\ge 2$ be an integer whose prime factors divide $e$ but not $\frac{q^m-1}{e}$.
Assume also that $q^m\equiv 1(\textnormal{mod }4)$ if $t\equiv0(\textnormal{mod }4)$.
Then $f_1(X^t),\dots, f_N(X^t)$ are all distinct monic irreducible polynomials in $\mathbb{F}_q[X]$ of degree $mt$ and order $et$.
\item Let $q=2^au-1$ and $t=2^bv$ with $a,b\ge 2$, where $u,v$ are odd integers
and all prime factors of $t$ divides $e$ but not $\frac{q^m-1}{e}$. If $k=\min\{a,b\}$, then
each of the polynomials $f_j(X^t)$ factors as a product of $2^{k-1}$ monic irreducible polynomials in $\mathbb{F}_q[X]$ of degree $mt2^{1-k}$ and order $et$.
\end{enumerate}
\end{theorem}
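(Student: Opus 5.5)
This is \cite[Theorems 3.35 and 3.37]{ln97}. To prove it directly, I would work with the roots of $f_j(X^t)$ and read off degrees and orders from Theorem \ref{Theorem 3.3}.

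\textbf{Orders of the roots.} Fix $j$, and let $\alpha$ be a root of $f_j$. By Theorem \ref{Theorem 3.3}(1), $\alpha$ has order $e$ in $\mathbb{F}_{q^m}^*$. Let $\beta$ be any root of $f_j(X^t)$, so $\beta^t=\alpha$, and put $r=\textnormal{ord}(\beta)$. From $r/\gcd(r,t)=e$ we get $r\mid et$, and we must show $r=et$.

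Every prime $\ell\mid t$ divides $e$. The condition $\ell\nmid (q^m-1)/e$ says $v_\ell(e)=v_\ell(q^m-1)$. Since $\alpha\in\mathbb{F}_{q^m}^*$ has order $e$ and $v_\ell(e)=v_\ell(q^m-1)$, $\alpha$ is not an $\ell$-th power in $\mathbb{F}_{q^m}^*$. I would check $\ell$-adically, prime by prime, that $v_\ell(r)=v_\ell(e)+v_\ell(t)$. This gives $r=et$, so every root of every $f_j(X^t)$ has order $et$. Moreover, distinct $f_j$ have disjoint root sets, so the $f_j(X^t)$ share no roots and are pairwise distinct.

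\textbf{Degrees via lifting the exponent.} By Theorem \ref{Theorem 3.3}(3), $\deg_{\mathbb{F}_q}\beta$ is the multiplicative order of $q$ modulo $et$. Since $q^m\equiv1 \pmod e$, this order is $m s$, where $s$ is the least integer with $q^{ms}\equiv 1\pmod{et}$. For each prime $\ell\mid t$ I would apply the lifting-the-exponent lemma.

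For odd $\ell$, and for $\ell=2$ when $q^m\equiv1\pmod 4$, it gives $v_\ell(q^{ms}-1)=v_\ell(q^m-1)+v_\ell(s)=v_\ell(e)+v_\ell(s)$. So the requirement is $v_\ell(s)\ge v_\ell(t)$ for every $\ell\mid t$, and for primes not dividing $t$ no condition arises because $q^m\equiv 1\pmod e$ already. Hence $s=t$, so $\beta$ has degree $mt=\deg f_j(X^t)$. Thus $f_j(X^t)$ is irreducible of degree $mt$, and its order is $et$ by the previous paragraph. This proves (1).

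For (2), every root still has order $et$, so all irreducible factors have the same degree $ms$ and order $et$. The number of factors is then $mt/(ms)=t/s$. Here $q\equiv 3\pmod 4$, and the relevant situation is $q^m\equiv 3 \pmod 4$, i.e.\ $m$ odd. Indeed, for $m$ even part (1) already applies, so I would record that (2) is meant for odd $m$, as in \cite{ln97}. Then $v_2(e)=v_2(q^m-1)=1$. For even $n$, the $2$-adic lifting-the-exponent lemma gives $v_2(q^n-1)=v_2(q-1)+v_2(q+1)+v_2(n)-1=a+v_2(n)$. The odd primes behave as before and force $v\mid s$.

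For the prime $2$ we need $a+v_2(s)\ge 1+b$ with $s$ even (since $q^{ms}\equiv1\pmod 4$ forces $s$ even), so the minimal choice is $v_2(s)=\max\{1,b+1-a\}$. Hence $t/s=2^{b-\max\{1,b+1-a\}}=2^{\min\{a,b\}-1}=2^{k-1}$, and each factor has degree $mt\,2^{1-k}$.

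\textbf{Main obstacle.} The delicate point is the $2$-adic bookkeeping. One must use the correct form of lifting the exponent in the two regimes $q^m\equiv 1$ and $q^m\equiv 3 \pmod 4$. One must also verify that the order of $\beta$ is exactly $et$ rather than a proper divisor, and this is exactly where the hypothesis $\ell\nmid (q^m-1)/e$ is used.
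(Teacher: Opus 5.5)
The paper gives no argument here; it only cites \cite[Theorems 3.35 and 3.37]{ln97}. You cite the same source and then reconstruct the reasoning behind it. Every root $\beta$ of $f_j(X^t)$ has order exactly $et$. Its degree is the multiplicative order of $q$ modulo $et$ by Theorem \ref{Theorem 3.3}. That order is then computed prime by prime with lifting the exponent. Your computations are correct, including the $2$-adic count $t/s=2^{\min\{a,b\}-1}$ in (2). What the reconstruction adds over the bare citation is that it exposes a defect in the paper's transcription: (2) as stated here needs $m$ odd, and without it the statement is false. Take $q=3$, $m=2$, $t=4$ and $f=X^2+X+2$, a primitive quadratic with $e=8=q^2-1$. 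All hypotheses of (2) hold with $a=b=k=2$. Yet the multiplicative order of $3$ modulo $32$ is $8=mt$, so $f(X^4)$ is irreducible of degree $8$, as (1) predicts, not a product of two quartics. So your restriction to odd $m$ is necessary, not cosmetic. It costs the paper nothing, since Proposition \ref{lemma4.13} invokes (2) only when $m$ is odd.

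Three small repairs are needed.

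First, your closing remark puts the hypothesis $\ell\nmid (q^m-1)/e$ in the wrong place. The equality $\textnormal{ord}(\beta)=et$ follows from $\ell\mid e$ alone: $r=e\gcd(r,t)$ and $v_\ell(e)\ge 1$ force $v_\ell(\gcd(r,t))=v_\ell(t)$. The condition $v_\ell(e)=v_\ell(q^m-1)$ is what makes $s=t$ in the degree step, which is exactly where your LTE computation uses it. For contrast, take $q=5$, $f=X+1$, $t=2$, where that hypothesis fails: the roots $\pm 2$ of $X^2+1$ still have order $4=et$, but $X^2+1$ splits.

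Second, in (1) your LTE cases miss $v_2(t)=1$ with $q^m\equiv 3\pmod 4$, which the hypotheses allow. There $v_2(e)=1$, and the requirement $v_2(q^{ms}-1)\ge 2$ just says $s$ is even. So $v_2(s)\ge v_2(t)$ still holds.

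Third, counting the factors in (2) as $t/s$ requires $f_j(X^t)$ to be squarefree. This holds because $\gcd(t,q)=1$ makes each $X^t-\alpha$ separable.
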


\begin{proof}
(1) \cite[Theorem 3.35]{ln97}. (2) \cite[Theorem 3.37]{ln97}.
\end{proof}

We are now ready to prove the following result which completely characterizes 
an irreducible element of $D_n = F[X^{\frac{1}{2^n}}, X^{-\frac{1}{2^n}}]$ for all $n \in \mathbb{N}_0$ that is also a prime elements of $D$.

\begin{proposition} \label{lemma4.13}
Let $F$ be a finite field with $q$ elements and $\textnormal{char}(F)\neq 2$.
Suppose that $f(X) \in F[X]$ is a monic polynomial of degree $m$ and $f(0)\neq0$.
Then $f(X)$ is a prime element of $D$ if and only if the following two conditions are satisfied:
\begin{enumerate}[font=\normalfont]
    \item $4\mid q^m-1$.
    \item There exists $n\in\mathbb{N}$ such that
    \begin{enumerate}[font=\normalfont]
        \item $m=\min\{i\in\mathbb{N}\mid n\mid q^i-1\}$,
                \item $2n\nmid q^{m}-1$, and
         \item $f(X)$ is an irreducible factor of $\Phi_n(X)$ over $F$.
    \end{enumerate}
\end{enumerate}
In this case, $n$ is the order of $f(X)$, $4\mid n$ and gcd$(q, n)=1$.
Moreover, if $\textnormal{char}(F)\equiv 3$  $(\textnormal{mod }4)$, then $m$ is even.
\end{proposition}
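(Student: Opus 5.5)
By Lemma \ref{lemma4.6}, $f(X)$ is a prime element of $D$ if and only if $f$ is irreducible over $F$ and $f(X^4)$ is irreducible in $F[X]$. So the plan is to turn irreducibility of $f(X^4)$ into arithmetic conditions on $e=\mathrm{ord}(f)$, and then rewrite those conditions as (1) and (2)(a)--(c).

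\emph{Irreducibility of $f(X^4)$ in terms of roots.} Assume $f$ is irreducible of degree $m$, and let $\alpha\in\mathbb{F}_{q^m}^*$ be a root. By Lemma \ref{thc}(1), $f(X^4)$ is irreducible if and only if $X^4-\alpha$ is irreducible over $\mathbb{F}_{q^m}$. By Lemma \ref{thc}(2), this holds if and only if $\alpha\notin(\mathbb{F}_{q^m}^*)^2$ and $\alpha\notin-4(\mathbb{F}_{q^m})^4$. The group $\mathbb{F}_{q^m}^*$ is cyclic of order $Q-1$, where $Q=q^m$, and $Q-1$ is even. Hence $\alpha$ is a nonsquare if and only if $e$, the order of $\alpha$, does not divide $(Q-1)/2$. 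By Theorem \ref{Theorem 3.3}(1), $e=\mathrm{ord}(f)$. Writing $v_2$ for the $2$-adic valuation, this says $e\mid Q-1$ (Theorem \ref{Theorem 3.3}(2)) with $v_2(e)=v_2(Q-1)$, i.e. $2e\nmid Q-1$.

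\emph{The extra condition $\alpha\notin-4\mathbb{F}_Q^4$.} I expect this to be the main obstacle.
\begin{itemize}
\item If $Q\equiv 1\pmod 4$, then $-1$ is a square and $4$ is a square. So $-4\mathbb{F}_Q^4\subseteq\mathbb{F}_Q^2$, and the condition is automatic once $\alpha$ is a nonsquare.
\item If $Q\equiv 3\pmod 4$, then $m$ is odd and $-1$ is a nonsquare. Squaring is then a bijection on the squares, so $\mathbb{F}_Q^{*4}=\mathbb{F}_Q^{*2}$. Thus $-4\mathbb{F}_Q^{*4}$ is exactly the set of nonsquares, and a nonsquare $\alpha$ always lies in it. So $f(X^4)$ is reducible in this case.
\end{itemize}
One can cross-check both cases against Theorem \ref{Theorem 3.35} with $t=4$. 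Consequently, $f(X^4)$ is irreducible if and only if $f$ is irreducible, $4\mid Q-1$, and $2e\nmid Q-1$. Since $4\mid Q-1$ and $v_2(e)=v_2(Q-1)$, this forces $4\mid e$.

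\emph{Translating to (1) and (2).} Set $n=e$.
\begin{itemize}
\item For the forward direction: $f$ irreducible of order $n$ divides $\Phi_n$, which gives (c). Theorem \ref{Theorem 3.3}(3) gives (a). Condition (b) is $2e\nmid q^m-1$.
\item Conversely, suppose (a)--(c) hold. Since $f$ divides $\Phi_n$, we need $\gcd(q,n)=1$. For this, note that (a) forces $n\mid q^m-1$. By Theorem \ref{Theorem 2.47(ii)}, $\Phi_n$ factors into distinct irreducibles of degree $\min\{i: n\mid q^i-1\}=m$. An irreducible factor $f$ of $\Phi_n$ has roots of exact order $n$, so $\mathrm{ord}(f)=n$. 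Then (1) and (b) give the criterion above.
\end{itemize}

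\emph{The final assertions.} The claims $n=\mathrm{ord}(f)$, $4\mid n$ and $\gcd(q,n)=1$ were obtained along the way. If $\mathrm{char}(F)\equiv 3\pmod 4$, then $q\equiv 3\pmod 4$ whenever $q$ is an odd power of the characteristic, and then $q^m\equiv 1\pmod 4$ forces $m$ even. If instead $q$ is an even power, then $q\equiv 1\pmod 4$ and $m$ need not be even; for this case I would recheck the intended hypothesis (presumably $q\equiv 3\pmod 4$).
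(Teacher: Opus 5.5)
Your proof is correct, and it takes a different route from the paper.

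\textbf{How the routes differ.} After identifying $n=\mathrm{ord}(f)$, the paper relies on the Lidl--Niederreiter results on $f(X^t)$. It uses Theorem~\ref{Theorem 3.35}(1) for sufficiency. For necessity it splits on the parity of $n$, using $\Phi_n(X^4)=\Phi_{4n}\Phi_{2n}\Phi_n$ or $\Phi_n(X^4)=\Phi_{4n}$. It then compares orders to show $2n\mid q^m-1$ would give $4n\mid q^{2m}-1$, which is impossible. Finally it invokes Theorem~\ref{Theorem 3.35}(2) to exclude $4\nmid q^m-1$.

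You instead apply Lemma~\ref{thc} directly to $X^4-\alpha$ over $\mathbb{F}_{q^m}$ and compute in the cyclic group $\mathbb{F}_{q^m}^*$. There, $\alpha$ is a nonsquare if and only if $2e\nmid q^m-1$. The condition $\alpha\notin -4\mathbb{F}_{q^m}^4$ is automatic when $q^m\equiv 1 \pmod 4$ and always fails when $q^m\equiv 3\pmod 4$.

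\textbf{What each approach buys.} Your route is more elementary and self-contained: it reproves the $t=4$ case of the cited theorems using only tools already in Section~3. It also yields $4\mid n$ immediately from $v_2(e)=v_2(q^m-1)\ge 2$. You make explicit a step the paper passes over silently: the $n$ in (2) must equal $\mathrm{ord}(f)$. Indeed, (a) forces $n\mid q^m-1$, so $\gcd(n,q)=1$, and then every root of $\Phi_n$ has exact order $n$. The paper's route buys only the connection with the general theory of $f(X^t)$ for arbitrary $t$.

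\textbf{The final assertion.} Your doubt about the last sentence of the statement is justified. Write $q=p^k$. When $p\equiv 3\pmod 4$, the condition $4\mid q^m-1$ only forces $km$ to be even, and that is all the paper's one-line justification gives.

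A concrete counterexample: take $q=9$, let $\zeta$ be a generator of $\mathbb{F}_9^*$, and let $f=X-\zeta$. Then $f$ satisfies (1) and (2) with $n=8$. Moreover, $X^4-\zeta$ is irreducible over $\mathbb{F}_9$: its roots have order $32$, and $9^i\equiv 1 \pmod{32}$ holds first at $i=4$. So $f$ is a prime element of $D$ with $m=1$ and characteristic $3$.

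The correct form of the claim is: if $q\equiv 3\pmod 4$, then $m$ is even. This is exactly what you proposed.
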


\begin{proof}
We may assume that $f(X)$ is irreducible over $F$. Let $n\in\mathbb{N}$ be the order of $f(X)$.
Recall that if $\alpha$ is a root of $f(X)$ in an algebraic closure $\overline{F}$ 
of $F$, then $\alpha$
is a primitive $n$-th root of unity by Theorem \ref{Theorem 3.3}(1) and the multiplicity of $\alpha$ is $1$
(this is obvious by the facts that $f(X)$ divides $X^n-1$
 and $X^n-1$ does not have any repeated roots since $n$ is relatively prime to $q$
 by Theorem \ref{Theorem 3.3}(2).
Therefore, $f(X)$ divides $\Phi_n(X)$ in $\overline{F}[X]$,
so $f(X)$ divides $\Phi_n(X)$ in $F[X]$ by Example \ref{ex2.12}(3) and (4).
It also follows that $m=\min\{i\in\mathbb{N}\mid n\mid q^i-1\}$ by Theorem \ref{Theorem 3.3}(3).
Thus, we only need to show that $f(X)$ is a prime element of $D$ if and only if $4\mid q^m-1$ and $2n\nmid q^m-1$.

$(\Rightarrow)$ Assume that $f(X)$ is a prime element of $D$. Then $f(X^4)$ is irreducible in $F[X]$ by Lemma \ref{lemma4.6}.
Notice that $f(X^4)$ is a factor of $\Phi_{n}(X^4)$.
If $n$ is odd, then  $\Phi_{n}(X^4)=\Phi_{4n}(X)\Phi_{2n}(X)\Phi_{n}(X)$
as mentioned in the proof of Corollary \ref{excyclo}.
Note that both $2$ and $n$ divide $q^m-1$ and $n$ is odd, so $2n\mid q^m-1$, which implies that $m=\min\{i\in\mathbb{N}\mid 2n\mid q^i-1\}$
because $m=\min\{i\in\mathbb{N}\mid n\mid q^i-1\}$.
Therefore, the degree of irreducible factors of $\Phi_{n}(X)$ and $\Phi_{2n}(X)$ over $F$ must be equal to $m$  by Theorem \ref{Theorem 2.47(ii)}.
It follows that $f(X^4)$ is an irreducible factor of $\Phi_{4n}(X)$, so the order of $f(X^4)$ must be $4n$ by Theorem \ref{Theorem 3.35}(1)
and $4m=\min\{i\in\mathbb{N}\mid 4n\mid q^i-1\}$ by Theorem \ref{Theorem 3.3}(3).
However, since $2|q^m-1$, $n|q^m-1$ and $2|q^m+1$, we have $4n|q^{2m}-1$, a contradiction.
Hence, $n$ must be even. Then $\Phi_{n}(X^4)=\Phi_{4n}(X)$ as mentioned in the proof of Corollary \ref{excyclo}.
In this case, $f(X^4)$ is an irreducible factor of $\Phi_{4n}(X)$, so its roots are all $4n$-th primitive root of unity,
which implies that $f(X^4)$ is of order $4n$.
Hence, if $2n\mid q^m-1$, then $4n\mid q^{2m}-1$,
 a contradiction. Thus, $2n\nmid q^m-1$.
It remains to show that $4\mid q^m-1$. If $4\nmid q^m-1$, then we have $q\equiv -1(\textnormal{mod }4)$ and $m$ is odd.
 Hence, by Theorem \ref{Theorem 3.35}(2), $f(X^4)$ is factored as a product of at least $2$ monic irreducible polynomials, a contradiction.

($\Leftarrow)$ Suppose that $4\mid q^m-1$ and $2n\nmid q^m-1$. Then $n$ must be even, since otherwise we would have $2n\mid q^m-1$.
Hence, $f(X^4)$ is a monic irreducible polynomial of $F[X]$ by Theorem \ref{Theorem 3.35}(1).
Thus, $f(X)$ is a prime element of $D$ by Lemma \ref{lemma4.6}.

\vspace{.2cm}

 In this case, assume that char$(F)=p$. Then $q=p^k$ for some $k \in \mathbb{N}$ and gcd$(p,n)=1$, thus gcd$(q,n)=1$.
Also, $4|n$ because $4|q^m-1$, $n|q^m-1$ and $2n\nmid q^m-1$.
 Moreover, if $p \equiv 3$  $(\textnormal{mod }4)$, then $4|q^m-1$ implies that $m$ is even.
\end{proof}

Note that every prime element of $D$ is of the form $uf(X^\frac{1}{2^n})$ for some nonzero $u\in F$
and a monic irreducible polynomial $f(X)\in F[X]$ that is a prime element of $D$. Hence, if $F$ is a
finite field of char$(F) \neq 2$, then we can find every prime element of $D$ using Proposition \ref{lemma4.13}.
It is also well known that if $|F|=q < \infty$ and $N_q(m)$ denotes the number of monic irreducible polynomials of degree $m$ in $F[X]$,
then $$N_q(m) = \frac{1}{m}\sum\limits_{d|m}\mu(d)q^{\frac{m}{d}}$$
where $\mu(d)$ is the M\"obius function \cite[Theorem 3.25]{ln97}.
In the next corollary, we find how many of these polynomials are prime elements of $D$.

\begin{corollary} \label{coro4.8}
Let $F$ be a finite field with $q$ elements, $\textnormal{char}(F)\neq 2$,
$m \in \mathbb{N}$, and $\phi$ the Euler's totient function.
Then the number of monic polynomials of degree $m$ in $F[X]$ that are prime elements of $D$ is
\[\begin{cases}
    0 & \textnormal{ if }4\nmid q^m-1,\\
    &\\
    \sum\limits_{n}\frac{\phi(n)}{m} & \textnormal{ if }4\mid q^m-1
\end{cases}\]
where the sum is taken over all $n\in\mathbb{N}$ satisfying \textnormal{(a)} and \textnormal{(b)} of Proposition \ref{lemma4.13}.
\end{corollary}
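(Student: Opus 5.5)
The count follows from Proposition \ref{lemma4.13} together with Theorem \ref{Theorem 2.47(ii)}. The plan is to partition the monic prime elements of degree $m$ by their order, and then count within each order class.

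First, if $4\nmid q^m-1$, then condition (1) of Proposition \ref{lemma4.13} fails. So no monic polynomial of degree $m$ with nonzero constant term is a prime element of $D$. A monic polynomial with zero constant term is $X$ times something, so it is either a unit times a lower-degree polynomial (when it equals $X^k$, a unit of $D$) or not of the required form. In the paper's setting only $f(0)\neq0$ is relevant, since prime elements are of the form $uf(X^{1/2^n})$ with $f$ monic, irreducible and $f(0)\neq 0$. I would remark that $X$ itself is a unit of $D$, so it contributes nothing. Hence the count is $0$.

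Next assume $4\mid q^m-1$. By Proposition \ref{lemma4.13}, a monic $f$ of degree $m$ with $f(0)\neq0$ is prime in $D$ if and only if $f$ is an irreducible factor of $\Phi_n(X)$ for some $n$ satisfying (a) and (b). In that case $n=\mathrm{ord}(f)$, so $n$ is uniquely determined by $f$. The sets of such $f$ for different $n$ are therefore disjoint: an irreducible $f$ dividing both $\Phi_n$ and $\Phi_{n'}$ would have order equal to both $n$ and $n'$, by Theorem \ref{Theorem 3.3}(1), since its roots are primitive $n$-th roots of unity. The count is then the sum over admissible $n$ of the number of monic irreducible factors of $\Phi_n$ of degree $m$.

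Now fix an $n$ satisfying (a). Then $\gcd(q,n)=1$, because $n\mid q^m-1$. The multiplicative order of $q$ modulo $n$ is exactly $m$, which is condition (a). Theorem \ref{Theorem 2.47(ii)} then says that $\Phi_n$ factors into exactly $\phi(n)/m$ distinct monic irreducible polynomials, all of degree $m$. Every one of them satisfies the conditions of Proposition \ref{lemma4.13}, so each is a prime element of $D$. Summing over $n$ gives $\sum_n \phi(n)/m$.

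Finally I would note that the sum is finite, since any admissible $n$ divides $q^m-1$, so no convergence issue arises. The only delicate point is the disjointness across different $n$, which the uniqueness of the order handles.
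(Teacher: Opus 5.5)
Your argument is the paper's: Proposition~\ref{lemma4.13} identifies the degree-$m$ monic primes with nonzero constant term as the irreducible factors of $\Phi_n$ for $n$ satisfying (a) and (b). Theorem~\ref{Theorem 2.47(ii)} then gives exactly $\phi(n)/m$ of them for each such $n$. Your disjointness check via $n=\mathrm{ord}(f)$ is a detail the paper leaves implicit.

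Your treatment of $f(0)=0$, however, is a non sequitur. Since $X$ is a unit of $D$, $X^k g(X)$ is an associate of $g(X)$, so it is prime in $D$ whenever $g$ is. For instance, over $\mathbb{F}_3$, $X^2+X+2$ divides $\Phi_8=X^4+1$, and $n=8$ satisfies (a) and (b) with $m=2$, so $X^2+X+2$ is prime in $D$. Hence the monic cubic $X(X^2+X+2)$ is a prime of $D$ even though $4\nmid 3^3-1$. So you cannot argue such polynomials away. Instead, state the restriction the paper makes implicitly by invoking Proposition~\ref{lemma4.13}: the count is over monic $f$ with $f(0)\neq 0$. Equivalently, it is over the monic irreducibles of $F[X]$ mentioned just before the corollary, among which only the unit $X$ has zero constant term.
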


\begin{proof}
This follows from Proposition \ref{lemma4.13} by recalling that for each $n\in\mathbb{N}$ satisfying \textnormal{(a)}
and \textnormal{(b)}, $\Phi_n(X)$ is factored into $\frac{\phi(n)}{m}$ distinct monic irreducible polynomials of degree $m$  by Theorem \ref{Theorem 2.47(ii)}.
 \end{proof}

Let $\mathbb{F}_q$ be a finite field with $q$ elements and $\mathbb{F}_q^*$
the multiplicative group of units of $\mathbb{F}_q$, so $\mathbb{F}_q^*$ is a cyclic group.
An element $\alpha\in \mathbb{F}_{q}^*$ is \textit{primitive} if it is a group generator of $\mathbb{F}_{q}^*$.
A polynomial $f(X)\in\mathbb{F}_q[X]$ is \textit{primitive over }$\mathbb{F}_q$ if there exists
a primitive element $\alpha$ of $\mathbb{F}_{q^m}^*$ for some $m\in\mathbb{N}$ such that $f(X)$ is
a minimal polynomial of $\alpha$ over $\mathbb{F}_q$.

\begin{corollary}
Assume that $F$ is a finite field with $q$ elements.
If $m$ is a positive integer such that $4\mid q^m-1$, then each primitive polynomial of degree $m$ over $F$ is a prime element of $D$.
\end{corollary}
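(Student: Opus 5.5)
The plan is to check the criterion of Proposition \ref{lemma4.13} directly for a primitive polynomial $f(X)$ of degree $m$.

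First, $F$ must have odd characteristic, since $4\mid q^m-1$ forces $q$ odd; so Proposition \ref{lemma4.13} applies. Let $f(X)$ be primitive of degree $m$, the minimal polynomial over $F$ of a primitive element $\alpha$ of $\mathbb{F}_{q^m}^*$. (A primitive element of $\mathbb{F}_{q^{m'}}^*$ has minimal polynomial of degree $m'$, so the field is indeed $\mathbb{F}_{q^m}$.) Then $f(X)$ is monic and irreducible, and $f(0)\neq 0$ because $\alpha\neq 0$.

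Set $n=q^m-1$. By Theorem \ref{Theorem 3.3}(1), ord$(f)$ equals the multiplicative order of $\alpha$, which is $n$ because $\alpha$ generates $\mathbb{F}_{q^m}^*$. We then verify the conditions of Proposition \ref{lemma4.13} with this $n$:
\begin{enumerate}
\item[(1)] $4\mid q^m-1$ holds by hypothesis.
\item[(a)] $m=\min\{i\in\mathbb{N}\mid n\mid q^i-1\}$. This follows from Theorem \ref{Theorem 3.3}(3); alternatively, directly, $q^i-1<q^m-1$ for $i<m$.
\item[(b)] $2n\nmid q^m-1$, which is trivial since $2n>n=q^m-1>0$.
\item[(c)] $f(X)\mid\Phi_n(X)$ over $F$. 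The root $\alpha$ is a primitive $n$-th root of unity, and $X^n-1$ is separable because $\gcd(n,q)=1$ by Theorem \ref{Theorem 3.3}(2). Hence $f$, the minimal polynomial of $\alpha$, divides $\Phi_n(X)$ in $\overline{F}[X]$, and therefore in $F[X]$ by Example \ref{ex2.12}(3),(4). This is the same argument used at the start of the proof of Proposition \ref{lemma4.13}, which shows that any irreducible $f$ divides $\Phi_{\mathrm{ord}(f)}$. So (c) holds automatically once $n$ is taken to be the order.
\end{enumerate}

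Proposition \ref{lemma4.13} then gives that $f(X)$ is a prime element of $D$.

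No step is a real obstacle. The only point needing care is that the proposition's condition (2) is existential in $n$, and choosing $n=$ ord$(f)=q^m-1$ makes (b) immediate.
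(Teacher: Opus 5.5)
Your proof is correct and follows essentially the same route as the paper: apply Proposition \ref{lemma4.13} with $n=\mathrm{ord}(f)=q^m-1$. The only difference is that the paper cites \cite[Theorem 3.16]{ln97} for the fact that a degree-$m$ primitive polynomial is exactly a monic polynomial with nonzero constant term of order $q^m-1$, whereas you derive this from Theorem \ref{Theorem 3.3}(1) and check conditions (a)--(c) explicitly.
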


\begin{proof}
Recall that  a degree $m$ polynomial over $F$ is primitive
if and only if it is a monic polynomial with nonzero constant term whose order is $q^m-1$ \cite[Theorem 3.16]{ln97}.
Thus, Proposition \ref{lemma4.13} yields the conclusion.
\end{proof}

It is helpful to note that if
$m$ is an integer with $m>1$, then the product of all monic irreducible polynomials of degree $m$
in $F[X]$ equals $\prod\limits_{n}\Phi_n(X)$, where $n$ is taken over all positive integers
that satisfy (a) of Proposition \ref{lemma4.13} \cite[Theorem 3.31]{ln97}.
 The result of Theorem \ref{Theorem 2.47(ii)} also gives a very helpful information 
when we factor $\Phi_n(X)$ into irreducible polynomials.
We end this section with an example in which we classify all prime elements of
$D$ that are monic polynomials of degree $4$ in $F[X]$ with $|F|=3$.

\begin{example} \label{ex4.10}
{\em Let $F$ be a field with three elements. We will identify every monic polynomial of degree 4 in $F[X]$
that is a prime element of $D$. Note that $q^m-1=3^4-1=80$. One readily verifies that the only $n\in\mathbb{N}$
satisfying  \textnormal{(a)}  of Proposition \ref{lemma4.13} are $n=5,10,16,20,40, 80$.
Moreover, $N_3(4) = \frac{1}{4}\sum\limits_{d|4}\mu(d)3^{\frac{4}{d}}
= \frac{1}{4}(3^4\mu(1) + 3^2\mu(2) + 3\mu(4)) = 18$. Hence,
there are $18$ monic irreducible polynomials of degree $4$ in $F[X]$. We will show  that only 10 of them are prime elements of $D$.
With the aid of Macaulay2 \cite{gs}, we have the following prime factorization over $F$:
\begin{align*}
\Phi_5(X)=&X^4+X^3+X^2+X+1,\\
\Phi_{10}(X)=&X^4-X^3+X^2-X+1,\\
    \Phi_{16}(X)=&X^8+1
     =(X^4+X^2+2)(X^4+2X^2+2),\\
    \Phi_{20}(X)=&X^8-X^6+X^4-X^2+1\\
    =&(X^4+X^3+2X^2+1)(X^4+2X^2+X+1),\\
        \Phi_{40}(X)=&X^{16}-X^{12}+X^8-X^4+1\\
    =&(X^4+X^2+X+1)(X^4+X^2+2X+1)\\
    &(X^4+X^3+X^2+1)(X^4+2X^3+X^2+1)\\
    \Phi_{80}(X)=&X^{32} - X^{24} + X^{16} - X^8 + 1\\
    =&(X^4 + X + 2) (X^4 + 2 X + 2) (X^4 + X^3 + 2)\\
    &(X^4 + X^3 + X^2 + 2 X + 2) (X^4 + X^3 + 2 X^2 + 2 X + 2) (X^4 + 2 X^3 + 2)\\
    &(X^4 + 2 X^3 + X^2 + X + 2) (X^4 + 2 X^3 + 2 X^2 + X + 2).
\end{align*}
Hence, $\{X^4+X^3+X^2+X+1, X^4-X^3+X^2-X+1, X^4+X^2+2, X^4+2X^2+2, X^4+X^3+2X^2+1, X^4+2X^2+X+1, X^4+X^2+X+1, X^4+X^2+2X+1,
X^4+X^3+X^2+1, X^4+2X^3+X^2+1, X^4 + X + 2, X^4 + 2 X + 2, X^4 + X^3 + 2,
X^4 + X^3 + X^2 + 2 X + 2, X^4 + X^3 + 2 X^2 + 2 X + 2, X^4 + 2 X^3 + 2, X^4 + 2 X^3 + X^2 + X + 2, X^4 + 2 X^3 + 2 X^2 + X + 2\}$ is
the set of all monic irreducible polynomials of degree 4 over $F$.

On the other hand, the only $n\in\mathbb{N}$
satisfying \textnormal{(a)} and \textnormal{(b)} of Proposition \ref{lemma4.13} are $n=16, 80$.
Hence, by Proposition \ref{lemma4.13}, $\{X^4+X^2+2, X^4+2X^2+2, X^4 + X + 2, X^4 + 2 X + 2, X^4 + X^3 + 2,
X^4 + X^3 + X^2 + 2 X + 2, X^4 + X^3 + 2 X^2 + 2 X + 2, X^4 + 2 X^3 + 2, X^4 + 2 X^3 + X^2 + X + 2, X^4 + 2 X^3 + 2 X^2 + X + 2\}$
is the set of all monic polynomials of degree $4$ in $F[X]$ that are prime elements of $D$.

Note that $X^4+X^2+2$ and $X^4+2X^2+2$ are the only non-primitive elements of this set.}
\end{example}


\section{The ring $D= \bigcup\limits_{n\in\mathbb{N}_0}\mathbb{Q}[X^{\frac{1}{2^n}}, X^{-\frac{1}{2^n}}]$}

Let $D=\bigcup\limits_{n\in\mathbb{N}_0} F[X^{\frac{1}{p^n}}, X^{-\frac{1}{p^n}}]$ for a prime number $p$. In the previous sections, a cyclotomic polynomial plays a central role when we seek element-wise factorization properties of $D$ in terms of those of $F[X^{\frac{1}{p^n}}, X^{-\frac{1}{p^n}}]$.
However, a cyclotomic polynomial is not irreducible in general (see
Example \ref{ex4.10}), so in order to ensure the irreducibility of a cyclotomic polynomial
in $F[X^{\frac{1}{p^n}}, X^{-\frac{1}{p^n}}]$,
in this section, we choose $F=\mathbb{Q}$. We also need the results developed in Section 4, so we maintain our assumption that $p=2$.

Let $D= \bigcup\limits_{n\in\mathbb{N}_0}\mathbb{Q}[X^{\frac{1}{2^n}}, X^{-\frac{1}{2^n}}]$.
Then $X^{\frac{1}{2^n}} +1$ is a prime element of $D$ by Lemma \ref{lemma4.6}
and $X^{\frac{1}{2^n}} +1$ divides $X-1$ in $D$ for all $n \in \mathbb{N}$.
In fact, we have
\begin{align*}
    X-1=(X^{\frac{1}{2}}-1)&(X^{\frac{1}{2}}+1)\\
=(X^{\frac{1}{4}}-1)&(X^{\frac{1}{4}}+1)(X^{\frac{1}{2}}+1)\\
    &\vdots\\
=(X^{\frac{1}{2^n}}-1)&\prod\limits_{i=1}^{n}(X^{\frac{1}{2^i}}+1)\\
&\vdots
\end{align*}
for all $n \in \mathbb{N}$. So we want to write $X-1 = \prod\limits_{n \in \mathbb{N}}(X^{\frac{1}{2^n}} +1)$.
In fact, in this section, we are going to prove that every nonzero nonunit
of $D$ can be written as a product of countably many prime elements of $D$ (Theorem \ref{theorem5.11}).
We first need a notion of infinite product of prime elements in $D$.

\begin{definition} \label{defi5.2}
{\em Let the notation be as in Notation \ref{notation} and
$S$ the set of monic polynomials of $R$ that are prime elements of $D$.
For a nonzero nonunit $f$ of $D$, assume that for each $p\in S$,
\begin{enumerate}[font=\normalfont]
\item  there exists $k_p \in \mathbb{N}_0$ such that $p^{k_p}|f$ but $p^{k_p+1}\nmid f$ and
\item if $g \in D$ is such that $p^{k_p}|g$ but $p^{k_p+1}\nmid g$ for each $p\in S$, then $gD \subseteq fD$.
\end{enumerate}
Then $f$ will be written as a product of prime elements in $S$, i.e., $f = u_f\prod\limits_{p \in S}p^{k_p}$,
where $u_f$ is the unit of $D$ such that $f=u_ff_1$ for a monic polynomial $f_1$ of $R$ with $f_1(0) \neq 0$.
Moreover, if $T = \{p \in S\mid k_p \neq 0\}$,
then $f$ is also written as $f = u_f\prod\limits_{p \in T}p^{k_p}$, and $f$ is a unit of $D$ if and only if $S = T$. }
\end{definition}

It is worth noting that the (infinite) product of prime elements in Definition \ref{defi5.2} is unique because
each prime element of $D$ is associated with a unique monic polynomial in $F[X^{\frac{1}{2^n}}]$ for some $n \in \mathbb{N}_0$
and that if $F$ is countable (e.g., $F$ is finite or $F= \mathbb{Q}$), then the set $S$ of Definition \ref{defi5.2} is countable.

For $d\in\mathbb{N}$,
let $\Phi_d(X) \in \mathbb{Z}[X]$ be the $d$-th cyclotomic polynomial.
It is useful to note that $\Phi_d(X)$ is divided by (countably) infinitely  many prime elements in
$D= \bigcup\limits_{n\in\mathbb{N}_0}\mathbb{Q}[X^{\frac{1}{2^n}}, X^{-\frac{1}{2^n}}]$
if and only if $d$ is odd, as the following lemma shows.

\begin{lemma} \label{lemma5.1}
Let the notation be as in Notation \ref{notation} with $F=\mathbb{Q}$.
 Then the following statements hold for each $d\in\mathbb{N}$.
\begin{enumerate}[font=\normalfont]
\item $d$ is even if and only if $\Phi_d(X)$ is a prime element of $D$.
\item If $d$ is odd, then $\Phi_d(X)$ is contained in countably infinitely many maximal ideals of $D$, say, $\{N\}\cup\{\Phi_{2d}(X^{\frac{1}{2^i}})D\mid i\in\mathbb{N}\}$,
where $N$ is infinitely generated by $\{\Phi_d(X^{\frac{1}{2^i}})\mid i\in\mathbb{N}\}$. 
\end{enumerate}
\end{lemma}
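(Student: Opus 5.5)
This lemma is the case $p=2$ of Corollary \ref{excyclo}, so I would follow that argument and replace the general facts with their $p=2$ forms. The one tool is the cyclotomic identity
\[
\Phi_d(X^2)=\Phi_{2d}(X)\Phi_d(X) \text{ if } d \text{ is odd}, \qquad \Phi_d(X^2)=\Phi_{2d}(X) \text{ if } d \text{ is even}.
\]
I would combine it with the irreducibility of every cyclotomic polynomial over $\mathbb{Q}$, with Lemma \ref{lemma4.6}, and with the maximal-ideal description of Lemma \ref{lemma2.3} and Theorem \ref{th1}(1).

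For (1), first suppose $d$ is even. Applying the identity twice gives $\Phi_d(X^4)=\Phi_{2d}(X^2)=\Phi_{4d}(X)$, which is irreducible over $\mathbb{Q}$. Since $\Phi_d(0)=\pm1\neq0$, Lemma \ref{lemma4.6} shows that $\Phi_d(X)$ is a prime element of $D$. Now suppose $d$ is odd. Then $\Phi_d(X^4)=\Phi_{4d}(X)\Phi_{2d}(X)\Phi_d(X)$ is reducible, so $\Phi_d(X)$ is not prime in $D$, again by Lemma \ref{lemma4.6}. Alternatively, this direction follows from (2), because a prime element lies in exactly one maximal ideal.

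For (2), let $d$ be odd. Substituting $X\mapsto X^{1/2^{n}}$ and inducting on $n$ gives, in $D_n$,
\[
\Phi_d(X)=\Phi_d(X^{\frac{1}{2^n}})\prod_{i=1}^{n}\Phi_{2d}(X^{\frac{1}{2^i}}).
\]
Each factor is irreducible in $D_n$, since $\Phi_{2d}(X^{2^{n-i}})=\Phi_{2^{n-i+1}d}(X)$ and $\Phi_d(X^{2^n})$ is the corresponding cyclotomic polynomial after re-indexing. These factors are pairwise nonassociate because they correspond to distinct cyclotomic polynomials in the variable $X^{1/2^n}$.

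By (1), each $\Phi_{2d}(X^{1/2^i})$ is a prime element of $D$; the element $\Phi_{2d}$ has even index, and replacing $X$ by $X^{1/2^i}$ preserves primality by Lemma \ref{lemma4.6}(4). So each $\Phi_{2d}(X^{1/2^i})D$ is a maximal ideal containing $\Phi_d(X)$, since $\dim D=1$. These ideals are distinct for distinct $i$: their contractions to $D_n$ are distinct for large $n$, by Lemma \ref{lemma2.3}(5).

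Next, let $N$ be the ideal generated by $\{\Phi_d(X^{1/2^i})\}$. The ideals $\Phi_d(X^{1/2^i})D_i$ are prime and form a chain, because $\Phi_d(X^{1/2^{i+1}})$ divides $\Phi_d(X^{1/2^i})$. Hence $N$ is maximal by Theorem \ref{th1}(1).

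To see there are no other maximal ideals, take a maximal ideal $M$ containing $\Phi_d(X)$. For each $n$, $M\cap D_n$ is a prime of the PID $D_n$ containing the product above, so it is generated by one of the listed factors. If for some $n$ it is $\Phi_{2d}(X^{1/2^i})D_n$, then $M=\Phi_{2d}(X^{1/2^i})D$, since that element is prime in $D$. Otherwise $M\cap D_n=\Phi_d(X^{1/2^n})D_n$ for all $n$, and $M=N$ by Lemma \ref{lemma2.3}(1) and (5). This gives countably infinitely many maximal ideals.

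Finally, $N$ is not finitely generated. If it were, then $N=\Phi_d(X^{1/2^k})D$ for some $k$ by Corollary \ref{coro2.5}, since $D$ is B\'ezout. But $\Phi_d(X^{1/2^k})$ is not prime in $D$, because it factors further in $D_{k+1}$. So $N$ is infinitely generated.

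The main point needing care is showing that the listed factors are irreducible and pairwise nonassociate in each $D_n$; this comes from the cyclotomic identity together with the irreducibility of $\Phi_m$ over $\mathbb{Q}$.
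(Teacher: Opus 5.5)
Your proposal is correct and is essentially the paper's argument: the paper proves this lemma as the case $p=2$ of Corollary \ref{excyclo}, using the same cyclotomic identity, the irreducibility test of Proposition \ref{coro3.2} (of which Lemma \ref{lemma4.6} is the $p=2$ restatement), and the description of the maximal ideals containing $\Phi_d(X)$ via contractions to the $D_n$ together with $\dim D=1$. You fill in details the paper leaves implicit (irreducibility and pairwise nonassociateness of the factors in $D_n$, maximality of $N$ by Theorem \ref{th1}(1), and non-finite generation of $N$ by Corollary \ref{coro2.5}); the only slip is that where you wrote $\Phi_d(X^{2^n})$ you mean $\Phi_d(X^{\frac{1}{2^n}})$, which is irreducible in $D_n$ because it is just $\Phi_d$ evaluated at the generator $X^{\frac{1}{2^n}}$.
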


\begin{proof}
This is a special case of Corollary \ref{excyclo}. In particular,
(2) appears in the proof of Corollary \ref{excyclo}(2).
\end{proof}

We will show that each nonzero nonunit of
$D= \bigcup\limits_{n\in\mathbb{N}_0}\mathbb{Q}[X^{\frac{1}{2^n}}, X^{-\frac{1}{2^n}}]$
is contained in countably many maximal ideals of $D$, in contrast to Corollary \ref{co4}(4),
which is a simple corollary of the following theorem.
For the proof of the next result, we borrow a technique from that of \cite[Theorem 4.6]{bglz24}.

\begin{theorem} \label{q1}
Let the notation be as in Notation \ref{notation} with $F=\mathbb{Q}$ and
$f(X)\in\mathbb{Q}[X]$ a nonconstant polynomial with $f(0)\neq 0$. If $f(X)$ is irreducible over $\mathbb{Q}$,
then the following statements are equivalent.
\begin{enumerate}[font=\normalfont]
\item $f(X)=u\Phi_d(X)$ for some nonzero $u\in\mathbb{Q}$ and an odd integer $d\in\mathbb{N}$.
\item $f(X)$ is not a product of finitely many prime elements of $D$.
\end{enumerate}
\end{theorem}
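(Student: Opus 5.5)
For (1) $\Rightarrow$ (2), I would use Lemma \ref{lemma5.1}(2) directly. If $f(X)=u\Phi_d(X)$ with $d$ odd, then $f(X)$ lies in countably infinitely many maximal ideals of $D$. On the other hand, every prime element $q$ of $D$ lies in exactly one maximal ideal, namely $qD$, because $\dim D=1$ (Proposition \ref{prop3.1}(1)). So a finite product of prime elements lies in only finitely many maximal ideals. Hence $f(X)$ is not a finite product of primes.

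For (2) $\Rightarrow$ (1), I would argue by contraposition: if $f(X)$ is not an associate of $\Phi_d(X)$ for any odd $d$, then $f(X)$ is a finite product of prime elements of $D$.

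\emph{Step 1 (reduction to counting factors).} Let $N_k$ be the number of irreducible factors of $f(X)$ in $D_k$, which equals the number of irreducible factors of $f(X^{2^k})$ in $\mathbb{Q}[X]$. By Proposition \ref{prop4.3}(2), each irreducible factor in $D_k$ either remains irreducible in $D_{k+1}$ or splits into exactly two nonassociated factors $g(X^{\frac{1}{2^{k+1}}})g(-X^{\frac{1}{2^{k+1}}})$. By Proposition \ref{coro3.2}, a factor that is irreducible in $D_k$ but not prime in $D$ must already split in $D_{k+1}$ or $D_{k+2}$. Hence, if $(N_k)$ is bounded, there is some $k_0$ after which $N_k$ stays constant. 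From that level on, every irreducible factor in $D_{k_0}$ remains irreducible in all $D_{k}$ with $k\ge k_0$, so it is prime in $D$ by Lemma \ref{lemma1.1}(3) (equivalently Lemma \ref{lemma4.6}). Thus $f(X)$ is a finite product of primes. It therefore suffices to show that if $N_k\to\infty$, then $f=u\Phi_d$ with $d$ odd.

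\emph{Step 2 (unbounded splitting forces $\alpha$ to be a root of unity).} Let $\alpha$ be a root of $f(X)$ and $K=\mathbb{Q}(\alpha)$, of degree $m=\deg f$. By Capelli's theorem (Lemma \ref{thc}(1)), the irreducible factors of $f(X^{2^k})$ over $\mathbb{Q}$ correspond to the irreducible factors of $X^{2^k}-\alpha$ over $K$. If there are $N_k$ such factors, then one of them, say $h$, has degree $e_k\le 2^k/N_k$. The constant term of $h$ is, up to sign, a product of $e_k$ roots $\zeta\beta$ of $X^{2^k}-\alpha$, where $\beta^{2^k}=\alpha$ and $\zeta$ ranges over $2^k$-th roots of unity. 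So this constant term has the form $\gamma_k=\xi\beta^{e_k}\in K$ with $\xi$ a root of unity. Taking Weil heights gives
\[
h(\gamma_k)=\frac{e_k}{2^k}\,h(\alpha)\le \frac{h(\alpha)}{N_k}\longrightarrow 0 .
\]
The $\gamma_k$ lie in the fixed number field $K$ and have bounded height, so by Northcott only finitely many values occur. Since $\gamma_k\neq 0$ and $h(\gamma_k)\to 0$, some fixed value of $\gamma_k$ has height $0$, and then $h(\alpha)=0$. By Kronecker's theorem, $\alpha$ is a root of unity. This is the technique borrowed from \cite[Theorem 4.6]{bglz24}, and I expect it to be the main obstacle: one must make sure the constant-term bookkeeping (the degree $e_k$ and the attached root of unity $\xi$) is correct, and that the height identity $h(\xi\beta^{e})=\frac{e}{2^k}h(\alpha)$ holds exactly.

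\emph{Step 3 (conclusion).} Since $f$ is irreducible over $\mathbb{Q}$ and $\alpha$ is a primitive $d$-th root of unity for some $d$, we get $f=u\Phi_d$. If $d$ were even, then $\Phi_d(X)$ would be a prime element of $D$ by Lemma \ref{lemma5.1}(1), so $N_k=1$ for all $k$, contradicting $N_k\to\infty$. Hence $d$ is odd, which proves (1).
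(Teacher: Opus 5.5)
Your proposal is correct. Direction (1) $\Rightarrow$ (2) is exactly the paper's argument, but for (2) $\Rightarrow$ (1) you take a genuinely different route.

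The paper follows a single branch $f_0,f_1,f_2,\dots$ of non-prime irreducible factors in $\mathbb{Z}[X]$. It uses Gauss's lemma and the relation $b_n=\pm b_{n+1}^2$ between leading coefficients to force $f_0$ to be monic. It bounds $\deg f_n$ by showing, case by case via the square and fourth-root criteria of Lemma~\ref{thc}(2), that some root of every $f_n$ lies in one fixed number field (a splitting field of $f_0$ with $\sqrt{-1}$ adjoined). Once the degrees stabilize, it bounds the integer coefficients by Vieta's formulas, since the largest root modulus satisfies $\delta_{n+1}=\delta_n^{1/2}$. Finiteness then gives $f_n=f_{n+a}$, and iterating $2^a$-th powers shows that a root is a root of unity. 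In effect the paper proves the needed instances of Northcott's and Kronecker's theorems by hand. This keeps everything elementary and self-contained, but ties the argument to $\mathbb{Z}$, Gauss's lemma and $p=2$.

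You instead count all factors at once. The reduction to $N_k\to\infty$ needs only that $N_k$ is nondecreasing together with Lemma~\ref{lemma1.1}(3); the appeal to Proposition~\ref{coro3.2} there is superfluous. The constant terms $\gamma_k=\xi\beta^{e_k}\in\mathbb{Q}(\alpha)$ have heights $\frac{e_k}{2^k}h(\alpha)\le h(\alpha)/N_k$ tending to $0$, and Northcott and Kronecker finish. This avoids the monicity trick and the case split between the two splitting patterns. The part showing that $\alpha$ is a root of unity also works verbatim for $p^k$-th roots over any number field. The price is importing Weil heights and the theorems of Northcott and Kronecker.

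One small point: the bijection of factors you attribute to Capelli is stronger than the irreducibility criterion stated in Lemma~\ref{thc}(1). You only need the degree bound, which is immediate. Every irreducible factor $P$ of $f(X^{2^k})$ over $\mathbb{Q}$ has a root $\theta$ with $\theta^{2^k}=\alpha$ (apply a suitable automorphism of $\overline{\mathbb{Q}}$), so $\deg P=m\,[\mathbb{Q}(\theta):\mathbb{Q}(\alpha)]$. The factor of least degree then yields a factor of $X^{2^k}-\alpha$ over $\mathbb{Q}(\alpha)$ of degree at most $2^k/N_k$.
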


\begin{proof}
(1) $\Rightarrow$ (2) This follows from Lemma \ref{lemma5.1}
and the fact that dim$(D)=1$.

(2) $\Rightarrow$ (1) Assume that (2) holds. We need to show that $f(X)$ is associated to $\Phi_d(X)$ for some odd integer $d$.
Note that $f(X)$ is associated to some $f_0(X)\in\mathbb{Z}[X]$ with $c(f_0(X)) = \mathbb{Z}$.
We may therefore assume that $f(X)=f_0(X)$.
Then $f_0(X)$ is not a product of finitely many prime elements of $D$ by assumption.

By Proposition \ref{coro3.2}, $f_0(X)$ is not irreducible in $D_2$. Hence,
either $f_0(X)=u_1f_1(X^{\frac{1}{2}})f_1(-X^{\frac{1}{2}})$ or $f_0(X)=u_1f_1(X^{\frac{1}{4}})f_1(-X^{\frac{1}{4}})$
for some $0 \neq u_1\in\mathbb{Q}$ and an irreducible polynomial $f_1(X)\in\mathbb{Q}[X]$ by Proposition \ref{prop4.3}(2).
Then, by Gauss's lemma, $f_1(X)\in\mathbb{Z}[X]$, $u_1=\pm1$ and $c(f_1(X)) = \mathbb{Z}$.
Since $f_0(X)$ is not a product of finitely many prime elements of $D$, without loss of generality,
we may assume that $f_1(X)$ is not a prime element of $D$. Hence, applying the preceding argument,
we have either $f_1(X)=\pm f_2(X^{\frac{1}{2}})f_2(-X^{\frac{1}{2}})$ or $f_1(X)=\pm f_2(X^{\frac{1}{4}})f_2(-X^{\frac{1}{4}})$
for some irreducible polynomial $f_2(X)\in\mathbb{Z}[X]$ with $c(f_2(X)) = \mathbb{Z}$. Iterating this process, we have a sequence $\{f_n(X)\}_{n\in\mathbb{N}_0}$
of irreducible polynomials in $\mathbb{Z}[X]$ such that if
$b_n$ is the leading coefficient of $f_n(X)$ for every $n\in\mathbb{N}_0$, then
$b_{n}=\pm b_{n+1}^2$. Hence, $b_0=\pm 1$,
so we may assume that $f_0(X)$ is a monic irreducible polynomial in $\mathbb{Z}[X]$. \\
\\
Claim 1: $\lim\limits_{n\to\infty}\textnormal{deg}(f_n(X))\in\mathbb{N}$.\\
\\
Proof of Claim 1: Let $K$ be the field obtained by adjoining $\sqrt{-1}$ to a splitting field of $f_0(X)$ over $\mathbb{Q}$.
For convenience, we first introduce an \textit{ad hoc} term: $g(X)\in\mathbb{Q}[X]$ is \textit{1-irreducible}
(resp., \textit{2-irreducible}) if it satisfies the following three conditions.
\begin{enumerate}
    \item[$\cdot$] $g(X)$ is irreducible over $\mathbb{Q}$.
    \item[$\cdot$] $g(X^2)$ is irreducible (resp., not irreducible) over $\mathbb{Q}$.
    \item[$\cdot$] $g(X^4)$ is not irreducible over $\mathbb{Q}$.
\end{enumerate}
Fix $n\in\mathbb{N}_0$. Let $\beta$ be a root of $f_n(X)$ such that $\beta \in K$. If $f_n(X)$ is 1-irreducible,
then $-\frac{\beta}{4}=\gamma^4$ for some $\gamma\in\mathbb{Q}(\beta)$ by Lemma \ref{thc}.
In this case, $\pm \beta^{\frac{1}{4}}$ is an element of $K$ because $(-4)^{\frac{1}{4}}=1+\sqrt{-1}\in K$.
Since $f_n(X)$ is 1-irreducible, it follows that $f_n(X)=\pm f_{n+1}(X^{\frac{1}{4}})f_{n+1}(-X^{\frac{1}{4}})$, and
hence either $\beta^{\frac{1}{4}}$ or $-\beta^{\frac{1}{4}}$ is a root of $f_{n+1}(X)$.
By a similar reasoning, if $f_n(X)$ is 2-irreducible, then $f_n(X)=\pm f_{n+1}(X^{\frac{1}{2}})f_{n+1}(-X^{\frac{1}{2}})$,
so $f_{n+1}(\beta^{\frac{1}{2}})f_{n+1}(-\beta^{\frac{1}{2}})=0$ and
$\pm\beta^{\frac{1}{2}} = \delta$ for some $\delta\in\mathbb{Q}(\beta)\subseteq K$.
In other words, one of the following holds:
\begin{enumerate}[label=(\roman*)]
\item $\beta^{\frac{1}{4}}\in K$ and $f_{n+1}(X)$ has either $\beta^{\frac{1}{4}}$
or $- \beta^{\frac{1}{4}}$ as one of its roots. In this case, $\textnormal{deg}(f_{n+1}(X))=2\textnormal{deg}(f_n(X))$.
\item $\beta^{\frac{1}{2}}\in K$ and $f_{n+1}(X)$ has either $\beta^{\frac{1}{2}}$ or $- \beta^{\frac{1}{2}}$ as one of its roots.
In this case, $\textnormal{deg}(f_{n+1}(X))=\textnormal{deg}(f_n(X))$.
\end{enumerate}
\noindent
Therefore, by mathematical induction, if $\alpha$ is a root of $f_0(X)$,
there exists $k_n\in\mathbb{N}$
so that at least one of $2^{k_n}$-th root of $\alpha$, say $\alpha_n$, is a root of $f_{n}(X)$ and $\alpha_n \in K$ for each $n\in\mathbb{N}$.
Since $f_{n}(X)$ is irreducible over $\mathbb{Q}$,
it follows that $\textnormal{deg}(f_{n}(X)) \leq [K:\mathbb{Q}] < \infty$. Since
$\{\textnormal{deg}(f_n(X))\}_{n\in\mathbb{N}_0}$ is a monotone increasing sequence in $\mathbb{N}$,
Claim 1 is proved. \\
\\
Now, by Claim 1, there exists $m\in\mathbb{N}$ such that $\textnormal{deg}(f_n(X))=\textnormal{deg}(f_m(X))$ for each $n\ge m$, and thereby $f_{n}(X)=\pm f_{n+1}(X^{\frac{1}{2}})f_{n+1}(-X^{\frac{1}{2}})$ for each $n\ge m$.\\
\\
Claim 2: $f_{n}(X)=f_{n+a}(X)$ for some $n\ge m$ and $a\in\mathbb{N}$.\\
\\
Proof of Claim 2: Let $k=\textnormal{deg}(f_m(X))$ and choose $n\ge m$.
Then $\textnormal{deg}(f_n(X))=k$. Write $f_n(X)=X^k+a_{n,k-1}X^{k-1}+\cdots+a_{n,0}$ for some $a_{n,0},\dots, a_{n,k-1}\in\mathbb{Z}$.
Let $r_{n,1},\dots, r_{n,k}$ be the complex roots of $f_{n}(X)$. Then, by Vieta's formula, we have
\begin{align*}
(-1)^la_{n,k-l}&=\sum\limits_{1\le i_1<i_2<\cdots <i_l\le k}\bigg(\prod\limits_{t=1}^l r_{n,i_t}\bigg)
\end{align*}
for each $l\in\{1,\dots, k\}$. Let  $\delta_n=\max\limits_{1\le i\le k}\{|r_{n,i}|\}$ and $s_n={k\choose \lceil \frac{k}{2}\rceil} \delta_n^k$.
Since $f_m(0)\neq 0$ and $f_m(X)\in\mathbb{Z}[X]$, we have $\delta_m\ge 1$. Moreover, $\delta_{n+1}=\delta_{n}^{\frac{1}{2}}$ for each $n\ge m$.
It follows that $|a_{n,k-l}|\le {k\choose l} \delta_n^l\le s_n\le s_m$ for each $l\in\{1,\dots, k\}$, i.e.,
the absolute value of a coefficient of $f_n(X)$ cannot exceed  $s_m$ for every $n\ge m$.
Hence, one can deduce that $\{f_n(X)\}_{n\ge m}$ is finite as a set. Hence, Claim 2 follows.

Now, $f_n(X)=f_{n+a}(X)$ for some $a\in\mathbb{N}$ and $n\ge m$ by Claim 2.
Note that $f_n(X) = f_{n+a}(X^{\frac{1}{2^a}})g(X^{\frac{1}{2^a}})$ for some
$g(X^{\frac{1}{2^a}}) \in \mathbb{Q}[X^{\frac{1}{2^a}}]$, so if
$\{\beta_1,\dots, \beta_k\}$ is the set of roots of $f_n(X)$, then $\{\beta_1^{2^a},\dots, \beta_k^{2^a}\}$ is also
a set of roots of $f_n(X)$. Hence, $\beta_1^{2^{a+i}}$ is a root of $f_n(X)$
for all $i \in \mathbb{N}$, and thus
$\beta_1^{2^{i}}= \beta_1^{2^j}$ for some $i, j\in\mathbb{N}$ with $i < j$.
Then $\beta_1\neq 0$ implies that $\beta_1$ is a root of unity.
Finally, note that $f_0(X) = f_{n}(X^{\frac{1}{2^c}})h(X^{\frac{1}{2^c}})$ for some
$h(X^{\frac{1}{2^c}}) \in \mathbb{Q}[X^{\frac{1}{2^c}}]$ with $c \in \mathbb{N}_0$. Hence,
$f_0(\beta_1^{2^c}) = f_n(\beta_1)h(\beta_1)=0$ and $\beta_1^{2^c}$ is a root of unity,
so it follows that $f_0(X)$ is a monic irreducible polynomial of $\mathbb{Z}[X]$
that has a root of unity as one of its roots.
Obviously, every root of unity is a primitive $d$-th root of unity for some $d\in\mathbb{N}$.
Hence, $f_0(X)$ is the minimal polynomial of a primitive $d$-th root of unity over $\mathbb{Q}$ for some $d\in\mathbb{N}$.
Thus, $f_0(X)=\Phi_d(X)$. By Lemma \ref{lemma5.1}, we conclude that $d$ is odd.
\end{proof}

\begin{corollary}
Let the notation be as in Notation \ref{notation} with $F=\mathbb{Q}$.
 Then each nonzero nonunit of $D$ is contained in  countably many maximal ideals of $D$.
\end{corollary}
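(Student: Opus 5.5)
We reduce to the case of a single irreducible polynomial in $\mathbb{Q}[X]$ and then invoke Theorem \ref{q1} together with Lemma \ref{lemma5.1}.

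\textbf{Reduction.} Let $a$ be a nonzero nonunit of $D$.
\begin{enumerate}
\item Choose $n$ with $a\in D_n$, and write $a$ as a unit times $g(X^{\frac{1}{2^n}})$, where $g(X)\in\mathbb{Q}[X]$ and $g(0)\neq 0$.
\item Factor $g(X)=u\,g_1(X)\cdots g_r(X)$ into irreducible polynomials over $\mathbb{Q}$. Then $a$ is associated to $\prod_i g_i(X^{\frac{1}{2^n}})$.
\item Every maximal ideal containing $a$ contains some $g_i(X^{\frac{1}{2^n}})$. So it suffices to show that each $h=g_i(X^{\frac{1}{2^n}})$ lies in only countably many maximal ideals.
\item Pass to $D_0$ via the automorphism $\sigma$ of $D$ sending $X^{\frac{1}{2^k}}\mapsto X^{\frac{1}{2^{k+n}}}$. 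Its inverse is given by $X^{\frac{1}{2^k}}\mapsto X^{\frac{1}{2^{k-n}}}$, which makes sense because $D$ contains all $2$-power roots and all integer powers of $X$; both maps are well defined on the group ring $D=\mathbb{Q}[X;G]$.
\item Since $\sigma(g_i(X))=h$ and $\sigma$ permutes maximal ideals, we may assume $h=f(X)\in\mathbb{Q}[X]$ is irreducible with $f(0)\neq 0$.
\end{enumerate}

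\textbf{Two cases.} By Theorem \ref{q1}, there are exactly two possibilities for $f$.
\begin{itemize}
\item If $f$ is not associated to $\Phi_d$ for any odd $d$, then $f$ is a finite product of prime elements $p_1\cdots p_s$ of $D$. Any maximal ideal containing $f$ contains some $p_j$. Since $\dim D=1$ (Proposition \ref{prop3.1}(1)), each $p_jD$ is itself maximal. Hence exactly the finitely many ideals $p_jD$ contain $f$.
\item If $f=u\Phi_d$ with $d$ odd, then Lemma \ref{lemma5.1}(2) states that $\Phi_d(X)$ lies in exactly the countably many maximal ideals $\{N\}\cup\{\Phi_{2d}(X^{\frac{1}{2^i}})D\mid i\in\mathbb{N}\}$.
\end{itemize}

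\textbf{Conclusion.} A finite union of countable sets is countable, so $a$ is contained in only countably many maximal ideals.

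The only step needing care is the reduction to $n=0$. If one prefers to avoid the automorphism, there is an alternative: Theorem \ref{q1} and Lemma \ref{lemma5.1} can be applied directly to $g_i(X^{\frac{1}{2^n}})$, treating $X^{\frac{1}{2^n}}$ as the variable. This works because $D=\bigcup_k\mathbb{Q}[Y^{\frac{1}{2^k}},Y^{-\frac{1}{2^k}}]$ with $Y=X^{\frac{1}{2^n}}$.
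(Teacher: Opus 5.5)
Your proposal is correct and follows the paper's own argument: factor the element into irreducible polynomials of some $D_n$, then bound the maximal ideals over each factor using Theorem \ref{q1} (finite product of primes, with $\dim D=1$) or Lemma \ref{lemma5.1}(2) (the odd cyclotomic case). The only difference is that you make explicit, via the automorphism $X^{g}\mapsto X^{g/2^n}$ of $\mathbb{Q}[X;G]$, the reduction from irreducible polynomials in $X^{\frac{1}{2^n}}$ to polynomials in $\mathbb{Q}[X]$, a step the paper leaves implicit.
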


\begin{proof}
Given $n\in\mathbb{N}_0$, each irreducible polynomial of $D_n$ is contained in countably many maximal ideals of $D$
by Lemma \ref{lemma5.1} and Theorem \ref{q1}. Since each nonzero nonunit of $D$ is a finite product of
irreducible polynomials of $D_n$ for some $n\in\mathbb{N}_0$, the conclusion follows.
\end{proof}

 It is remarkable that the results of Theorem \ref{q1} and Lemma \ref{lemma5.1}
also implies that each nonzero nonunit of $D$
has a prime divisor.

\begin{proposition} \label{lemma5.5}
Let the notation be as in Definition \ref{defi5.2}.
\begin{enumerate}[font=\normalfont]
\item $D = \bigcap\limits_{p \in S}D_{pD}$.
\item If $f,g \in D$ are nonzero nonunits such that
$f = u_f\prod\limits_{p \in S}p^{k_p}$ and $g = u_g\prod\limits_{p \in S}p^{l_p}$ for some units $u_f, u_g \in D$,
then $fg=u_fu_g\prod\limits_{p \in S}p^{k_p+l_p}$ and $u_{fg}=u_fu_g$.
\end{enumerate}
\end{proposition}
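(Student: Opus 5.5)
For (1), the plan is to show that a nonzero element $x=a/b$ of the quotient field lying in every $D_{pD}$ already lies in $D$. The key fact is that every nonzero nonunit of $D$ has a prime divisor in $S$. To see this, let $a\in D$ be a nonzero nonunit. Write $a$ as a unit times a finite product of irreducible polynomials of some $R_n$. By Theorem \ref{q1} (after substituting $X^{1/2^n}$ for $X$), each such factor $h$ is either a finite product of primes of $D$, or associated to $\Phi_d(X^{\frac{1}{2^n}})$ with $d$ odd. In the second case, Lemma \ref{lemma5.1}(2) shows that $\Phi_{2d}(X^{\frac{1}{2^{n+1}}})$ is a prime divisor. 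Either way, $a$ has a prime divisor, and it is associated to a monic element of $S$.

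Now take $x\in\bigcap_{p\in S}D_{pD}$. Since $D$ is B\'ezout (Proposition \ref{prop3.1}(1)), it is a GCD domain, so we may write $x=a/b$ with $\gcd(a,b)=1$. If $b$ is a nonunit, it has a prime divisor $p\in S$, and $p\nmid a$. Then $x\in D_{pD}$ means $a/b=c/s$ with $s\notin pD$, so $as=bc\in pD$. This forces $p\mid a$ or $p\mid s$, which is a contradiction. Hence $b$ is a unit and $x\in D$. The reverse inclusion is trivial.

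For (2), exponents first. Since $pD$ is prime, the valuation $v_p$ attached to the DVR $D_{pD}$ is additive. Here $D_{pD}$ is a DVR because $pD$ is a principal height-one prime of a B\'ezout domain, and we have $\bigcap_k p^kD_{pD}\cap D=\bigcap_k p^kD$. Therefore $v_p(fg)=k_p+l_p$, which gives condition (i) of Definition \ref{defi5.2} for $fg$.

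For condition (ii), let $h\in D$ satisfy $v_p(h)=k_p+l_p$ for all $p\in S$. The step I expect to be the main obstacle is showing $h\in fgD$. My approach is to use (1). Consider $h/(fg)$. For each $p$, $v_p(h/(fg))=0$, so $h/(fg)$ is a unit in every $D_{pD}$. By (1), $h/(fg)\in D$, so $hD\subseteq fgD$. This argument never used condition (ii) for $f$ or $g$ individually; (1) does all the work.

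It remains to identify the unit, $u_{fg}=u_fu_g$. Write $f=u_ff_1$ and $g=u_gg_1$ with $f_1,g_1$ monic elements of $R$ having nonzero constant term. Then $f_1g_1$ is again monic in $R$ with nonzero constant term, and $fg=u_fu_g(f_1g_1)$. The decomposition of an element as a unit times such a monic polynomial is unique: units of $D$ are $cX^{r}$ with $c\in F^\times$ and $r\in\mathbb{Z}[\frac12]$, and normalizing to a monic polynomial with nonzero constant term fixes $c$ and $r$. Hence $u_{fg}=u_fu_g$, and the product formula follows.
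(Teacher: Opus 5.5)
Your proof is correct. Part (1) follows the paper's argument. Every nonzero nonunit has a prime divisor in $S$ by Theorem \ref{q1} and Lemma \ref{lemma5.1}, so a denominator coprime to the numerator cannot be a nonunit. Your substitution $X^{r}\mapsto X^{r/2^n}$ is an automorphism of $D$, and it is what legitimizes applying Theorem \ref{q1} to irreducible factors lying in $R_n$; the paper leaves that step tacit.

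For (2) your route is somewhat different. The paper proceeds in three steps:
\begin{enumerate}
\item It uses (1) to get $h\in gD$.
\item It uses the fact that $p^{k}D$ is $pD$-primary to show that $t=h/g$ has exactly the exponents $k_p$.
\item It invokes condition (ii) of Definition \ref{defi5.2} for $f$ to conclude $t\in fD$.
\end{enumerate}
You instead work with the valuations $v_p$ of the DVRs $D_{pD}$ and apply (1) once, to $h/(fg)$. This is shorter. It also explicitly checks condition (i) for $fg$ and the equality $u_{fg}=u_fu_g$, which the paper leaves implicit. Your observation is also right that condition (ii) of Definition \ref{defi5.2} is automatic once (1) is known: by (1), an element of $D$ is determined up to a unit by its family $(v_p)_{p\in S}$.

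One small correction. To identify the divisibility exponent $k_p$ with $v_p(f)$, you need $p^kD_{pD}\cap D=p^kD$ for each $k$. This holds because $p^kD$ is $pD$-primary with $pD$ maximal, or directly from (1). The intersection statement over all $k$ that you wrote does not give it.
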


\begin{proof}
(1) Let $\theta \in \bigcap\limits_{p \in S}D_{pD}$, then $\theta = \frac{f}{g}$ for some
$f, g \in D$ with $(f,g) = D$. Suppose that $g$ is not a unit of $D$.
Then there exists $p \in S$ so that $p|g$ by Theorem \ref{q1} and Lemma \ref{lemma5.1}.
However, note that $\frac{f}{g} \in D_{pD}$ implies $sf \in gD \subseteq pD$ for
some $s \in D \setminus pD$ by which we have $(f,g) \subseteq pD \subsetneq D$, a contradiction. Thuu, $g$ is a unit of $D$ and $\theta = \frac{f}{g} \in D$.

(2) Let $h \in D$ be an element such that $p^{k_p+l_p}|h$ but $p^{k_p+l_p+1}\nmid h$ for each $p\in S$.
Then $h\in gD$ by (1) above, and hence $h=gt$ for some $t\in D$. It follows that $p^{k_p}|t$ but $p^{k_p+1}\nmid t$ for each $p\in S$.
Indeed, note that $p^{k_p}\mid g$ and $p^{k_p+1}\nmid g$, so $g=p^{k_p}g'$ for some $g'\in D$ such that $p\nmid g'$.
Note also that $D$ has Krull dimension 1
by Proposition \ref{prop3.1}(1), so $p^{k_p}D$ is a $pD$-primary ideal of $D$ \cite[Proposition 4.2]{am69}.
Hence, $p^{k_p+l_p}\mid h=gt$ implies $p^{k_p}\mid g't$, and thereby $p^{k_p}\mid t$.
On the other hand, $p^{k_p+1}\nmid t$ since $p^{k_p+l_p+1}\nmid gt=h$. Therefore, $t\in fD$, which implies that $h\in fgD$.
Hence, $fg=u_{fg}\prod\limits_{p\in S}p^{k_p+l_p}$. Finally, it is clear that $u_{fg}=u_fu_g$. Thus,
$fg=u_{f}u_g\prod\limits_{p\in S}p^{k_p+l_p}$.
\end{proof}

\begin{corollary} \label{inf}
Let the notation be as in Notation \ref{notation} with $F=\mathbb{Q}$, $k_1,\dots, k_a$
positive integers and $l_1,\dots, l_a$ distinct positive odd integers.
Then we have \begin{align*}
    \Phi_{l_1}(X)^{k_1}\cdots \Phi_{l_a}(X)^{k_a}=\prod\limits_{n \in \mathbb{N},\atop j\in\{1,\dots, a\}}\Phi_{2l_j}(X^{\frac{1}{2^n}})^{k_j}.
\end{align*}
\end{corollary}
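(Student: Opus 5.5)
The plan is to reduce to a single factor $\Phi_l(X)$ with $l$ odd and then apply Proposition \ref{lemma5.5}(2), which says exponents add under multiplication of infinite products. So the main work is to show that, in the sense of Definition \ref{defi5.2},
\[
\Phi_l(X)=\prod_{n\in\mathbb{N}}\Phi_{2l}(X^{\frac{1}{2^n}}).
\]
Once this is proved, Proposition \ref{lemma5.5}(2), applied $k_j$ times to each $\Phi_{l_j}(X)$ and then across $j$, gives the stated identity. The unit parts cause no trouble: each $\Phi_{l_j}$ is monic with nonzero constant term, so $u_{\Phi_{l_j}}=1$ and the product of units is $1$. Also, the prime sets for distinct odd $l_j$ are disjoint, since $\Phi_{2l}(X^{1/2^n})$ determines $(l,n)$ through the orders of its roots. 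Hence the combined exponent of $\Phi_{2l_j}(X^{1/2^n})$ is exactly $k_j$, and every other prime has exponent $0$.

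For the single factor, first note that each $\Phi_{2l}(X^{1/2^n})$ is a monic element of $R$. It is a prime element of $D$ by Lemma \ref{lemma5.1}(1) together with Lemma \ref{lemma4.6}(4), since $2l$ is even. The factorization $\Phi_l(X^{2^n})=\Phi_l(X)\prod_{i=1}^n\Phi_{2^il}(X)$, evaluated at $X^{1/2^n}$, gives
\[
\Phi_l(X)=\Phi_l(X^{\frac{1}{2^n}})\prod_{i=1}^{n}\Phi_{2l}(X^{\frac{1}{2^{i}}}),
\]
using $\Phi_{2^il}(Y)=\Phi_{2l}(Y^{2^{i-1}})$. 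This shows each $\Phi_{2l}(X^{1/2^n})$ divides $\Phi_l(X)$. Proposition \ref{prop4.3}(4) says $\Phi_l(X)D$ is radical, so the square of a prime cannot divide it, and $k_p=1$ for these primes. By Lemma \ref{lemma5.1}(2), the only maximal ideals containing $\Phi_l(X)$ are $N$ and the ideals $\Phi_{2l}(X^{1/2^i})D$. Since $N$ is not principal, no other element of $S$ divides $\Phi_l(X)$, so $k_p=0$ for all remaining $p\in S$. This settles condition (1) of Definition \ref{defi5.2}.

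Condition (2) is the main obstacle. Let $g\in D$ satisfy $p^{k_p}\mid g$ and $p^{k_p+1}\nmid g$ for every $p\in S$; we must show $g\in\Phi_l(X)D$. I would avoid a direct computation and use Proposition \ref{lemma5.5}(1), $D=\bigcap_{p\in S}D_{pD}$. It then suffices to show $g/\Phi_l(X)\in D_{pD}$ for every $p\in S$. For $p$ not dividing $\Phi_l(X)$, $\Phi_l(X)$ is a unit in $D_{pD}$, and the claim is immediate. For $p=\Phi_{2l}(X^{1/2^n})$, Theorem \ref{th2} makes $D_{pD}$ a DVR with uniformizer $p$, and $\Phi_l(X)$ has $p$-adic valuation exactly $1$ by the radicality above. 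Since $p\mid g$, we get $v_p(g)\ge 1$, and hence $g/\Phi_l(X)\in D_{pD}$. Therefore $g\in\Phi_l(X)D$, as required.
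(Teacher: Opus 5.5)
Your proof is correct. The overall reduction via Proposition \ref{lemma5.5}(2) is the same as the paper's, except that the paper reduces only to $a=1$ and keeps the exponent $k$, while you also reduce to $k=1$ (harmless, since exponents add). The difference is in condition (2) of Definition \ref{defi5.2}.

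\textbf{The paper's route.} It argues at a finite level. With $\alpha_n=\Phi_{2l}(X^{\frac{1}{2^n}})$ and $g\in D_m$, it writes $g=g_1\cdots g_s\prod_{n=1}^m\alpha_n^k$ with each $g_i$ irreducible in $D_m$. Proposition \ref{prop4.3}(3) shows that at least $k$ of the $g_i$ are divisible by $\alpha_{m+1}$. Proposition \ref{prop4.3}(2), together with $\Phi_{2l}(-Y)=\pm\Phi_l(Y)$, then gives $g_i=u_i\alpha_{m+1}\Phi_l(X^{\frac{1}{2^{m+1}}})$. The finite identity $\Phi_l(X)=\Phi_l(X^{\frac{1}{2^{m+1}}})\prod_{n=1}^{m+1}\alpha_n$ then yields $\Phi_l(X)^k\mid g$.

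\textbf{Your route.} You localize instead. Using $D=\bigcap_{p\in S}D_{pD}$ (Proposition \ref{lemma5.5}(1)) and the fact that each $D_{pD}$ is a DVR, you only need to compare $p$-adic valuations. This is the same mechanism the paper uses inside the proof of Proposition \ref{lemma5.5}(2).

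\textbf{Comparison.} The paper's argument is explicit and purely polynomial, and it shows concretely how the tail factor $\Phi_l(X^{\frac{1}{2^{m+1}}})$ is recovered from the $g_i$. Yours is shorter and more conceptual. You also make explicit something the paper leaves implicit: no element of $S$ other than the $\alpha_n$ divides $\Phi_l(X)$. You get this from Lemma \ref{lemma5.1}(2) and the non-principality of $N$.

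\textbf{What your method actually proves.} It gives more than this corollary. For any nonzero nonunit $f$ of $D$, put $k_p=v_p(f)$, which is finite. Since $p^{k_p}D$ is $pD$-primary, $p^{k_p}D_{pD}\cap D=p^{k_p}D$, so condition (1) of Definition \ref{defi5.2} holds. Condition (2) is then exactly your intersection argument. So the existence part of Theorem \ref{theorem5.11} follows directly from Proposition \ref{lemma5.5}(1) and Theorem \ref{th2}. The only content specific to this corollary is identifying the exponents, and you do that correctly via radicality (Proposition \ref{prop4.3}(4)).
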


\begin{proof}
By Proposition \ref{lemma5.5}, we may assume that $a=1$. Let $k_1=k$, $l_1=l$ and $f=\Phi_{l}(X)^{k}$ for the sake of simplicity.
Given $n\in\mathbb{N}$, let $\alpha_{n} =\Phi_{2l}(X^{\frac{1}{2^n}})$.
Then $\{\alpha_{n}\}_{n\in\mathbb{N}}$ is a set of monic polynomials of $R$ that are
pairwise nonassociated prime elements of $D$ and
\begin{align*}
f=\Phi_{l}(X^{\frac{1}{2^{t+1}}})^{k}\prod\limits_{n=1}^t\alpha_{n}^{k} \tag{a}
\end{align*}
 for each $t\in\mathbb{N}$ by the proof of Corollary \ref{excyclo}.
 It follows that $\alpha_{n}^{k}| f$ but $\alpha_{n}^{k+1}\nmid f$ for each $n \in \mathbb{N}$ by Proposition \ref{prop4.3}(3).
Now assume that there exists $g \in D$ such that $\alpha_{n}^{k}|g$ but $\alpha_{n}^{k+1}\nmid g$ for all $n \in \mathbb{N}$.
Then $g \in D_m$ for some $m \in \mathbb{N}$, and hence $$g= g_1 \cdots g_s \prod\limits_{n=1}^m\alpha_{n}^{k}$$
for some irreducible elements $g_1, \dots , g_s$ of $D_m$. Note that
$\alpha_{m+1}^{k}|g_1\cdots g_{s}$ in $D$.
Note also that $\alpha_{m+1}^2\nmid g_i$ for each $i\in\{1,\dots, s\}$ by Proposition \ref{prop4.3}(3).
Hence, $s \geq k$ and we may assume that $\alpha_{m+1}\mid g_{i}$ for each $i\in\{1,\dots, k\}$.
For each $i\in\{1,\dots, k\}$, we then have
$g_{i} =u_i\alpha_{m+1}\Phi_{2l}(-X^{\frac{1}{2^{m+1}}})=u_i\alpha_{m+1}\Phi_{l}(X^{\frac{1}{2^{m+1}}})$
for some nonzero $u_i\in\mathbb{Q}$,
where the first equality follows from Proposition \ref{prop4.3}(2) and the second is from \cite[p.280]{l02}.
Thus, $g \in fD$ or $gD \subseteq fD$ by equation (a) above. Therefore,
the statement follows.
\end{proof}

We are now ready to prove the main result of this section.

\begin{theorem} \label{theorem5.11}
Let the notation be as in Notation \ref{notation} with $F=\mathbb{Q}$.
Then each nonzero nonunit of $D$ can be written uniquely as a product of countably many prime elements of $D$.
\end{theorem}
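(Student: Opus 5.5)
We reduce to a finite product of irreducible polynomials over a single level $D_n$, handle each factor separately, and glue the pieces with Proposition \ref{lemma5.5}(2). Let $f$ be a nonzero nonunit of $D$. Then $f\in D_n$ for some $n\in\mathbb{N}_0$. Up to a unit $u X^{\frac{r}{2^n}}$ with $u \in \mathbb{Q}^{*}$, we have $f = g_1\cdots g_s$, where each $g_i \in \mathbb{Q}[X^{\frac{1}{2^n}}]$ is monic irreducible with $g_i(0)\neq 0$. Since Proposition \ref{lemma5.5}(2) says infinite products in the sense of Definition \ref{defi5.2} multiply exponentwise, it suffices to show that each $g_i$ admits such a representation. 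Uniqueness should then be immediate: the exponent $k_p$ of each $p\in S$ is determined intrinsically as the largest $k$ with $p^k\mid f$. Moreover, the unit $u_f$ is forced by the normalization in Definition \ref{defi5.2}, and each prime element is associated to exactly one element of $S$. Countability holds because $S$ itself is countable for $F=\mathbb{Q}$.

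\textbf{Step 1: reduce $g(X^{\frac{1}{2^n}})$ to $g(X)$.} Fix one factor $g=g(X^{\frac{1}{2^n}})$. I would pass to $g(X)\in\mathbb{Q}[X]$ via the automorphism of $D$ sending $X^{\frac{1}{2^k}}\mapsto X^{\frac{1}{2^{k+n}}}$; this shift is an automorphism of $D$ and permutes $S$. It therefore preserves both divisibility and the notion of infinite product. So we may assume $g\in\mathbb{Q}[X]$ is irreducible with $g(0)\neq0$.

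\textbf{Step 2: split by Theorem \ref{q1}.} If $g$ is a finite product of prime elements of $D$, the representation is trivial. Finite products satisfy Definition \ref{defi5.2}, since condition (2) follows from $D=\bigcap_{p\in S}D_{pD}$ (Proposition \ref{lemma5.5}(1)) and the usual argument for finitely many primes. Otherwise, Theorem \ref{q1} gives $g = u\Phi_d(X)$ with $d$ odd. Corollary \ref{inf} then gives $\Phi_d(X)=\prod_{n\in\mathbb{N}}\Phi_{2d}(X^{\frac{1}{2^n}})$, an infinite product of elements of $S$. Hence every factor has a representation, and multiplying them via Proposition \ref{lemma5.5}(2) yields one for $f$.

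\textbf{Main obstacle.} The delicate point is that Proposition \ref{lemma5.5}(2) is stated only for nonzero nonunits, so I must be careful when combining finite products with infinite ones. A finite product is, however, also of the form in Definition \ref{defi5.2}. I must also check that the automorphism shift in Step 1 genuinely maps $S$ to $S$. Monic polynomials of $R$ with nonzero constant term map to polynomials of the same kind, so this should hold. A possible alternative is to avoid the shift entirely by applying Theorem \ref{q1} and Corollary \ref{inf} inside $D_n$ with the variable $Y=X^{\frac{1}{2^n}}$, since $D$ is also the union $\bigcup_k\mathbb{Q}[Y^{\frac{1}{2^k}},Y^{-\frac{1}{2^k}}]$.
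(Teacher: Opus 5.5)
Your proposal is correct and follows essentially the same route as the paper. You factor into irreducibles at some level $D_n$, use Theorem \ref{q1} to identify the factors that are not finite products of primes as odd cyclotomic polynomials, expand those via Corollary \ref{inf}, and glue by exponent-additivity (Proposition \ref{lemma5.5}(2); the paper phrases this as multiplication by $q\in S$ raising $k_q$ by one), with uniqueness read off from condition (1) of Definition \ref{defi5.2}. Your explicit shift automorphism $X^{\frac{1}{2^k}}\mapsto X^{\frac{1}{2^{k+n}}}$ supplies a step the paper leaves implicit: Theorem \ref{q1} and Corollary \ref{inf} are stated for polynomials in $X$, and the shift shows they also apply to factors lying in $\mathbb{Q}[X^{\frac{1}{2^n}}]$.
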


\begin{proof}
Let $S$ be the set of monic polynomials of $R$ that are prime elements of $D$.
Then $S$ is countable.
Now let $f=u_f\prod\limits_{p\in S}p^{k_p}$ be the product of prime elements in $D$ for a nonzero nonunit $f$ of $D$.
It is clear that if $q \in S$, then $qf=u_f\prod\limits_{p\in S}p^{l_p} \in D$ where
\[l_p=\begin{cases}
 k_p &\textnormal{ if }p\neq q,\\
k_q+1 &\textnormal{ if }p=q.
\end{cases}\]
Hence,  by Corollary \ref{inf} and Theorem \ref{q1}, the conclusion follows.
\end{proof}

Let the notation be as in Theorem \ref{theorem5.11} and
$f \in D$ a nonzero nounit. Then there exists a sequence $\{k_p\}_{p \in S}$ of nonnegative integers
such that $f = u_f\prod\limits_{p\in S}p^{k_p}$ for some unit $u_f$ of $D$.
We next classify all the
sequences of nonnegative integers that represent elements of $D$,
i.e., we determine when $\prod\limits_{p\in S}p^{k_p} \in D$ for a given sequence $\{k_p\}_{p \in S}$ of nonnegative integers.

\begin{proposition} \label{lastcoro}
Let the notation be as in Notation \ref{notation} with $F=\mathbb{Q}$,
$S$ the set of monic polynomials of $R$ that are prime elements of $D$ and
$\{a_p\}_{p \in S}$ a sequence of nonnegative integers such that $T: = \{p \in S \mid a_p > 0\}$ is infinite.
Then $\prod\limits_{p \in S}p^{a_p} \in D$ if and only if the following two conditions hold:
\begin{enumerate}[font=\normalfont]
    \item There exist a finite subset $\mathcal{C}$ of $T$, a nonnegative integer $n$ and distinct positive odd integers $d_1,\dots, d_l$ such that $$T \setminus\mathcal{C}=\bigcup\limits_{j=1}^l\{\Phi_{2d_j}(X^{\frac{1}{2^i}})\mid i \gneq n\}.$$
    \item For each $j\in\{1,\dots, l\}$, let
    \begin{align*}
        \mathcal{B}_j&=\{p \in T \mid p =\Phi_{2d_j}(X^{\frac{1}{2^i}})\textnormal{ for some }i\ge n\},\\
        b_j&=\min\{a_p\mid p\in B_j\}.
    \end{align*}
    Then $a_p=b_j$ for all but finitely many $p \in \mathcal{B}_j$.
\end{enumerate}
In this case, $\{a_p\}_{p \in S}$ is a finite set.
\end{proposition}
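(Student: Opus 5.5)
The plan is to prove both directions by comparing $\prod_{p\in S}p^{a_p}$ with the explicit infinite factorizations already available from Corollary \ref{inf}, Theorem \ref{q1} and Theorem \ref{theorem5.11}.

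\emph{Necessity.} Suppose $f:=\prod_{p\in S}p^{a_p}\in D$. Choose $m$ with $f\in D_m$, and factor $f$ into irreducibles of $D_m$ as $f=u\,g_1^{e_1}\cdots g_r^{e_r}$, where the $g_i$ are pairwise nonassociated and each $g_i\in R_m$ is monic with $g_i(0)\neq 0$.

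By Theorem \ref{q1} (applied after rescaling $X^{1/2^m}$ to $X$), each $g_i$ falls into one of two cases.
\begin{itemize}
\item[$\cdot$] $g_i$ is a finite product of primes of $D$.
\item[$\cdot$] $g_i$ is associated to $\Phi_d(X^{1/2^m})$ with $d$ odd.
\end{itemize}
In the second case, Corollary \ref{inf} (again rescaled) gives $\Phi_d(X^{1/2^m})=\prod_{i>m}\Phi_{2d}(X^{1/2^i})$. A subtlety: $\Phi_d(X^{1/2^m})$ need not be irreducible in $D_m$ when $\Phi_d(X)$ is irreducible over $\mathbb Q$. Still, its factors in $D_m$ are of the form $\Phi_{2^jd}(X^{1/2^m})$, and by Lemma \ref{lemma5.1} these are primes of $D$ when $j\ge 1$. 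So the second case really does occur exactly with $g_i\sim\Phi_d(X^{1/2^m})$.

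Proposition \ref{lemma5.5}(2) then shows that the exponent sequence of $f$ is the sum of the exponent sequences of the $g_i^{e_i}$. Uniqueness of the representation (Theorem \ref{theorem5.11}) identifies this sum with $\{a_p\}$. Now read off the conditions.
\begin{itemize}
\item[$\cdot$] The finitely many primes coming from the first kind of factor, together with the finitely many primes $\Phi_{2d}(X^{1/2^i})$ with $i\le m$, form $\mathcal C$.
\item[$\cdot$] Taking $n=m$ and $d_1,\dots,d_l$ the distinct odd $d$ that occur gives (1).
\item[$\cdot$] On each $\mathcal B_j$, the exponent is the constant $e_i$ except at finitely many indices, since primes from the finite-product factors could coincide with some $\Phi_{2d_j}(X^{1/2^i})$. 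This gives (2).
\end{itemize}
Finiteness of the set $\{a_p\}$ is then immediate: there are finitely many values on $\mathcal C$ and on each tail.

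\emph{Sufficiency.} Given (1) and (2), enlarge $n$ so that $a_p=b_j$ for all $p=\Phi_{2d_j}(X^{1/2^i})$ with $i>n$, and move the finitely many exceptions into $\mathcal C$. Set
\[g=\prod_{p\in\mathcal C}p^{a_p}\cdot\prod_{j=1}^{l}\Phi_{d_j}(X^{1/2^n})^{b_j}\in D_n .\]
The first factor is a finite product, so it lies in $D$. By Corollary \ref{inf} with $X$ replaced by $X^{1/2^n}$, the second factor equals $\prod_{j}\prod_{i>n}\Phi_{2d_j}(X^{1/2^i})^{b_j}$.

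We must make sure the $\mathcal C$ part does not collide with the tails. Choosing $n$ large handles this: $\mathcal C$ is finite and the sets $\mathcal B_j$ are disjoint for distinct $d_j$, so the $\mathcal C$ part can be taken disjoint from the tails. Proposition \ref{lemma5.5}(2) then gives that the exponent sequence of $g$ is exactly $\{a_p\}$. By uniqueness in Theorem \ref{theorem5.11}, $\prod_p p^{a_p}=g\in D$, up to the unit normalization of Definition \ref{defi5.2}.

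\emph{Main obstacle.} The delicate step is rescaling Theorem \ref{q1} and Corollary \ref{inf} from $X$ to $X^{1/2^m}$. This requires checking that the automorphism-like substitution $X\mapsto X^{1/2^m}$ maps $D$ onto $D$ and preserves primes. It also requires the remark above that the irreducible factors of $\Phi_d(X^{1/2^m})$ in $D_m$ are either primes of $D$ or associates of $\Phi_{d}(X^{1/2^m})$ itself, so that the exponent bookkeeping gives eventually constant exponents on each $\mathcal B_j$.
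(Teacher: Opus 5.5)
Your argument is essentially the paper's. For necessity you factor $f$ in some $D_m$, use Theorem~\ref{q1} to recognize the irreducible factors that are not finite products of primes as $\Phi_d(X^{\frac{1}{2^m}})$ with $d$ odd, and expand these by Corollary~\ref{inf}. For sufficiency you reassemble the product from $\Phi_{d_j}(X^{\frac{1}{2^n}})^{b_j}$ and finitely many primes via Proposition~\ref{lemma5.5}(2); you absorb the exceptional exponents by enlarging $n$, where the paper instead multiplies by $\prod_{p\in\mathcal{C}'}p^{a_p-b}$.

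Two minor fixes are needed.

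\emph{The subtlety is spurious.} The substitution $X^{\frac{1}{2^m}}\mapsto X$ gives $D_m\cong D_0$, so $\Phi_d(X^{\frac{1}{2^m}})$ is irreducible in $D_m$. The splitting you describe is that of $\Phi_d(X)=\Phi_d(X^{\frac{1}{2^m}})\prod_{i=1}^{m}\Phi_{2d}(X^{\frac{1}{2^i}})$ in $D_m$. This confusion is harmless to your conclusion.

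\emph{The choice of $n$ in the necessity direction.} Take $n=m+1$ rather than $n=m$. The reason is that $\mathcal{B}_j$ contains the index $i=n$ and $b_j$ is defined as a minimum. For example, with $f=(X-1)^2(X+1)$ and $n=0$, one gets $b_1=1$ while every tail exponent equals $2$, so condition (2) fails as literally stated. With $n=m+1$, $\mathcal{B}_j$ consists only of tail primes, and the problem disappears.
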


\begin{proof}
$(\Rightarrow)$ Assume that $f = \prod\limits_{p \in S}p^{a_p}$ exists in $D$.
Then, as $f\in D_n$ for some $n\in\mathbb{N}_0$, $f$ must be divided by a cyclotomic polynomial of odd degree in $D_n$ by Theorem \ref{q1}.
In other words, there exists $g\in D$ that is either a unit of $D$ or a finite product of prime elements of $D$
such that $f=g\prod\limits_{j=1}^l\Phi_{d_j}(X^{\frac{1}{2^n}})^{b_j}$ for some $g\in D$, $b_j\in\mathbb{N}$
and distinct odd integers $d_j\in\mathbb{N}$.
Since $\Phi_{d_j}(X^{\frac{1}{2^n}})^{b_j}=\prod\limits_{i\in\mathbb{N}}\Phi_{2d_j}(X^{\frac{1}{2^{n+i}}})^{b_j}$
by Corollary \ref{inf}, we have (1) and (2).

$(\Leftarrow)$ Conversely, suppose that (1) and (2) hold. By Proposition \ref{lemma5.5},
without loss of generality, we may assume that $l=1$. Furthermore,
for simplicity, let $\mathcal{B}_1 = \mathcal{B}$, $b_1= b$ and $d_1=d$.
Hence, if we let $\mathcal{C}' = \{p \in \mathcal{B} \mid a_p \neq b\}$, then
$|\mathcal{C}'| < \infty$, so $\prod\limits_{p \in \mathcal{C}'}p^{a_p - b} \in D.$
Moreover, $\prod\limits_{p \in T \setminus \mathcal{C} \cup \mathcal{C}'}p^{b} = \Phi_{d}(X^{\frac{1}{2^{n}}})^{b} \in D$
by assumption and Corollary \ref{inf} and $\prod\limits_{p \in \mathcal{C}}p^{a_p} \in D$ because $|\mathcal{C}| < \infty$.
Thus, by Proposition \ref{lemma5.5},
$$\prod\limits_{p \in S}p^{a_p}
= \bigg(\Phi_{d}(X^{\frac{1}{2^{n}}})^{b}\bigg)\bigg(\prod\limits_{p \in \mathcal{C}'}p^{a_p - b}\bigg)\bigg(\prod\limits_{p \in \mathcal{C}}p^{a_p}\bigg)$$
is a nonzero nonunit of $D$.
\end{proof}

Let $p_1, \dots, p_n$ be prime elements of an integral domain $E$
and $e_1, \dots , e_n$ positive integers. It is easy to see that $(\prod\limits_{i=1}^np_i^{e_i})E = \bigcap\limits_{i=1}^np_i^{e_i}E$
and $\prod\limits_{i=1}^np_i^{e_i}$ is a greatest common divisor of $\{p_1^{e_1}, \dots, p_n^{e_n}\}$.
The next result shows that this is true of a product of infinitely many prime elements in
$D= \bigcup\limits_{n\in\mathbb{N}_0}\mathbb{Q}[X^{\frac{1}{2^n}}, X^{-\frac{1}{2^n}}]$.

\begin{corollary}
\label{finaltheorem}
Let the notation be as in Notation \ref{notation} with $F=\mathbb{Q}$, $S$
the set of monic polynomials of $R$ that are prime elements of $D$,
and $\{k_p\}_{p \in S}$ a set of nonnegative integers. Then the following statements are equivalent
for a nonzero nonunit $f$ of $D$.
\begin{enumerate}[font=\normalfont]
    \item $f = u_f\prod\limits_{p \in S}p^{k_p}$.
    \item $fD = \bigcap\limits_{p \in S}p^{k_p}D$.
    \item $f$ is a least common multiple of $\{p^{k_p}\}_{p\in S}$ in $D$.
\end{enumerate}
\end{corollary}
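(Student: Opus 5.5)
The plan is to go through the cycle (1) $\Rightarrow$ (2) $\Rightarrow$ (3) $\Rightarrow$ (1). All three statements encode the same information as condition (2) of Definition \ref{defi5.2}. The fact that lets us pass between them is this: for $g\in D$ and $p\in S$ we have $g\in p^{k}D$ if and only if $p^{k}\mid g$. Hence $g\in\bigcap_{p\in S}p^{k_p}D$ says exactly that $p^{k_p}\mid g$ for every $p\in S$. It says nothing about $p^{k_p+1}\nmid g$.

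\emph{(1) $\Rightarrow$ (2).} Assume $f=u_f\prod_{p\in S}p^{k_p}$. By condition (1) of Definition \ref{defi5.2}, $p^{k_p}\mid f$ for all $p$, so $fD\subseteq\bigcap_{p}p^{k_p}D$. For the reverse inclusion, let $0\neq g\in\bigcap_p p^{k_p}D$. By Theorem \ref{theorem5.11}, $g=u_g\prod_p p^{m_p}$ with exact exponents $m_p$, and $m_p\ge k_p$ for every $p$. Set $e_p=m_p-k_p\ge 0$. I would then show that $g/f\in D$, using Proposition \ref{lemma5.5}(1): $D=\bigcap_{p\in S}D_{pD}$. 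For each $p$, write $f=p^{k_p}f'$ and $g=p^{m_p}g'$ with $p\nmid f'$ and $p\nmid g'$. Then $g/f=p^{e_p}g'/f'\in D_{pD}$, since $f'\notin pD$. Thus $g/f\in\bigcap_p D_{pD}=D$ and $g\in fD$, which gives (2). This argument also re-derives condition (2) of the definition, so it is consistent.

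\emph{(2) $\Rightarrow$ (3).} This is immediate. Every $p^{k_p}$ divides $f$, and any common multiple $g$ lies in $\bigcap_p p^{k_p}D=fD$, so $f\mid g$. Hence $f$ is a least common multiple.

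\emph{(3) $\Rightarrow$ (1).} Let $f$ be an lcm of $\{p^{k_p}\}$. Write $f=u_f\prod_p p^{m_p}$ by Theorem \ref{theorem5.11}; then $m_p\ge k_p$. The main point is to show that $m_p=k_p$ for every $p$. Suppose $m_q>k_q$ for some $q$. I claim that $f/q\in D$ is still a common multiple of all the $p^{k_p}$. For $p\neq q$, the element $p^{k_p}$ divides $f=q\cdot(f/q)$ with $q\notin pD$, and $p^{k_p}D$ is $pD$-primary (as in the proof of Proposition \ref{lemma5.5}), so $p^{k_p}\mid f/q$. For $p=q$ the divisibility is clear because $m_q-1\ge k_q$. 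By minimality, $f\mid f/q$, so $q$ is a unit, a contradiction. Hence $m_p=k_p$ for all $p$, and $f=u_f\prod_p p^{k_p}$ by the uniqueness in Theorem \ref{theorem5.11} and Definition \ref{defi5.2}.

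The main obstacle is the reverse inclusion in (1) $\Rightarrow$ (2). It requires passing from exponent comparisons at each prime $p\in S$ to actual divisibility in $D$. This rests on $D=\bigcap_{p\in S}D_{pD}$, Proposition \ref{lemma5.5}(1), which in turn depends on every nonunit having a prime divisor in $S$ (Theorem \ref{q1} together with Lemma \ref{lemma5.1}).
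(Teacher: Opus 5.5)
Your proof is correct. Your (3) $\Rightarrow$ (1) is essentially the paper's argument: divide by $q$, use that $p^{k_p}D$ is $pD$-primary, and contradict minimality of the lcm. The paper checks condition (2) of Definition \ref{defi5.2} directly from the lcm property instead of first invoking Theorem \ref{theorem5.11}, but the idea is the same.

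The genuine difference is in (1) $\Rightarrow$ (2). The paper's route is:
\begin{enumerate}
\item expand $g=u_g\prod_p p^{l_p}$ by Theorem \ref{theorem5.11} and set $a_p=l_p-k_p$;
\item use the classification of realizable exponent sequences (Proposition \ref{lastcoro}) to get $h=\prod_p p^{a_p}\in D$;
\item use the multiplicativity in Proposition \ref{lemma5.5}(2) to conclude $g=fh$.
\end{enumerate}
You instead check $g/f\in D_{pD}$ one prime at a time and finish with $D=\bigcap_{p\in S}D_{pD}$ from Proposition \ref{lemma5.5}(1).

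Your route is leaner in three ways. First, it does not need Proposition \ref{lastcoro}, nor the check, left implicit in the paper, that $\{l_p-k_p\}$ satisfies its hypotheses. Second, it does not even need Theorem \ref{theorem5.11} for $g$: the divisibility $p^{k_p}\mid g$ alone already gives $g/f\in D_{pD}$. Third, it uses only the exact-exponent condition (1) of Definition \ref{defi5.2} for $f$, so it shows that condition (2) of that definition is automatic in this ring. In exchange, the paper's route identifies the cofactor explicitly as the formal product $\prod_p p^{l_p-k_p}$, so division corresponds to subtracting exponents.

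Finally, you prove (2) $\Rightarrow$ (3) directly from the definition of a least common multiple. The paper cites an external result for (2) $\Leftrightarrow$ (3); your cycle makes that citation unnecessary.
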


\begin{proof}
(1) $\Rightarrow$ (2) Let $f = u_f\prod\limits_{p \in S}p^{k_p}$. Then $p^{k_p}|f$ for all $p \in S$
by Definition \ref{defi5.2}, and hence $f \in \bigcap\limits_{p \in S}p^{k_p}D$. Thus,
$fD \subseteq \bigcap\limits_{p \in S}p^{k_p}D$. For the reverse containment, let
$g \in \bigcap\limits_{p \in S}p^{k_p}D$ be a nonzero nonunit of $D$.
Then, by Theorem \ref{theorem5.11}, $g = u_g\prod\limits_{p \in S}p^{l_p}$, where $l_p \in \mathbb{N}_0$ for all $p \in S$.
Moreover, $g \in p^{k_p}D$ implies $l_p \geq k_p$ for all $p \in S$.
Now let $a_p = l_p-k_p$ for all $p \in S$. Then $\{p \in S \mid a_p > 0\}$ is either a
finite set or an infinite set satisfying the conditions (1) and (2) of Proposition \ref{lastcoro},
so if we let $h = \prod\limits_{p \in S}p^{a_p}D$, then $h \in D$ by Proposition \ref{lastcoro}.
Hence, by Proposition \ref{lemma5.5}, $g = fh \in fD$. Therefore,
$\bigcap\limits_{p \in S}p^{k_p}D \subseteq fD$.

(2) $\Leftrightarrow$ (3) This follows from \cite[Theorem 2.1]{a00}.

(3) $\Rightarrow$ (1)
Assume that (3) holds. Then clearly $p^{k_p}\mid f$ in $D$ for each $p\in S$
and $f$ satisfies the condition (2) of Definition \ref{defi5.2}.
Suppose that $p^{k_p+1}\mid f$ in $D$ for some $p\in S$ and let $h=p^{-1}f$.
Then $h\in D$ and  $q^{k_q}\mid h$ in $D$ for each $q\in S$ because $q^{k_q}D$ is a $qD$-primary ideal of $D$
by Proposition \ref{prop3.1}(1) and \cite[Proposition 4.2]{am69}.
Hence, $h$ is a common multiple of $\{p^{k_p}\}_{p\in S}$, so we have $h=fl$ for some $l\in D$. It follows that $p$ is a unit of $D$, a contradiction.
Thus, $p^{k_p+1}\nmid f$ in $D$ for each $p\in S$. In other words, $f$ satisfies (1) of Definition \ref{defi5.2}.
\end{proof}

We end this section with a product of infinitely many prime elements analog of the following well-known result:
Let $p_1, \dots, p_n$ be nonassociated prime elements of an integral domain $E$,
$e_1, \dots , e_n$, $k_1, \dots , k_n$ nonnegative integers, $f = \prod\limits_{i=1}^np_i^{e_i}$
and $g = \prod\limits_{i=1}^np_i^{k_i}$.
Then $f, g \in E$ and $\prod\limits_{i=1}^np_i^{\min\{e_i, k_i\}}$ (resp.,
 $\prod\limits_{i=1}^np_i^{\max\{e_i, k_i\}}$) is a greatest common divisor (resp., a least common multiple) of $f$ and $g$ in $E$.

\begin{corollary}
Let the notation be as in Notation \ref{notation} with  $F=\mathbb{Q}$ and
$S$ the set of monic polynomials of $R$ that are prime elements of $D$.
If $f=u_f\prod\limits_{p \in S}p^{a_p}$ and $g=u_g\prod\limits_{p \in S}p^{b_p}$ are nonzero nonunits of $D$,
then $h= \prod\limits_{p \in S}p^{\min\{a_p,b_p\}}$ $($resp., $l= \prod\limits_{p \in S}p^{\max\{a_p,b_p\}})$
is a greatest common divisor  $($resp., a least common multiple$)$ of $f$ and $g$ in $D$.
\end{corollary}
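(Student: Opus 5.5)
We will exploit Corollary \ref{finaltheorem}, which identifies an infinite product $\prod_{p\in S}p^{k_p}$ (when it lies in $D$) with a least common multiple of $\{p^{k_p}\}_{p\in S}$, and with $\bigcap_{p\in S}p^{k_p}D$. The plan is first to show that $h$ and $l$ are elements of $D$, and then to verify the divisibility conditions prime by prime. The prime-by-prime steps rest on Theorem \ref{theorem5.11} and on the fact that each $p^kD$ is $pD$-primary, because $D$ is one-dimensional.

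\textbf{Step 1: $h$ and $l$ lie in $D$.} Since $D$ is a B\'ezout domain (Proposition \ref{prop3.1}(1)), $fD+gD=dD$ for some $d\in D$, and $fD\cap gD=mD$ with $m=fg/d$. Both $d$ and $m$ are nonzero nonunits, except that $d$ may be a unit; in that case we treat $h$ as the empty product. By Theorem \ref{theorem5.11} we may write $d=u_d\prod p^{c_p}$ and $m=u_m\prod p^{e_p}$.

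\textbf{Step 2: compute the exponents.} We claim that $c_p=\min\{a_p,b_p\}$ and $e_p=\max\{a_p,b_p\}$, and that these exponents are exactly the $p$-adic valuations. For a fixed prime $p$, the exponent $k_p$ in Definition \ref{defi5.2} equals $v_p$, the valuation on the DVR $D_{pD}$. This is a DVR because $pD$ is principal in a Pr\"ufer domain of dimension one. Localizing at $pD$ gives $dD_{pD}=fD_{pD}+gD_{pD}$, so $v_p(d)=\min\{v_p(f),v_p(g)\}$. Likewise $mD_{pD}=fD_{pD}\cap gD_{pD}$, since localization commutes with finite intersections, so $v_p(m)=\max\{v_p(f),v_p(g)\}$.

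\textbf{Step 3: identify $d$ with $h$ and $m$ with $l$.} Uniqueness of the infinite product (the remark after Definition \ref{defi5.2}) gives $d=u_dh$ and $m=u_ml$. In particular $h,l\in D$, and they are associates of a gcd and an lcm respectively. Alternatively, we could first check that the exponent sequences satisfy conditions (1) and (2) of Proposition \ref{lastcoro}. That route works because min and max of sequences that are eventually constant on each family $\{\Phi_{2d_j}(X^{1/2^i})\}$ are again eventually constant there. However, the Step 1 route avoids this bookkeeping.

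\textbf{Direct verification.} One could also argue directly. If $c\mid f$ and $c\mid g$, then writing $c=u_c\prod p^{t_p}$ gives $t_p\le\min\{a_p,b_p\}$. Proposition \ref{lastcoro} together with Proposition \ref{lemma5.5}(2) would then give $h=c\cdot\prod p^{\min-t_p}$, provided the quotient sequence is admissible. Admissibility is exactly the delicate point, and Step 1 sidesteps it.

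\textbf{Main obstacle.} The step needing the most care is justifying that the exponent $k_p$ of Definition \ref{defi5.2} coincides with $v_p$. This holds because $p^{k}D=p^kD_{pD}\cap D$, as $p^kD$ is $pD$-primary (Proposition \ref{prop3.1}(1) and \cite[Proposition 4.2]{am69}). The definition's normalization, that $u_f$ is a unit and the remaining factor is monic, must also be checked to be consistent: the monic representative is unique up to powers of $X^{1/2^n}$.
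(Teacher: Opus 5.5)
Your proof is correct, but it identifies the exponents by a different mechanism than the paper.

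The paper stays inside the infinite-product formalism of Section 5:
\begin{itemize}
\item It uses Corollary \ref{finaltheorem} to rewrite $fD\cap gD$ as $\bigcap_{p\in S}p^{\max\{a_p,b_p\}}D$.
\item It takes a generator $l'$ of this ideal, which exists since $D$ is B\'ezout, and applies Corollary \ref{finaltheorem} again to see that $l'$ is a unit multiple of $l$.
\item It obtains the gcd indirectly from $fgD=lh'D$ for a gcd $h'$. The additivity of exponents in Proposition \ref{lemma5.5}(2), together with Theorem \ref{theorem5.11}, shows that $h'$ has exponents $a_p+b_p-\max\{a_p,b_p\}=\min\{a_p,b_p\}$.
\end{itemize}

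You instead observe that the exponent $k_p$ of Definition \ref{defi5.2} is just the valuation $v_p$ of the DVR $D_{pD}$, via $p^kD=p^kD_{pD}\cap D$. The min/max formulas then reduce to how sums and finite intersections of principal ideals behave in a valuation ring. This buys you three things:
\begin{itemize}
\item It treats gcd and lcm symmetrically.
\item It makes Corollary \ref{finaltheorem} unnecessary, and Proposition \ref{lemma5.5}(2) becomes essentially the additivity of $v_p$.
\item It shows explicitly that $h$ and $l$ are elements of $D$, including the degenerate case where $\min\{a_p,b_p\}=0$ for all $p$.
\end{itemize}

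Like the paper's gcd step, you still need the B\'ezout generators $d$ and $m$ to be infinite products in the sense of Definition \ref{defi5.2}, which you take from Theorem \ref{theorem5.11}. You can make this, and the uniqueness claim in your Step 3, self-contained via Proposition \ref{lemma5.5}(1). Since $D=\bigcap_{p\in S}D_{pD}$, two nonzero elements with the same $v_p$ for every $p\in S$ divide each other, and that is exactly condition (2) of Definition \ref{defi5.2}.
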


\begin{proof}
By Corollary \ref{finaltheorem}, we have
\begin{align*}
fD\cap gD&=(\bigcap\limits_{p \in S}p^{a_p}D)\cap (\bigcap\limits_{p \in S}p^{b_p}D)\\
&=\bigcap\limits_{p \in S}(p^{a_p}D\cap p_p^{b_p}D)\\
&=\bigcap\limits_{p \in S}p^{\max\{a_p,b_p\}}D.
\end{align*}
Since $D$ is a B{\'e}zout domain by Proposition \ref{prop3.1},
there exists $l'\in D$ such that $l'D=fD\cap gD$  \cite[Theorem 2.1]{a00}.
It follows that $l = \prod\limits_{p \in S}p^{\max\{a_p,b_p\}}$ is a least common multiple of $f$ and $g$ by Corollary \ref{finaltheorem}.
Now, let $h'\in D$ be a greatest common divisor of $f$ and $g$ in $D$. Then we have $fgD=lh'D$ \cite[Theorem 2.1(2)]{a00},
so $h'=u_{h'}\prod\limits_{p \in S}p^{\min\{a_p,b_p\}}$
by Proposition \ref{lemma5.5} and Theorem \ref{theorem5.11}.
Hence, $h=\prod\limits_{p \in S}p^{\min\{a_p,b_p\}}$ is a greatest common divisor of $f$ and $g$.
\end{proof}

Motivated by the results of this section, in \cite{cc26}, we
introduced the notion of formal unique factorization domains and studied
the ring-theoretic characterizations of formal UFDs systematically.
Then $D= \bigcup\limits_{n\in\mathbb{N}_0}\mathbb{Q}[X^{\frac{1}{2^n}}, X^{-\frac{1}{2^n}}]$
is a formal UFD by Theorem \ref{theorem5.11} and Corollary \ref{finaltheorem}
(see the remark after Corollary 2.3 in \cite{cc26}).

\vspace{.2cm}
\noindent
\textbf{Acknowledgements.}
The first author was supported by Basic Science Research Program through the National Research Foundation of Korea (NRF)
funded by the Ministry of Education (2017R1D1A1B06029867). The second author was supported by the National Research Foundation of Korea (NRF) grant funded by the Korea government (MSIT)(No. 2022R1C1C2009021).

\end{document}